\newcommand{\arxiv}[1]{\href{http://arxiv.org/abs/#1}{{\tt arXiv:#1}}}
\def\@citecolor{blue}
\def\@linkcolor{blue}
\def\@urlcolor{blue}
\def\im{\operatorname{im}}
\def\image{\operatorname{image}}
\def\ker{\operatorname{ker}}
\def\dim{\operatorname{dim}}
\def\depth{\operatorname{depth}}
\def\ass{\operatorname{Ass}}
\def\supp{\operatorname{Supp}}
\def\degree{\operatorname{degree}}
\def\ZZ{\mathbb Z}
\newcommand{\m}{\mathfrak m}
\newcommand{\p}{\mathfrak p}
\newcommand{\ov}{\overline}
\newcommand{\lm}{{\lambda}}
\newcommand{\R}{{\mathcal R}}
\newcommand{\F}{{\mathcal F}}
\newcommand{\NN}{{\mathbb N}}
\theoremstyle{plain}
\newtheorem{theorem}{Theorem}[section]
\newtheorem{lemma}[theorem]{Lemma}
\newtheorem{proposition}[theorem]{Proposition}
\newtheorem{corollary}[theorem]{Corollary}
\theoremstyle{definition}
\newtheorem{remark}[theorem]{Remark}
\newtheorem{example}[theorem]{Example}
\begin{document}

 \title[ ] {Local Cohomology of Multi-Rees algebras with \\ [2mm] applications to joint reductions and complete ideals}
\author{Shreedevi K. Masuti}
 \address{Institute of Mathematical Sciences, 
IV Cross Road, CIT Campus, 
Taramani, 
Chennai 600113, 
Tamil Nadu, India.}
\email{shreedevikm@imsc.res.in}
 \thanks{Key words : strict complete reductions, good complete reductions, good joint reductions, 
 multi-Rees algebra, local cohomology, complete ideal, normal Hilbert polynomial, 
 normal joint reduction number zero. 
}
\author{Tony J. Puthenpurakal}
\address{Department of Mathematics, Indian Institute of Technology
Bombay, Powai, Mumbai, India - 400076}
\email{tputhen@math.iitb.ac.in}
\author{J. K. Verma}
 \address{Department of Mathematics, Indian Institute of Technology
Bombay, Powai, Mumbai, India - 400076}
\email{jkv@math.iitb.ac.in}
\maketitle
\begin{center}
 {\it Dedicated to Professor Ngo Viet Trung on his sixtieth birthday}
\end{center}

\begin{abstract}
Let $(R,\m)$ be a Cohen-Macaulay local ring of dimension $d$ and ${\bf I}=(I_1,\ldots,I_d)$ be 
$\m-$primary ideals in $R$. We prove that $\lm_R([H^d_{(x_{ii}t_i:1\leq i \leq d)}(\R(\F)]_{\bf n})< \infty$, 
for all ${\bf n} \in \mathbb N^d$, where $\F=\{\F({\bf n}):{\bf n}\in \mathbb Z^d\}$ is an
${\bf I}-$admissible filtration and $(x_{ij})$ is a strict complete reduction of $\F$ and $\R(\F)$ is the multi-Rees algebra of $\F.$
As a consequence we prove that the normal joint reduction 
number of $I,J,K$ is zero in an analytically unramified Cohen-Macaulay local ring of 
dimension $3$ if and only if 
$\ov{e}_3(IJK)-[\ov{e}_3(IJ)+\ov{e}_3(IK)+\ov{e}_3(JK)]+\ov{e}_3(I)+\ov{e}_3(J)+\ov{e}_3(K)=0$. 
This generalizes a theorem of Rees on joint reduction number zero in dimension $2$. 
We apply this theorem to generalize a theorem of M. A. Vitulli in dimension $3.$
\end{abstract}

\section{Introduction}
The objective of this paper is to explore  finite generation of multigraded components of 
local cohomology modules of extended multi-Rees algebras of $\ZZ^d$-graded filtrations of ideals in a 
$d$-dimensional Cohen-Macaulay local ring. As a consequence we obtain a satisfactory 
generalisation in dimension $3$ of an important theorem of D. Rees about $\NN^2$-graded filtrations of 
complete ideals of joint reduction number zero 
in two-dimensional analytically unramified Cohen-Macaulay local rings \cite{rees}.
As a consequence of his theorem, Rees reproved Zariski's theorem about products
of complete ideals in two-dimensional regular and psuedo-rational local rings and he also obtained
a formula for the normal Hilbert polynomial of a complete ideal in a pseudo-rational
local ring. An analogue of Rees' theorem in any dimension is not yet known. This paper
presents an analogue in dimension $3.$

We shall apply our finiteness theorem to provide necessary and sufficient conditions for 
the integral closure filtration $\{\overline{I^rJ^tK^t}\}$ of three $\m$-primary ideals 
in a $3$-dimensional analytically unramified Cohen-Macaulay local ring to have joint 
reduction number zero. This, in turn, gives us  sufficient conditions for power 
products of complete monomial ideals to be compete in $3$-dimensional polynomial rings. 
This yields a generalization of a theorem of M. A. Vitulli \cite{v} in dimension $3.$ 

We now describe our main results in detail for which we need to recall several definitions and 
introduce necessary notation. Let $(R,\m)$ be a $d$-dimensional Noetherian local ring and let 
${\bf I}=(I_1,I_2, \ldots,I_s)$ for  ideals 
$I_1, I_2, \ldots,I_s$ of $R$. For ${\bf n}=(n_1,\ldots,n_s) \in \mathbb Z^s$, let ${\bf I}^{\bf n}=
I_1^{n_1}\ldots I_s^{n_s}$. An {\it ${\bf I}-$filtration} of ideals is a set of ideals 
$\mathcal F=\{\mathcal F({\bf n}): {\bf n} \in \mathbb Z^s\}$ such that
\begin{enumerate}
 \item $\mathcal F({\bf n}) \subseteq \mathcal F({\bf m})$ for all ${\bf n}, {\bf m}
 \in \mathbb Z^s$ and ${\bf n}\geq {\bf m},$
 \item ${\bf I}^{\bf n} \subseteq \mathcal F({\bf n})$ for all $\bf{n} \in \ZZ^s,$
\item $\mathcal F({\bf n})\mathcal F({\bf m}) \subseteq \mathcal F({\bf n}+{\bf m})$ 
for all ${\bf m},{\bf n} \in \mathbb Z^s.$
 \end{enumerate}
 
 The principal examples of $\ZZ^s$-graded filtrations are the {\em adic filtration}
 $\{{\bf{I}}^{\bf n}= I_1^{n_1}I_2^{n_2}\ldots I_s^{n_s}\}$ 
 and
 the {\em integral closure filtration} $\{\ov{\bf{I}^{\bf n}}= \ov{I_1^{n_1}I_2^{n_2} \ldots I_s^{n_s}}\}.$  
 Recall that the integral closure of an ideal $J$ in a ring $R$ denoted by $\ov{J}$ is the ideal
 $$\{r \in R\mid r^n+a_1r^{n-1}+a_2r^{n-2}+\cdots+a_n=0 \;\mbox{ for some } a_i \in 
 J^i \;\; \mbox{ for } i=1,2,\ldots,n\}.$$
 The ideal  $J$ is called {\em complete} or {\em integrally closed}  if $J=\ov{J}.$ 
 For indeterminates $t_1,\ldots,t_s$ over $R$, put
 ${\bf t}^{\bf n}=t_1^{n_1}\ldots t_s^{n_s}$. 
 We say $\mathcal F$ is an ${\bf I}-${\it admissible filtration} if the extended Rees 
 algebra $\mathcal R^\prime(\mathcal F)=\bigoplus_{{\bf n}\in \mathbb Z^s}\mathcal F({\bf n})
 {\bf t}^{\bf n}$ is a finite module over the extended Rees algebra 
 $\mathcal R^\prime({\bf I})=
 \bigoplus_{{\bf n} \in \mathbb Z^s}{\bf I}^{\bf n}{\bf t}^{\bf n}$. The integral closure
 of  ${\mathcal R}^\prime({\bf I})$ in the ring $R[t_1^{\pm 1},t_2^{\pm 2},\ldots,t_s^{\pm 1}] $ is 
 the extended Rees algebra $\ov{\mathcal R^\prime}({\bf I})=\mathcal R^\prime ({\mathcal F})$ where 
 $\mathcal F({\bf n})=\ov{{\bf I}^{\bf n}}$ for all ${\bf n} \in \mathbb Z^s$. 
 
 We write ${\bf e}_i$ for the vector in $\ZZ^s$ which has $1$ in the $i^{th}$ position and $0$ elsewhere for $i=1,2,\ldots,s.$ The zero vector in $\ZZ^s$ will be denoted by $\bf 0.$
 Set ${\bf e}={\bf e}_1+{\bf e}_2+\cdots+{\bf e}_s.$
Now assume that $I_1,I_2, \ldots, I_s$ are $\m$-primary ideals. An $s\times d$ matrix $(x_{ij})$ where $x_{ij} \in I_i$ for all $j=1,\ldots,d$ and 
$i=1,\ldots,s$ is called a {\it complete reduction} of the ${\bf I}-$filtration $\mathcal F$ if $$\mathcal F({{\bf n}+{\bf e}})=(y_1,\ldots,y_d)\mathcal F({\bf n}) $$
for all large 
${\bf n} \in \mathbb N^s.$
Here $y_j=\prod_{i=1}^sx_{ij}$ for $j=1,\ldots,d.$ 
We say that $(x_{ij})$ is a {\it good complete reduction} of $\F$ if in addition $(x_{ij})$ satisfies 
\begin{eqnarray*}
 (y_j:j \in A) \cap \F({\bf n})=(y_j:\in A)\F({\bf n}-{\bf e}) \mbox{ for }{\bf n} \geq |A|{\bf e} 
\end{eqnarray*}
for every proper subset $A \subsetneq \{1,\ldots,d\}$. Here $|A|$ denotes the cardinality 
of the set $A$. If $s=d$ then the elements $x_{ii}=a_i$ for $i=1,2,\ldots,d$ satisfy the equation
$$\mathcal F({\bf n})=a_1{\mathcal F}({\bf n}-{\bf e}_1)+a_2 {\mathcal F}({\bf n}-{\bf e}_2)+\cdots+
a_d {\mathcal F}({\bf n}-{\bf e}_d)\mbox{ for }{\bf n} \gg {\bf 0}.$$
If the above equation holds for $a_i \in I_i$ for $i=1,2,\ldots,d$ then we say that $(a_1,a_2,\ldots, a_d)$ constitutes a {\em joint reduction } of $\mathcal F.$  If the above equation holds for all $\bf n \geq e$ then we say that the filtration $\mathcal F$ has {\em joint reduction number zero } with respect to the joint reduction $(a_1,a_2,\ldots,a_d).$
If in addition, for each proper subset $A $ of $\{1,2,\ldots,d\}$ and  ${\bf n}\geq 
\sum_{i \in A}{\bf e}_i$
$$(a_i : i \in A)\cap \mathcal F({\bf n})=\sum_{ i \in A}  a_i\mathcal F({\bf n}-{\bf e}_i)$$
then we say that $(a_1,a_2,\ldots, a_d)$  is a {\em good joint reduction of $\mathcal F.$}
We say that $(x_{ij})$ is a {\em strict complete reduction} of 
$\mathcal F$ if for all ${\bf n }\geq j{\bf e}$ and for $j=1,2,\ldots,d-1$ we have
$$(y_1,y_2,\ldots,y_j)\cap \mathcal F({\bf n})=(y_1,y_2,\ldots,y_j) \mathcal F({\bf n-e}).$$
 One of the main observations in this paper is that joint  reductions can be studied using local cohomology  with support in the 
 ideal  $\mathcal I=(a_1t_1,a_2t_2,\ldots,a_dt_d)$ of the extended Rees algebra $\mathcal R'(\mathcal F).$ Our main result in section $2$ is the following finiteness theorem:

\begin{theorem} [\bf Finiteness Theorem] Let $(R,\m)$ be a Cohen-Macaulay local ring of dimension $d.$
Let $\mathcal F$ be an ${\bf I}-$admissible filtration of $\m$-primary ideals and $M=\mathcal R^\prime(\mathcal F)$. Then 
\begin{enumerate}
\item  $[H^i_{\mathcal I}(M)]_{\bf n}=0 \mbox{ for all }{\bf n} \gg {\bf 0}$ and for all $i \geq 0.$ 

\item If $(y_1,y_2,\ldots,y_d)$ is a strict complete reduction of $\mathcal F$ then
$\lambda_R\left([H^d_{\mathcal I}(M)]_{\bf n}\right) < \infty$ for all $\bf n \geq 0.$
\end{enumerate}
\end{theorem}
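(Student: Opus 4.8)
The plan is to compute $H^{\bullet}_{\mathcal I}(M)$ from the \v{C}ech complex of $M$ on the $d$ generators $g_i := x_{ii}t_i$ of $\mathcal I$, so that $H^{j}_{\mathcal I}(M)=H^{j}(\mathbf C^{\bullet}(g_1,\dots,g_d;M))$ with $\mathbf C^{j}=\bigoplus_{|S|=j}M_{g_S}$, $g_S=\prod_{i\in S}g_i$, and in particular $H^{j}_{\mathcal I}(M)=0$ for $j>d$ at once; one then analyses each $\ZZ^{d}$-graded strand $[H^{j}_{\mathcal I}(M)]_{\mathbf n}$ as an $R$-module. One remark serves both parts: since the $I_i$ are $\m$-primary, for each prime $\p\ne\m$ of $R$ one has $M_{\p}\cong R_{\p}[t_1^{\pm1},\dots,t_d^{\pm1}]$ with every $x_{ii}$ a unit, so $(H^{j}_{\mathcal I}(M))_{\p}=0$; thus every strand of every $H^{j}_{\mathcal I}(M)$ is supported only at $\m$, and for such a strand ``finite length over $R$'' is the same as ``finitely generated over $R$''.

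For part (1): the diagonal $(x_{11},\dots,x_{dd})$ of the complete reduction is a joint reduction of $\F$, so $[M/\mathcal I M]_{\mathbf n}=\F(\mathbf n)/\sum_i x_{ii}\F(\mathbf n-\mathbf e_i)=0$ for $\mathbf n\gg\mathbf 0$; iterating this identity (and using $\mathbf I$-admissibility to keep the bounds uniform) shows that $M$ and $\mathcal I M$ have the same strands in all large multidegrees, whence $[H^{j}_{\mathcal I}(M)]_{\mathbf n}=0$ for $\mathbf n\gg\mathbf 0$ and all $j\ge 0$ by the standard vanishing argument for local cohomology of Rees-type algebras (equivalently, by inspecting $\mathbf C^{\bullet}$ directly in high multidegree, where $[M_{g_S}]_{\mathbf n}$ is assembled from the ideals $\F(\mathbf n+k\sum_{i\in S}\mathbf e_i)$).

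Part (2) is the substantive assertion, and here the strict complete reduction enters. Fix $\mathbf n\ge\mathbf 0$; by the first paragraph it suffices to prove $[H^{d}_{\mathcal I}(M)]_{\mathbf n}$ has finite length. The top \v{C}ech cohomology presents it as
$$[H^{d}_{\mathcal I}(M)]_{\mathbf n}=\varinjlim_{k}\ \F(\mathbf n+k\mathbf e)\Big/\textstyle\sum_{i=1}^{d}x_{ii}^{k}\,\F\big(\mathbf n+k(\mathbf e-\mathbf e_i)\big),$$
with transition maps given by multiplication by $x_{11}\cdots x_{dd}$. Each term of this system has finite length (localise at $\p\ne\m$ as above), and the direct limit of a direct system of $R$-modules of bounded finite length is again of finite length (the images in the limit form an ascending chain of submodules of bounded length, hence stabilise); so part (2) is equivalent to the single uniform estimate that $\lambda_R\!\big(\F(\mathbf n+k\mathbf e)/\sum_i x_{ii}^{k}\F(\mathbf n+k(\mathbf e-\mathbf e_i))\big)$ is bounded independently of $k$. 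To obtain it one rewrites $\F(\mathbf n+k\mathbf e)$ via the complete-reduction identity $\F(\mathbf m+\mathbf e)=(y_1,\dots,y_d)\F(\mathbf m)$, iterated down to a fixed multidegree $\gg\mathbf 0$, getting a sum of terms $y^{\alpha}\F(\cdot)$ with $y^{\alpha}=y_1^{\alpha_1}\cdots y_d^{\alpha_d}$; the ``pure'' terms $y_i^{k}\F(\cdot)$ are absorbed into $x_{ii}^{k}\F(\mathbf n+k(\mathbf e-\mathbf e_i))$ because $y_i=x_{ii}\prod_{\ell\ne i}x_{\ell i}$ and $\prod_{\ell\ne i}x_{\ell i}^{k}\in\mathbf I^{k(\mathbf e-\mathbf e_i)}\subseteq\F(k(\mathbf e-\mathbf e_i))$, and the remaining ``mixed'' terms must then be peeled apart and re-expressed with controlled denominators. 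It is precisely here — and nowhere in the complete-reduction identity alone — that the strict intersection relations $(y_1,\dots,y_j)\cap\F(\mathbf m)=(y_1,\dots,y_j)\F(\mathbf m-\mathbf e)$ for $\mathbf m\ge j\mathbf e$, $1\le j\le d-1$, are invoked: they let one commute the columns $y_d,y_{d-1},\dots$ of the complete reduction past one another and past the diagonal powers $x_{\ell\ell}^{k}$ being divided out, by a descending induction on $j$, so that what survives is a subquotient of one fixed finitely generated $R$-module.

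I expect this last step — the uniform length bound via the strict relations, with the bookkeeping of denominators arranged so that the error terms stay bounded as $k\to\infty$ — to be the main obstacle; the rest is formal. The case $d=1$, where there is no strictness hypothesis, $y_1=x_{11}$, $\F(n+1)=x_{11}\F(n)$ for $n\gg0$, and the displayed direct system already stabilises, is immediate and is the natural anchor if one prefers to organise the estimate as an induction on $d$; in that case the inductive step would be run by dividing out one strict-complete-reduction element and transporting the conclusion along the long exact sequence of $\mathcal I$-local cohomology (using $H^{d+1}_{\mathcal I}=0$ to get surjectivity of the divided element on $H^{d}_{\mathcal I}(M)$ and controlling the kernel by $H^{d-1}_{\mathcal I}$ of the quotient).
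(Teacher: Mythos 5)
Your formal reductions are sound --- the support-at-$\m$ observation, the \v{C}ech description of $[H^d_{\mathcal I}(M)]_{\bf n}$ as a direct limit, and the fact that a direct system of modules of uniformly bounded length has a limit of finite length are all correct --- but the proposal has two gaps, one of which is the heart of the theorem. For part (1), the implication you invoke, namely that $[M]_{\bf n}=[\mathcal I M]_{\bf n}$ for ${\bf n}\gg{\bf 0}$ forces $[H^i_{\mathcal I}(M)]_{\bf n}=0$ for ${\bf n}\gg{\bf 0}$ and \emph{all} $i$, is not a standard vanishing statement in this setting: $\mathcal I$ is not the irrelevant ideal, its generators sit in the distinct degrees ${\bf e}_1,\dots,{\bf e}_d$, and $M$ is $\ZZ^d$-graded rather than $\NN^d$-graded. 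Your direct inspection of the \v{C}ech complex does work for the top cohomology (iterate the joint-reduction identity and use pigeonhole to absorb $\F({\bf n}+k{\bf e})$ into $\sum_i a_i^k\F({\bf n}+k{\bf e}-k{\bf e}_i)$), but for $0<i<d$ controlling cycles modulo boundaries in high multidegree is precisely the nontrivial point. The paper instead imports the known vanishing $[H^i_{\mathcal R({\bf I})_{++}}(M)]_{\bf n}=0$ for ${\bf n}\gg{\bf 0}$ from Jayanthan--Verma and then passes to the larger ideal $(\mathcal R({\bf I})_{++},a_1t_1,\dots,a_dt_d)$, which has the same radical as $\mathcal I$, via the elementary exact sequence $0\to H^1_{(f)}(H^{i-1}_q(E))\to H^i_{(q,f)}(E)\to H^0_{(f)}(H^{i}_q(E))\to 0$, which preserves vanishing in degrees $\geq{\bf m}$ when $\degree f\in\NN^d$. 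Some input of this kind is needed.

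For part (2), your reduction to a bound on $\lm_R\bigl(\F({\bf n}+k{\bf e})/\sum_i a_i^k\F({\bf n}+k({\bf e}-{\bf e}_i))\bigr)$ uniform in $k$ is legitimate, but you then stop exactly where the theorem begins: the ``peeling apart of mixed terms with controlled denominators'' is not carried out, and you yourself flag it as the main obstacle. As written this is a restatement of the claim, not a proof, and there is no evidence the combinatorial bookkeeping closes. The paper avoids it entirely by a homological double induction: with $\theta_j=y_jt_1\cdots t_d$ and $N_j=M/(\theta_1,\dots,\theta_j)M$, strictness forces the kernel $B^{(j-1)}$ of $\theta_j\colon N_{j-1}(-{\bf e})\to N_{j-1}$ to vanish in degrees $\geq j{\bf e}$, giving $[H^{d-j+1}_{\mathcal I}(N_{j-1})]_{{\bf n}-{\bf e}}\cong[H^{d-j+1}_{\mathcal I}(K^{(j-1)})]_{\bf n}$ there; combining this with the exact sequence $[H^{d-j}_{\mathcal I}(N_j)]_{\bf n}\to[H^{d-j+1}_{\mathcal I}(K^{(j-1)})]_{\bf n}\to[H^{d-j+1}_{\mathcal I}(N_{j-1})]_{\bf n}$ and a descending induction on $t$ along ${\bf m}+t{\bf e}$, anchored at the part-(1) vanishing, yields finiteness of $[H^{d-j}_{\mathcal I}(N_j)]_{\bf n}$ for ${\bf n}\geq j{\bf e}$ by descending induction on $j$; finally $0\to M(-{\bf e})\xrightarrow{\theta_1}M\to N_1\to 0$ delivers the theorem. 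Your closing remark about dividing out one element of the strict complete reduction and transporting finiteness along the long exact sequence of $\mathcal I$-local cohomology is in fact the paper's route; that is the version you should develop, rather than the direct length estimate.
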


In view of the finiteness theorem, it is natural to ask under what conditions, strict complete reductions and good joint reductions exist. Rees proved their existence for the 
integral closure filtration $\{\overline{I^rJ^s}\}$ of $\m$-primary ideals $I,J$
in any $2$-dimensional analytically unramified Cohen-Macaulay local ring with infinite residue field. We show their existence in dimension $3$ under additional hypothesis.
For an $\bf I$-admissible filtration $\mathcal F$ we set 
$$G_i(\mathcal F)=\bigoplus_{\bf n \in \mathbb N^s} \frac{\mathcal F(\bf n)}{\mathcal F({\bf n}+{\bf e}_i)}\;\;\mbox{ for all } i=1,2,\ldots,s.$$

For any $\ZZ^s$ or $\NN^s$-graded algebra $R$ we write $R_{++}$ for the ideal
generated by all elements of degree at least ${\bf e}.$
\begin{theorem} Let $(R,\m)$ be a three dimensional Cohen-Macaulay local ring
with infinite residue field and ${\bf I}=(I_1,I_2,I_3)$ for $\m$-primary ideals 
$I_1,I_2,I_3$ in $R$. Suppose
$\mathcal F$ is an $\bf I$-admissible filtration. Suppose that $G_i(\mathcal F)_{++}$ 
has positive depth and $[H^1_{G_i(\mathcal F)_{++}}(G_i(\mathcal F)]_{\bf n}=0$
for all $i=1,2,3$ and all ${\bf n}+{\bf e}_i \geq 0.$ Then there exists a good complete reduction
$(x_{ij})$ of $\mathcal F$ such that $(x_{i,\sigma(i)} : i=1,2,3)$ is a good joint reduction of $\mathcal F$ and for any proper subset $A$ of $\{1,2,3\}$ and $i=1,2,3$
$$ (x_{ij}: j \in A)\cap \mathcal F({\bf n})= (x_{ij}: j \in A) \mathcal F({\bf n}-{\bf e}_i)\;\; 
\mbox{ for all } {\bf n}-{\bf e}_i \geq 0.$$

\end{theorem}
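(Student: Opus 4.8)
The plan is to construct the matrix $(x_{ij})$ one entry at a time by a prime avoidance argument, and then to deduce the stated equalities from the fact that the initial forms of the chosen entries form maximal superficial sequences for $\mathcal F$. Since the residue field is infinite, for each fixed $i$ one may choose $x_{i1},x_{i2},x_{i3}\in I_i$ successively so that $x_{i1},x_{i2},x_{i3}$ generate a minimal reduction of $I_i$ and, at each stage, the initial form of the new entry (which lives in multidegree $\mathbf e_i$ of $G_i(\mathcal F)$) avoids the finitely many relevant associated primes of the appropriate homogeneous quotient of $G_i(\mathcal F)$. The hypothesis that $G_i(\mathcal F)_{++}$ has positive depth is exactly what makes this effective: it forbids a nonzero submodule of $G_i(\mathcal F)$ annihilated by a power of the irrelevant ideal, so a generic initial form is a genuine nonzerodivisor on $G_i(\mathcal F)$, and on the quotients that arise, rather than merely a filter-regular element. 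The prime-avoidance data being symmetric in the columns, one arranges simultaneously that every transversal $(x_{1\sigma(1)},x_{2\sigma(2)},x_{3\sigma(3)})$ is generic and that the products $y_j=x_{1j}x_{2j}x_{3j}$ are generic for the parallel construction along the diagonal direction $\mathbf e$ on the associated graded ring $\bigoplus_{\mathbf n}\mathcal F(\mathbf n)/\mathcal F(\mathbf n+\mathbf e)$, whose positive depth and low-degree local cohomology are governed by those of the $G_i(\mathcal F)$.

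The heart of the argument is to convert regularity of initial forms into intersection equalities, and this is where the vanishing hypothesis does its work. If $x_{ij}t_i$ is a nonzerodivisor on $G_i(\mathcal F)$, a homogeneous degree-count gives $(x_{ij})\cap\mathcal F(\mathbf n)=x_{ij}\mathcal F(\mathbf n-\mathbf e_i)$ for $\mathbf n\gg\mathbf 0$; the role of $[H^1_{G_i(\mathcal F)_{++}}(G_i(\mathcal F))]_{\mathbf n}=0$ for $\mathbf n+\mathbf e_i\geq\mathbf 0$ is that the obstruction to this equality in a fixed multidegree is a subquotient of the degree-$\mathbf n$ component of $H^1_{G_i(\mathcal F)_{++}}(G_i(\mathcal F))$, which forces the equality for every $\mathbf n$ with $\mathbf n-\mathbf e_i\geq\mathbf 0$. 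The equality for a proper subset $A\subsetneq\{1,2,3\}$ of the $i$-th row follows from the singleton case by the usual colon manipulation modulo $x_{ij}$ for one $j\in A$: using the degree-by-degree singleton equality one identifies $G_i$ of the quotient filtration $\mathcal F/(x_{ij})$ with $G_i(\mathcal F)/(x_{ij}t_i)$, which again has positive depth and the requisite low-degree $H^1$-vanishing because $x_{ij}t_i$ was chosen regular, so the induction on $|A|$ closes. Running this over all proper $A$ produces the last displayed family of equalities, and running the same argument in the $\mathbf e$-direction with the $y_j$ yields the complete-reduction identity $\mathcal F(\mathbf n+\mathbf e)=(y_1,y_2,y_3)\mathcal F(\mathbf n)$ for $\mathbf n\gg\mathbf 0$ together with the good-complete-reduction equalities $(y_j:j\in A)\cap\mathcal F(\mathbf n)=(y_j:j\in A)\mathcal F(\mathbf n-\mathbf e)$ for $\mathbf n\geq|A|\mathbf e$.

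It remains to extract the good joint reduction from the good complete reduction. Here the key observation is that, since each row generates a minimal reduction of its ideal and $x_{2j}x_{3j}\in\mathcal F(\mathbf e_2+\mathbf e_3)$, the complete-reduction identity gives $\mathcal F(\mathbf n+\mathbf e)\subseteq(x_{11},x_{12},x_{13})\mathcal F(\mathbf n+\mathbf e_2+\mathbf e_3)$, and symmetrically in the other two directions, so the full $i$-th row reduces $\mathcal F$ in the $\mathbf e_i$-direction with reduction number zero; combining the three directions, tracking multidegrees, and using the row-equalities of the previous step — or, alternatively, feeding the strict complete reduction $(x_{ij})$ into the Finiteness Theorem — one cuts the statement down to the transversal $(x_{1\sigma(1)},x_{2\sigma(2)},x_{3\sigma(3)})$ and verifies the defining equalities of a good joint reduction. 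The main obstacle is the second step: forcing the intersection equalities to hold in every admissible multidegree, and not merely asymptotically, is precisely what requires the sharp multigraded vanishing of $H^1_{G_i(\mathcal F)_{++}}(G_i(\mathcal F))$, and the delicate part is to organise the nested inductions — over the rows $i$, over the subsets $A$, and over the dimension-dropping steps — so that each quotient filtration that arises still satisfies the positive-depth and $H^1$-vanishing hypotheses needed to keep the machine running.
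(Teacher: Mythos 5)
Your overall strategy---prime avoidance to choose entries whose initial forms are regular on the $G_i(\F)$, upgrading asymptotic intersection equalities to all admissible multidegrees via the local cohomology hypotheses, and passing to quotient filtrations---is indeed the strategy of the paper's proof (Theorem \ref{existence of good joint reductions}). But there is a genuine gap at the step you compress into ``one cuts the statement down to the transversal \dots and verifies the defining equalities of a good joint reduction.'' The good joint reduction property for a transversal $(x_{1\sigma(1)},x_{2\sigma(2)},x_{3\sigma(3)})$ requires the \emph{mixed-row} equalities, e.g.\ $(x_{1\sigma(1)},x_{2\sigma(2)})\cap\F({\bf n})=x_{1\sigma(1)}\F({\bf n}-{\bf e}_1)+x_{2\sigma(2)}\F({\bf n}-{\bf e}_2)$, in which the two elements lie in different ideals and are divided out in different coordinate directions. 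These do not follow from the same-row equalities you establish, nor from the diagonal ${\bf e}$-direction equalities for the $y_j$, nor from the Finiteness Theorem (which controls lengths of local cohomology, not such intersections). In the paper these mixed equalities occupy the bulk of the proof: the second-column entry $x_2\in I_1$ must be chosen to avoid associated primes of modules over the quotient rings $S_2=R/(y_1)$, $S_3=R/(z_1)$ and $S=R/(x_1y_1z_1)$---i.e.\ quotients by first-column entries of the \emph{other} rows---and the equality $(y_1,x_2)\cap\F({\bf n})=y_1\F({\bf n}-{\bf e}_2)+x_2\F({\bf n}-{\bf e}_1)$ is then proved by element-chasing using the degreewise vanishing of $(0:_{G_i(\F S_2)}(x_2^\prime)^*)$. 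Your row-by-row construction with genericity ``symmetric in the columns'' does not set up this cross-row genericity. Moreover the good-complete-reduction equalities $(a,b)\cap\F({\bf n})=(a,b)\F({\bf n}-{\bf e})$ for the column products are themselves deduced in the paper \emph{from} the mixed-row pairs, so they cannot be obtained independently ``in the ${\bf e}$-direction'' and then used to recover the transversal statements.

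A secondary misattribution of hypotheses: you invoke the $H^1$-vanishing to force the singleton equality $(x_{ij})\cap\F({\bf n})=x_{ij}\F({\bf n}-{\bf e}_i)$ in every admissible degree, and you assert that $G_i(\F)/(x_{ij}t_i)$ ``again has positive depth \dots because $x_{ij}t_i$ was chosen regular.'' In fact the singleton upgrade needs only $H^0_{G_j(\F)_{++}}(G_j(\F))=0$ for all $j$ together with the asymptotic equality from Rees' Lemma (this is Lemma \ref{lemma4}), whereas the vanishing $[H^1_{G_i(\F)_{++}}(G_i(\F))]_{\bf n}=0$ for ${\bf n}+{\bf e}_i\geq 0$ is used precisely to get the positive depth, in the relevant degrees, of the quotients $G_i(\F S_j)$, via the long exact sequence attached to $0\to G_i(\F)(-{\bf e}_j)\to G_i(\F)\to G_i(\F)/(x^*)\to 0$; regularity of the element alone says nothing about the depth of the quotient. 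Your further claim that the quotient retains the low-degree $H^1$-vanishing would require an $H^2$ hypothesis that is not available (though for $d=3$ it is also not needed, since proper subsets of $\{1,2,3\}$ have at most two elements, so only one quotient step is taken).
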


\noindent
The principal application of  good joint reductions is a formula for the length of 
$[H^3_{\mathcal I}(\overline{\mathcal R'}(\mathcal F))]_{(0,0,0)} .$ In order to state the next theorem proved in this paper, we recall the notion of the normal Hilbert function and the normal Hilbert polynomial.
For an $\m$-primary ideal $I$ in an analytically unramified local ring $(R,\m)$ of dimension $d$, 
the {\em normal Hilbert function of $I$} is the function $\overline{H}_I(n)=
\lambda(R/\overline{I^n}).$  There exists a polynomial
$\overline{P}_I(x)$ of degree $d$ such that  $\overline{P}_I(n)=\overline{H}_I(n)$ for all large $n.$ We write this polynomial in the form
$$\overline{P}_I(x)=\ov{e}_0(I)\binom{x+d-1}{d}-\ov{e}_1(I)\binom{x+d-2}{d-1}+\cdots+
(-1)^d\ov{e}_d(I).$$
The polynomial $\ov{P}_I(x)$ is called the {\em normal Hilbert polynomial of $I.$} 
Let $\ov{\mathcal R^\prime}:=\ov{\R^\prime}(I,J,K)$.

\begin{theorem} \label{main} Let $(R,\m)$ be a $3$-dimensional Cohen-Macaulay analytically unramified local ring. 
Assume that a good complete reduction of the filtration $\{\ov{I^rJ^sK^t}\}$ exists. 
Let $(a,b,c)$ be a good joint reduction of the filtration $\{\ov{I^rJ^sK^t}\}.$ Then
$$\lm_R([H^3_{(at_1,bt_2,ct_3)}(\ov{\mathcal R^\prime})]_{(0,0,0)})=
\ov{e}_3(IJK)-[\ov{e}_3(IJ)+\ov{e}_3(IK)+\ov{e}_3(JK)]+\ov{e}_3(I)+\ov{e}_3(J)+\ov{e}_3(K).$$
\end{theorem}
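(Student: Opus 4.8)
The plan is to identify $[H^3_{\mathcal I}(\ov{\R'})]_{(0,0,0)}$ with a Koszul-type cohomology computed from the joint reduction $(a,b,c)$, and then to read off the alternating sum of the $\ov{e}_3$'s by a Čech/telescoping argument that peels off one ideal at a time. First I would use the fact that $\mathcal I=(at_1,bt_2,ct_3)$ is generated by three elements, so $H^3_{\mathcal I}(\ov{\R'})$ is the top cohomology of the Čech complex on $at_1,bt_2,ct_3$; equivalently, it is the cokernel of the last map in that complex, and its $(0,0,0)$-component is the cokernel of a map whose source and target are localizations of $\ov{\R'}$ in various mixed degrees. Because $(a,b,c)$ is a \emph{good} joint reduction (so all the partial intersection conditions $(x_{ij}:j\in A)\cap\mathcal F({\bf n})=\sum_{j\in A}x_{ij}\mathcal F({\bf n}-{\bf e}_j)$ hold for \emph{all} ${\bf n}-{\bf e}_j\ge 0$, not just large ${\bf n}$), the Čech complex restricted to degree $(0,0,0)$ collapses to a complex of finite-length $R$-modules whose homology I can control; this is exactly where the earlier Finiteness Theorem (part (2)) guarantees $\lambda_R([H^3_{\mathcal I}(\ov{\R'})]_{(0,0,0)})<\infty$, so the quantity on the left is well-defined.

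Next I would run an Euler-characteristic computation. Writing $\chi$ for the alternating sum of lengths of the graded pieces $[H^i_{\mathcal I}(\ov{\R'})]_{(0,0,0)}$, part (1) of the Finiteness Theorem kills the ``large ${\bf n}$'' tails, and the good-joint-reduction hypotheses force $[H^i_{\mathcal I}(\ov{\R'})]_{(0,0,0)}=0$ for $i<3$ (the partial intersection conditions are precisely the vanishing of the lower Čech cohomologies in degree zero). Hence $\lambda_R([H^3_{\mathcal I}(\ov{\R'})]_{(0,0,0)})=\chi$, and $\chi$ is the alternating sum
$$\chi=\sum_{i=0}^{3}(-1)^{3-i}\lambda_R\!\big([C^i]_{(0,0,0)}\big)$$
of lengths of Čech pieces. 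Each Čech piece $[C^i]_{(0,0,0)}$, after inverting the appropriate subset of $\{at_1,bt_2,ct_3\}$ and extracting degree $(0,0,0)$, is a finite direct sum of modules of the form $R/\ov{I^rJ^sK^t}$ for suitable exponents; summing the lengths of such modules over the relevant range produces, by definition of the normal Hilbert polynomial $\ov P_I$ and its multigraded analogue, the values $\ov e_3(IJK)$, $\ov e_3(IJ)$, etc., with signs matching the inclusion–exclusion pattern. Concretely, inverting $at_1$ corresponds to passing to the filtration generated by $J,K$ (killing the $I$-direction), inverting $at_1$ and $bt_2$ corresponds to the $K$-adic normal filtration, and so on; the constant terms of the resulting Hilbert polynomials are the $\ov e_3$'s.

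The main obstacle will be making the last identification rigorous: one must show that, after localizing at a subset $S\subseteq\{at_1,bt_2,ct_3\}$ and taking the degree-$(0,0,0)$ strand, the resulting $R$-module is genuinely $\bigoplus R/\ov{(\cdot)}$ with the exponents ranging exactly so that the length sums telescope into normal multiplicities — this requires the joint-reduction equation $\ov{I^rJ^sK^t}=a\,\ov{I^{r-1}J^sK^t}+b\,\ov{I^rJ^{s-1}K^t}+c\,\ov{I^rJ^sK^{t-1}}$ for all ${\bf n}\ge{\bf e}$ together with the analytically unramified hypothesis (so that all the normal filtrations are admissible and their Hilbert functions are eventually polynomial). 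A secondary technical point is bookkeeping the shifts: the $(0,0,0)$-strand of the localization $\ov{\R'}_{at_1}$ lives in degrees $(-k,*,*)$ of $\ov{\R'}$, and one must check these contributions assemble correctly into $\ov P$-values rather than into lower-order terms. I expect that once the degree-zero Čech complex is written out explicitly using the good-joint-reduction relations, the computation is a finite inclusion–exclusion over three ideals and the stated formula drops out; the conceptual content is entirely in the vanishing statements supplied by the Finiteness Theorem and in the good-joint-reduction definition, both of which are available to us.
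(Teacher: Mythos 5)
Your opening step is the same as the paper's: the degree-$(0,0,0)$ component of $H^3_{(at_1,bt_2,ct_3)}(\ov{\mathcal R^\prime})$ is the direct limit of the quotients $\ov{I^kJ^kK^k}/(a^k\ov{J^kK^k}+b^k\ov{I^kK^k}+c^k\ov{I^kJ^k})$, with the good joint reduction forcing the transition maps to be injective. After that, however, your argument has two genuine gaps. First, the Euler-characteristic computation you propose is carried out on the wrong complex. The degree-$(0,0,0)$ strands of the \v{C}ech pieces are localizations such as $[\ov{\mathcal R^\prime}_{at_1}]_{(0,0,0)}=\varinjlim_k(\ov{I^k}\xrightarrow{\,a\,}\ov{I^{k+1}})$; these are not finite-length modules and are not direct sums of quotients $R/\ov{I^rJ^sK^t}$, so the alternating sum of their lengths is not defined. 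Relatedly, your claim that the good-joint-reduction conditions force $[H^i_{\mathcal I}(\ov{\mathcal R^\prime})]_{(0,0,0)}=0$ for $i<3$ is asserted without proof and is neither established nor used in the paper. What the paper actually does is introduce, for each finite $(r,s,t)$, a multigraded Kirby--Mehran complex whose terms genuinely are $R/\ov{I^r}$, $R/\ov{I^rJ^s}$, $R/\ov{I^rJ^sK^t}$, etc.; it computes all three homology modules explicitly ($H_2=0$, while $H_0$ and $H_1$ are nonzero in general) and extracts from the Euler characteristic an exact length identity at each finite level.

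Second, and more seriously, even after the finite-level inclusion--exclusion one obtains (Theorem \ref{theorem1}) that $S_{(a,b,c)}(r,s,t)=\lm_R\bigl(\ov{I^rJ^sK^t}/(a^r\ov{J^sK^t}+b^s\ov{I^rK^t}+c^t\ov{I^rJ^s})\bigr)$ is, for $r,s,t\gg 0$, a polynomial of total degree at most one whose \emph{constant term} is the desired alternating sum of $\ov{e}_3$'s but whose linear coefficients are differences of degree-two mixed normal Hilbert coefficients that do not vanish for free. Your proposal never explains why these linear terms disappear. This is precisely where the hypothesis that a good complete reduction exists enters: its diagonal gives a particular good joint reduction $(x,y,z)$ to which the Finiteness Theorem applies, so $[H^3_{(xt_1,yt_2,zt_3)}(\ov{\mathcal R^\prime})]_{(0,0,0)}$ has finite length, the direct limit stabilizes, and $S_{(x,y,z)}(k,k,k)$ is eventually constant; since each linear coefficient is nonnegative (each one is the leading coefficient of an eventually polynomial, nonnegative length function in one variable) and their sum is zero, each must vanish. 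Only then does $S_{(a,b,c)}(r,s,t)$ reduce to the constant term for \emph{every} good joint reduction, and combining this with the injectivity of the transition maps identifies the direct limit with a single finite-level quotient. Without this stabilization argument your formula would carry spurious linear terms, so the proof as proposed does not close.
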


\noindent
As a consequence of this theorem, we are able to generalise Rees' Theorem \cite{rees} in dimension three. 
In dimension three we say that the {\it normal joint reduction number of $I,J,K$ is zero} with respect to $(a,b,c)$ if 
for all $r,s,t>0$
$$\ov{I^rJ^sK^t}=a\ov{I^{r-1}J^sK^t}+b\ov{I^rJ^{s-1}K^t}+c\ov{I^rJ^sK^{t-1}}.$$

\begin{theorem} Let the assumptions be as in Theorem \ref{main}.
Then following statements are equivalent:
\begin{enumerate}
\item $[H^3_{(at_1,bt_2,ct_3)}(\ov{\mathcal R^\prime})]_{(0,0,0)}=0,$
 \item The normal joint reduction number of $I,J,K$ is zero with respect to $(a,b,c),$
\item $\ov{e}_3(IJK)-[\ov{e}_3(IJ)+\ov{e}_3(IK)+\ov{e}_3(JK)]+\ov{e}_3(I)+\ov{e}_3(J)+\ov{e}_3(K)=0.$
\end{enumerate}
\end{theorem}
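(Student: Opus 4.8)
The equivalence of $(1)$ and $(3)$ is immediate from Theorem \ref{main}: the integer on the right side of the formula there is the length of the module $[H^3_{(at_1,bt_2,ct_3)}(\ov{\mathcal R^\prime})]_{(0,0,0)}$, and a module is zero exactly when its length is zero. So the plan is to prove $(1)\Longleftrightarrow(2)$, which together with $(1)\Longleftrightarrow(3)$ gives all the equivalences.

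Write $S=\ov{\mathcal R^\prime}$, $\mathcal F({\bf n})=\ov{I^{n_1}J^{n_2}K^{n_3}}$, and $f_1=at_1,\ f_2=bt_2,\ f_3=ct_3$, which are homogeneous of multidegrees ${\bf e}_1,{\bf e}_2,{\bf e}_3$. First I would show that $f_1,f_2,f_3$ is a regular sequence on $S$ in all multidegrees ${\bf n}\ge{\bf e}$: $f_1$ is a nonzerodivisor on $S$ because $a$ is a nonzerodivisor on $R$, and the statements that $f_2$ is a nonzerodivisor on $S/f_1S$ and that $f_3$ is a nonzerodivisor on $S/(f_1,f_2)S$ in these degrees follow by combining the good joint reduction identities $(a)\cap\mathcal F({\bf n})=a\mathcal F({\bf n}-{\bf e}_1)$ and $(a,b)\cap\mathcal F({\bf n})=a\mathcal F({\bf n}-{\bf e}_1)+b\mathcal F({\bf n}-{\bf e}_2)$ with the fact that $a,b,c$ is a regular sequence in $R$ --- it is a system of parameters of the Cohen--Macaulay ring $R$, since $(a,b,c)$ is $\m$-primary.

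Next I would read off the relevant graded piece from the \v{C}ech complex on $f_1,f_2,f_3$. Since $f_1f_2f_3$ has multidegree ${\bf e}$, this gives
\begin{equation*}
[H^3_{(at_1,bt_2,ct_3)}(S)]_{(0,0,0)}=\varinjlim_k\,[S/(f_1^k,f_2^k,f_3^k)S]_{k{\bf e}}=\varinjlim_k\,\frac{\ov{I^kJ^kK^k}}{a^k\ov{J^kK^k}+b^k\ov{I^kK^k}+c^k\ov{I^kJ^k}},
\end{equation*}
with transition maps given by multiplication by $abc$. By the first step these transition maps are injective, so the colimit is zero if and only if $[S/(f_1^k,f_2^k,f_3^k)S]_{k{\bf e}}=0$ for all $k$. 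Again by the first step, $S/(f_1^k,f_2^k,f_3^k)S$ carries a finite filtration by $S$-submodules whose successive quotients are copies of $S/(at_1,bt_2,ct_3)S$ shifted by the multidegrees $\alpha{\bf e}_1+\beta{\bf e}_2+\gamma{\bf e}_3$ with $0\le\alpha,\beta,\gamma\le k-1$; evaluating in multidegree $k{\bf e}$, these quotients become the modules
\begin{equation*}
[S/(at_1,bt_2,ct_3)S]_{(i,j,l)}=\ov{I^iJ^jK^l}\big/\bigl(a\,\ov{I^{i-1}J^jK^l}+b\,\ov{I^iJ^{j-1}K^l}+c\,\ov{I^iJ^jK^{l-1}}\bigr),\qquad 1\le i,j,l\le k.
\end{equation*}
Hence $[S/(f_1^k,f_2^k,f_3^k)S]_{k{\bf e}}=0$ for every $k$ if and only if $[S/(at_1,bt_2,ct_3)S]_{\bf n}=0$ for every ${\bf n}\ge{\bf e}$, which is precisely the assertion that $\ov{I^rJ^sK^t}=a\,\ov{I^{r-1}J^sK^t}+b\,\ov{I^rJ^{s-1}K^t}+c\,\ov{I^rJ^sK^{t-1}}$ for all $r,s,t>0$, i.e.\ statement $(2)$. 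This completes $(1)\Longleftrightarrow(2)$.

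The main obstacle is the first step --- showing that $at_1,bt_2,ct_3$ behaves as a regular sequence on the normal extended multi-Rees algebra $\ov{\mathcal R^\prime}$ in multidegrees $\ge{\bf e}$. This is exactly where the full strength of the \emph{good} joint reduction conditions is used; it supports both the injectivity of the transition maps in the colimit and the existence of the filtration of $S/(f_1^k,f_2^k,f_3^k)S$ with quotients $[S/(at_1,bt_2,ct_3)S]_{(i,j,l)}$. Once that is in place, the remaining ingredients --- the \v{C}ech computation of the graded piece, the vanishing of the colimit being equivalent to the vanishing of each of its terms, and extracting the normal joint reduction number zero condition --- are routine.
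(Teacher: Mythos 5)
Your proposal is essentially sound and its skeleton coincides with the paper's: the equivalence $(1)\Leftrightarrow(3)$ is read off from the length formula of Theorem \ref{main} exactly as you say, and your identification of $[H^3_{(at_1,bt_2,ct_3)}(\ov{\mathcal R^\prime})]_{(0,0,0)}$ with $\varinjlim_k \ov{I^kJ^kK^k}/(a^k\ov{J^kK^k}+b^k\ov{I^kK^k}+c^k\ov{I^kJ^k})$, together with the injectivity of the transition maps (which forces the colimit to vanish iff every term vanishes), is precisely Theorem \ref{localcohomoly} of the paper, proved there using the good joint reduction identities just as you indicate. Where you genuinely diverge is in passing from ``the $k$-th power quotients vanish for all $k$'' to ``$\ov{I^rJ^sK^t}=a\ov{I^{r-1}J^sK^t}+b\ov{I^rJ^{s-1}K^t}+c\ov{I^rJ^sK^{t-1}}$ for all $r,s,t>0$'': you package this as a Koszul filtration of $S/(f_1^k,f_2^k,f_3^k)S$ with subquotients the shifted copies $[S/(at_1,bt_2,ct_3)S]_{(i,j,l)}$, whereas the paper runs two explicit inductions (a descending one in the proof of Theorem \ref{characterization of joint reduction number} to move from $r,s,t\gg 0$ down to $r,s,t\geq 1$, and an ascending one to convert the joint-reduction-number-zero condition into the ``powers'' form $a^r,b^s,c^t$), both resting on Lemma \ref{properties of good joint reduction}. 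Your version is more structural and arguably cleaner, but the filtration claim is exactly where the content lies and you assert it rather than prove it: since $at_1,bt_2,ct_3$ is only a regular sequence ``in multidegrees $\geq{\bf e}$,'' the isomorphism of each subquotient with a shift of $S/(at_1,bt_2,ct_3)S$ (needed for the implication $(1)\Rightarrow(2)$; for $(2)\Rightarrow(1)$ surjections suffice) must be checked degreewise, and doing so amounts to establishing statements of the form $(a^m,b^n)\cap\ov{I^rJ^sK^t}=a^m\ov{I^{r-m}J^sK^t}+b^n\ov{I^rJ^{s-n}K^t}$ --- i.e.\ the same computations as the paper's Lemma \ref{properties of good joint reduction}. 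So the two routes cost about the same; yours makes the homological mechanism visible, the paper's keeps everything at the level of elementary ideal manipulations. One further small point: your argument establishes $(1)\Leftrightarrow(2)$ without invoking Theorem \ref{main} at all, so only $(1)\Leftrightarrow(3)$ uses the hypothesis that a good complete reduction exists; this matches the paper, where that hypothesis enters only through the finiteness statement behind the length formula.
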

Finally we apply our criterion for joint reduction number zero to obtain a generalisation
of a theorem of M. A. Vitulli \cite{v} about normal monomial ideals.

\begin{theorem} Let $k$ be a field and $R=k[x,y,z],\m=(x,y,z).$ Suppose that $I,J,K$ are $\m$-primary monomial ideals of $R$ such that $I^rJ^sK^t$ is complete for all
$r+s+t \leq 2.$ Then $I^rJ^sK^t$ is complete for all $r,s,t \geq 0.$

\end{theorem}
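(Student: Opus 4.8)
The plan is to reduce to the local situation of the preceding theorems and then invoke the equivalence criterion to force the relevant local cohomology module to vanish. First, some harmless reductions. We may assume the residue field is infinite: if $k$ is finite, replace $R$ by $R\otimes_k k(u)=k(u)[x,y,z]$, a faithfully flat extension under which monomial ideals extend to monomial ideals with the same Newton polyhedron, so integral closure and the property of being complete are both preserved and reflected. We then localise at $\m$: the ring $R_\m$ is a three-dimensional regular, hence Cohen--Macaulay and analytically unramified, local ring with infinite residue field, and since the integral closure of a monomial ideal is again monomial and commutes with localisation, a monomial ideal of $k[x,y,z]$ is complete iff its extension to $R_\m$ is. Note also that applying the hypothesis with two exponents equal to zero shows $I$, $J$, $K$ are themselves complete, so each is generated by the monomials at the vertices of its Newton polyhedron.

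Next I would verify that $\mathcal F=\{\ov{I^rJ^sK^t}\}$ admits a good complete reduction by checking the hypotheses of the existence theorem in the monomial case. By construction $\ov{\mathcal R'}=\ov{\mathcal R'}(I,J,K)$ is the integral closure of $\mathcal R'(I,J,K)$ inside $R[t_1^{\pm1},t_2^{\pm1},t_3^{\pm1}]$; being a monomial subalgebra integrally closed in a normal ring it is a normal affine semigroup ring, hence Cohen--Macaulay by Hochster's theorem. Since $G_i(\mathcal F)$ is the quotient of $\ov{\mathcal R'}$ by the nonzerodivisor $t_i^{-1}$, each $G_i(\mathcal F)$ is Cohen--Macaulay, and a height count then gives that $G_i(\mathcal F)_{++}$ has positive depth and $[H^1_{G_i(\mathcal F)_{++}}(G_i(\mathcal F))]_{\bf n}=0$. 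Thus the existence theorem produces a good joint reduction $(a,b,c)$ of $\mathcal F$, and both Theorem \ref{main} and the equivalence theorem above apply.

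By that equivalence theorem it suffices to prove that the normal joint reduction number of $I,J,K$ is zero — equivalently, in its intrinsic form independent of the joint reduction,
$$\ov{e}_3(IJK)-[\ov{e}_3(IJ)+\ov{e}_3(IK)+\ov{e}_3(JK)]+\ov{e}_3(I)+\ov{e}_3(J)+\ov{e}_3(K)=0.$$
For monomial ideals $\ov{I^rJ^sK^t}$ is generated by the lattice points of the Minkowski sum $r\,NP(I)+s\,NP(J)+t\,NP(K)$, and by Theorem \ref{main} the displayed alternating sum equals $\lm_R\!\left([H^3_{(at_1,bt_2,ct_3)}(\ov{\mathcal R'})]_{(0,0,0)}\right)$, which for monomial ideals counts the lattice points obstructing $\ov{I^rJ^sK^t}=a\,\ov{I^{r-1}J^sK^t}+b\,\ov{I^rJ^{s-1}K^t}+c\,\ov{I^rJ^sK^{t-1}}$ in low multidegree. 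I would kill it as follows: a lattice point $m$ of a Minkowski sum of Newton polyhedra in $\mathbb R^3$ lies above a facet of the lower boundary, which is itself a Minkowski sum of faces whose dimensions add up to at least $2$; one peels off integral generators corresponding to the low-dimensional summands until the residual Minkowski sum has total degree at most $2$, at which point the hypothesis that $I^aJ^bK^c$ is complete for $a+b+c\le2$ lets the residual monomial be absorbed. (Alternatively, one computes each $\ov{e}_3$ by an Ehrhart-type count of the lattice points outside the relevant polyhedron and checks that the alternating sum collapses under the hypothesis.) This combinatorial step — the three-variable analogue of the bookkeeping in Vitulli's normality criterion for monomial ideals \cite{v} — is the main obstacle: the point is to ensure the residual can always be driven down to total degree $\le 2$, so that the bounded hypothesis is exactly enough.

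Finally I would close the induction. Once the normal joint reduction number is zero, $\ov{I^rJ^sK^t}=a\,\ov{I^{r-1}J^sK^t}+b\,\ov{I^rJ^{s-1}K^t}+c\,\ov{I^rJ^sK^{t-1}}$ for all $r,s,t>0$; assuming $\ov{I^{r'}J^{s'}K^{t'}}=I^{r'}J^{s'}K^{t'}$ for smaller total degree and using $a\in I$, $b\in J$, $c\in K$, the right-hand side lies in $I^rJ^sK^t\subseteq\ov{I^rJ^sK^t}$, so equality holds. The cases with a vanishing exponent are treated by running the whole argument for the triples $(I,I,J)$, $(I,J,J)$, $(I,I,K)$, $(I,K,K)$, $(J,J,K)$, $(J,K,K)$ and $(I,I,I)$, $(J,J,J)$, $(K,K,K)$ — each a triple of $\m$-primary monomial ideals whose hypothesis is a special case of the given one — yielding completeness of all $I^pJ^q$, $I^pK^q$, $J^pK^q$ and of all pure powers $I^p$, $J^p$, $K^p$; these are precisely the base cases of the induction on $r+s+t$. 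Hence $I^rJ^sK^t$ is complete for all $r,s,t\ge0$.
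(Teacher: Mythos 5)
Your overall architecture is right up to the decisive step: the reductions (infinite residue field, localisation at $\m$, Hochster's theorem giving Cohen--Macaulayness of $\ov{\mathcal R}(I,J,K)$, existence of a good complete reduction via Theorem \ref{good joint reductions exist in CMcase}, and the closing induction on $r+s+t$ with the two-ideal and one-ideal cases as base cases) all match the paper. The gap is exactly at what you yourself flag as ``the main obstacle'': the vanishing of $\ov{e}_3(IJK)-[\ov{e}_3(IJ)+\ov{e}_3(IK)+\ov{e}_3(JK)]+\ov{e}_3(I)+\ov{e}_3(J)+\ov{e}_3(K)$. Your proposed lattice-point argument is only a sketch, and it cannot work as described: the good joint reduction $(a,b,c)$ produced by Theorem \ref{good joint reductions exist in CMcase} consists of \emph{generic} elements of $I,J,K$ chosen to avoid finitely many associated primes, not monomials, so there is no meaningful count of ``lattice points obstructing'' the joint reduction equation, and the peeling procedure on Minkowski summands is not justified. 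You have also misallocated the hypothesis: the completeness of $I^rJ^sK^t$ for $r+s+t\le 2$ plays no role in the vanishing of the alternating sum --- that vanishing holds for \emph{arbitrary} $\m$-primary monomial ideals (Theorem \ref{joint reduction number of monomial ideals is zero}); the hypothesis is needed only to supply the base cases of the final induction.

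The paper closes the gap with machinery you have already set up. Cohen--Macaulayness of $\ov{\mathcal R}(IS,JS,KS)$ forces $a^j(\ov{\mathcal R})=-1$ (Lemma \ref{negativity of a-invariants}), whence $[H^i_{\mathcal R_{++}}(\ov{\mathcal R^\prime})]_{\bf n}=0$ for ${\bf n}\ge{\bf 0}$ and the Grothendieck--Serre difference formula gives $\ov{P}_{IS,JS,KS}({\bf n})=\ov{H}_{IS,JS,KS}({\bf n})$ for all ${\bf n}\ge {\bf 0}$ (Theorem \ref{negative postulation number}). Evaluating at the origin yields $\ov{e}_3(IJKS)=0$, and the postulation identity for $r+s+t>0$ collapses the alternating sum to the single term $\ov{e}_3(IJKS)$ (Theorem \ref{vanishing of e_3(IJK)}), so the normal joint reduction number is zero by Theorem \ref{characterization of joint reduction number}. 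If you replace your combinatorial step by this argument, the rest of your proof goes through as written.
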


\noindent {\bf Acknowledgement:} We thank Parangama Sarkar for a careful reading of the 
manuscript.

\section{Finiteness of multigraded components of local cohomology modules of Rees algebras}
In \cite{masuti-verma}, authors have derived a formula for 
$\lm_R([H^2_{(at_1,bt_2)}(\ov{\mathcal R^\prime}(I,J))]_{(r,s)})$ for all 
integers $r,s \geq 0$, which shows that $\lm_R([H^2_{(at_1,bt_2)}(\ov{\mathcal R^\prime}(I,J))]_
{(r,s)}) <\infty $ in an analytically unramified Cohen-Macaulay local ring of dimension $2$ 
for a good joint reduction $(a,b)$ of $\{\ov{I^rJ^s}\}$. Here 
$\ov{\mathcal R^\prime}(I,J)$ is the bigraded Rees algebra of the filtration $\{\ov{I^rJ^s}\}$. 
In general 
$\lm_R([H^2_{(at_1,bt_2)}(\mathcal R^\prime(\mathcal F))]_{(r,s)})$ need 
not be finite for an $(I,J)-$admissible filtration $\mathcal{F}$ and 
for a joint reduction $(a,b)$ of $\mathcal F$, even if $R$ is regular. In this section 
we give an example to illustrate this. However, we show that 
$\lm_R([H^d_{(a_1t_1,\ldots,a_dt_d)}(\mathcal R^\prime(\mathcal F))]_{\bf n}) <\infty$ for 
all ${\bf n} \in \mathbb N^d$ in a Cohen-Macaulay local ring of dimension $d$, where $a_i=x_{ii}$ for a good complete reduction $(x_{ij})$ 
of an ${\bf I}-$admissible filtration $\mathcal F$. 

\subsection{Preliminaries}
First we need some lemmas.
\begin{lemma} \label{preliminary lemma}
 Let $R=\bigoplus_{{\bf n} \in \mathbb Z^s} R_{\bf n}$ be a Noetherian 
 $\mathbb Z^s-$graded ring. Suppose $E=\bigoplus_{{\bf n}\in \mathbb Z^s} E_{\bf n}$ 
 is a $\mathbb Z^s-$graded $R-$module. Suppose $E_{\bf n}=0$ for all ${\bf n} \geq {\bf m} $.
 \begin{enumerate}
 \item \label{lemma part(a)} Let $f \in R_{\bf d}$, where ${\bf d} \geq {\bf 0}$. Then for all 
  $i \geq 0$ and all ${\bf n} \geq {\bf m}$,
  \begin{eqnarray}\label{part(a)}
   [H^i_{(f)}(E)]_{\bf n}=0 .
  \end{eqnarray}
\item Let $f_1,\ldots,f_r$ be homogeneous elements of $R$ of degrees ${\bf d_1},\ldots,{\bf d_r} 
\in \mathbb N^s$. 
Then for all $i \geq 0$ and all ${\bf n} \geq {\bf m},$
\begin{eqnarray*}
 [H^i_{(f_1,\ldots,f_r)}(E)]_{\bf n}=0 .
\end{eqnarray*}
\item Assume $R_{\bf 0}=A$ is a local Noetherian ring with maximal ideal $\m.$ If 
$\supp E_{\bf n} \subseteq \{\m\}$ for all ${\bf n} \in \mathbb Z^s$ then 
$\supp~ [H^i_{(f_1,\ldots,f_r)}(E)]_{\bf n} \subseteq \{\m\}$ for all ${\bf n} \in 
\mathbb Z^s$.
 \end{enumerate}
\end{lemma}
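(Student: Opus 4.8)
The plan is to prove the three parts in the order (1), (2), (3), with (1) as the computational core and (2), (3) as formal consequences. For part (1), I would compute $H^i_{(f)}(E)$ using the \v{C}ech complex on the single element $f$, namely the two-term complex $0 \to E \to E_f \to 0$ sitting in cohomological degrees $0$ and $1$. Thus $H^0_{(f)}(E) = (0:_E f^\infty)$ and $H^1_{(f)}(E) = E_f/(\text{image of }E)$, and $H^i_{(f)}(E)=0$ for $i \geq 2$, so only $i=0,1$ need attention. Since $f \in R_{\bf d}$ with ${\bf d}\geq {\bf 0}$, multiplication by $f$ raises degree by ${\bf d}$; the key point is that the localization $E_f$ has graded pieces $[E_f]_{\bf n} = \varinjlim_k E_{{\bf n}+k{\bf d}}$, and for ${\bf n} \geq {\bf m}$ every term in this direct limit has index ${\bf n}+k{\bf d}\geq {\bf m}$ (using ${\bf d}\geq {\bf 0}$), hence vanishes by hypothesis. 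Therefore $[E_f]_{\bf n}=0$ for all ${\bf n}\geq{\bf m}$, which immediately kills $[H^1_{(f)}(E)]_{\bf n}$; and $[H^0_{(f)}(E)]_{\bf n}\subseteq E_{\bf n}=0$ as well. This gives \eqref{part(a)}.

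For part (2), I would argue by induction on $r$ using the \v{C}ech complex on $f_1,\ldots,f_r$, or equivalently the Mayer--Vietoris/long exact sequence relating $H^\bullet_{(f_1,\ldots,f_{r-1})}$, $H^\bullet_{(f_r)}$ and $H^\bullet_{(f_1,\ldots,f_r)}$. Concretely, the full \v{C}ech complex $C^\bullet(f_1,\ldots,f_r;E)$ is the tensor product of the one-element \v{C}ech complexes, so every entry is a finite direct sum of modules of the form $E_{f_{i_1}\cdots f_{i_k}}$, i.e. localizations of $E$ at products of the $f_j$'s, each of which is again a homogeneous element of degree in $\NN^s$. By the computation in part (1) (applied to the element $f_{i_1}\cdots f_{i_k}$, whose degree ${\bf d}_{i_1}+\cdots+{\bf d}_{i_k}\geq {\bf 0}$), each such localization has vanishing graded pieces in degrees ${\bf n}\geq {\bf m}$. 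Since the degree-${\bf n}$ part of the cohomology is a subquotient of the degree-${\bf n}$ part of the \v{C}ech complex, $[H^i_{(f_1,\ldots,f_r)}(E)]_{\bf n}=0$ for all ${\bf n}\geq {\bf m}$ and all $i$.

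For part (3), the same \v{C}ech-complex description does the work: each entry of $C^\bullet(f_1,\ldots,f_r;E)$ in multidegree ${\bf n}$ is a finite direct sum of pieces $[E_{f_{i_1}\cdots f_{i_k}}]_{\bf n}$, and since $[E_{f_{i_1}\cdots f_{i_k}}]_{\bf n}$ is a direct limit of the modules $E_{\bf n'}$ (which all have support in $\{\m\}$ by hypothesis) and localization/direct limit preserves the property of being supported at $\m$ (equivalently, being $\m$-power torsion), each entry is supported at $\{\m\}$. A subquotient of a module supported at $\{\m\}$ is again supported at $\{\m\}$, so $\supp [H^i_{(f_1,\ldots,f_r)}(E)]_{\bf n}\subseteq\{\m\}$ for all ${\bf n}$. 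The one mild point to check is that a filtered colimit of modules with support in $\{\m\}$ still has support in $\{\m\}$: if $x$ is annihilated by a power of $\m$ then so is its image in the colimit, so the colimit is $\m$-power torsion, hence supported at $\m$. I expect the main (though still routine) obstacle to be bookkeeping the graded structure of iterated localizations in part (1)—making precise that $[E_f]_{\bf n}=\varinjlim_k E_{{\bf n}+k{\bf d}}$ with the transition maps given by multiplication by $f$—after which parts (2) and (3) are essentially formal.
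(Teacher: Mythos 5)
Your proposal is correct, and part (1) is essentially the paper's argument (the paper phrases it via the four-term exact sequence $0 \to H^0_{(f)}(E) \to E \to E_f \to H^1_{(f)}(E) \to 0$, which is just the \v{C}ech complex on one element, and makes the same observation that $b/f^r$ forces $\degree b = {\bf n}+r{\bf d} \geq {\bf m}$). For parts (2) and (3), however, you take a genuinely different route. The paper proves (2) by induction on $r$, using the composite-functor short exact sequence
\[
0\longrightarrow H^1_{(f_r)}\bigl(H^{i-1}_{(f_1,\ldots,f_{r-1})}(E)\bigr) \longrightarrow H^i_{(f_1,\ldots,f_r)}(E) \longrightarrow H^0_{(f_r)}\bigl(H^{i}_{(f_1,\ldots,f_{r-1})}(E)\bigr) \longrightarrow 0,
\]
and then applies part (1) to the graded modules $H^j_{(f_1,\ldots,f_{r-1})}(E)$, which by induction vanish in degrees $\geq {\bf m}$; you instead observe that every entry of the full \v{C}ech complex $C^\bullet(f_1,\ldots,f_r;E)$ is a localization $E_{f_{i_1}\cdots f_{i_k}}$ at a homogeneous element of degree in $\NN^s$, hence vanishes in degrees $\geq {\bf m}$, so all cohomology does too. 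Your version is more direct and handles all $i$ in one stroke, but the paper's inductive exact sequence is not wasted effort: it is reused verbatim in the proof of Corollary 2.2 (the sequence labelled \eqref{relating i-1 and i local cohomology}), where the module being localized is no longer $E$ itself and the subquotient argument would not apply as cleanly. For (3) the paper simply localizes at a prime $\p \neq \m$ of $A$, notes that $E_\p = 0$ because elements of $A\setminus\p$ have degree ${\bf 0}$, and concludes $H^i_{(f_1,\ldots,f_r)}(E)_\p = H^i_{(f_1,\ldots,f_r)A_\p}(E_\p) = 0$; your argument via $\m$-power torsion of the \v{C}ech entries is also valid (and you correctly flag the only point needing care, that filtered colimits and subquotients preserve support in $\{\m\}$), but the paper's one-line localization is slicker. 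Both proofs are complete.
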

\begin{proof}
 \begin{enumerate}
  \item By \cite[Exercise 5.1.14]{bs}, we have an exact sequence
  $$0 \longrightarrow H^0_{(f)}(E) \longrightarrow E \longrightarrow E_f \longrightarrow 
  H^1_{(f)}(E) \longrightarrow 0.$$
  Since $E_{\bf n}=0$ for all ${\bf n} \geq {\bf m}$, $[H^0_{(f)}(E)]_{\bf n}=0$ 
  for all ${\bf n} \geq {\bf m}$. To show $[H^1_{(f)}(E)]_{\bf n}=0$ for all ${\bf n} 
  \geq {\bf m}$, consider any $z \in [E_f]_{\bf n}$. Then $z=\frac{b}{f^r}$ for some 
  $r \in \mathbb N$ and $b \in E$. Therefore $\degree b={\bf n}+r{\bf d} \geq {\bf n}$. 
  Hence $[E_f]_{\bf n}=0$ for all ${\bf n}\geq {\bf m}$. This gives $[H^1_{(f)}(E)]_{\bf n}=0$ 
  for all ${\bf n}\geq {\bf m}$.
  \item We use induction on $r.$ For $r=1$ the assertion follows from the equation (\ref{part(a)}). 
  Assume $r>1$ and the result is true for $r-1$. Let $I=(f_1,\ldots,f_{r-1})$. By 
  \cite[Exercise 5.1.14]{bs}, we have the exact sequence
  $$\cdots\longrightarrow H^{i-1}_{I}(E)\longrightarrow H^{i-1}_{I}(E)_{f_r} 
\longrightarrow   H^i_{(I,f_r)}(E) \longrightarrow H^i_I(E) \longrightarrow 
  H^i_{I}(E)_{f_r} \longrightarrow \cdots.$$
 For all $i \geq 0,$ we have an exact sequence
\begin{eqnarray} \label{relating i-1 and i local cohomology}
0\longrightarrow H^1_{(f_r)}(H^{i-1}_I(E)) \longrightarrow H^i_{(I,f_r)}(E) 
\longrightarrow H^0_{(f_r)}(H^{i}_I(E)) \longrightarrow 0.
\end{eqnarray}
By induction hypothesis 
$$[H^j_I(E)]_{\bf n}=0\mbox{ for all }{\bf n}\geq {\bf m} \mbox{ and all }j \geq 0.$$
Therefore from the equation (\ref{part(a)}), we obtain
$$[H^k_{(f_r)}(H^j_I(E))]_{\bf n}=0 \mbox{ for all }{\bf n}\geq {\bf m} \mbox{ and all } k,j\geq 0 
.$$
Hence $[H^i_{(I,f_r)}(E)]_{\bf n}=0$ for all ${\bf n} \geq {\bf m}$ and all $i \geq 0$.
\item Let $\mathfrak p \neq \m$ be a prime ideal of $A$. Note that $R_{\bf 0}\setminus \mathfrak{p}$ 
has elements of degree ${\bf 0}$. Since $(E_{\bf n})_{\mathfrak{p}}=0$ 
for all ${\bf n} \in \mathbb Z^s$, $E_\mathfrak p=0$. Hence 
$$(H^i_{(f_1,\ldots,f_r)}(E))_{\mathfrak{p}}=H^i_{(f_1,\ldots,f_r)A_{\mathfrak{p}}}
(E_{\mathfrak{p}})=0.$$
Therefore $([H^i_{(f_1,\ldots,f_r)}(E)]_{\bf n})_{\mathfrak{p}}=0$ for all 
${\bf n} \in \mathbb Z^s$ and hence 
$\supp ~[H^i_{(f_1,\ldots,f_r)}(E)]_{\bf n} \subseteq\{\m\}$.
\end{enumerate}
\end{proof}

\begin{corollary} \label{corollary of preliminary lemma}
 Let $R=\bigoplus_{{\bf n} \in \mathbb Z^s}R_{\bf n}$ be a Noetherian $\mathbb Z^s-$
 graded ring and let $M=\bigoplus_{{\bf n}\in \mathbb Z^s}$ be a finitely generated 
 $\mathbb Z^s-$graded $R-$module. Let $q$ be a graded ideal of 
 $R$. Suppose there exists ${\bf m} \in \mathbb Z^s$ such that for all $i \geq 0$ and all 
 ${\bf n} \geq {\bf m}$, 
 $$[H^i_q(M)]_{\bf n}=0 .$$
 Let $f_1,\ldots,f_r$ be homogeneous elements with $\degree f_i \in \mathbb N^s$ 
 for all $i$. Then for $q^\prime=(q,f_1,\ldots,f_r)$,
 $$[H^i_{q^\prime}(M)]_{\bf n}=0\mbox{ for all }{\bf n}\geq {\bf m} \mbox{ and all }i \geq  0.$$
Furthermore, if $R_0=A$ is a local Noetherian ring and $\lm_A([H^i_{q}(M)]_{\bf n})< \infty $
for all ${\bf n} \in \mathbb Z^s$ and all $i \geq 0$ then 
\begin{eqnarray}\label{support in q'}
 \supp [H^i_{q^\prime}(M)]_{\bf n} \subseteq \{\m\} \mbox{ for all }{\bf n} \in \mathbb Z^s 
 \mbox{ and all }i \geq 0.
\end{eqnarray}
 \end{corollary}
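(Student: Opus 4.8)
The plan is to deduce the statement from Lemma~\ref{preliminary lemma} by feeding it repeatedly into an exact sequence of the form (\ref{relating i-1 and i local cohomology}), adjoining the elements $f_1,\dots,f_r$ one at a time. Conceptually, since $q^\prime = q + (f_1,\dots,f_r)$ one has $\Gamma_{q^\prime} = \Gamma_{(f_1,\dots,f_r)}\circ \Gamma_q$, hence a graded spectral sequence $H^a_{(f_1,\dots,f_r)}(H^b_q(M)) \Rightarrow H^{a+b}_{q^\prime}(M)$; what follows is just the elementary ``unwinding'' of this.

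Set $q_0 = q$ and, for $1 \le j \le r$, put $q_j = (q_{j-1}, f_j) = (q, f_1,\dots,f_j)$, so that $q_r = q^\prime$. Since $q$ and the $f_j$ are homogeneous, each $H^b_{q_j}(M)$ is a $\ZZ^s$-graded $R$-module, and the exact sequence (\ref{relating i-1 and i local cohomology}), applied to $M$ with $q_{j-1}$ in place of $I$ and $f_j$ in place of $f_r$, gives for every $i \ge 0$ a short exact sequence
$$0 \longrightarrow H^1_{(f_j)}\bigl(H^{i-1}_{q_{j-1}}(M)\bigr) \longrightarrow H^i_{q_j}(M) \longrightarrow H^0_{(f_j)}\bigl(H^i_{q_{j-1}}(M)\bigr) \longrightarrow 0.$$
For the first assertion I would induct on $j$. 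The case $j = 0$ is the hypothesis $[H^b_q(M)]_{\bf n} = 0$ for all ${\bf n} \ge {\bf m}$ and all $b \ge 0$. Assuming this vanishing for $q_{j-1}$, Lemma~\ref{preliminary lemma}(1) applied to $E = H^b_{q_{j-1}}(M)$ and $f = f_j$ (note $\degree f_j \in \NN^s$) gives $[H^k_{(f_j)}(H^b_{q_{j-1}}(M))]_{\bf n} = 0$ for all ${\bf n} \ge {\bf m}$ and $k = 0,1$; feeding the two outer terms of the displayed sequence into the middle one yields $[H^i_{q_j}(M)]_{\bf n} = 0$ for all ${\bf n} \ge {\bf m}$. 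Taking $j = r$ gives the first conclusion.

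For the ``furthermore'' part I would run the same induction on $j$, with ``$\supp [\,\cdot\,]_{\bf n} \subseteq \{\m\}$ for all ${\bf n} \in \ZZ^s$'' replacing ``$[\,\cdot\,]_{\bf n} = 0$ for all ${\bf n} \ge {\bf m}$''. The case $j=0$ holds because a finite length module over the local ring $A = R_0$ is supported only at $\m$, so the hypothesis $\lm_A([H^b_q(M)]_{\bf n}) < \infty$ forces $\supp [H^b_q(M)]_{\bf n} \subseteq \{\m\}$ for all ${\bf n}$ and $b$. In the inductive step, Lemma~\ref{preliminary lemma}(3) applied to $E = H^b_{q_{j-1}}(M)$ with the single element $f_j$ gives $\supp [H^k_{(f_j)}(H^b_{q_{j-1}}(M))]_{\bf n} \subseteq \{\m\}$ for $k = 0,1$, and then the displayed short exact sequence forces $\supp [H^i_{q_j}(M)]_{\bf n} \subseteq \{\m\}$. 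Taking $j = r$ gives (\ref{support in q'}).

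I do not expect a real obstacle: the corollary is bookkeeping on top of Lemma~\ref{preliminary lemma}. The two small points that need a word are that (\ref{relating i-1 and i local cohomology}) is valid for the (in general non-principal) ideal $q_{j-1}$ --- precisely the generality in which it is obtained inside the proof of Lemma~\ref{preliminary lemma} --- and the elementary equivalence, over a local Noetherian ring, of ``finite length'' with ``support contained in the maximal ideal''.
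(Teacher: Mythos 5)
Your proposal is correct and follows essentially the same route as the paper: the paper also reduces to adjoining one element at a time, uses the short exact sequence $0 \to H^1_{(f)}(H^{i-1}_q(M)) \to H^i_{q'}(M) \to H^0_{(f)}(H^i_q(M)) \to 0$ together with Lemma \ref{preliminary lemma}(1) for the vanishing, and Lemma \ref{preliminary lemma}(3) (after localizing at $\p \neq \m$) for the support statement. Your explicit induction on $j$ and the remark identifying finite length with support in $\{\m\}$ just spell out what the paper leaves implicit.
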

\begin{proof}
 It suffices to show the $r=1$ case. Let $q^\prime=(q,f)$ where $f=f_1$. 
 From (\ref{relating i-1 and i local cohomology}), we have the exact sequence
 \begin{eqnarray}\label{relating i-1 and i local cohomology in corollary}
 0\longrightarrow H^1_{(f)}(H^{i-1}_q(M)) \longrightarrow H^i_{q^\prime}(M) 
\longrightarrow H^0_{(f)}(H^{i}_q(M)) \longrightarrow 0.
\end{eqnarray}
By Lemma \ref{preliminary lemma}(1), $[H^i_{q^\prime}(M)]_{\bf n}=0$ 
for all ${\bf n}\geq {\bf m}$ and all $i \geq 0$. To prove (\ref{support in q'}), 
localize the exact sequence (\ref{relating i-1 and i local cohomology in corollary}) 
at a prime ideal $\mathfrak{p}$ of $A$, $\mathfrak p \neq \m$, and 
use Lemma \ref{preliminary lemma}(3) to obtain 
$H^i_{q^\prime}(M)_\mathfrak p=0$ which gives $\supp ~ [H^i_{q^\prime}(M)]_{\bf n} \subseteq \{\m\} 
$ for all ${\bf n} \in \mathbb Z^s$.
\end{proof}

\subsection{Local Cohomology modules of extended Rees algebras of a $\mathbb Z^d-$filtration 
of ideals}
Let $(R,\m)$ be a $d-$dimensional Cohen-Macaulay local ring with infinite residue 
field in this subsection. Let ${\bf I}=(I_1,\ldots,I_d)$ for $\m-$primary ideals 
$I_1,\ldots,I_d$ of $R$. Let the $d \times d$ matrix $(x_{ij})$ where $x_{ij} \in I_i$ for all $j=1,\ldots,d$ and 
$i=1,\ldots,d$ be  a complete reduction of the ${\bf I}-$filtration $\mathcal F.$
Let  $y_j=\prod_{i=1}^dx_{ij}$ for $j=1,\ldots,d$. 
Let $\mathcal R({\bf I})=\bigoplus_{{\bf n} \in \mathbb N^d}{\bf I}^{\bf n}{\bf t}^{\bf n}$ 
be the Rees algebra of the filtration $\{{\bf I}^{\bf n}\}$ and 
$\mathcal R({\bf I})_{++}=\bigoplus_{{\bf n} \geq {\bf e}} {\bf I}^{\bf n}{\bf t}^{\bf n}$ 
be an ideal in $\R({\bf I})$. 
Then $q=(y_1{\bf t}^{\bf e},\ldots,y_d{\bf t}^{\bf e}) \subseteq \mathcal R({\bf I})_{++}$. 
It is clear that $\sqrt q =\sqrt {\mathcal R({\bf I})_{++}}$. Hence 
$$H^i_{\mathcal R({\bf I})_{++}}(\mathcal R^\prime(\mathcal F))=H^i_q(\mathcal R^\prime(\mathcal F))$$
for all $i \geq 0$. By an easy argument the following result can be derived from 
\cite[Theorem 5.1]{jayanthan-verma}. We set $\mathcal R_{++}=\mathcal R({\bf I})_{++}$.

\begin{proposition} \label{difference formula for filtration}
Let $\mathcal F$ be an admissible ${\bf I}-$filtration. Let $M=\mathcal R^\prime(\mathcal F)$ 
and $\mathcal R^\prime=\mathcal R^\prime({\bf I})$. Then 
\begin{enumerate}
 \item $[H^i_{\mathcal R_{++}}(M)]_{\bf n}=0$ for all ${\bf n} \gg {\bf 0}$.
 \item $\lm_R([H^i_{\mathcal R_{++}}(M)]_{\bf n}) < \infty$ for all ${\bf n} \in \mathbb Z^d$.
\end{enumerate}
\end{proposition}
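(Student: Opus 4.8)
The plan is to deduce both parts from \cite[Theorem 5.1]{jayanthan-verma}, which establishes the vanishing for ${\bf n}\gg{\bf 0}$ and the finite length of the graded components of $H^i_{\mathcal{R}_{++}}(-)$ for the \emph{ordinary} multi-Rees module $\mathcal{R}(\mathcal{F})=\bigoplus_{{\bf n}\in\NN^d}\mathcal{F}({\bf n}){\bf t}^{\bf n}$ (a finitely generated $\mathcal{R}({\bf I})$-module when $\mathcal{F}$ is admissible), and to transfer this to $M=\mathcal{R}^\prime(\mathcal{F})$. The bridge is the short exact sequence of $\mathcal{R}({\bf I})$-modules
\[
0\longrightarrow \mathcal{R}(\mathcal{F})\longrightarrow M\longrightarrow Q\longrightarrow 0 ,
\]
where $Q=M/\mathcal{R}(\mathcal{F})$ is concentrated in the degrees ${\bf n}\in\ZZ^d$ having a negative coordinate, so that $Q_{\bf n}=0$ for every ${\bf n}\geq{\bf 0}$. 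Since $\mathcal{R}_{++}$ is generated by finitely many elements of degree ${\bf e}\in\NN^d$, Lemma~\ref{preliminary lemma}(2) yields $[H^i_{\mathcal{R}_{++}}(Q)]_{\bf n}=0$ for all ${\bf n}\geq{\bf 0}$ and all $i$.

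Granting the bridge, part (1) and the non-negative degrees of part (2) are formal. Applying $H^\bullet_{\mathcal{R}_{++}}(-)$ to the displayed sequence, in degrees ${\bf n}\gg{\bf 0}$ both the $Q$-terms and the $\mathcal{R}(\mathcal{F})$-terms vanish (the former by the preceding paragraph, the latter by \cite[Theorem 5.1]{jayanthan-verma}), whence $[H^i_{\mathcal{R}_{++}}(M)]_{\bf n}=0$. For ${\bf n}\geq{\bf 0}$ the long exact sequence exhibits $[H^i_{\mathcal{R}_{++}}(M)]_{\bf n}$ as a quotient of $[H^i_{\mathcal{R}_{++}}(\mathcal{R}(\mathcal{F}))]_{\bf n}$, which has finite length by the same theorem.

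It remains to treat part (2) in degrees ${\bf n}\in\ZZ^d$ having a negative coordinate, which is where the real content lies. Replace $\mathcal{R}_{++}$ by $q=(y_1{\bf t}^{\bf e},\ldots,y_d{\bf t}^{\bf e})$, which has the same radical, so $H^i_{\mathcal{R}_{++}}(M)=H^i_q(M)$ is computed by the \v{C}ech complex of $M$ on $y_1{\bf t}^{\bf e},\ldots,y_d{\bf t}^{\bf e}$. Two points are needed. First, a support computation: since $(x_{ij})$ is a complete reduction, the relations $\mathcal{F}({\bf n}+{\bf e})=(y_1,\ldots,y_d)\mathcal{F}({\bf n})$ (${\bf n}\gg{\bf 0}$) force $(y_1,\ldots,y_d)$ to be $\m$-primary; hence for $\p\in\spec R\setminus\{\m\}$ some $y_j\notin\p$, and in $\mathcal{R}^\prime({\bf I})_\p=R_\p[t_1^{\pm 1},\ldots,t_d^{\pm 1}]$ the element $y_j{\bf t}^{\bf e}$ is a unit, so $q\mathcal{R}^\prime({\bf I})_\p$ is the unit ideal and $H^i_q(M)_\p=0$. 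Thus $\supp_R[H^i_{\mathcal{R}_{++}}(M)]_{\bf n}\subseteq\{\m\}$, so each graded component is $\m$-power torsion and finite length is equivalent to finite generation over $R$. Second, this finite generation has to be proved in the negative degrees, and this is the main obstacle: the individual \v{C}ech terms $[M_{y_{j_1}\cdots y_{j_k}{\bf t}^{k{\bf e}}}]_{\bf n}$ are \emph{not} finitely generated over $R$ (their defining colimits do not stabilise, since $\mathcal{F}({\bf m}+k{\bf e})=(y_1,\ldots,y_d)^k\mathcal{F}({\bf m})$ is properly larger than $y_{j_1}\cdots y_{j_k}\mathcal{F}({\bf m})$), so one must exploit cancellation in the differentials. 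Concretely, in a fixed degree ${\bf n}$ one uses the complete-reduction relations to rewrite the complex, in all sufficiently large auxiliary degrees, in terms of a single finitely generated module $\mathcal{F}({\bf m})$ (${\bf m}\gg{\bf 0}$) and the $\m$-primary ideal $(y_1,\ldots,y_d)$; its cohomology in degree ${\bf n}$ then agrees, up to a finite-length correction, with a graded piece of $H^\bullet_{(y_1,\ldots,y_d)}$ of a finitely generated module, which is finitely generated over $R$. This passage from the extended algebra to the ordinary one in each fixed degree is the ``easy argument'' deducing the proposition from \cite[Theorem 5.1]{jayanthan-verma}; if that theorem is already formulated for $\mathcal{R}^\prime(\mathcal{F})$ under the present hypotheses, only the routine diagram chase with Lemma~\ref{preliminary lemma}, Corollary~\ref{corollary of preliminary lemma} and the long exact sequences remains.
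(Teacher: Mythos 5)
Your reduction to the ordinary Rees module via the exact sequence $0\to\mathcal R(\mathcal F)\to M\to Q\to 0$ is the right idea, and it is essentially what the paper intends by its one-line justification: the needed comparison between $\mathcal R(\mathcal F)$ and $\mathcal R^\prime(\mathcal F)$ is recorded later as Lemma \ref{local cohomology of R and R*}, and the same transfer via \cite[Proposition 4.5]{jayanthan-verma} is used in Corollary \ref{vanishing of lc of Nj for large n}. However, your execution has a genuine gap exactly where you say ``the real content lies.'' In degrees with a negative coordinate you abandon the long exact sequence and offer only a plan of ``exploiting cancellation in the differentials'' of the \v{C}ech complex; that paragraph describes an intention, not an argument, and nothing in it can be checked. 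The observation you are missing is that $Q=M/\mathcal R(\mathcal F)$ is not merely zero in non-negative degrees: it is $\mathcal R_{++}$-\emph{torsion} as a module. Indeed, for homogeneous $z\in Q_{\bf n}$ and $k$ with ${\bf n}+k{\bf e}\geq{\bf 0}$ one has $(\mathcal R_{++})^k z\subseteq\bigoplus_{{\bf m}\geq{\bf n}+k{\bf e}}Q_{\bf m}=0$. Hence $H^0_{\mathcal R_{++}}(Q)=Q$ and $H^i_{\mathcal R_{++}}(Q)=0$ for all $i\geq 1$ \emph{in every degree}, so the long exact sequence yields a surjection $[H^i_{\mathcal R_{++}}(\mathcal R(\mathcal F))]_{\bf n}\twoheadrightarrow[H^i_{\mathcal R_{++}}(M)]_{\bf n}$ for every $i\geq 1$ and every ${\bf n}\in\ZZ^d$ (an isomorphism for $i\geq 2$), not only for ${\bf n}\geq{\bf 0}$. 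For $i=0$ one checks directly that $[H^0_{\mathcal R_{++}}(M)]_{\bf n}=H^0_{\m}(\mathcal F({\bf n}))$, which has finite length (and vanishes when $\dim R>0$, $R$ being Cohen--Macaulay).

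With this, both assertions in all degrees reduce to the finitely generated $\NN^d$-graded $\mathcal R({\bf I})$-module $\mathcal R(\mathcal F)$, for which the cited result of Jayanthan--Verma supplies the vanishing for ${\bf n}\gg{\bf 0}$ and the finite generation of every graded component; your localization argument showing $\supp_R[H^i_{q}(M)]_{\bf n}\subseteq\{\m\}$ (which is correct, and is Lemma \ref{preliminary lemma}(3) in disguise) then upgrades finite generation to finite length. So the statement is true and your skeleton matches the intended proof, but the negative-degree case must be settled by the torsion observation above rather than by the unproved \v{C}ech-complex manipulation. A small additional caveat: the finiteness input you need is the one for finitely generated $\NN^d$-graded modules (Theorem 2.3 of that paper, extended from the bigraded to the multigraded setting), not literally the difference formula of Theorem 5.1, and strictly speaking that multigraded extension is part of what the ``easy argument'' has to supply.
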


Put $a_i=x_{ii}$ for $i=1,\ldots,d$ and $\mathcal I=(a_1t_1,\ldots,a_dt_d)$. Then 
$\mathcal R_{++} \subseteq \sqrt \mathcal I$ and for all $i \geq 0$
$$H^i_{(\mathcal I,\mathcal R_{++})}(M)=H^i_{(\mathcal I,q)}(M)=H^i_{\mathcal I}(M).$$

\begin{corollary} \label{vanishing of lc for large n} 
Let $\mathcal F$ be an ${\bf I}-$admissible filtration and $M=\mathcal R^\prime(\mathcal F)$. Then
$$[H^i_{\mathcal I}(M)]_{\bf n}=0 \mbox{ for all }{\bf n} \gg {\bf 0}.$$
\end{corollary}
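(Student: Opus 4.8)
The plan is to obtain this as a direct consequence of Proposition \ref{difference formula for filtration} together with Corollary \ref{corollary of preliminary lemma}, using the radical identifications recorded just before the statement. Recall that $q=(y_1{\bf t}^{\bf e},\ldots,y_d{\bf t}^{\bf e})$ satisfies $\sqrt q=\sqrt{\mathcal R_{++}}$, so that $H^i_{\mathcal R_{++}}(M)=H^i_q(M)$ for every $i\geq 0$; and since $\mathcal R_{++}\subseteq\sqrt{\mathcal I}$, the ideal $(\mathcal I,q)$ has the same radical as $\mathcal I$, so that $H^i_{(\mathcal I,q)}(M)=H^i_{\mathcal I}(M)$ for all $i$.

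First I would fix one shift ${\bf m}\in\mathbb Z^d$ that is valid for all cohomological degrees simultaneously. Since $\mathcal R_{++}$ is generated up to radical by the $d$ elements $y_j{\bf t}^{\bf e}$, the module $H^i_{\mathcal R_{++}}(M)$ vanishes identically for $i>d$, so only the finitely many degrees $0\leq i\leq d$ are at issue; for each such $i$ Proposition \ref{difference formula for filtration}(1) supplies a bound past which the corresponding graded component vanishes, and I would take ${\bf m}$ to be the coordinatewise maximum of these finitely many bounds. This yields $[H^i_q(M)]_{\bf n}=0$ for all ${\bf n}\geq{\bf m}$ and all $i\geq 0$. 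Now I would apply Corollary \ref{corollary of preliminary lemma} to this ideal $q$ with the homogeneous elements $f_j=a_jt_j$, $j=1,\ldots,d$: each $a_jt_j$ has degree ${\bf e}_j\in\mathbb N^d$, so the hypothesis on degrees holds, and the corollary gives $[H^i_{q'}(M)]_{\bf n}=0$ for all ${\bf n}\geq{\bf m}$ and all $i\geq 0$, where $q'=(q,a_1t_1,\ldots,a_dt_d)=(\mathcal I,q)$. By the radical identification above $H^i_{q'}(M)=H^i_{\mathcal I}(M)$, hence $[H^i_{\mathcal I}(M)]_{\bf n}=0$ for all ${\bf n}\geq{\bf m}$, which is the assertion.

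The argument is essentially an assembly of the two preceding results, so no serious difficulty arises; the one point that genuinely needs attention is the uniformity in $i$. This is precisely where one uses that the cohomological dimension of $\mathcal R_{++}$ is at most $d$, which collapses the infinitely many degree-dependent thresholds coming from Proposition \ref{difference formula for filtration}(1) into the single shift ${\bf m}$.
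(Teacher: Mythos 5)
Your proposal is correct and follows exactly the route the paper intends: its proof of this corollary is the one-line remark that it ``follows from Corollary \ref{corollary of preliminary lemma} and Proposition \ref{difference formula for filtration},'' and your write-up is precisely that assembly, using the radical identifications $H^i_{\mathcal R_{++}}(M)=H^i_q(M)$ and $H^i_{(\mathcal I,q)}(M)=H^i_{\mathcal I}(M)$ recorded just before the statement. Your extra care about choosing a single shift ${\bf m}$ uniform in $i$ (via the bound on cohomological dimension) is a detail the paper leaves implicit, and it is handled correctly.
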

\begin{proof}
 Follows from Corollary \ref{corollary of preliminary lemma} and Proposition 
 \ref{difference formula for filtration}.
\end{proof}

Let $\mathcal R(\mathcal F)=\bigoplus_{{\bf n} \in \mathbb N^g}\mathcal F({\bf n}){\bf t}^{\bf n}$ 
be the Rees algebra of $\mathbb N^g-$graded filtration $\mathcal F$.  

\begin{corollary} \label{vanishing of lc of Nj for large n}
Let $\mathcal F$ be an ${\bf I}-$admissible filtration and $M=\mathcal R^\prime(\mathcal F)$. 
Put $\theta_j=y_jt_1\ldots t_d$ for 
$j=1,\ldots,d$, $N_j=M/(\theta_1,\ldots,\theta_j)M$. 
Then
\begin{enumerate}
 \item $[H^i_{\mathcal R_{++}}(N_j)]_{\bf n}=0$ for all ${\bf n} \gg {\bf 0} $, 
 $j=1,\ldots,d$ and all $i \geq 0.$
 \item $[H^i_{\mathcal I}(N_j)]_{\bf n}=0$ for all ${\bf n} \gg {\bf 0}$, $j=1,\ldots, d$ and all 
 $i \geq 0$.
\end{enumerate}
\end{corollary}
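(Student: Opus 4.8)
The plan is to deduce (2) from (1) and to prove (1) by induction on $j$. For the first reduction, note that since $\mathcal{R}_{++}\subseteq\sqrt{\mathcal{I}}$ and $a_it_i\in\mathcal{I}$ for each $i$, one has $H^i_{\mathcal{I}}(N_j)=H^i_{(\mathcal{I},\mathcal{R}_{++})}(N_j)=H^i_{(\mathcal{R}_{++},\,a_1t_1,\dots,a_dt_d)}(N_j)$ for every $i$, exactly as in the discussion preceding Corollary \ref{vanishing of lc for large n}. The elements $a_1t_1,\dots,a_dt_d$ are homogeneous of degrees ${\bf e}_1,\dots,{\bf e}_d\in\mathbb{N}^d$, so once (1) is known, Corollary \ref{corollary of preliminary lemma} applied with $q=\mathcal{R}_{++}$ and $M$ replaced by $N_j$ gives $[H^i_{\mathcal{I}}(N_j)]_{\bf n}=0$ for all ${\bf n}\gg{\bf 0}$, which is (2).

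Now I turn to (1). For $j=0$ we have $N_0=M$ and the claim is Proposition \ref{difference formula for filtration}(1). Suppose (1) holds for $N_{j-1}$ and set $K_j=(0:_{N_{j-1}}\theta_j)$. Multiplication by $\theta_j\in[M]_{\bf e}$ on $N_{j-1}$ produces the four-term exact sequence $0\to K_j(-{\bf e})\to N_{j-1}(-{\bf e})\xrightarrow{\ \theta_j\ }N_{j-1}\to N_j\to0$, which splits into the two short exact sequences
\[
0\to\theta_jN_{j-1}\to N_{j-1}\to N_j\to0,\qquad 0\to K_j(-{\bf e})\to N_{j-1}(-{\bf e})\to\theta_jN_{j-1}\to0 .
\]
Taking the long exact sequences in $H^\bullet_{\mathcal{R}_{++}}(-)$ and invoking the induction hypothesis — which forces $[H^i_{\mathcal{R}_{++}}(N_{j-1})]_{\bf n}=0$ for all $i$ and all ${\bf n}\gg{\bf 0}$ — each term involving $N_{j-1}$ disappears in high degree, so one obtains isomorphisms $[H^i_{\mathcal{R}_{++}}(N_j)]_{\bf n}\cong[H^{i+1}_{\mathcal{R}_{++}}(\theta_jN_{j-1})]_{\bf n}\cong[H^{i+2}_{\mathcal{R}_{++}}(K_j)]_{{\bf n}-{\bf e}}$ for ${\bf n}\gg{\bf 0}$. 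Since vanishing in high degree is insensitive to the shift by ${\bf e}$, it suffices to prove $[H^k_{\mathcal{R}_{++}}(K_j)]_{\bf n}=0$ for all $k$ and all ${\bf n}\gg{\bf 0}$.

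A first input is available for this: $K_j\subseteq N_{j-1}=M/(\theta_1,\dots,\theta_{j-1})M$ and $\theta_jK_j=0$, so $K_j$ is annihilated by $(\theta_1,\dots,\theta_j)$; as $\theta_i=y_i{\bf t}^{\bf e}$ we have $(\theta_1,\dots,\theta_d)=q$ and hence $\sqrt{(\theta_1,\dots,\theta_d)}=\sqrt{\mathcal{R}_{++}}$, so $H^k_{\mathcal{R}_{++}}(K_j)=H^k_{(\theta_{j+1},\dots,\theta_d)}(K_j)$, which vanishes for $k>d-j$. Through the displayed isomorphisms this already yields $[H^i_{\mathcal{R}_{++}}(N_j)]_{\bf n}=0$ for ${\bf n}\gg{\bf 0}$ whenever $i>d-j-2$; these cases suffice to carry out the induction completely when $d\le2$, and when $d=3$ to settle every step except the one producing $N_1$.

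What remains — the groups $[H^i_{\mathcal{R}_{++}}(N_j)]_{\bf n}$ with $i\le d-j-2$, which for $d=3$ is exactly $H^0_{\mathcal{R}_{++}}(N_1)=\Gamma_{\mathcal{R}_{++}}(N_1)$ — is the step I expect to be the main obstacle: one cannot close it through $K_j$ alone, since the top local cohomology $H^{d-j}_{(\theta_{j+1},\dots,\theta_d)}(K_j)$ need not vanish in high degree. I would handle these cases directly from the complete‑reduction hypothesis. A class in $[N_j]_{\bf n}$ killed by a power of $\mathcal{R}_{++}$ is represented by some $r\in\mathcal{F}({\bf n})$ satisfying ${\bf I}^{l{\bf e}}\,r\subseteq(y_1,\dots,y_j)\,\mathcal{F}({\bf n}+(l-1){\bf e})$ for all $l\gg0$; feeding in the identities $\mathcal{F}({\bf m}+{\bf e})=(y_1,\dots,y_d)\mathcal{F}({\bf m})$ and ${\bf I}^{l{\bf e}}\mathcal{F}({\bf m})=\mathcal{F}({\bf m}+l{\bf e})$, valid for ${\bf m}\gg{\bf 0}$, should collapse this to $r\in(y_1,\dots,y_j)\mathcal{F}({\bf n}-{\bf e})$, i.e. to the vanishing of the class in $N_j$. (Alternatively, the vanishing of $\Gamma_{\mathcal{R}_{++}}(N_j)$ in high degree can be read off from the structural analysis of $\mathcal{R}^\prime(\mathcal{F})$ underlying Proposition \ref{difference formula for filtration} and \cite[Theorem 5.1]{jayanthan-verma}.) Once these finitely many low‑degree groups are dealt with, the induction for (1) closes and (2) follows as above.
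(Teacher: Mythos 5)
Your deduction of (2) from (1) via Corollary \ref{corollary of preliminary lemma} is exactly the paper's argument and is fine. The problem is part (1). Your induction on $j$ only closes in the range where the Koszul-type bound applies: from $[H^i_{\mathcal R_{++}}(N_j)]_{\bf n}\cong[H^{i+2}_{\mathcal R_{++}}(K_j)]_{{\bf n}-{\bf e}}$ and $\operatorname{cd}_{(\theta_{j+1},\dots,\theta_d)}(K_j)\le d-j$ you get vanishing only for $i>d-j-2$, and as you yourself note this leaves every $H^i$ with $0\le i\le d-j-2$ untreated. The corollary is stated and used (in Lemma \ref{vanishing of lc of Nj} and Theorem \ref{finiteness of local cohomology modules}) for arbitrary $d$, so an argument that is complete only for $d\le 3$ is a genuine gap: already for $d=4$, $j=1$ the group $H^1_{\mathcal R_{++}}(N_1)$ is not reached by either your dimension count or your proposed patch, which addresses only $\Gamma_{\mathcal R_{++}}$. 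Moreover that patch is itself only a sketch: the step where ${\bf I}^{l{\bf e}}r\subseteq(y_1,\dots,y_j)\mathcal F({\bf n}+(l-1){\bf e})$ is supposed to ``collapse'' to $r\in(y_1,\dots,y_j)\mathcal F({\bf n}-{\bf e})$ requires cancelling ${\bf I}^{l{\bf e}}$ against a colon ideal, which is not justified by the complete-reduction identity alone.

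The paper's proof shows that no induction on $j$ and no use of the reduction property is needed, because the statement is an instance of a general asymptotic vanishing theorem: $N_j$ is a quotient of $M=\mathcal R'(\mathcal F)$, so $(N_j)_{\geq\bf 0}$ is a finitely generated $\mathbb N^d$-graded module over the standard multigraded Rees algebra $\mathcal R({\bf I})$, whence $[H^i_{\mathcal R_{++}}((N_j)_{\geq\bf 0})]_{\bf n}=0$ for all $i$ and all ${\bf n}\gg{\bf 0}$ by \cite[Theorem 2.3]{jayanthan-verma}, and \cite[Proposition 4.5]{jayanthan-verma} transfers this to $N_j$ itself. If you want to keep a self-contained route, you should either prove (or cite) that general vanishing theorem for finitely generated multigraded modules, or supply an argument for the intermediate cohomologies $H^i$, $1\le i\le d-j-2$, that your exact sequences do not control.
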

\begin{proof}
 \begin{enumerate}
  \item Let $(N_j)_{\geq {\bf 0}}=  \bigoplus_{{\bf n} \geq {\bf 0}}[N_j]_{\bf n}$. 
  Since $\mathcal R(\mathcal F)$ is a finite $\mathcal R({\bf I})-$module, $(N_j)_{\geq {\bf 0}} $ 
  is a finite $\mathcal R({\bf I})-$module for all $j=1,\ldots,d$. Hence, 
  from \cite[Theorem 2.3]{jayanthan-verma}, 
  $[H^i_{\mathcal R_{++}}((N_j)_{\geq {\bf 0}})]_{\bf n}=0$ for ${\bf n}\gg {\bf 0}$. 
  Therefore, by \cite[Proposition 4.5]{jayanthan-verma}, 
  $[H^i_{\mathcal R_{++}}(N_j)]_{\bf n}=0$ for ${\bf n}\gg {\bf 0}$.
  
  \item Follows from part(1) and Corollary \ref{corollary of preliminary lemma}.
 \end{enumerate}
\end{proof}

\begin{remark} \label{remark on gcr}
\begin{enumerate}
 \item Every good complete reduction of $\F$ is strict good complete reduction of $\F$. 

\item \label{jr coming from gcr are good} If $(x_{ij})$ is a good complete reduction of the filtration $\mathcal F$ then $(x_{i,\sigma(i)})_{i \in S_d}$ is a 
 good joint reduction for every permutation $\sigma \in S_d$. Here $S_d$ is the symmetric group on $d$ letters.
\end{enumerate}
\end{remark}

\begin{lemma} \label{vanishing of lc of Nj}
 Let $(y_1,\ldots,y_d)$ be a strict complete reduction of $\mathcal F$. Then 
 for $j=1,\ldots,d-1$,
 \begin{eqnarray*}
 \lm_R[H^{d-j}_{\mathcal I}(N_j)]_{\bf n}<\infty \mbox{ for all }{\bf n}\geq j{\bf e}.
 \end{eqnarray*}
\end{lemma}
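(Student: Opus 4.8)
Here is how I would approach the proof of Lemma~\ref{vanishing of lc of Nj}.

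The plan is to argue by \emph{descending} induction on $j$, the engine at each stage being multiplication by $\theta_{j+1}$ on $N_j$. To prime the induction I would first record the ``$j=d$'' fact: every graded piece of $N_d=M/(\theta_1,\ldots,\theta_d)M$, namely $\mathcal F({\bf n})/(y_1,\ldots,y_d)\mathcal F({\bf n}-{\bf e})$, has finite length, because the complete reduction equation $\mathcal F({\bf m}+{\bf e})=(y_1,\ldots,y_d)\mathcal F({\bf m})$ (valid for ${\bf m}\gg{\bf 0}$) shows $(y_1,\ldots,y_d)\supseteq\mathcal F({\bf m}+{\bf e})$, so $(y_1,\ldots,y_d)$ is $\m$-primary, hence a system of parameters, hence (as $R$ is Cohen--Macaulay) a regular sequence; in particular $\lm_R[H^0_{\mathcal I}(N_d)]_{\bf n}<\infty$ for all ${\bf n}$. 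For $j=d-1,d-2,\ldots,1$ in turn I would then deduce the statement for $N_j$ from the statement for $N_{j+1}$, which for $j=d-1$ is the fact just recorded.

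For the inductive step put $C_{j+1}=(0:_{N_j}\theta_{j+1})$ and $D_{j+1}=\theta_{j+1}N_j\cong(N_j/C_{j+1})(-{\bf e})$, giving the two short exact sequences $0\to C_{j+1}(-{\bf e})\to N_j(-{\bf e})\to D_{j+1}\to 0$ and $0\to D_{j+1}\to N_j\to N_{j+1}\to 0$. The crucial structural input is that $C_{j+1}$ vanishes in degrees ${\bf n}\geq j{\bf e}$: if $a\in\mathcal F({\bf n})$ satisfies $y_{j+1}a\in(y_1,\ldots,y_j)\mathcal F({\bf n})$, then $y_{j+1}a\in(y_1,\ldots,y_j)R$ forces $a\in(y_1,\ldots,y_j)R$ since $y_1,\ldots,y_{j+1}$ is a regular sequence, and then $a\in(y_1,\ldots,y_j)R\cap\mathcal F({\bf n})=(y_1,\ldots,y_j)\mathcal F({\bf n}-{\bf e})$ by the strict complete reduction identity once ${\bf n}\geq j{\bf e}$, i.e.\ $\bar a=0$ in $N_j$; unravelled, this is exactly $[C_{j+1}]_{\bf n}=0$ for ${\bf n}\geq j{\bf e}$. (Localizing at $\mathfrak p\neq\m$ and using that the $I_i$ are $\m$-primary so that $M_{\mathfrak p}=R_{\mathfrak p}[t_1^{\pm1},\ldots,t_d^{\pm1}]$ and $(\theta_1,\ldots,\theta_j)M_{\mathfrak p}=(y_1,\ldots,y_j)M_{\mathfrak p}$ shows in addition that $C_{j+1}$ is $\m$-torsion, which one may use as an alternative throughout.) By Lemma~\ref{preliminary lemma}(2) it follows that $[H^i_{\mathcal I}(C_{j+1})]_{\bf m}=0$ for all $i$ and all ${\bf m}\geq j{\bf e}$. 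Threading this through the first short exact sequence gives, for every ${\bf m}\geq (j+1){\bf e}$, an isomorphism $[H^{d-j}_{\mathcal I}(D_{j+1})]_{\bf m}\cong[H^{d-j}_{\mathcal I}(N_j)]_{{\bf m}-{\bf e}}$ induced by the surjection $N_j(-{\bf e})\twoheadrightarrow D_{j+1}$, under which the map $H^{d-j}_{\mathcal I}(D_{j+1})\to H^{d-j}_{\mathcal I}(N_j)$ occurring in the long exact sequence of the second short exact sequence becomes multiplication by $\theta_{j+1}$.

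Now read off the long exact sequence of $0\to D_{j+1}\to N_j\to N_{j+1}\to 0$ in cohomological degree $d-j$ and in internal degree ${\bf n}+{\bf e}$ with ${\bf n}\geq j{\bf e}$: exactness shows that the kernel of $\theta_{j+1}\colon[H^{d-j}_{\mathcal I}(N_j)]_{\bf n}\to[H^{d-j}_{\mathcal I}(N_j)]_{{\bf n}+{\bf e}}$ is a homomorphic image of $[H^{d-j-1}_{\mathcal I}(N_{j+1})]_{{\bf n}+{\bf e}}$, which by the induction hypothesis has finite length (since ${\bf n}+{\bf e}\geq (j+1){\bf e}$; for $j=d-1$ this is the base fact $\lm_R[H^0_{\mathcal I}(N_d)]_{{\bf n}+{\bf e}}<\infty$). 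Finally I would invoke Corollary~\ref{vanishing of lc of Nj for large n}(2): since $[H^{d-j}_{\mathcal I}(N_j)]_{\bf m}=0$ for ${\bf m}\gg{\bf 0}$, each $[H^{d-j}_{\mathcal I}(N_j)]_{\bf n}$ with ${\bf n}\geq j{\bf e}$ is annihilated by a power of $\theta_{j+1}$; filtering it by the kernels of the powers $\theta_{j+1}^s$ and observing that each successive subquotient embeds, via multiplication by $\theta_{j+1}^s$, into the finite-length module $\ker\big(\theta_{j+1}\colon[H^{d-j}_{\mathcal I}(N_j)]_{{\bf n}+s{\bf e}}\to[H^{d-j}_{\mathcal I}(N_j)]_{{\bf n}+(s+1){\bf e}}\big)$, one obtains a finite filtration with finite-length quotients, whence $\lm_R[H^{d-j}_{\mathcal I}(N_j)]_{\bf n}<\infty$.

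I expect the main obstacle to be the behaviour of $C_{j+1}$ — namely that it is concentrated in degrees not above $j{\bf e}$ (and, if one prefers, $\m$-torsion) — since that is precisely the point at which the strict complete reduction identity and the system-of-parameters property of $y_1,\ldots,y_d$ must be turned into regularity of $\theta_{j+1}$ on $N_j$ in high degrees; once this input and the correct shift bookkeeping are in place (so that $H^{d-j-1}_{\mathcal I}(N_{j+1})$ governs $\ker\theta_{j+1}$), the remaining homological steps and the closing filtration argument are routine.
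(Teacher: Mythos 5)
Your proposal is correct and follows essentially the same route as the paper's proof: the same four-term exact sequence $0 \to B \to N_j(-{\bf e}) \buildrel \theta_{j+1} \over \longrightarrow N_j \to N_{j+1} \to 0$ split into two short exact sequences, the same vanishing of the kernel term in degrees $\geq (j+1){\bf e}$ via the strict complete reduction identity and the regularity of $y_1,\ldots,y_d$, and the same descending induction on $j$ anchored by Corollary \ref{vanishing of lc of Nj for large n} and the finite length of $[H^{0}_{\mathcal I}(N_d)]_{\bf n}$. The only (immaterial) difference is that you conclude by filtering $[H^{d-j}_{\mathcal I}(N_j)]_{\bf n}$ by kernels of powers of $\theta_{j+1}$, whereas the paper runs an explicit decreasing induction on $t$ along ${\bf m}+t{\bf e}$; the two bookkeeping devices are equivalent.
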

\begin{proof}
 Apply descending induction on $j$. Suppose $j=d-1$. Consider the exact sequence 
 \begin{equation}\label{exact sequence relating N(d-1) and N(d)}
\diagram
 0\rto& B^{(d-1)}\rto&N_{d-1}(-{\bf e})\drto \rrto^{\theta_{d}}&& N_{d-1}\rto& N_d \rto &0\\
  &&&K^{(d-1)}\drto\urto&&&\\
 &&0\urto&&0&&
\enddiagram
\end{equation}
Note that 
\begin{eqnarray*}
 [N_{d-1}]_{\bf n}=\frac{\mathcal F({\bf n})}{(y_1,\ldots,y_{d-1})\mathcal F({\bf n}-{\bf e})}.
\end{eqnarray*}
We show that $[H^i_{\mathcal I}(B^{(d-1)})]_{\bf n}=0$ for all ${\bf n}\geq d{\bf e}$ and 
all $i \geq 0.$ In view of Lemma \ref{preliminary lemma}, it is enough to show that 
$[B^{(d-1)}]_{\bf n}=0$ for all ${\bf n}\geq d {\bf e}$. Let $\ov{p} \in [B^{(d-1)}]_{\bf n}$ 
for some $p \in \mathcal F({\bf n}-{\bf e})$, where $\ov{p}$ denotes the image of $p$ in 
$N_{d-1}(-{\bf e})$. 
Then $y_d p \in (y_1,\ldots,y_{d-1})\mathcal F({\bf n}-{\bf e})$. Since $y_1,\ldots,y_d$ is a 
regular sequence 
$$p \in (y_1,\ldots,y_{d-1})\cap \mathcal F({\bf n}-{\bf e})=
(y_1,\ldots,y_{d-1})\mathcal F({\bf n}-2{\bf e}) \mbox{ for all }{\bf n} \geq d{\bf e}.$$
Hence $\ov p=0$. Thus $[B^{(d-1)}]_{\bf n}=0$ for all ${\bf n} \geq d{\bf e}$. 
By the exact sequence (\ref{exact sequence relating N(d-1) and N(d)}), we get that the sequence 
\begin{eqnarray*}
 \cdots\longrightarrow [H^1_{\mathcal I}(B^{(d-1)})]_{\bf n} \longrightarrow [H^1_{\mathcal I}(N_{d-1})]_{{\bf n}-{\bf e}} 
 \longrightarrow [H^1_{\mathcal I}(K^{(d-1)})]_{\bf n} \longrightarrow 
 [H^2_{\mathcal I}(B^{(d-1)})]_{\bf n} \longrightarrow \cdots
\end{eqnarray*}
is exact. Hence for ${\bf n} \geq d{\bf e}$ we have,
\begin{eqnarray} \label{isomorphism of N(d-1) and K(d-1)}
 [H^1_{\mathcal I}(N_{d-1})]_{{\bf n}-{\bf e}} \simeq [H^1_{\mathcal I}(K^{(d-1)})]_{\bf n}.
\end{eqnarray}
By the exact sequence (\ref{exact sequence relating N(d-1) and N(d)}), we also have the 
exact sequence 
\begin{eqnarray} \label{exact sequence relating N(d-1) and K(d-1)}
\cdots\longrightarrow[H^0_{\mathcal I}(N_d)]_{\bf n} \longrightarrow [H^1_{\mathcal I}(K^{(d-1)})]_{\bf n} 
\longrightarrow [H^1_{\mathcal I}(N_{d-1})]_{\bf n} \longrightarrow \cdots. 
\end{eqnarray}
Fix $(d-1){\bf e} \leq {\bf m} \in \mathbb Z^d $. By 
Corollary \ref{vanishing of lc of Nj for large n}, there exist an integer $t_0 \in \mathbb N$ such that 
$$[H^1_{\mathcal I}(N_{d-1})]_{{\bf m}+t{\bf e}}=0 \mbox{ for } t \geq t_0+1.$$
Therefore $\lm_R([H^1_{\mathcal I}(N_{d-1})]_{{\bf m}+t{\bf e}})<\infty$ 
for $t \gg 0$. We use decreasing induction on $t$ to show that 
$\lm_R([H^1_{\mathcal I}(N_{d-1})]_{{\bf m}+t{\bf e}}) < \infty$ for all $t \geq 0$. 
Assume the result for $t \geq1$ and 
prove it for $t-1$. Since  
$$[H^0_{\mathcal I}(N_d)]_{\bf n} \subseteq [H^0_{\mathcal R_{++}}(N_d)]_{\bf n} \subseteq [N_d]_{\bf n}
=\frac{\mathcal F({\bf n})}{(y_1,\ldots,y_d)\mathcal F({\bf n}-{\bf e})}\mbox{ for all }{\bf n} 
\in \mathbb Z^d,$$
$\lm_R([H^0_{\mathcal I}(N_d)]_{\bf n})< \infty$ for all ${\bf n}\in \mathbb Z^d$. 
Hence using induction hypothesis and 
the exact sequence (\ref{exact sequence relating N(d-1) and K(d-1)}), 
we obtain 
$$\lm_R([H^1_{\mathcal I}(K^{(d-1)})]_{{\bf m}+t{\bf e}})< \infty.$$
Since ${\bf m}+t{\bf e} \geq d{\bf e}$, from (\ref{isomorphism of N(d-1) and K(d-1)}), 
we get $\lm_R([H^1_{\mathcal I}(N_{d-1})]_{{\bf m}+(t-1){\bf e}})<\infty$. 
Thus $\lm_R([H^1_{\mathcal I}(N_{d-1})]_{{\bf m}+t{\bf e}})<\infty$ for all $t \geq 0$ and hence 
$\lm_R([H^1_{\mathcal I}(N_{d-1})]_{{\bf m}})<\infty$. Therefore 
$\lm_R([H^1_{\mathcal I}(N_{d-1})]_{{\bf m}})$ has finite length for all ${\bf m} \geq (d-1){\bf e}$. \\
Assume the result for $N_j$ for $j \geq 2$ and we prove it for $N_{j-1}$. Consider 
the exact sequence
 \begin{equation}\label{exact sequence relating N(j-1) and N(j)}
\diagram
 0\rto& B^{(j-1)}\rto&N_{j-1}(-{\bf e})\drto \rrto^{\theta_{j}}&& N_{j-1}\rto& N_j \rto &0\\
  &&&K^{(j-1)}\drto\urto&&&\\
 &&0\urto&&0&&
\enddiagram
\end{equation}
Note that 
\begin{eqnarray*}
 [N_{j-1}]_{\bf n}=\frac{\mathcal F({\bf n})}{(y_1,\ldots,y_{j-1})\mathcal F({\bf n}-{\bf e})}.
\end{eqnarray*}
Hence we have the sequence 
\begin{eqnarray*}
 0 \longrightarrow [B^{(j-1)}]_{\bf n} \longrightarrow \frac{\mathcal F({\bf n}-{\bf e})}
 {\sum_{i=1}^{j-1}y_i\mathcal F({\bf n}-2{\bf e})} \buildrel y_j \over \longrightarrow 
 \frac{\mathcal F({\bf n})}{\sum_{i=1}^{j-1}y_i \mathcal F({\bf n}-{\bf e})} \longrightarrow 0. 
\end{eqnarray*}
Let $\ov{p} \in [B^{(j-1)}]_{\bf n}$ for some $p \in \mathcal F({\bf n}-{\bf e})$ and 
$py_j \in \sum_{i=1}^{j-1}y_i \mathcal F({\bf n}-{\bf e})$. Since $y_1,\ldots,y_j$ is a 
regular sequence, we get 
$$p \in (y_1,\ldots,y_{j-1}) \cap \mathcal F({\bf n}-{\bf e})=
(y_1,\ldots,y_{j-1})
\mathcal F({\bf n}-2{\bf e}) \mbox{ for }{\bf n} -{\bf e}\geq (j-1){\bf e}, \mbox{ i.e, for } 
{\bf n} \geq j{\bf e}.$$
Therefore for ${\bf n} \geq j{\bf e}$, $[B^{(j-1)}]_{\bf n}=0$ which implies that 
$[H^i_{\mathcal I}(B^{(j-1)})]_{\bf n}=0$ for all ${\bf n}\geq j{\bf e}$ and all $i \geq 0$, by Lemma 
\ref{preliminary lemma}. By the exact sequence (\ref{exact sequence relating N(j-1) and N(j)}), 
we get 
\begin{eqnarray*}
 \cdots \longrightarrow [H^{d-j+1}_{\mathcal{I}}(B^{(j-1)})]_{\bf n} \longrightarrow [H^{d-j+1}_{\mathcal I}(N_{j-1})]_
 {{\bf n}-{\bf e}} \longrightarrow [H^{d-j+1}_{\mathcal I}(K^{(j-1)})]_{\bf n} 
 \longrightarrow [H^{d-j+2}_{\mathcal I}(B^{(j-1)})]_{\bf n} \longrightarrow \cdots
\end{eqnarray*}
is exact. Hence for all ${\bf n} \geq j{\bf e}$,
\begin{eqnarray} \label{isomorphism of N(j-1) and K(j-1)}
[H^{d-j+1}_{\mathcal I}(N_{j-1})]_{{\bf n}-{\bf e}} \simeq [H^{d-j+1}_{\mathcal I}
(K^{(j-1)})]_{\bf n}.
\end{eqnarray}
By the exact sequence (\ref{exact sequence relating N(j-1) and N(j)}), we obtain 
\begin{eqnarray}\label{exact sequence relating N(j-1) and K(j-1)}
\cdots \longrightarrow [H^{d-j}_{\mathcal I}(N_j)]_{\bf n} \longrightarrow [H^{d-j+1}_{\mathcal I}(K^{(j-1)})]_{\bf n} 
\longrightarrow [H^{d-j+1}_{\mathcal I}(N_{j-1})]_{\bf n} \longrightarrow \cdots
\end{eqnarray}
is exact. By induction hypothesis $[H^{d-j}_{\mathcal I}(N_j)]_{\bf n}$ 
has finite length for all ${\bf n} \geq j{\bf e}$. Fix $(j-1){\bf e} \leq {\bf m} \in \mathbb Z^d$. 
We use decreasing induction on 
$t$ to show that $\lm_R([H^{d-j+1}_{\mathcal I}(N_{j-1})]_{{\bf m}+t{\bf e}})< \infty$ 
for all $t \geq 0$. By Corollary \ref{vanishing of lc of Nj for large n}, 
the result is true for $t \gg 0$. 
Assume the result for $t\geq 1$ and prove it for $t-1$. 
As ${\bf m}+t{\bf e} \geq j{\bf e}$, by the exact sequence (\ref{exact sequence relating N(j-1) and K(j-1)}), $[H^{d-j+1}_{\mathcal I}(K^{(j-1)})]_{{\bf m}
+t{\bf e}}$ has finite length. Hence, from (\ref{isomorphism of N(j-1) and K(j-1)}), 
$[H^{d-j+1}_\mathcal I(N_{j-1})]_{{\bf m}+(t-1){\bf e}}$ has finite length. Thus 
$\lm_R([H^{d-j+1}_\mathcal I(N_{j-1})]_{{\bf m}+t{\bf e}})<\infty$ for all $t \geq 0$ and hence 
$\lm_R([H^{d-j+1}_\mathcal I(N_{j-1})]_{{\bf m}})<\infty$. Therefore 
$[H^{d-j+1}_\mathcal I(N_{j-1})]_{{\bf n}}$ has finite length for all ${\bf n} \geq (j-1){\bf e}.$
\end{proof}

In the following theorem we prove that for a strict complete reduction $(y_1,\ldots,y_d)$ of 
$\mathcal F$, $[H^d_{\mathcal I}(\mathcal R^\prime(\mathcal F))]_{\bf n}$ has finite length 
for all ${\bf n} \geq {\bf 0}$.

\begin{theorem}\label{finiteness of local cohomology modules}
 Let $(R,\m)$ be a $d-$dimensional Cohen-Macaulay local ring. Suppose ${\bf I}=(I_1,\ldots,I_d)$ 
 is a set of $\m-$primary ideals and $\mathcal F$ is an admissible ${\bf I}-$filtration. Suppose 
 $(x_{ij})$ is a strict complete reduction of $\mathcal F$ and $\mathcal I=(a_1t_1,\ldots,
 a_dt_d)$ be as before. Let $M=\mathcal R^\prime(\mathcal F)$. Then 
 \begin{eqnarray*}
  \lm_R([H^d_{\mathcal I}(M)]_{\bf n})<\infty \mbox{ for all }{\bf n} \geq {\bf 0}.
 \end{eqnarray*}
 \end{theorem}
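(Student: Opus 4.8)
The plan is to push the descending induction that proves Lemma~\ref{vanishing of lc of Nj} one step further, down to $N_0 = M$. First I would write down the $j=1$ analogue of the exact sequence (\ref{exact sequence relating N(j-1) and N(j)}):
$$0 \longrightarrow B^{(0)} \longrightarrow M(-{\bf e}) \xrightarrow{\ \theta_1\ } M \longrightarrow N_1 \longrightarrow 0, \qquad B^{(0)} = \ker\big(\theta_1 \colon M(-{\bf e}) \to M\big).$$
The only point that is genuinely new compared to Lemma~\ref{vanishing of lc of Nj} is that $B^{(0)}$ vanishes in \emph{every} degree, not merely for ${\bf n} \geq {\bf e}$: indeed $\theta_1 = y_1{\bf t}^{\bf e}$ is a nonzerodivisor on $M$, since $y_1$ is a nonzerodivisor on $R$ --- it is part of the regular sequence $y_1,\ldots,y_d$ already used in the proof of Lemma~\ref{vanishing of lc of Nj} --- and each graded component of $M$ is an ideal of $R$. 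Thus we obtain the short exact sequence $0 \to M(-{\bf e}) \xrightarrow{\theta_1} M \to N_1 \to 0$.

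Next I would take $H^{\bullet}_{\mathcal I}(-)$ and pass to the ${\bf n}$-th graded piece, extracting the segment
$$[H^{d-1}_{\mathcal I}(N_1)]_{\bf n} \longrightarrow [H^{d}_{\mathcal I}(M)]_{{\bf n}-{\bf e}} \xrightarrow{\ \theta_1\ } [H^{d}_{\mathcal I}(M)]_{\bf n} \longrightarrow [H^{d}_{\mathcal I}(N_1)]_{\bf n} \longrightarrow [H^{d+1}_{\mathcal I}(M)]_{{\bf n}-{\bf e}},$$
in which the last term is $0$ because $\mathcal I$ is generated by $d$ elements. Exactness at $[H^{d}_{\mathcal I}(M)]_{{\bf n}-{\bf e}}$ then gives
$$\lm_R\big([H^{d}_{\mathcal I}(M)]_{{\bf n}-{\bf e}}\big) \;\leq\; \lm_R\big([H^{d-1}_{\mathcal I}(N_1)]_{\bf n}\big) + \lm_R\big([H^{d}_{\mathcal I}(M)]_{\bf n}\big).$$
By the case $j=1$ of Lemma~\ref{vanishing of lc of Nj} the first summand on the right is finite for all ${\bf n} \geq {\bf e}$ (when $d=1$ that case of the lemma is vacuous, but then $[H^{0}_{\mathcal I}(N_1)]_{\bf n} \subseteq [N_1]_{\bf n} = \mathcal F({\bf n})/y_1\mathcal F({\bf n}-{\bf e})$, which has finite length); and by Corollary~\ref{vanishing of lc for large n} the module $[H^{d}_{\mathcal I}(M)]_{\bf n}$ vanishes for ${\bf n} \gg {\bf 0}$.

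Finally I would fix ${\bf m} \in \ZZ^d$ with ${\bf m} \geq {\bf 0}$ and prove $\lm_R\big([H^{d}_{\mathcal I}(M)]_{{\bf m}+t{\bf e}}\big) < \infty$ for all $t \geq 0$ by decreasing induction on $t$, exactly as in the final paragraph of the proof of Lemma~\ref{vanishing of lc of Nj}: the module is zero for $t \gg 0$, and applying the displayed inequality with ${\bf n} = {\bf m}+t{\bf e}$ --- which satisfies ${\bf n} \geq {\bf e}$ as soon as $t \geq 1$ --- carries finiteness from $t$ to $t-1$. Taking $t=0$ yields $\lm_R\big([H^{d}_{\mathcal I}(M)]_{\bf m}\big) < \infty$, and since ${\bf m} \geq {\bf 0}$ is arbitrary this is the theorem. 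I do not expect a real obstacle here: essentially all the work is the inductive bookkeeping already done in Lemma~\ref{vanishing of lc of Nj}, and the only extra ingredient --- the injectivity of $\theta_1$ on $M$ --- merely upgrades the four-term sequence of that lemma to a bona fide short exact sequence holding in all degrees.
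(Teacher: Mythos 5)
Your argument is correct and is essentially the paper's own proof: both use the short exact sequence $0 \to M(-\mathbf{e}) \xrightarrow{\theta_1} M \to N_1 \to 0$, the finiteness of $[H^{d-1}_{\mathcal I}(N_1)]_{\mathbf n}$ for $\mathbf n \geq \mathbf e$ from Lemma \ref{vanishing of lc of Nj}, the vanishing for $\mathbf n \gg \mathbf 0$ from Corollary \ref{vanishing of lc for large n}, and descending induction along $\mathbf m + t\mathbf e$. Your explicit remarks on the injectivity of $\theta_1$ and the degenerate case $d=1$ are harmless additions.
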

\begin{proof}
 By Lemma \ref{vanishing of lc of Nj}, $[H^{d-1}_{\mathcal I}(N_1)]_{\bf n}$ 
 has finite length for all ${\bf n} \geq {\bf e}$. Fix ${\bf m} \in \mathbb N^d$. 
 Then ${\bf m}+t{\bf e} \gg {\bf 0}$ for all large positive integers $t.$ Hence, by 
 Corollary \ref{vanishing of lc for large n}, there 
 is an integer $t_0 \in \mathbb N$ such that 
 \begin{eqnarray} \label{finiteness of length for large n}
  [H^d_{\mathcal I}(M)]_{{\bf m}+t{\bf e}}=0 \mbox{ for }t \geq t_0+1.  
 \end{eqnarray}
 We use descending induction on $t$ to show that $\lm_R([H^d_{\mathcal I}(M)]_{{\bf m}+t{\bf e}}) < 
 \infty.$ From (\ref{finiteness of length for large n}), 
 $\lm_R([H^d_{\mathcal I}(M)]_{{\bf m}+t{\bf e}}) < \infty$ for $t \geq t_0+1$. 
 Assume the result for $t \geq 1$ and prove it for $t-1$.
 The exact sequence
\begin{eqnarray*}
 0 \longrightarrow M(-{\bf e})\buildrel{\theta_1}\over\longrightarrow M 
 \longrightarrow N_1 \longrightarrow 0
\end{eqnarray*}
gives the long exact sequence
\begin{eqnarray*}
 \cdots\longrightarrow [H^i_{\mathcal I}(M)]_{{\bf m}+(t-1){\bf e}} \longrightarrow 
 [H^i_{\mathcal I}(M)]_{{\bf m}+t{\bf e}} \longrightarrow [H^i_{\mathcal I}(N_1)]_{{\bf m}+t{\bf e}}
 \longrightarrow \cdots.
\end{eqnarray*}
Note that $[H^{d-1}_{\mathcal I}(N_1)]_{{\bf m}+t{\bf e}}$ 
 has finite length. Therefore using the exact sequence 
\begin{eqnarray*}
 \cdots \longrightarrow[H^{d-1}_{\mathcal I}(N_1)]_{{\bf m}+t{\bf e}} \longrightarrow 
 [H^d_{\mathcal I}(M)]_{{\bf m}+(t-1){\bf e}} \longrightarrow [H^d_{\mathcal I}(M)]_{{\bf m}+t{\bf e}} 
 \longrightarrow \cdots
\end{eqnarray*}
and induction hypothesis we obtain that $\lm_R([H^d_{\mathcal I}(M)]_{{\bf m}+
(t-1){\bf e}})<\infty$. Hence $\lm_R([H^d_{\mathcal I}(M)]_{{\bf m}+t{\bf e}})<\infty$ 
for all $t\geq 0$ and thus $\lm_R([H^d_{\mathcal I}(M)]_{\bf m})<\infty$. 
\end{proof}

We give an example to show that $[H^2_{(at_1,bt_2)}(\mathcal R^\prime(I,J))]_{(0,0)}$ need not have 
finite length in a Cohen-Macaulay local ring of dimension $2$ with infinite residue 
field and a joint reduction $(a,b)$ of $(I,J)$, where $\mathcal R^\prime(I,J)$ is the bigraded 
Rees algebra of the filtration $\{I^rJ^s\}$. 

\begin{example}\cite[Example 4.10]{dcruz-masuti}
 Let $R=k[|x,y|], \m=(x,y), I=(x^2,y^2), J=\m^2$. Let $a=x^2,b=y^2$. 
 Then, by \cite[Example 4.10]{dcruz-masuti}, $\lm_R([H^2_{(at_1,bt_2)}(\mathcal R^\prime(I,J))]_{(0,0)})$ 
 is not finite. We claim that there does not exists a strict complete reduction of $\{I^rJ^s\}$. 
 Suppose there exists a strict complete reduction $(x_{ij})$ of $\{I^rJ^s\}$. Then, 
 by Theorem \ref{finiteness of local cohomology modules}, 
 $\lm_R([H^2_{(x_{11}t_1,x_{22}t_2)}(\mathcal R^\prime(I,J))]_{(0,0)})< \infty$. 
 By \cite[Theorem 4.6]{dcruz-masuti}, it follows that 
 $\lm_R([H^2_{(at_1,bt_2)}(\mathcal R^\prime(I,J))]_{(0,0)})<\infty$, a contradiction. 
 \end{example}

\section{Existence of Good Complete Reductions and Good Joint Reductions}
\noindent Let $(R,\m)$ be a Noetherian local ring of dimension $3$ with infinite residue field and 
${\bf I}=(I_1,I_2,I_3)$ be a set of $\m-$primary ideals in $R$. 
In this section we prove existence of good complete reductions of an ${\bf I}-$admissible 
filtration $\mathcal F$ under some 
additional hypothesis on associated graded rings.  The following lemma is an analogue of 
\cite[Lemma 1.2]{rees3} for filtrations of ideals. Since the proof is similar, we skip it.
\begin{lemma} \em[Rees' Lemma]\label{r1.2}
 Let $(R,\m)$ be a Noetherian local ring of dimension $d$ with
infinite residue field and $I_1,\ldots,I_g$ be ideals in $R.$ 
Let $\mathcal F=\{\mathcal F({\bf n})\}$ be an $\bf I-$admissible filtration. 
Let $S$ be a finite set of prime ideals of $R$ not containing $I_1\ldots I_g.$ Then there
exist $x_i \in I_i$ not contained in any of 
the prime ideals in $S$ and integers $r_i$ such that 
\begin{eqnarray}\label{eq4}
(x_i) \cap \mathcal F({\bf n})=x_i \mathcal F({{\bf n}-{\bf e}_i}) 
\mbox{ for all } {\bf n} \geq r_i{\bf e}_i.
\end{eqnarray}
\end{lemma}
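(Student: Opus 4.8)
The plan is to deduce Rees' Lemma for filtrations from the classical version \cite[Lemma 1.2]{rees3} by passing to the extended multi-Rees algebra, where the filtration $\mathcal F$ becomes a genuine $\mathbb Z^g$-graded ideal structure amenable to the original prime-avoidance argument. Concretely, I would first set $\mathcal A=\mathcal R'(\mathcal F)=\bigoplus_{\bf n} \mathcal F({\bf n}){\bf t}^{\bf n}$, which is a finite module — in fact a finite $R$-algebra after suitable normalization — over $\mathcal R'({\bf I})$ by ${\bf I}$-admissibility, hence Noetherian. The homogeneous component $\mathcal F({\bf n}){\bf t}^{\bf n}$ plays the role of a ``power'' of an ideal, and the multiplicative property $\mathcal F({\bf n})\mathcal F({\bf m})\subseteq \mathcal F({\bf n}+{\bf m})$ gives exactly the structure needed to run the Artin–Rees / superficial-element machinery along the $i$-th coordinate direction.

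Next I would fix $i$ and work in the ``$i$-th direction'': consider $\mathcal F({\bf n})$ as a filtration indexed by $n_i$ (with the other coordinates frozen, or more invariantly look at the Rees-like algebra $\bigoplus_{k\ge 0}\mathcal F(k{\bf e}_i)t_i^k$ and its module of components). The key point is that \cite[Lemma 1.2]{rees3} produces an element $x_i$, avoiding a prescribed finite set $S$ of primes not containing $I_i$ (here I use that $S$ does not contain $I_1\cdots I_g$, so in particular one may arrange $x_i\in I_i\setminus\bigcup_{\mathfrak p\in S}\mathfrak p$ since $I_i\not\subseteq\mathfrak p$ for $\mathfrak p\in S$ after intersecting — one should be slightly careful that $S$ contains no prime containing $I_1\cdots I_g$ forces, by primality, that no $\mathfrak p\in S$ contains every $I_j$, but we need $I_i\not\subseteq\mathfrak p$; this is where I would instead apply the lemma coordinate by coordinate, choosing the $x_i$ successively and enlarging $S$), which is a superficial element for the filtration, i.e. satisfies $(x_i)\cap\mathcal F({\bf n})=x_i\mathcal F({\bf n}-{\bf e}_i)$ for all $n_i\gg 0$ with the other coordinates also large — and then a standard argument upgrades ``all coordinates large'' to ``${\bf n}\ge r_i{\bf e}_i$'' because increasing a coordinate $n_j$, $j\ne i$, only shrinks both sides compatibly. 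The integer $r_i$ is the stabilization bound coming from the Artin–Rees number of $x_i$ on $\mathcal A$.

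The main obstacle, and the reason the authors say ``the proof is similar'' rather than identical, is bookkeeping with the multi-grading: \cite[Lemma 1.2]{rees3} is stated for a single ideal and its adic filtration, so one must check that the colon-capturing identity $(x_i)\cap\mathcal F({\bf n})=x_i\mathcal F({\bf n}-{\bf e}_i)$, which a priori only holds for ${\bf n}$ large in \emph{every} coordinate, in fact holds once $n_i\ge r_i$ regardless of the size of $n_j$ for $j\ne i$. This follows from the observation that both sides are contained in $\mathcal F({\bf n})$ and that, writing ${\bf n}={\bf n}'+{\bf m}$ with ${\bf m}$ supported away from $i$, multiplication by $\mathcal F({\bf m})$ carries the identity at ${\bf n}'$ to an inclusion at ${\bf n}$ in one direction, while the reverse inclusion $x_i\mathcal F({\bf n}-{\bf e}_i)\subseteq (x_i)\cap\mathcal F({\bf n})$ is automatic; the content is the $\subseteq$ direction, which reduces to $x_i$ being a nonzerodivisor modulo the relevant submodules and is exactly what superficiality along the $i$-direction buys us. I would also note that one may choose a common bound $r_i$ working for all ${\bf n}\ge r_i{\bf e}_i$ by finite generation of $\mathcal A$. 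Assembling these, the $g$ elements $x_1,\dots,x_g$ and integers $r_1,\dots,r_g$ are produced, establishing \eqref{eq4}.
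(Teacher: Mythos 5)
Your overall plan---pass to the extended multi-Rees algebra $\mathcal R^\prime(\mathcal F)$, which is Noetherian because it is a finite module over $\mathcal R^\prime({\bf I})$, and produce $x_i$ by a vector-space avoidance argument in $I_i/\m I_i$ over the infinite residue field---is the right one, and it is what the paper intends (the paper gives no proof, deferring to \cite[Lemma 1.2]{rees3}, whose mechanism it records separately as Lemma \ref{keylemma}). But there is a genuine gap at the decisive step, namely why the identity holds for \emph{all} ${\bf n}\geq r_i{\bf e}_i$, i.e.\ with $n_j\geq 0$ arbitrary for $j\neq i$, rather than only for ${\bf n}\gg{\bf 0}$. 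Your proposed upgrade---write ${\bf n}={\bf n}^\prime+{\bf m}$ with ${\bf m}$ supported away from $i$ and multiply the identity at ${\bf n}^\prime$ by $\mathcal F({\bf m})$---only controls the subideal $\mathcal F({\bf m})\mathcal F({\bf n}^\prime)$ of $\mathcal F({\bf n})$, which may be strictly smaller; it yields $\mathcal F({\bf m})\bigl((x_i)\cap\mathcal F({\bf n}^\prime)\bigr)\subseteq x_i\mathcal F({\bf n}-{\bf e}_i)$ but says nothing about a general element of $(x_i)\cap\mathcal F({\bf n})$, which is exactly the hard inclusion. Appealing to ``superficiality along the $i$-th direction'' at this point is circular, since superficiality as you have set it up is only known when all coordinates are large. (Two smaller points: \cite[Lemma 1.2]{rees3} is already the multi-ideal statement for the adic filtration, not a single-ideal statement, and it does not apply verbatim to a general admissible filtration, so one must rerun its proof with $\mathcal R^\prime(\mathcal F)$ in place of $\mathcal R^\prime({\bf I})$; also, since $I_1\cdots I_g\subseteq I_i$, no $\p\in S$ can contain $I_i$, so your successive enlargement of $S$ is unnecessary.)

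The missing ingredient is the one the paper isolates in Lemma \ref{keylemma}: with $u=t_1^{-1}\cdots t_g^{-1}$, the relevant finite set of primes to avoid is $S_i=\{\p\in\ass_{\mathcal R^\prime(\mathcal F)}(\mathcal R^\prime(\mathcal F)/u\mathcal R^\prime(\mathcal F))\mid I_it_i\nsubseteq\p\}$, together with the primes of $S$. Choosing $x_i\in I_i$ with $x_it_i\notin\p$ for all $\p\in S_i$ forces every associated prime of $(u\mathcal R^\prime(\mathcal F):x_it_i)/u\mathcal R^\prime(\mathcal F)$ to contain $I_it_i$, and it is this---via a primary decomposition of $u\mathcal R^\prime(\mathcal F)$ and a degree count in the $i$-th variable only---that produces a single bound $r_i$ on $n_i$ working uniformly for all values of the other coordinates. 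Without introducing $u$ and these associated primes, the uniform bound $r_i$ is not obtained, so the proposal as written does not establish \eqref{eq4}.
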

\noindent We put $u_i=t_i^{-1}$ and $u=u_1\ldots u_g$. The proof of 
\cite[Lemma 1.2]{rees3} shows the following:

\begin{lemma}\label{keylemma}
Let $\mathcal F$ be an $\bf I-$admissible filtration. Let $1 \leq i \leq g$ be fixed and 
\begin{eqnarray*}
 S_i=\{\p\in \ass_{\mathcal R^\prime(\mathcal F)} (\mathcal R^\prime(\mathcal F)/u\mathcal R^\prime(\mathcal F))\mid I_it_i \nsubseteq \p\}.
\end{eqnarray*}

\noindent Suppose there exists an element $x_i \in I_i $ such that $x_it_i \notin \p $ for all 
$\p \in S_i. $
Then there exists an integer $r_i$ such that
\begin{eqnarray*}
 (x_i) \cap \mathcal F({\bf n})&=&x_i \mathcal F({{\bf n}-{\bf e}_i})\mbox{ for all } {\bf n} \geq r_i{\bf e}_i. 
\end{eqnarray*}
 \end{lemma}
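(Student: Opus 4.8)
The plan is to reproduce the argument underlying Rees' Lemma~\ref{r1.2}, which is the classical superficial-element mechanism. Throughout put $A=\R'(\F)$, a Noetherian $\ZZ^g$-graded ring (because $\F$ is ${\bf I}$-admissible) sitting inside $R[t_1^{\pm1},\dots,t_g^{\pm1}]=A_u$, the localization of $A$ at $u={\bf t}^{-{\bf e}}$; note $u$ is a nonzerodivisor on $A$ and $x_it_i$ is a homogeneous element of $A$ of degree ${\bf e}_i$. I will assume $x_i$ is a nonzerodivisor on $R$ (automatic when $\depth R\ge 1$, in particular in all the applications, since $I_i$ is $\m$-primary), so that $x_it_i$ is a nonzerodivisor on $A$. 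First I reformulate the conclusion: the ``defect module'' $C:=\bigl((x_it_i)A_u\cap A\bigr)/(x_it_i)A$ is a finitely generated $\ZZ^g$-graded $A$-module with $[C]_{\bf n}=\bigl((x_i)\cap\F({\bf n})\bigr)/x_i\F({\bf n}-{\bf e}_i)$, so the lemma amounts to showing $[C]_{\bf n}=0$ once $n_i$ is large enough. Since intersecting an ideal of $A_u$ with $A$ is the same as $u$-saturation, we also have $C=H^0_{(u)A}(A/(x_it_i)A)$.

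The heart of the proof is to locate $\ass_A(C)$; this is where the hypothesis on $S_i$ is used. From $C=H^0_{(u)}(A/(x_it_i)A)$ one gets $\ass_A(C)=\ass_A(A/(x_it_i)A)\cap V((u)A)$, and every $\p$ in this set contains $x_it_i$ because $\ann_A(A/(x_it_i)A)=(x_it_i)A$. A short localization-and-depth computation then shows such a $\p$ lies in $\ass_A(\R'(\F)/u\R'(\F))$: after localizing at $\p$, the element $x_it_i$ is a nonzerodivisor on $A_\p$ and $\p A_\p$ is associated to $A_\p/(x_it_i)A_\p$, which forces $\depth A_\p=1$, hence $\depth A_\p/uA_\p=0$. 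So every associated prime of $C$ is an associated prime of $\R'(\F)/u\R'(\F)$ that contains $x_it_i$, and by the hypothesis it therefore contains $I_it_i$. Consequently $\supp_A(C)\subseteq V(I_it_iA)$, and finite generation of $C$ gives $(I_it_i)^aC=0$ for some $a\ge1$.

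It remains to convert ``$(I_it_i)^aC=0$'' into ``$C$ vanishes for $n_i$ large'', and here admissibility re-enters. Writing $\R'(\F)$ as a finitely generated module over $\R'({\bf I})$ and comparing the degree-${\bf n}$ and degree-$({\bf n}+{\bf e}_i)$ components (watching the truncation-to-$R$ convention for powers of the $I_j$) produces a $b_i$ with $\F({\bf n}+{\bf e}_i)=I_i\F({\bf n})$ for all ${\bf n}$ with $n_i\ge b_i$; iterating gives $\F({\bf n})=I_i^{a}\F({\bf n}-a{\bf e}_i)$ for $n_i\ge b_i+a$, i.e.\ $\R'(\F)/(I_it_i)^a\R'(\F)$ vanishes in every degree with $n_i\ge b_i+a$. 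Since $C$ is a finitely generated graded module over this last ring (as $(I_it_i)^aC=0$), it is generated in finitely many degrees, and hence $[C]_{\bf n}=0$ as soon as $n_i$ exceeds $b_i+a$ plus the largest $i$-th coordinate occurring among a set of homogeneous generators of $C$; that bound is the desired $r_i$.

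The routine pieces are the two descriptions of $C$ and the admissibility comparison. The step I expect to be the real obstacle is the middle one — passing from ``$x_it_i$ avoids the associated primes of $\R'(\F)/u\R'(\F)$ outside $V(I_it_i)$'' to ``$\supp_A C\subseteq V(I_it_i)$'' — and inside it the depth count placing $\ass_A(C)$ into $\ass_A(\R'(\F)/u\R'(\F))$, which genuinely uses that both $u$ and $x_it_i$ are nonzerodivisors on $\R'(\F)$; so the nonzerodivisor hypothesis on $x_i$, together with pinning down the exact form of admissibility needed, is the point to be careful about. This is precisely the content extracted from the proof of \cite[Lemma~1.2]{rees3}.
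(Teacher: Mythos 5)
The paper does not actually prove this lemma: it is stated with the remark that ``the proof of \cite[Lemma 1.2]{rees3} shows the following,'' so the only comparison available is with Rees' original argument, and your reconstruction is exactly that argument. The reduction to the graded defect module $C=\bigl((x_it_i)\mathcal R^\prime(\F)_u\cap \mathcal R^\prime(\F)\bigr)/(x_it_i)\mathcal R^\prime(\F)=H^0_{(u)}(\mathcal R^\prime(\F)/(x_it_i)\mathcal R^\prime(\F))$, the identification $\ass(C)\subseteq\ass(\mathcal R^\prime(\F)/u\mathcal R^\prime(\F))\cap V(x_it_i)$ via the depth count, the conclusion $(I_it_i)^aC=0$, and the use of admissibility to get $\F({\bf n}+{\bf e}_i)=I_i\F({\bf n})$ for $n_i\gg 0$ are all correct and assembled in the right order; the bound you obtain is in fact stronger than the stated conclusion (vanishing for all ${\bf n}$ with $n_i\geq r_i$, not only ${\bf n}\geq r_i{\bf e}_i$).

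The one point to flag is the nonzerodivisor hypothesis on $x_i$, which you correctly identify as the load-bearing assumption in the depth step (from $\depth(\mathcal R^\prime(\F)_\p/(x_it_i)\mathcal R^\prime(\F)_\p)=0$ you can only conclude $\depth \mathcal R^\prime(\F)_\p=1$ if $x_it_i$ is $\mathcal R^\prime(\F)_\p$-regular). However, your parenthetical justification is off: $\depth R\geq 1$ guarantees that a \emph{general} element of an $\m$-primary ideal is a nonzerodivisor, not that the \emph{given} $x_i$ satisfying the prime-avoidance condition on $S_i$ is one, since $S_i$ need not contain (primes lying over) the associated primes of $R$. So strictly speaking you are proving the lemma under an extra hypothesis not present in its statement. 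This is harmless for the paper — every invocation (Lemma \ref{lemma4}, Theorem \ref{existence of good joint reductions}) applies it to elements that are explicitly chosen generically and are nonzerodivisors on the relevant ring — but the assumption should be stated as part of the lemma rather than claimed to be automatic.
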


\begin{lemma}\label{lemma4}
Let $(R,\m)$ be a Noetherian local ring of dimension $d$ with infinite residue field  and $I_1,\ldots,I_g$ be ideals of positive height. 
 Let $\mathcal F$ be an $\bf I-$admissible filtration. For fixed $j$, $1 \leq j \leq g$, 
 suppose there exists a nonzerodivisor $x_j \in I_j \setminus \m I_j$ such that
\begin{eqnarray*}
 (x_j) \cap \mathcal F({{\bf n}})=x_j\mathcal F({\bf n}-{\bf e}_j) \mbox{ for }n_j 
 \gg 0 \mbox{ and for all }n_k \geq 0,k\neq j.
\end{eqnarray*}
Let $H^0_{\mathcal R_{++}}(G_i(\mathcal F))=0$ for $i=1,\ldots,g$.  
Then 
\begin{eqnarray*}
 (x_j) \cap \mathcal F({{\bf n}})=x_j\mathcal F({{\bf n}-{\bf e}_j}) \mbox{ for all }{\bf n}\geq {\bf e}_j 
\end{eqnarray*}
\end{lemma}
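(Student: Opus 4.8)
The hypothesis gives us that $(x_j)\cap\mathcal F(\mathbf n)=x_j\mathcal F(\mathbf n-\mathbf e_j)$ when $n_j$ is large and all other $n_k\geq 0$; we must bootstrap this down to \emph{all} $\mathbf n\geq\mathbf e_j$. The plan is to translate the desired equality into the vanishing of the relevant graded piece of $H^0_{\mathcal R_{++}}(G_j(\mathcal F))$, or more precisely into an injectivity statement about multiplication by $x_j$ on the associated graded ring $G_j(\mathcal F)=\bigoplus_{\mathbf n}\mathcal F(\mathbf n)/\mathcal F(\mathbf n+\mathbf e_j)$. The key observation is that $(x_j)\cap\mathcal F(\mathbf n)=x_j\mathcal F(\mathbf n-\mathbf e_j)$ for \emph{all} $\mathbf n\geq\mathbf e_j$ is equivalent to saying that the image $x_j^*$ of $x_j$ in the degree-$\mathbf e_j$ component of the Rees algebra $\mathcal R(\mathcal F)$ is a nonzerodivisor on $G_j(\mathcal F)$ in the relevant range — indeed if $p\in\mathcal F(\mathbf n-\mathbf e_j)$ and $x_jp\in\mathcal F(\mathbf n+\mathbf e_j)$, then $\bar p\in G_j$ is killed by $x_j^*$, and conversely.

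\textbf{Main steps.} First I would record that since $x_j$ is a nonzerodivisor lying in $I_j\setminus\mathfrak m I_j$, its leading form $x_j^*$ in $G_j(\mathcal F)$ is a well-defined nonzero element of the degree-$\mathbf e_j$ part, and the hypothesis $(x_j)\cap\mathcal F(\mathbf n)=x_j\mathcal F(\mathbf n-\mathbf e_j)$ for $n_j\gg 0$ translates into: multiplication by $x_j^*$ is injective on $[G_j(\mathcal F)]_{\mathbf n}$ for $n_j\gg 0$ and all $n_k\geq 0$. Second, the kernel $K$ of multiplication by $x_j^*$ on $G_j(\mathcal F)$ is then a graded submodule of $G_j(\mathcal F)$ whose components vanish for $n_j\gg 0$ (all other coordinates nonnegative) — in particular $K$ is annihilated by a power of $\mathcal R_{++}$, so $K\subseteq H^0_{\mathcal R_{++}}(G_j(\mathcal F))$. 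Third, invoke the hypothesis $H^0_{\mathcal R_{++}}(G_i(\mathcal F))=0$ for each $i$, in particular for $i=j$, to conclude $K=0$, i.e. $x_j^*$ is a nonzerodivisor on all of $G_j(\mathcal F)$. Fourth, unwind this: nonzerodivisor on $G_j$ means precisely that for every $\mathbf n\geq\mathbf e_j$, if $x_jp\in\mathcal F(\mathbf n)$ with $p\in\mathcal F(\mathbf n-\mathbf e_j)$ then already — by a short induction climbing the $\mathcal F(\mathbf n-\mathbf e_j)\supseteq\mathcal F(\mathbf n)\supseteq\cdots$ chain, using injectivity in each degree — $p\in\mathcal F(\mathbf n-\mathbf e_j)$ suffices and $x_jp\in x_j\mathcal F(\mathbf n-\mathbf e_j)$; combined with the reverse containment $x_j\mathcal F(\mathbf n-\mathbf e_j)\subseteq(x_j)\cap\mathcal F(\mathbf n)$, which is automatic, this gives the claimed equality.

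\textbf{Anticipated obstacle.} The delicate point is the precise bookkeeping in the third and fourth steps: one must be careful that "vanishing of $[G_j]_{\mathbf n}$-components of the kernel for $n_j$ large with other coordinates fixed and $\geq 0$" genuinely implies the kernel sits inside $H^0_{\mathcal R_{++}}(G_j)$ — this requires noting that $\mathcal R_{++}$ is generated in degrees $\geq\mathbf e$, so a high enough power shifts any fixed component into the vanishing range simultaneously in all coordinates, hence kills the kernel. I expect the role of the hypothesis that the $I_i$ have positive height (so the relevant elements are nonzerodivisors and the filtration behaves well under passing to $G_j$) and possibly also $H^0_{\mathcal R_{++}}(G_i)=0$ for $i\neq j$ to enter when one needs that the relevant chain of ideals does not stabilize too early; but the heart of the argument is the identification $K=H^0_{\mathcal R_{++}}(G_j(\mathcal F))\cap(\text{kernel})=0$. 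No hard computation is needed — it is a structural argument about leading forms and local cohomology in degree zero.
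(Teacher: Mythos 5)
Your first half is sound and matches the paper: the hypothesis forces the kernel of multiplication by $x_j^*$ to vanish in degrees with $n_j\gg 0$, that kernel is a graded submodule, $\mathcal R_{++}$ shifts degrees by at least ${\bf e}$, so the kernel lands in $H^0_{\mathcal R_{++}}(G_j(\mathcal F))=0$ and $x_j^*$ is injective on $G_j(\mathcal F)$. This is exactly the paper's first claim (for $i=j$).

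The gap is in your fourth step, and it is not mere bookkeeping. To prove $(x_j)\cap\mathcal F({\bf n})\subseteq x_j\mathcal F({\bf n}-{\bf e}_j)$ you start with an \emph{arbitrary} $z\in R$ with $x_jz\in\mathcal F({\bf n})$; a priori $z$ only lies in $\mathcal F({\bf 0})$, and you must lift it all the way to $\mathcal F({\bf n}-{\bf e}_j)$, i.e.\ climb in \emph{every} coordinate direction, not only the $j$-th. Your formulation (``if $x_jp\in\mathcal F({\bf n})$ with $p\in\mathcal F({\bf n}-{\bf e}_j)$ then \dots\ $p\in\mathcal F({\bf n}-{\bf e}_j)$'') presupposes the membership that has to be established, and the chain you propose to climb, $\mathcal F({\bf n}-{\bf e}_j)\supseteq\mathcal F({\bf n})\supseteq\cdots$, moves only in the $j$-th direction. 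Climbing in the $k$-th direction for $k\neq j$ requires knowing that if $z\in\mathcal F(l{\bf e}_k+\cdots)$ and $x_jz$ lies one step higher in the $k$-th filtration, then so does $z$ --- that is injectivity of $x_j^*$ (an element of degree ${\bf e}_j$) acting on $G_k(\mathcal F)$, not on $G_j(\mathcal F)$. This is precisely why the lemma assumes $H^0_{\mathcal R_{++}}(G_i(\mathcal F))=0$ for \emph{all} $i$: the paper first proves $(0:_{G_i(\mathcal F)}x_j^*)=0$ for every $i=1,\dots,g$ (by the same ``multiply by $y^*$ with $m_j\gg 0$ to enter the range of the hypothesis, then use $H^0_{\mathcal R_{++}}(G_i(\mathcal F))=0$'' trick), then uses the Krull-intersection fact $\bigcap_k\mathcal F(k{\bf e}_i)=0$ (this is where admissibility and the ideals being proper enter) to locate $z$ at a finite level in each direction and climbs coordinate by coordinate: first to $\mathcal F(n_2{\bf e}_2+\cdots+n_g{\bf e}_g)$ using $G_2,\dots,G_g$, and only at the end to $\mathcal F({\bf n}-{\bf e}_j)$ using $G_j$. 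Your proposal, which treats the hypotheses for $i\neq j$ as possibly dispensable, cannot complete this last step; ``$x_j^*$ is a nonzerodivisor on $G_j(\mathcal F)$'' alone is strictly weaker than the asserted equality in the multigraded setting.
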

\begin{proof}
Since proof is similar for each $j$, we prove the result for $j=1$. 
Let $a=x_1$. Let $a^*$ denote the image of $a$ 
in $G_i(\mathcal F)_{{\bf e}_1}$. 
First we prove that $(0:_{G_i(\mathcal F)}a^*)=0$ for each $i=1,\ldots,g$. It 
suffices to prove the result for $G_1(\mathcal F)$. 
For $r \in \F({\bf n})$ let $r^*$ denote the image of $r$ in $[G_1(\F)]_{\bf n}$. 
Suppose $z^*a^*=0$ for some $z \in \mathcal F({\bf n})$, 
${\bf n} \geq {\bf 0}$. 
Then for any 
$y^*\in [G_1(\mathcal F)]_{\bf m}$, where $y \in \mathcal F({\bf m})$, 
$y^*z^*a^*=0$. Hence $yza \in (a) \cap \mathcal F({{\bf m}+{\bf n}+2{\bf e}_1})=
a \mathcal F({{\bf m}+{\bf n}+{\bf e}_1})$ for $m_1 \gg 0$. Thus $y^*z^*=0$ if $m_1 \gg 0$. 
Hence $\mathcal R_{++}^Nz^*=0$ for some $N$. Therefore 
$z^* \in [H^0_{\mathcal R_{++}}(G_1(\mathcal F))]_{\bf n}.$ Hence $z^*=0$. \\
Let $za\in (a) \cap \mathcal F({\bf n})$ for $n_1>0$. First we prove that 
$z\in \mathcal F({n_2{\bf e}_2}).$ Since $\mathcal F$ is an $\bf I-$admissible filtration, 
there exists an integer $ h \geq 0$ such that $\mathcal F({\bf n}) \subseteq {\bf I}^{{\bf n}-h{\bf e}}$ 
for all ${\bf n} \in \mathbb Z^g$. Hence 
$\cap_{k \geq0} \mathcal F({k{\bf e}_2}) \subseteq \cap_{k \geq h} I_2^{k-h} =0$. Thus there 
exists an integer $l \geq 0,$ such that $z \in \mathcal F({l{\bf e}_2})\setminus\mathcal F({(l+1){\bf e}_2})$. 
Suppose $l<n_2$. Then $z^* \in [G_2(\mathcal F)]_{l{\bf e_2}}$ and $z^*a^*=0$ as 
$za \in \mathcal F({\bf n})\subseteq \mathcal F({{\bf e}_1+(l+1){\bf e}_2})$, a contradiction. Therefore 
$z \in \mathcal F({n_2{\bf e}_2})$. \\
Similar argument shows that $z \in \mathcal F({n_2{\bf e}_2+\cdots+n_g{\bf e}_g})$. 
Since 
$$\cap_{k \geq 0} \mathcal F({k{\bf e}_1+n_2{\bf e}_2+\cdots+n_g{\bf e}_g}) \subseteq 
\cap_{k \geq 0} I_1^{k-h}=0,
$$
there exists an integer $l \geq 0$ such that 
$$z \in \mathcal F({l{\bf e}_1+n_2{\bf e}_2+\cdots+n_g{\bf e}_g})\setminus\mathcal F({(l+1){\bf e}_1+n_2{\bf e}_2+
\cdots+n_g{\bf e}_g}).$$
Suppose $l<n_1-1$. Then 
$z^*\in [G_1(\mathcal F)]_{l{\bf e}_1+n_2{\bf e}_2+\cdots+n_g{\bf e}_g}$ and $z^*a^*=0$, a contradiction. 
Hence $z \in \mathcal F({{\bf n}-{\bf e}_1})$. 
\end{proof}

\begin{theorem} \label{existence of good joint reductions}
 Let $(R,\m)$ be a Cohen-Macaulay local ring of 
dimension $3$ with infinite residue field $k$ and $I_1,I_2,I_3$ be $\m-$primary ideals in $R$. 
Let $\mathcal F$ be an $(I_1,I_2,I_3)-$admissible filtration. Assume that for $i=1,2,3$,
\begin{eqnarray*}
H^0_{\R_{++}}(G_i(\mathcal F))=0 \mbox{ and }
[H^1_{\R_{++}}(G_i(\mathcal F))]_{{\bf n}}=0\mbox{ for }
{\bf n}+{\bf e}_i \geq 0.
\end{eqnarray*}
Then there exists a good complete reduction $(M_{ij})$, where
$M= \begin{bmatrix}
  x_1 & x_2 & x_3\\
  y_1 & y_2 & y_3\\
  z_1 & z_2 & z_3\\
 \end{bmatrix}
$ 
such that for each $A \subsetneq \{1,2,3\}$ 
\begin{eqnarray*}
 (x_i:i \in A) \cap \mathcal F{(r,s,t)}&=&(x_i:i\in A)\mathcal F{(r-1,s,t)} \mbox{ for }r >0 \mbox{ and all } s,t \geq 0 \\
 (y_i:i \in A) \cap \mathcal F{(r,s,t)}&=&(y_i:i\in A)\mathcal F{(r,s-1,t)}) \mbox{ for }s >0 \mbox{ and all } r,t \geq 0 \\
 (z_i:i \in A) \cap \mathcal F{(r,s,t)}&=&(z_i:i\in A)\mathcal F{(r,s,t-1)} \mbox{ for }t >0 \mbox{ and all } r,s \geq 0.
\end{eqnarray*}
In particular, $(M_{i,\sigma (i)})$ is a good joint reduction of $\mathcal F$ for each 
permutation $\sigma \in S_3$. 
\end{theorem}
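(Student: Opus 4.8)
The plan is to construct $M=(M_{ij})$ one row at a time, the $i$-th row being a sufficiently generic triple of elements of $I_i$; the infinite residue field is used throughout for the underlying prime avoidance. Fix a direction $i$ and write $G=G_i(\mathcal F)$. The identities demanded of the $i$-th row, namely
$$(M_{ij}:j\in A)\cap\mathcal F({\bf n})=(M_{ij}:j\in A)\,\mathcal F({\bf n}-{\bf e}_i)\quad\text{for proper }A\subsetneq\{1,2,3\},\ {\bf n}-{\bf e}_i\geq{\bf 0},$$
amount, by the standard Valabrega--Valla type correspondence between such intersection identities and regularity in associated graded rings, to the assertion that the initial forms $M_{i1}^{*},M_{i2}^{*},M_{i3}^{*}\in[G]_{{\bf e}_i}$ form a triple, any two of which is a $G$-regular sequence in degrees $\geq{\bf 0}$. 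So the problem reduces to producing, inside each $G_i(\mathcal F)$, a generic ``pair-regular'' sequence sitting in degree ${\bf e}_i$.

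First I would settle the subsets of size $\leq 1$. Since $I_i$ is $\m$-primary and $R$ is Cohen--Macaulay, a sufficiently generic minimal generator $a$ of $I_i$ is a nonzerodivisor of $R$; choosing $a$ to avoid in addition the finitely many primes of $S_i\subseteq\ass_{\mathcal R^\prime(\mathcal F)}(\mathcal R^\prime(\mathcal F)/u\mathcal R^\prime(\mathcal F))$ and invoking Rees' Lemma (Lemma \ref{r1.2}) in the refined form of Lemma \ref{keylemma} gives $(a)\cap\mathcal F({\bf n})=a\,\mathcal F({\bf n}-{\bf e}_i)$ for all ${\bf n}\geq r{\bf e}_i$, and Lemma \ref{lemma4}---whose hypothesis $H^0_{\mathcal R_{++}}(G_j(\mathcal F))=0$ holds for every $j$ by assumption---upgrades this to all ${\bf n}\geq{\bf e}_i$. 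Equivalently $a^{*}$ is a nonzerodivisor on $G$, which disposes of $|A|\leq 1$.

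For $|A|=2$ I would descend on dimension. Set $M_{i1}=a$; then $\ov R:=R/(a)$ is Cohen--Macaulay of dimension $2$, the induced filtration $\ov{\mathcal F}$ on $\ov R$ is admissible, and the exact identity for $a$ yields $G_i(\ov{\mathcal F})\cong G_i(\mathcal F)/(a^{*})$. Applying $H^{\bullet}_{\mathcal R_{++}}(-)$ to $0\to G_i(\mathcal F)(-{\bf e}_i)\xrightarrow{\,a^{*}\,}G_i(\mathcal F)\to G_i(\mathcal F)/(a^{*})\to 0$ and using $H^0_{\mathcal R_{++}}(G_i(\mathcal F))=0$ together with $[H^1_{\mathcal R_{++}}(G_i(\mathcal F))]_{\bf n}=0$ for ${\bf n}+{\bf e}_i\geq{\bf 0}$ forces $H^0_{\mathcal R_{++}}(G_i(\ov{\mathcal F}))=0$. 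Granting that the hypotheses of the theorem descend to $\ov{\mathcal F}$ in full, the $|A|\leq 1$ case, now applied in $\ov R$, produces a generic $\ov b\in\ov I_i$ with $(\ov b)\cap\ov{\mathcal F}({\bf n})=\ov b\,\ov{\mathcal F}({\bf n}-{\bf e}_i)$ for all ${\bf n}\geq{\bf e}_i$; lifting to $M_{i2}=b\in I_i$ this reads $(a,b)\cap\mathcal F({\bf n})=(a,b)\,\mathcal F({\bf n}-{\bf e}_i)$ for all ${\bf n}-{\bf e}_i\geq{\bf 0}$. Choosing $M_{i3}$ generic enough to pair in this way with each of $M_{i1}$ and $M_{i2}$ (a finite union of avoidance conditions) completes the $i$-th row; running this for $i=1,2,3$ gives $M$.

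To finish I must check that $M$ is a good complete reduction, whereupon Remark \ref{remark on gcr}(\ref{jr coming from gcr are good}) gives that $(M_{i,\sigma(i)})$ is a good joint reduction for every $\sigma\in S_3$. Writing $y_j=M_{1j}M_{2j}M_{3j}$, the complete reduction identity $\mathcal F({\bf n}+{\bf e})=(y_1,y_2,y_3)\mathcal F({\bf n})$ for ${\bf n}\gg{\bf 0}$ holds for a generic matrix by the standard existence theory for complete reductions over an infinite residue field, and the genericity requirements above can be imposed simultaneously. The ``good'' identities $(y_j:j\in A)\cap\mathcal F({\bf n})=(y_j:j\in A)\mathcal F({\bf n}-{\bf e})$ for ${\bf n}\geq|A|{\bf e}$ then follow from the row identities by peeling off the three factors of each $y_j$ one direction at a time: for $|A|\leq1$ one uses in turn that $M_{1j}$, then $M_{2j}$, then $M_{3j}$ is a nonzerodivisor obeying its row identity, so the successive quotients land in $\mathcal F({\bf n}-{\bf e}_1)$, then $\mathcal F({\bf n}-{\bf e}_1-{\bf e}_2)$, then $\mathcal F({\bf n}-{\bf e})$; for $|A|=2$ the same peeling works but must carry along the Koszul correction terms and uses that the $M_{1j},M_{2j},M_{3j}$ form regular sequences. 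The main obstacle is the descent step: one must verify that $H^0_{\mathcal R_{++}}(G_m(\mathcal F))=0$ and $[H^1_{\mathcal R_{++}}(G_m(\mathcal F))]_{\bf n}=0$ (${\bf n}+{\bf e}_m\geq{\bf 0}$) pass to $\ov{\mathcal F}=\mathcal F/(a)$ for every index $m$, not just $m=i$; for $m=i$ this is the cohomology sequence above, but for $m\neq i$ the element $a\in I_i$ carries no degree-${\bf e}_m$ initial form in $G_m(\mathcal F)$, so one needs a separate argument relating $\depth_{\mathcal R_{++}}G_m(\ov{\mathcal F})$ to $\depth_{\mathcal R_{++}}G_m(\mathcal F)$ through the natural surjection $G_m(\mathcal F)\twoheadrightarrow G_m(\ov{\mathcal F})$ and the regularity of $a$ on the ambient Rees algebra. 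This, together with the $|A|=2$ bookkeeping, is where the real work lies.
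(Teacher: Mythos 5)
Your general toolkit is the right one (prime avoidance via Rees' Lemma, upgrading eventual identities to exact ones through Lemma \ref{lemma4}, descent to quotient rings), and you correctly sense that the descent of the cohomological hypotheses is delicate. But the proposal has two genuine gaps. The first is structural: the row-by-row construction with only same-row identities cannot deliver the goodness of the complete reduction, which is a statement about the column products $y_j=M_{1j}M_{2j}M_{3j}$. For $|A|=2$ your ``peeling with Koszul correction terms'' does not go through: if $ap+bq\in\F(r,s,t)$ with $a=x_1y_1z_1$ and $b=x_2y_2z_2$, the same-row identity $(x_1,x_2)\cap\F(r,s,t)=(x_1,x_2)\F(r-1,s,t)$ only yields $x_1(y_1z_1p)+x_2(y_2z_2q)=x_1\alpha+x_2\beta$ with $\alpha,\beta\in\F(r-1,s,t)$, and regularity of $x_1,x_2$ then gives $y_1z_1p-\alpha\in(x_2)$, which says nothing about the filtration degree of $y_1z_1p$ itself. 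What is actually needed, and what the paper spends most of its proof establishing, are the \emph{mixed-row} pair identities such as $(y_1,x_2)\cap\F(r,s,t)=y_1\F(r,s-1,t)+x_2\F(r-1,s,t)$; from these one first gets $(a,x_2)\cap\F$, then $(a,b)\cap\F=(a,b)\F(r-1,s-1,t-1)$. These mixed identities never appear in your proposal, and they also force the construction to go column by column rather than row by row: $x_2\in I_1$ must be chosen to avoid associated primes of Rees modules over $R/(y_1)$, $R/(z_1)$ and $R/(x_1y_1z_1)$, i.e.\ relative to the first-column entries of the \emph{other} rows, so the genericity conditions cannot all be imposed up front on independent rows.

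The second gap is the descent step itself, which you flag as ``the real work'' but leave open --- and the statement you are implicitly hoping for is false in the form you need it. With $a\in I_i$ and the sequence $0\to G_m(\F)(-{\bf e}_i)\buildrel a^*\over\longrightarrow G_m(\F)\to G_m(\F)/(a^*)\to 0$ (the initial form $a^*$ lives in degree ${\bf e}_i$ of every $G_m(\F)$, so the map does exist for $m\neq i$), the hypothesis $[H^1_{\R_{++}}(G_m(\F))]_{\bf n}=0$ for ${\bf n}+{\bf e}_m\geq 0$ gives, after the twist by $-{\bf e}_i$, full vanishing of $H^0_{\R_{++}}$ of the quotient only when $m=i$; for $m\neq i$ one gets $[H^0_{\R_{++}}(G_m(\F/(a)))]_{\bf n}=0$ only in the degrees with $n_i\geq 1$. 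The paper's element-chasing arguments for the mixed identities are carefully arranged so that this \emph{partial} vanishing suffices (the relevant initial forms need only be regular on the quotient graded rings in those degrees). Without carrying out this degreewise bookkeeping, and without the mixed identities it feeds, the proposal establishes at best the displayed row identities and not that $M$ is a good complete reduction; consequently the appeal to Remark \ref{remark on gcr}(\ref{jr coming from gcr are good}) for the ``in particular'' clause is not yet available.
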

\begin{proof}
From Lemma \ref{r1.2} and \ref{lemma4}, there exist nonzerodivisors $x_1\in I_1 \setminus \m I_1,
y_1\in I_2 \setminus \m I_2,z_1\in I_3 \setminus \m I_3$ such that 
\begin{eqnarray*}
 (x_1) \cap \mathcal F{(r,s,t)}&=&x_1 \mathcal F{(r-1,s,t)}\mbox{ for }r>0 \mbox{ and all }s,t\geq 0\\
(y_1) \cap \mathcal F{(r,s,t)}&=&y_1 \mathcal F{(r,s-1,t)}\mbox{ for }s >0 \mbox{ and all }r,t \geq 0\\
(z_1) \cap \mathcal F{(r,s,t)}&=&z_1 \mathcal F{(r,s,t-1)}\mbox{ for }t>0 \mbox{ and all }r,s \geq 0.
\end{eqnarray*}
Let $a=x_1y_1z_1$, $S_1=R/(x_1),S_2=R/(y_1),S_3=R/(z_1)$ and $S=R/(a)$. 
Consider the filtration $\mathcal G=\{\mathcal F{(r,s,t)}S\}$.  
Let 
\begin{eqnarray*}
A_i&=&\{\p \in \ass_{\mathcal R^\prime(\mathcal G)}(\mathcal R^\prime(\mathcal G)/
(u\mathcal R^\prime(\mathcal G))) \mid I_iSt_i \notin \p\}\\
B_i&=&\{\p \in \ass_{\mathcal R^\prime(\mathcal F)}(\mathcal R^\prime(\mathcal F)/
  (u\mathcal R^\prime(\mathcal F)))\mid I_it_i \notin \p\} \mbox{ and }\\
C_{i,j}&=&\{\p \in \ass_{\mathcal R^\prime(\mathcal FS_j)}(\mathcal R^\prime(\mathcal FS_j)/
(u \mathcal R^\prime(\mathcal FS_j))) \mid I_iS_jt_i \notin \p \}.
\end{eqnarray*}
Then $\frac{\p \cap I_1t_1+ \m I_1t_1 }{\m I_1t_1}$
(respectively $W_j=\frac{\p \cap I_1S_jt_1+\m I_1S_jt_1}{\m I_1S_jt_1}$ and 
$W=\frac{\p \cap I_1St_1+\m I_1St_1}{\m I_1St_1}$) is a 
proper subspace of $\frac{I_1t_1}{\m I_1t_1} \cong \frac{I_1}{\m I_1}$ 
(respectively $\frac{I_1S_jt_1}{\m I_1S_jt_1}$ and $\frac{I_1St_1}{\m I_1St_1}$) 
for $\p \in B_1$ (respectively $\p \in C_{1,j}$ and $\p \in A_1$). 
Let 
$$f_j: \frac{I_1}{\m I_1} \longrightarrow \frac{I_1S_j}{\m I_1S_j} \mbox{ and }
f:\frac{I_1}{\m I_1} \longrightarrow \frac{I_1S}{\m I_1S}$$ be natural maps of $k-$vector spaces. 
Since $f_j$ and $f$ are surjective, $f_j^{-1}(W_j) $ and $f^{-1}(W)$ are proper subspaces of 
$\frac{I_1}{\m I_1}$. Since $k$ is infinite, there exists an element $x_2 \in I_1$ such that 
$ x_2t_1 \notin \p$ for any $\p \in B_1 $, $x_2^\prime t_1\notin \p$ for any $\p \in  C_{1,j}, 
A_1$ and $x_2$ is $S-$regular. Here $^\prime$ denotes the image of an element in respective quotients.  
Similarly, there exist $y_2\in I_2
$ and $z_2 \in I_3$ such that $y_2t_2 \notin \p$ for any $\p \in B_2,y_2^\prime t_2 \notin \p$ 
for any $\p \in A_2,C_{2,j}$, 
$z_2t_3 \notin \p$ for any $\p \in B_3,z_2^\prime t_3 \notin \p$ for any $\p\in A_3,C_{3,j}$ and 
$y_2,z_2 $ are nonzerodivisors in $S$. Therefore by Lemma 
\ref{keylemma} and \ref{lemma4}
\begin{eqnarray*}
 (x_2) \cap \mathcal F{(r,s,t)}&=&x_2 \mathcal F{(r-1,s,t)} \mbox{ for }r>0 \mbox{ and all } s,t \geq0\\
 x_2S \cap \mathcal G{(r,s,t)}&=&x_2\mathcal G{(r-1,s,t)}\mbox{ for }r \gg 0 \mbox{ and all }s,t \geq 0\\
x_2S_j \cap \mathcal F{(r,s,t)}S_{j}&=&x_2\mathcal F{(r-1,s,t)}S_j \mbox{ for } r \gg 0 \mbox{ and all }s,t \geq 0. 
 \end{eqnarray*}
\noindent Let $y_1^*$ denote the image of $y_1$ in $[G_i(\mathcal F)]_{{\bf e}_2}$. 
Note that $y_1^{*}$ is $G_i(\mathcal F)-$regular for each $i=1,2,3$. 
Therefore we have an exact sequence
$$0 \longrightarrow G_2(\mathcal F)(0,-1,0)\buildrel_{\mu_{y_1^*}} 
\over\longrightarrow G_2(\mathcal F) \longrightarrow 
G_2(\mathcal F)/(y_1^*) \longrightarrow 0.$$
This gives the long exact sequence 
$$\cdots\longrightarrow H^0_{\R_{++}}(G_2(\mathcal F)) 
\longrightarrow H^0_{\R_{++}}(G_2(\mathcal F)/(y_1^*)) 
\longrightarrow H^1_{\R_{++}}(G_2(\mathcal F)(0,-1,0)) \longrightarrow \cdots.$$ 
Therefore $[H^0_{\R_{++}}(G_2(\mathcal F)/(y_1^*))]_{(r,s,t)}=0$ 
for all $r,s,t \geq 0$. 
Consider the exact sequence
$$0 \longrightarrow L \longrightarrow G_2(\mathcal F)/(y_1^*) 
\longrightarrow G_2(\mathcal FS_2) \longrightarrow 0 .$$
Since $(y_1) \cap \mathcal F{(r,s,t)}=y_1\mathcal F{(r,s-1,t)}$ 
for $s>0$, $[G_2(\mathcal F)/(y_1^*)]_{(r,s,t)}=[G_2(\mathcal FS_2)]_{(r,s,t)}$ 
for $s>0$. Thus $L_{(r,s,t)}=0$ for $s>0$ and hence $L$ is $\R_{++}-$torsion. Hence the sequence 
$$0 \longrightarrow L \longrightarrow H^0_{\R_{++}}(G_2(\mathcal F)/(y_1^*))
\longrightarrow H^0_{\R_{++}}(G_2(\mathcal FS_2)) \longrightarrow 0 $$
is exact. Thus 
$$H^0_{\R_{++}}(G_2( \mathcal FS_2))=0 .$$ 
Considering an exact sequence
$$0 \longrightarrow G_1(\mathcal F)(0,-1,0)\buildrel_{\mu_{y_1^*}} 
\over\longrightarrow G_1(\mathcal F) \longrightarrow 
G_1(\mathcal F)/(y_1^*) \longrightarrow 0.$$
we get the long exact sequence 
$$\cdots\longrightarrow H^0_{\R_{++}}(G_1(\mathcal F)) 
\longrightarrow H^0_{\R_{++}}(G_1(\mathcal F)/(y_1^*)) 
\longrightarrow H^1_{\R_{++}}(G_1(\mathcal F)(0,-1,0)) \longrightarrow \cdots.$$
This gives $[H^0_{\R_{++}}(G_1(\mathcal F)/(y_1^*))]_{(r,s,t)}=0$ 
for $s \geq 1$ and $r,t \geq 0$. Consider the exact sequence
$$0 \longrightarrow L^\prime \longrightarrow G_1(\mathcal F)/(y_1^*) 
\longrightarrow G_1(\mathcal FS_2) \longrightarrow 0 .$$
Since $(y_1) \cap \mathcal F{(r,s,t)}=y_1\mathcal F{(r,s-1,t)}$ 
for $s>0$, $[G_1(\mathcal F)/(y_1^*)]_{(r,s,t)}=[G_1(\mathcal FS_2)]_{(r,s,t)}$ 
for $s>0$. Thus $L^\prime_{(r,s,t)}=0$ for $s>0$ and hence $L^\prime$ is $\R_{++}-$torsion. Hence the sequence 
$$0 \longrightarrow L^\prime \longrightarrow H^0_{\R_{++}}(G_1(\mathcal F)/(y_1^*))
\longrightarrow H^0_{\R_{++}}(G_1(\mathcal FS_2)) \longrightarrow 0 $$
is exact. Thus 
$$[H^0_{\R_{++}}(G_1( \mathcal FS_2))]_{(r,s,t)}=0 \mbox{ for }s \geq 1.$$ 
Similar argument shows that  
$$[H^0_{\R_{++}}(G_3( \mathcal FS_2))]_{(r,s,t)}=0 
\mbox{ for }s \geq1.$$
We prove that for all $r,s \geq 1$,
$$(y_1,x_2)\cap \mathcal F{(r,s,t)}=y_1\mathcal F{(r,s-1,t)}+x_2\mathcal F{(r-1,s,t)}.$$
First we claim that 
\begin{eqnarray*}
(0:_{G_2(\mathcal FS_2)}(x_2^\prime)^*)=0 \mbox{ and } 
[0:_{G_i(\mathcal FS_2)}(x_2^\prime)^*]_{(r,s,t)}=0 \mbox{ if }s \geq 1 \mbox{ for } i=1,3.
\end{eqnarray*} 
Here $^\prime$ denotes the image of an element in $S_2$. 
Suppose $(z^\prime)^*(x_2^\prime)^*=0$ for some $(z^\prime)^* \in 
[G_2(\mathcal FS_2)]_{(r,s,t)} $. Then $(y^\prime)^*(z^\prime)^*(x_2^\prime)^*=0$ 
for any $(y^\prime)^* \in [G_2(\mathcal FS_2)]_{(k,l,m)}$. Hence 
$$(yzx_2)^\prime \in \mathcal F{(r+k+1,s+l+1,t+m)}S_2 \mbox{ and } (yz)^\prime \in 
\mathcal F{(r+k,s+l+1,t+m)}S_2$$ 
for $k \gg 0$. Thus $(y^\prime)^*(z^\prime)^*=0$ in 
$G_2(\mathcal FS_2)$ for $k \gg 0$. Hence $\R_{++}^N(z^\prime)^*=0$ for 
some $N$ large. Therefore $(z^\prime)^*\in [H^0_{\mathcal R_{++}}(G_2(\mathcal FS_2))]_{(r,s,t)} =0$. \\
Suppose $(z^\prime)^*(x_2^\prime)^*=0$ for some $(z^\prime)^* \in 
[G_1(\mathcal FS_2)]_{(r,s,t)} $ and $s \geq 1$. Then $(y^\prime)^*(z^\prime)^*(x_2^\prime)^*=0$ 
for any $(y^\prime)^* \in [G_1(\mathcal FS_2)]_{(k,l,m)}$. Hence 
$(yzx_2)^\prime \in \mathcal F{(r+k+2,s+l,t+m)}S_2$. Hence $(yz)^\prime \in 
\mathcal F{(r+k+1,s+l,t+m)}S_2$ for $k \gg 0$. Thus $(y^\prime)^*(z^\prime)^*=0$ in 
$G_1(\mathcal FS_2)$ for $k \gg 0$. Hence $\R_{++}^N(z^\prime)^*=0$ for 
some $N$ large. Therefore $(z^\prime)^*\in [H^0_{\mathcal R_{++}}(G_1(\mathcal FS_2))]_{(r,s,t)} =0$. 
Similar argument shows that $[0:_{G_3(\mathcal FS_2)} (x_2^\prime)^*]_{(r,s,t)}=0$ 
for $s \geq 1$.\\
Let $y_1u+x_2v \in \F(r,s,t)$. Since $\cap_{n\geq 0}\mathcal F(0,n,0)S_2 \subseteq 
\cap_{n\geq 0} I_2^{n-h}S_2=0$, there exists an integer $l\geq 0$ such that 
$v^\prime \in \mathcal F{(0,l,0)}S_2\setminus\mathcal F{(0,l+1,0)}S_2$. 
Suppose $l<s$. 
Then $x_2^\prime v^\prime=(x_2v)^\prime \in \F(r,s,t)S_2 \subseteq \F(1,l+1,0)S_2$. Hence 
$(x_2^\prime)^* (v^\prime)^*=0$ in $G_2(\F S_2)$. 
Thus $0\neq(v^\prime)^*\in (0:_{G_2(\mathcal FS_2)}(x_2^\prime)^*)=0$, a contradiction. Therefore 
$l \geq s$. Hence $v^\prime \in \F(0,s,0)S_2=\frac{\F(0,s,0)+(y_1)}{(y_1)}$. Therefore $v=
v_1+y_1v_2$ for some $v_1 \in \F(0,s,0)$ and $v_2 \in R$. Thus 
$$y_1u+x_2v=y_1(u+x_2v_2)+x_2v_1 \in \F(r,s,t).$$
Suppose $v_1^\prime \in \F(l,s,0)S_2 \setminus \F(l+1,s,0)S_2$ and $l<r-1$. 
Since $v_1^\prime x_2^\prime \in \F(r,s,t)S_2 \subseteq \F(l+2,s,0)$, 
$(v_1^\prime)^*(x_2^\prime)^*=0$ in $G_1(\F S_2)$. Thus $0\neq (v_1^\prime)^* \in 
[0:_{G_1(\mathcal FS_2)} (x_2^\prime)^*]_{(r,s,t)} $ for $s \geq 1$, a contradiction.  
Therefore $l \geq r-1$. Hence $v_1^\prime \in \F(r-1,s,0)S_2$. Let $v_1=v_3+y_1v_4$ for some 
$v_3 \in \F(r-1,s,0) $ and $v_4 \in R$. Thus 
$$y_1u+x_2v=y_1(u+x_2v_2+x_2v_4)+x_2v_3 \in \F(r,s,t).$$
Suppose $v_3^\prime \in \F(r-1,s,l)S_2 \setminus \F(r-1,s,l+1)S_2$ for some $l<t$. 
Since $v_3^\prime x_2^\prime \in \F(r,s,t)S_2 \subseteq \F(r,s,l+1)S_2$, 
$(v_3^\prime)^*(x_2^\prime)^*=0$ in $G_3(\F S_2)$. Thus $0\neq (v_3^\prime)^* \in 
[0:_{G_3(\mathcal FS_2)} (x_2^\prime)^*]_{(r,s,t)} $ and $s \geq 1$, a contradiction.  
Therefore $l \geq t$. Let $v_3=v_5+y_1v_6$ for some $v_5 \in \F(r-1,s,t)$ and 
$v_6 \in R$. Then 
$$y_1u+x_2v=y_1(u+x_2v_2+x_2v_4+x_2v_6)+x_2v_5 \in\F(r,s,t).$$
Let $u_1=u+x_2v_2+x_2v_4+x_2v_6$. Since $x_2v_5 \in \F(r,s,t)$, 
$y_1u_1 \in \F(r,s,t)\cap (y_1)=y_1\F(r,s-1,t)$. Hence 
$u_1 \in \F(r,s-1,t)$. Thus 
$$y_1u+x_2v=y_1u_1+x_2v_5 \in y_1\mathcal F{(r,s-1,t)}+x_2\mathcal F{(r-1,s,t)}.$$
Similar argument shows that 
\begin{eqnarray*}
(y_1,z_2) \cap \mathcal F{(r,s,t)}&=&y_1\mathcal F{(r,s-1,t)}+z_2\mathcal F{(r,s,t-1)}\mbox{ for } 
s,t \geq 1 \mbox{ and all }r \geq 0 \mbox{ and }\\
(y_1,y_2) \cap \mathcal F{(r,s,t)}&=&(y_1,y_2)\mathcal F{(r,s-1,t)} \mbox{ for }s \geq 1 
\mbox{ and all }r,t \geq 0.
\end{eqnarray*}
Similarly, considering the ring $S_1$ and $S_3$, we get  
\begin{eqnarray*}
H^0_{\R_{++}}(G_1(\F S_1))&=&0 \mbox{ and } [H^0_{\R_{++}}(G_i(\mathcal FS_1))]_{(r,s,t)}=0 
\mbox{ for all }r \geq 1 \mbox{ and }i=2,3 \mbox{ and }\\
H^0_{\R_{++}}(G_3(\F S_3))&=&0 \mbox{ and }[H^0_{\R_{++}}(G_i(\mathcal FS_3))]_{(r,s,t)}=0 
\mbox{ for all }t \geq 1 \mbox{ and }i=1,2.
\end{eqnarray*} Thus 
\begin{eqnarray*}
 (x_1,y_2) \cap \mathcal F{(r,s,t)}&=&x_1\mathcal F{(r-1,s,t)}+y_2 \mathcal F{(r,s-1,t)} \mbox{ for }
 r,s \geq 1 \mbox{ and }t \geq 0\\
 (x_1,z_2) \cap \mathcal F{(r,s,t)}&=&x_1\mathcal F{(r-1,s,t)}+z_2\mathcal F{(r,s,t-1)} \mbox{ for }
r,t \geq 1 \mbox{ and }s \geq 0\\
(x_1,x_2)\cap \mathcal F{(r,s,t)}&=&(x_1,x_2)\mathcal F{(r-1,s,t)} \mbox{ for }r \geq 1 \mbox{ and all }s,t \geq 0\\
(z_1,x_2) \cap \mathcal F{(r,s,t)}&=&z_1\mathcal F{(r,s,t-1)}+x_2\mathcal F{(r-1,s,t)} \mbox{ for }
r,t \geq 1 \mbox{ and }s\geq 0\\
(z_1,y_2) \cap \mathcal F{(r,s,t)}&=&z_1\mathcal F{(r,s,t-1)}+y_2\mathcal F{(r,s-1,t)} \mbox{ for }
s,t \geq 1 \mbox{ and }r \geq 0\\
(z_1,z_2) \cap \mathcal F{(r,s,t)}&=&(z_1,z_2)\mathcal F{(r,s,t-1)} \mbox{ for }t\geq 1 \mbox{ and all }r,s \geq 0. 
\end{eqnarray*}
Let $b=x_2y_2z_2$. We claim that
$$(a,b) \cap \mathcal F{(r,s,t)}=(a,b)\mathcal F{(r-1,s-1,t-1)} \mbox{ for }r,s,t > 1.$$
First we prove $(a)\cap \mathcal F(r,s,t)=a\mathcal F(r-1,s-1,t-1)$ for $r,s,t>0$. Let 
$au \in \mathcal F(r,s,t). $ Then $x_1y_1z_1u \in  \F(r,s,t)$. Since $r>0$, $y_1z_1u \in \F(r-1,s,t)$. 
Since $s>0$, $z_1u \in \F(r-1,s-1,t)$. As $t>0$, $u \in \F(r-1,s-1,t-1).$\\
Next we prove that $(a,x_2)\cap \F(r,s,t)=a\F(r-1,s-1,t-1)+x_2\F(r-1,s,t)$ for $r,s,t>0$. 
Let $ap+x_2q\in \F(r,s,t)$. Hence $x_1y_1z_1 p+x_2q \in (y_1,x_2) \cap \F(r,s,t)$. Let 
$x_1y_1z_1 p+x_2q=y_1p_1+x_2 q_1$ for some $p_1 \in \F(r,s-1,t) $ and $q_1 \in \F(r-1,s,t)$. Thus 
$y_1(x_1z_1p-p_1)\in (x_2)$. Hence $x_1z_1p-p_1 \in (x_2)$. Therefore $p_1 \in (x_2,z_1) \cap \F(r,s-1,t)=
x_2\F(r-1,s-1,t)+z_1\F(r,s-1,t-1)$. Let $p_1=x_2p_2+z_1p_3$ for some $p_2 \in \F(r-1,s-1,t) $ 
and $p_3\in \F(r,s-1,t-1)$. Then 
$$ap+x_2q=y_1p_1+x_2 q_1=y_1z_1p_3+x_2(q_1+y_1p_2).$$
Therefore $y_1z_1(x_1p-p_3) \in (x_2).$ Thus $x_1p-p_3 \in (x_2)$. Hence $p_3 \in 
(x_1,x_2) \cap \F(r,s-1,t-1)$. Let $p_3=x_1p_4+x_2p_5$ for some $p_4,p_5 \in 
\F(r-1,s-1,t-1)$. Thus 
$$ap+x_2q=ap_4+x_2(q_1+y_1p_2+y_1z_1p_5) \in a\F(r-1,s-1,t-1)+x_2\F(r-1,s,t).$$
Similar argument shows that 
\begin{eqnarray*}
(a,y_2)\cap \F(r,s,t)&=&a\F(r-1,s-1,t-1)+y_2\F(r,s-1,t)\mbox{ for }r,s,t>0\mbox{ and }\\
(a,z_2)\cap \F(r,s,t)&=&a\F(r-1,s-1,t-1)+z_2\F(r,s,t-1)\mbox{ for }r,s,t>0. 
\end{eqnarray*}
Next we prove that 
$(a,b) \cap \mathcal F{(r,s,t)}=(a,b)\mathcal F{(r-1,s-1,t-1)} \mbox{ for }r,s,t > 1$. 
Let $ap+bq \in \F(r,s,t)$. Then $ap+bq \in (a,x_2) \cap \F(r,s,t)=
a\F(r-1,s-1,t-1)+x_2\F(r-1,s,t)$. Let $ap+bq=ap_1+x_2q_1$ for some $p_1\in \F(r-1,s-1,t-1)$ and 
$q_1 \in \F(r-1,s,t)$. Then $x_2(y_2z_2q-q_1) \in (a)$. Hence $y_2z_2q-q_1\in (a)$. Thus 
$q_1\in (a,y_2) \cap \F(r-1,s,t)=a\F(r-2,s-1,t-1)+y_2\F(r-1,s-1,t)$. Let 
$q_1=aq_2+y_2q_3$ for some $q_2 \in \F(r-2,s-1,t-1)$ and $q_3 \in \F(r-1,s-1,t)$. Thus 
$$ap+bq=a(p_1+x_2q_2)+x_2y_2q_3.$$
Hence $x_2y_2(z_2q-q_3)\in (a)$. Thus 
$$q_3 \in (a,z_2) \cap \F(r-1,s-1,t)=
a\F(r-2,s-2,t-1)+z_2\F(r-1,s-1,t-1).$$ 
Let $q_3=aq_4+z_2q_5$ for some $q_4 \in \F(r-2,s-2,t-1)$ 
and $q_5 \in \F(r-1,s-1,t-1)$. Hence 
$$ap+bq=a(p_1+x_2q_2+x_2y_2q_4)+bq_5 \in (a,b) \F(r-1,s-1,t-1).$$
\noindent Let $T_1=R/(x_2),T_2=R/(y_2),T_3=R/(z_2)$ and $T=R/(a,b)$. Consider the filtration 
$\mathcal H=\{\mathcal F{(r,s,t)}T\}.$ Let 
\begin{eqnarray*}
D_i&=&\{\p \in \ass_{\mathcal R^\prime(\mathcal H)}(\mathcal R^\prime(\mathcal H)/
  (u\mathcal R^\prime(\mathcal H)))\mid I_iTt_i \notin \p\} \mbox{ and }\\
E_{i,j}&=&\{\p \in \ass_{\mathcal R^\prime(\mathcal FT_j)}(\mathcal R^\prime(\mathcal FT_j)/
  (u\mathcal R^\prime(\mathcal FT_j)))\mid I_iT_jt_i \notin \p\}.
\end{eqnarray*}
  Then there exist $x_3 \in I_1,y_3 \in I_2$ and $z_3 \in I_3$ such that $x_3,y_3,z_3$ are nonzerodivisors in $T$ and 
$x_3t_1 \notin \p $ for any $\p \in B_1$, $x_3^\prime t_1 \notin \p$ for any $\p \in 
C_{1,j},D_1,E_{1,j},y_3t_2\notin \p$ for any 
$\p \in B_2$, $y_3^\prime t_2 \notin \p $ for any $\p \in C_{2,j},D_2,E_{2,j}$ and 
$z_3t_3 \notin \p $ for any $p\in B_3, z_3^\prime t_3 \notin \p$ for any $\p \in C_{3,j},D_3,E_{3,j}$. 
Therefore by Lemma \ref{keylemma} 
\begin{eqnarray*}
 (x_3) \cap \mathcal F{(r,s,t)}&=&x_3 \mathcal F{(r-1,s,t)} \mbox{ for }r>0 \mbox{ and all } s,t \geq0\\
 x_3S_j \cap \mathcal F{(r,s,t)}S_j&=&x_3\mathcal F{(r-1,s,t)}S_j\mbox{ for }r \gg 0 \mbox{ and all }s,t \geq 0\mbox{ and }\\
x_3T_j \cap \mathcal F{(r,s,t)}T_j&=&x_3 \mathcal F{(r-1,s,t)}T_j \mbox{ for }r\gg0 \mbox{ and all } s,t \geq0 \mbox{ and }\\
 x_3T \cap \mathcal H{(r,s,t)}&=&x_3 \mathcal H{(r-1,s,t)} \mbox{ for }r\gg0 \mbox{ and all } s,t \geq0.
\end{eqnarray*}
Then argument as above shows that for $i=1,2$
\begin{eqnarray*}
 (y_i,x_3) \cap \mathcal F{(r,s,t)}&=&y_i \mathcal F{(r,s-1,t)}+x_3\mathcal F{(r-1,s,t)} 
 \mbox{ for }r,s>0 \mbox{ and all }t \geq 0 \mbox{ and }\\
(z_i,x_3) \cap \mathcal F{(r,s,t)}&=&z_i \mathcal F{(r,s,t-1)}+x_3\mathcal F{(r-1,s,t)} 
 \mbox{ for }r,t>0 \mbox{ and all }s \geq 0 \\
 (x_1,x_3) \cap \mathcal F{(r,s,t)}&=&(x_1,x_3)\mathcal F{(r-1,s,t)} \mbox{ for }r \geq 1 \mbox{ and all }s,t \geq 0\\
 (x_2,x_3) \cap \mathcal F{(r,s,t)}&=&(x_2,x_3)\mathcal F{(r-1,s,t)} \mbox{ for }r \geq 1 \mbox{ and all }s,t \geq 0.
 \end{eqnarray*}
Similarly $y_3,z_3$ satisfies required equations. Let $c=x_3y_3z_3$. Then
$$(a,b,c) \cap \mathcal F{(r,s,t)}=(a,b,c) \mathcal F{(r-1,s-1,t-1)} \mbox{ for all }r,s,t \gg 0.$$
Since $R/(a,b,c)$ is Artinian, $\mathcal F{(r,s,t)} \subseteq (a,b,c)$ for all $r,s,t$ large. Thus
$$\mathcal F{(r,s,t)}=(a,b,c)\mathcal F{(r-1,s-1,t-1)} \mbox{ for }r,s,t \gg 0.$$
Hence $M= \begin{bmatrix}
  x_1 & x_2 & x_3\\
  y_1 & y_2 & y_3\\
  z_1 & z_2 & z_3\\
 \end{bmatrix}
$ is a good complete reduction of $\mathcal F$ with the desired property.
\end{proof} 

In what follows we prove that a good complete reduction of $\mathcal F$ exists if 
$\mathcal R(\mathcal F)$ is Cohen-Macaulay. We prove few lemmas required for this purpose. 
For a homomorphism $\phi:\mathbb Z^r \longrightarrow \mathbb Z^q$, set 
$$M^\phi=\bigoplus_{{\bf m} \in \mathbb Z^q}\left(\bigoplus_{\phi({\bf n})={\bf m}} M_{\bf n}\right)$$
for any $\mathbb Z^r-$graded module $M$ defined over $\mathbb Z^r-$graded ring $T$. Then 
$H^i_{\mathfrak a}(M)^\phi=H^i_{\mathfrak a^\phi}(M^\phi)$ for any homogeneous ideal 
$\mathfrak{a}$ in $T$. 
See \cite{hyry}.

\begin{lemma} \label{vanishing of lc T++}
 Let $T$ be standard $\mathbb N^3-$graded ring defined over a local ring $(R,\m)$ and $M$ be a finitely 
 generated $\mathbb N^3-$graded $T-$module. Let $\mathcal M$ be the maximal homogeneous ideal of $T$. 
 Suppose that each $j=1,2,3$ satisfies, 
 $$[H^i_\mathcal M(M)]_{{\bf n}}=0  \mbox{ for all }i\geq 0 \mbox{ and }n_j\geq 0 . $$
Then 
$$[H^i_{T_{++}}(M)]_{{\bf n}}=0 \mbox{ for all }{\bf n} \in \mathbb N^3 \mbox{ and all }i \geq 0.
$$
 \end{lemma}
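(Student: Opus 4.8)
The plan is to compare $H^{\bullet}_{T_{++}}$ with $H^{\bullet}_{\mathcal M}$ only indirectly, since one of these ideals is maximal and the other is contained in every positively graded prime. Write ${\bf e}_i$ for the $i$-th coordinate vector of $\mathbb Z^{3}$ and set $\mathfrak a_i=\bigoplus_{n_i\ge 1}T_{\bf n}$, the graded ideal of $T$ generated by $T_{{\bf e}_i}$; since $T$ is standard, $T_{++}=\mathfrak a_1\mathfrak a_2\mathfrak a_3$, so $\sqrt{T_{++}}=\sqrt{\mathfrak a_1\cap\mathfrak a_2\cap\mathfrak a_3}$, while $\mathcal M=\mathfrak m T+\mathfrak a_1+\mathfrak a_2+\mathfrak a_3$ and the irrelevant ideal is $T_{+}=\mathfrak a_1+\mathfrak a_2+\mathfrak a_3$. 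The argument runs in three stages: (I) deduce from the hypothesis that $[H^{i}_{\mathfrak a_l}(M)]_{\bf n}=0$ whenever $n_l\ge 0$, for each $l=1,2,3$; (II) propagate this, via the Grothendieck spectral sequence of a composition of section functors, to $[H^{i}_{\mathfrak a_l+\mathfrak a_m}(M)]_{\bf n}=0$ when $n_l\ge 0$ or $n_m\ge 0$ and to $[H^{i}_{T_{+}}(M)]_{\bf n}=0$ when some $n_l\ge 0$; (III) reconstruct $H^{\bullet}_{T_{++}}$ from these by Mayer--Vietoris.

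Stage (I) is the crux. Fix $l$ and apply the grading--coarsening functor $(-)^{\psi_l}$ for the projection $\psi_l:\mathbb Z^{3}\to\mathbb Z$, $\psi_l({\bf n})=n_l$. Then $M^{\psi_l}$ is a finitely generated $\mathbb N$-graded module over the standard $\mathbb N$-graded ring $T^{\psi_l}$, whose degree-zero component $A:=\bigoplus_{n_l=0}T_{\bf n}$ is a graded (hence $\ast$-local) Noetherian ring; moreover $(\mathfrak a_l)^{\psi_l}=(T^{\psi_l})_{+}$, while $\mathcal M^{\psi_l}$ is the maximal homogeneous ideal of $T^{\psi_l}$, and by the identity $H^{i}_{\mathfrak a}(M)^{\phi}=H^{i}_{\mathfrak a^{\phi}}(M^{\phi})$ recalled above one has $[H^{i}_{\mathcal M^{\psi_l}}(M^{\psi_l})]_{n}=\bigoplus_{n_l=n}[H^{i}_{\mathcal M}(M)]_{\bf n}$, which is $0$ for $n\ge 0$ by the hypothesis with $j=l$. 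Thus it suffices to prove the following comparison statement (and, with all notions replaced by their graded analogues, its $\ast$-local version): \emph{if $T'$ is standard $\mathbb N$-graded over a local ring with maximal homogeneous ideal $\mathcal M'=\mathfrak m'T'+T'_{+}$, $M'$ is a finitely generated $\mathbb N$-graded $T'$-module, and $[H^{i}_{\mathcal M'}(M')]_{n}=0$ for all $i$ and all $n\ge 0$, then $[H^{i}_{T'_{+}}(M')]_{n}=0$ for all $i$ and all $n\ge 0$.}

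To prove this, use $\Gamma_{\mathcal M'}=\Gamma_{\mathfrak m'T'}\circ\Gamma_{T'_{+}}$ to get a spectral sequence $E_{2}^{p,q}=H^{p}_{\mathfrak m'T'}(H^{q}_{T'_{+}}(M'))\Rightarrow H^{p+q}_{\mathcal M'}(M')$, whose degree-$n$ strand is $H^{p}_{\mathfrak m'}(N_{q})\Rightarrow[H^{p+q}_{\mathcal M'}(M')]_{n}$ with $N_{q}:=[H^{q}_{T'_{+}}(M')]_{n}$ a finitely generated module over the base ring $T'_{0}$, zero for $q\gg 0$. Suppose, for a contradiction, that $N_{q}\ne 0$ for some $q$ and some $n\ge 0$; after completing $T'_{0}$ (harmless by faithfully flat base change) put $e^{*}:=\max_{q}\dim N_{q}$ and let $c$ be the largest $q$ with $\dim N_{c}=e^{*}$. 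At the spot $(e^{*},c)$ the term $H^{e^{*}}_{\mathfrak m'}(N_{c})$ is nonzero; all outgoing differentials vanish, since for $r\ge 2$ their targets are subquotients of $H^{e^{*}+r}_{\mathfrak m'}(N_{c-r+1})$ with $e^{*}+r>\dim N_{c-r+1}$; and all incoming differentials originate in subquotients of $H^{e^{*}-r}_{\mathfrak m'}(N_{c+r-1})$ with $\dim N_{c+r-1}\le e^{*}-1$. Since $E_{\infty}^{e^{*},c}$ is a subquotient of the vanishing abutment $[H^{e^{*}+c}_{\mathcal M'}(M')]_{n}$, the module $H^{e^{*}}_{\mathfrak m'}(N_{c})$ is a quotient of a finite direct sum of those subquotients; passing to Matlis duals, the dual of $H^{e^{*}}_{\mathfrak m'}(N_{c})$ --- which has dimension $e^{*}$ by local duality --- embeds into a finite direct sum of modules of dimension $\le e^{*}-1$, a contradiction (in the degenerate case $e^{*}=0$ no duality is needed, as then there are no incoming differentials and $E_{2}^{0,c}=N_{c}=E_{\infty}^{0,c}=0$). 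Hence every $N_{q}=0$; the $\ast$-local version is proved identically with the graded Matlis dual and graded local duality. Pulling this back through $(-)^{\psi_l}$ gives $[H^{i}_{\mathfrak a_l}(M)]_{\bf n}=0$ whenever $n_l\ge 0$.

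Stage (II) is formal: $\Gamma_{\mathfrak a_k}$ is computed by localizing $M$ at elements of degree ${\bf e}_k$, which alters only the $k$-th grading coordinate, so $\Gamma_{\mathfrak a_k}$ preserves the property ``vanishing in all degrees with $n_j\ge 0$'' for every $j\ne k$; feeding (I) into the spectral sequences of the compositions $\Gamma_{\mathfrak a_l+\mathfrak a_m}=\Gamma_{\mathfrak a_m}\circ\Gamma_{\mathfrak a_l}$ and $\Gamma_{T_{+}}=\Gamma_{\mathfrak a_k}\circ\Gamma_{\mathfrak a_l+\mathfrak a_m}$ (for $\{k,l,m\}=\{1,2,3\}$), taken in all orders of the indices, yields the vanishings claimed in (II), in particular on $\{{\bf n}\ge{\bf 0}\}$. (The statement for $T_{+}$ also drops out at one stroke from the comparison statement applied through the diagonal $\Delta({\bf n})=n_1+n_2+n_3$, because $n_1+n_2+n_3\ge 0$ forces some $n_j\ge 0$.) Stage (III) is then a short Mayer--Vietoris chain: the sequence for $\mathfrak a_2,\mathfrak a_3$ gives $[H^{i}_{\mathfrak a_2\mathfrak a_3}(M)]_{\bf n}=0$ for $n_2,n_3\ge 0$; since $\sqrt{\mathfrak a_1+\mathfrak a_2\mathfrak a_3}=\sqrt{(\mathfrak a_1+\mathfrak a_2)\cap(\mathfrak a_1+\mathfrak a_3)}$, the sequence for $\mathfrak a_1+\mathfrak a_2$ and $\mathfrak a_1+\mathfrak a_3$ (whose sum is $T_{+}$) gives $[H^{i}_{\mathfrak a_1+\mathfrak a_2\mathfrak a_3}(M)]_{\bf n}=0$ for ${\bf n}\ge{\bf 0}$; and finally, since $\sqrt{\mathfrak a_1\cap\mathfrak a_2\mathfrak a_3}=\sqrt{T_{++}}$, the sequence for $\mathfrak a_1$ and $\mathfrak a_2\mathfrak a_3$ sandwiches $[H^{i}_{T_{++}}(M)]_{\bf n}$ between $[H^{i}_{\mathfrak a_1}(M)\oplus H^{i}_{\mathfrak a_2\mathfrak a_3}(M)]_{\bf n}$ and $[H^{i+1}_{\mathfrak a_1+\mathfrak a_2\mathfrak a_3}(M)]_{\bf n}$, both of which vanish for ${\bf n}\ge{\bf 0}$; this gives the conclusion. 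The one genuinely delicate point is the comparison statement in Stage (I): Stages (II) and (III) are routine manipulations with Mayer--Vietoris sequences and Grothendieck spectral sequences, whereas in (I) the difficulty is that $H^{\bullet}_{\mathcal M}$ enters a Grothendieck spectral sequence only with $H^{\bullet}_{T_{+}}$ --- not with $H^{\bullet}_{\mathfrak a_l}$ or $H^{\bullet}_{T_{++}}$ --- which is exactly why one must first coarsen the grading so that $\mathfrak a_l$ becomes an irrelevant ideal, accepting a $\ast$-local base (and hence graded local duality), and only reassemble $T_{++}$ at the very end.
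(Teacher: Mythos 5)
Your proof is correct, and its skeleton is the same as the paper's: coarsen the $\mathbb N^3$-grading along each projection $\psi_l({\bf n})=n_l$ so that $\mathfrak a_l=T^+_l$ becomes the irrelevant ideal of a standard $\mathbb N$-graded ring over a graded local base, transfer the hypothesis on $H_{\mathcal M}$ to a vanishing of $H_{\mathfrak a_l}$ in degrees with $n_l\ge 0$, and then reassemble $T_{++}$ (which has the same radical as $\mathfrak a_1\cap\mathfrak a_2\cap\mathfrak a_3$) by Mayer--Vietoris. Where you genuinely diverge is in how the transfer steps are justified. The paper invokes Hyry's Lemma~2.3 twice: once with $\mathfrak a=0$ to pass from $H_{\mathcal M}$ to $H_{T_l^+}$, and once with a nonzero $\mathfrak a$ in the degree-zero part of the coarsened ring to get the vanishing of $H_{T_1^++T_2^+}$ and $H_{T_{1,2}^++T_3^+}$ that feeds the two Mayer--Vietoris sequences; it handles the graded base by localizing at its maximal homogeneous ideal. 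You instead prove the $\mathfrak a=0$ comparison from scratch via the composite-functor spectral sequence $H^p_{\mathfrak m'}\bigl(H^q_{T'_+}(M')\bigr)\Rightarrow H^{p+q}_{\mathcal M'}(M')$ and a dimension count through local duality at the corner $(e^*,c)$ (outgoing differentials die by Grothendieck vanishing, incoming ones by the drop in dimension of the Matlis duals), and you replace the second use of Hyry's lemma by the spectral sequences of $\Gamma_{\mathfrak a_m}\circ\Gamma_{\mathfrak a_l}$ together with the observation that localizing at degree-${\bf e}_k$ elements only moves the $k$-th coordinate, at the price of one extra Mayer--Vietoris sequence to reach $\mathfrak a_1+\mathfrak a_2\mathfrak a_3$. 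This buys a self-contained argument whose only external inputs are graded local duality and standard spectral-sequence facts; the paper's version is shorter by citation. The one step you should make fully explicit is the ``$\ast$-local'' form of the duality argument in Stage~(I): the cleanest route is the paper's, namely first localize the coarsened ring at the maximal homogeneous ideal $\mathcal N$ of its degree-zero part (vanishing of a graded module is detected by that localization) and then run your completed local duality argument verbatim over an honest local base.
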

\begin{proof}
Let $T_i^+=\bigoplus_{n_i \geq 1} T_{\bf n}$ and $T_{i,j}^+=\bigoplus_{n_i,n_j \geq 1}T_{\bf n}$. 
Consider a homomorphism $\phi: \mathbb Z^3 \longrightarrow \mathbb Z$ defined as 
$\phi(r,s,t)=r$. Then $M^\phi$ is an $\mathbb N-$graded module defined over the $\mathbb N-$graded 
ring $T^\phi$.
Let $B_{(r,s)}=T_{(0,r,s)}$ and $B=\bigoplus_{r,s \geq 0} B_{(r,s)}$. Then $\mathcal N=\m\bigoplus_{r+s \geq 1}B_{(r,s)}$ 
is the maximal homogeneous ideal of $B$. Then $T^\phi\otimes_B B_\mathcal{N}$ is $\mathbb N-$graded ring 
defined over a local ring $B_\mathcal N$. We have 
$[H^i_{\mathcal M^\phi}(M^{\phi})]_{m}=\bigoplus_{s,t \in \mathbb Z}[H^i_{\mathcal M}(M)]_{(m,s,t)}$. 
Hence $[H^i_{\mathcal M^\phi}(M^{\phi})]_{m}=0$ for all $m \geq 0$ and $i \geq 0$. Therefore
\begin{eqnarray*}
 [H^i_{\mathcal M^\phi \otimes_B B_\mathcal N}(M^\phi \otimes_B B_\mathcal N)]_{m} \cong 
 [H^i_{\mathcal M^\phi}(M^\phi)]_{m} \otimes_B B_{\mathcal N}=0 \mbox{ for all } m \geq 0 \mbox{ and }i \geq 0.
\end{eqnarray*}
Taking $\mathfrak a=0$ in \cite[Lemma 2.3]{hyry}, we get that 
$$[H^i_{(T_1^+)^\phi}(M^\phi)]_{m} \otimes_B B_{\mathcal N}\cong 
[H^i_{(T_1^+)^\phi \otimes_B B_\mathcal N}(M^\phi \otimes_B B_\mathcal N)]_{m}=0 
\mbox{ for all }m \geq 0 \mbox{ and }i \geq 0.$$
Since $[H^i_{(T_1^+)^\phi}(M^\phi)]_{m}$ is bigraded $B-$module and $([H^i_{(T_1^+)^\phi}(M^\phi)]_{m})_\mathcal N=0$, 
$[H^i_{(T_1^+)^\phi}(M^\phi)]_{m}=0$ for all $m \geq 0$. Since $[H^i_{(T_1^+)^\phi}(M^\phi)]_{m}=
\bigoplus_{s,t \geq 0}[H^i_{T_1^+}(M)]_{(m,s,t)}=0$, 
\begin{eqnarray}\label{vanishing of lc 1+}
[H^i_{T_1^+}(M)]_{\bf n}=0 \mbox{ for all }n_1 \geq 0 \mbox{ and }i \geq 0.
\end{eqnarray}
Taking $\mathfrak{a}=(\bigoplus_{p \geq 1,q \geq 0}{B_{(p,q)}})\otimes_B B_{\mathcal N} $ in 
\cite[Lemma 2.3]{hyry}, we get that 
\begin{eqnarray} \label{vanishing of lc 12+}
[H^i_{T_1^++T_2^+}(M)]_{\bf n}&=&0 \mbox{ for all }n_1 \geq 0 \mbox{ and all }i \geq 0.
\end{eqnarray}
Similar argument shows that 
\begin{eqnarray} \label{vanishing of lc i+}
[H^j_{T_i^+}(M)]_{\bf n}=0 \mbox{ for all }n_i \geq 0 \mbox{ and }j \geq 0. 
\end{eqnarray}
Consider $\phi:\mathbb Z^3 \longrightarrow \mathbb Z$ defined as $\phi(r,s,t)=t$. Let 
$C=\bigoplus_{r,s \geq 0}T_{(r,s,0)}$ and $\mathcal P$ be the maximal homogeneous ideal of $C$. Then 
taking $\mathfrak a=(\bigoplus_{p,q \geq 1}{C_{(p,q)}})\otimes_C C_{\mathcal P}$ in 
\cite[Lemma 2.3]{hyry}, we get that 
\begin{eqnarray}\label{vanishing of lc 12,3+}
 [H^i_{T_{1,2}^++T_3^+}(M)]_{\bf n}=0 \mbox{ for all }n_3 \geq 0 \mbox{ and all }i \geq 0.
\end{eqnarray}
Consider 
the Mayer-Vietoris sequence of local cohomology modules 
\begin{eqnarray*}
 \cdots \longrightarrow H^i_{T_1^++T_2^+}(M) \longrightarrow H^i_{T_1^+}(M) \oplus H^i_{T_2^+}(M) 
 \longrightarrow H^i_{T_{1,2}^+}(M) \longrightarrow H^{i+1}_{T_1^+ +T_2^+}(M) \longrightarrow \cdots.
\end{eqnarray*}
Then, by equations (\ref{vanishing of lc 1+}), (\ref{vanishing of lc 12+}) and 
(\ref{vanishing of lc i+}), for all $i \geq 0$, 
\begin{eqnarray}\label{vanishing of lc 1,2+}
[H^i_{T_{1,2}^+}(M)]_{\bf n}=0 \mbox{ if } n_1\geq 0 \mbox{ and }n_2 \geq 0.
\end{eqnarray}
Again considering the Mayer-Vietoris sequence of local cohomology modules
\begin{eqnarray*}
 \cdots \longrightarrow H^i_{T_{1,2}^++T_3^+}(M) \longrightarrow H^i_{T_{1,2}^+}(M) \oplus H^i_{T_3^+}(M) 
 \longrightarrow H^i_{T_{++}}(M) \longrightarrow H^{i+1}_{T_{1,2}^+ +T_3^+}(M) \longrightarrow \cdots
\end{eqnarray*}
and using equations (\ref{vanishing of lc i+}), (\ref{vanishing of lc 1,2+}) and (\ref{vanishing of lc 12,3+}), we get that 
$[H^i_{T_{++}}(M)]_{\bf n}=0$ for all ${\bf n} \in \mathbb N^3 $ and $i \geq 0$.
\end{proof}

Suppose $T$ is an $\mathbb N$-graded ring defined over a local ring 
and $\mathcal{M}$ is the maximal homogeneous ideal of $T$. For $\mathbb{N}-$graded $T-$module $M$
let $$a(M):=\sup\{k\mid [H^{\dim M}_\mathcal M(M)]_k \neq0\}.$$ M. Herrmann, E. Hyry and J. Ribbe 
proved that for homogeneous ideal $I$ of positive height in a multi-graded ring $B$, 
$a(\mathcal{R}(I))=-1$ \cite{hhr}. The same proof shows the following:

\begin{lemma} \label{lemma3}\rm\cite[Lemma 2.1]{hhr}
 Let $B$ be a multi-graded ring of dimension $d$ defined over a local ring and 
let $I \subseteq B$ be a homogeneous ideal of positive height. Let $\F=\{I_n\}$ be an $I-$admissible 
filtration of homogeneous ideal in $B$ such that $\mathcal R(\F)$ is a 
finite $\mathcal R(I)-$module. Then $a(\mathcal R(\F))=-1$.
\end{lemma}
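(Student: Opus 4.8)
The plan is to reproduce the proof of \cite[Lemma 2.1]{hhr} with the admissible filtration $\F=\{I_n\}$ in place of the $I$-adic filtration, checking that the argument uses only the hypotheses $\height I>0$ and that $\R(\F)$ is a finite $\R(I)$-module. First I would record the structural facts that drive the computation. Since $\F$ is an $I$-filtration one has $I_0=B$, so there are two canonical short exact sequences of graded $\R(\F)$-modules,
\begin{equation*}
0\longrightarrow \R(\F)_+\longrightarrow \R(\F)\longrightarrow B\longrightarrow 0,
\qquad
0\longrightarrow \R(\F)_+(1)\longrightarrow \R(\F)\longrightarrow G(\F)\longrightarrow 0,
\end{equation*}
where $G(\F)=\bigoplus_{n\ge 0}I_n/I_{n+1}$; the second sequence comes from identifying the ideal $\bigoplus_{n\ge 0}I_{n+1}t^{n}$ (it is an ideal because $I_mI_{n+1}\subseteq I_{m+n+1}$) with $\R(\F)_+(1)$ via division by $t$. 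The relevant dimensions are $\dim\R(\F)=d+1$ (as $\R(\F)$ is finite over $\R(I)$ and $\dim\R(I)=d+1$ since $\height I>0$), $\dim B=d$, and $\dim G(\F)\le d$ (since $G(\F)$ is a quotient of $\R(\F)\otimes_B B/I$, hence a finite module over $G(I)=\R(I)/I\R(I)=\bigoplus_n I^n/I^{n+1}$, and $\dim G(I)=d$).

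Next I would carry out the local cohomology computation with respect to the maximal homogeneous ideal $\mathcal M$. Applying $H^\bullet_{\mathcal M}(-)$ to the two sequences and using $H^{d+1}_{\mathcal M}(B)=0=H^{d+1}_{\mathcal M}(G(\F))$ (by the dimension count), both long exact sequences give surjections onto $H^{d+1}_{\mathcal M}(\R(\F))$: from the first, $H^{d+1}_{\mathcal M}(\R(\F)_+)\twoheadrightarrow H^{d+1}_{\mathcal M}(\R(\F))$ with kernel a quotient of $H^{d}_{\mathcal M}(B)$, hence an isomorphism in every $t$-degree $n\neq 0$ (as $B$ sits in $t$-degree $0$); from the second, $[H^{d+1}_{\mathcal M}(\R(\F)_+)]_{n+1}\twoheadrightarrow[H^{d+1}_{\mathcal M}(\R(\F))]_{n}$ for all $n$. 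Combining these yields surjections $[H^{d+1}_{\mathcal M}(\R(\F))]_{n+1}\twoheadrightarrow[H^{d+1}_{\mathcal M}(\R(\F))]_{n}$ for every $n\neq -1$. Since $[H^{d+1}_{\mathcal M}(\R(\F))]_{n}=0$ for $n\gg 0$ (graded local cohomology of a finitely generated graded algebra over a local ring vanishes in large degree), descending along these surjections from large $n$ forces $[H^{d+1}_{\mathcal M}(\R(\F))]_{n}=0$ for all $n\ge 0$, i.e. $a(\R(\F))\le -1$. For the reverse inequality: if $[H^{d+1}_{\mathcal M}(\R(\F))]_{-1}$ were also zero, then reading the same surjections downward from degree $-1$ would kill $[H^{d+1}_{\mathcal M}(\R(\F))]_n$ for all $n\le -1$ as well, giving $H^{d+1}_{\mathcal M}(\R(\F))=0$ and contradicting $\dim\R(\F)=d+1$. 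Hence $a(\R(\F))=-1$.

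The step I expect to need the most care is the grading bookkeeping: the $a$-invariant is defined for an $\NN$-graded algebra over a local ring, so when $B$ carries a genuine multigrading one must first pass to the appropriate ``collapsed'' $\NN$-grading on $\R(\F)$ and check that the dimension bounds and the location of $B$ in $t$-degree zero survive this collapse — or, where the clean isomorphisms above degenerate into mere surjections, replace them by the Hyry-type comparison of \cite[Lemma 2.3]{hyry} already invoked in Lemma \ref{vanishing of lc T++}. Aside from this, the only inputs are the two short exact sequences, the equality $I_0=B$, and the dimension facts above, all of which hold for any $I$-admissible $\F$ with $\R(\F)$ finite over $\R(I)$; so the argument of \cite[Lemma 2.1]{hhr} transfers without essential change.
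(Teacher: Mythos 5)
Your proposal is correct and follows essentially the same route as the paper: the same two short exact sequences for $\R(\F)_+$ and $\R(\F)_+(1)$, the isomorphisms in degrees $n\neq 0$ from the first, the epimorphisms from the second, descent from the vanishing in large degrees to get $a(\R(\F))\le -1$, and Grothendieck non-vanishing in dimension $d+1$ to rule out $[H^{d+1}_{\mathcal M}(\R(\F))]_{-1}=0$. The only cosmetic difference is in handling the multigrading on $B$: the paper simply localizes $B$ at its maximal homogeneous ideal $\mathfrak n$ (using $\R(\F)\otimes_B B_{\mathfrak n}=\R_{B_{\mathfrak n}}(\F B_{\mathfrak n})$) to reduce to the case where $B$ is local, rather than collapsing the grading or invoking Hyry's comparison.
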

\begin{proof}
Let $\mathfrak n$ be the maximal homogeneous ideal of $B$. Let $\mathcal M=\mathfrak n \bigoplus_{n >0}I^nt^n$ 
be a maximal homogeneous ideal in $\mathcal R(I)$. Then 
$[H^i_{\mathcal M}(\mathcal R(\F))]_n=0$ if and only if 
$[H^i_{\mathcal M \otimes_B B_\mathfrak n}(\mathcal R(\F) \otimes_B B_{\mathfrak n})]_n=0$. 
Since $\mathcal R(\F) \otimes_B B_\mathfrak n = \mathcal R_{B_\mathfrak n}(\F B_\mathfrak n)$, 
we may assume that $B$ is local with maximal ideal $\mathfrak n$. \\
Let $\mathcal R(\F)_+=\bigoplus_{n>0}I_nt^n$ and 
$ G(\F)=\bigoplus_{n\geq 0}I_n/I_{n+1}$. For an $\mathbb N$-graded module 
$M$ let $M(n)$ denote the graded module $M$ with $M(n)_k=M_{n+k}$. Then we have the exact sequences 
$$ 0 \longrightarrow \mathcal{R}(\F)_+\longrightarrow \mathcal{R}(\F) \longrightarrow B \rightarrow 0$$
$$0 \longrightarrow \mathcal{R}(\F)_+(1) \longrightarrow \mathcal{R}(\F) \longrightarrow G(\F) \longrightarrow 0.$$
Therefore we get the long exact sequence of local cohomology modules
$$\cdots \longrightarrow H^i_\mathcal M(\mathcal R(\F)_+) \longrightarrow H^i_\mathcal M(\mathcal R(\F)) 
\longrightarrow H^i_\mathcal{M}(B) \longrightarrow \cdots \mbox{ and }$$
$$\cdots \longrightarrow H^i_\mathcal M(\mathcal R(\F)_+(1)) \longrightarrow H^i_\mathcal M(\mathcal R(\F)) 
\longrightarrow H^i_\mathcal{M}(G(\F))\longrightarrow \cdots .$$
Note that $H^i_{\mathcal M}(B)=H^i_\mathfrak n(B)$. Hence $[H^i_\mathcal M(B)]_n=0$ for $n \neq 0$. 
This gives the isomorphisms 
\begin{equation} \label{equation5.2}
[H^{d+1}_\mathcal M(\mathcal R(\F)_+)]_n \longrightarrow [H^{d+1}_\mathcal M(\mathcal R(\F))]_n \mbox{ for }n \neq 0 
\end{equation}
and the epimorphisms
\begin{equation} \label{equation5.3}
[H^{d+1}_\mathcal M(\mathcal R(\F)_+)]_{n+1} \longrightarrow [H^{d+1}_\mathcal M(\mathcal R(\F))]_n \mbox{ for }n \in \mathbb Z.
\end{equation}
Since $H^{d+1}_\mathcal M(\mathcal R(\F)_+)$ is Artinian, $[H^{d+1}_\mathcal M(\mathcal R(\F)_+)]_n=0$ 
for $n\gg0$. Therefore, by equations (\ref{equation5.2}) and (\ref{equation5.3}), $[H^{d+1}_\mathcal{M}(\mathcal R(\F))]_n=0$ 
for $n\geq 0$. If $[H^{d+1}_\mathcal M(\mathcal R(\F))]_{-1}=0$ then 
$H^{d+1}_\mathcal M(\mathcal R(\F))=0,$ a contradiction. Thus 
$a(\mathcal R(\F))=-1$.
\end{proof}

For a standard $\mathbb{N}^g$-graded ring $T$ defined over a local ring and finitely generated 
$\mathbb{N}^g$-graded $T$-module $M$, let 
\begin{eqnarray*}
 a^i(M)&=&\sup\{k\in \mathbb{Z}\mid [H^{\dim M}_{\mathcal{M}}(M)]_{{\bf n}} \neq 0 \mbox{ for some } 
 {\bf n}\in \mathbb{Z}^{g} \mbox{ with }n_i=k\}
\end{eqnarray*}
be $a-$invariants of $M$ \cite{hyry}. Here $\mathcal{M}$ is the maximal homogeneous 
ideal of $T$.
E. Hyry proved that $a^i(\mathcal R({\bf I}))=-1$ for all $1\leq i \leq g$ 
\cite[Lemma 2.7]{hyry}. In following lemma we prove that $a^i(\mathcal R(\mathcal F))=-1$ for all $1\leq i \leq g$. 

\begin{lemma} \label{negativity of a-invariants}
 Let $(R,\m)$ be a Noetherian local ring and $I_1,\ldots,I_g$ be $\m-$primary ideals in $R$. Let 
 $\F$ be an ${\bf I}=(I_1,\ldots,I_g)-$admissible filtration. Then 
 $a^i(\mathcal R(\mathcal F))=-1$ for all $1\leq i \leq g$.
\end{lemma}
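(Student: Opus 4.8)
The plan is to reduce the multigraded $a^i$-invariant statement to the single-graded case of Lemma~\ref{lemma3} via the coarsening construction $M \mapsto M^\phi$ recorded just before Lemma~\ref{vanishing of lc T++}. Fix $i$, say $i=1$. I would consider the homomorphism $\phi:\mathbb Z^g \longrightarrow \mathbb Z$ given by $\phi({\bf n})=n_1$. Let $\mathcal M$ be the maximal homogeneous ideal of $\mathcal R({\bf I})$ (note $\mathcal R(\mathcal F)$ is a finite $\mathcal R({\bf I})$-module, so they share the same notion of maximal homogeneous ideal and $\dim \mathcal R(\mathcal F)=\dim\mathcal R({\bf I})=g+d$). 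The key identity is $H^{g+d}_{\mathcal M}(\mathcal R(\mathcal F))^\phi = H^{g+d}_{\mathcal M^\phi}(\mathcal R(\mathcal F)^\phi)$, so that
$$[H^{g+d}_{\mathcal M^\phi}(\mathcal R(\mathcal F)^\phi)]_k = \bigoplus_{n_1=k}[H^{g+d}_{\mathcal M}(\mathcal R(\mathcal F))]_{\bf n}.$$
Hence $a^1(\mathcal R(\mathcal F)) = \sup\{k : [H^{g+d}_{\mathcal M^\phi}(\mathcal R(\mathcal F)^\phi)]_k \neq 0\}$, which is precisely an $a$-invariant of the $\mathbb N$-graded object $\mathcal R(\mathcal F)^\phi$ over the ring $\mathcal R({\bf I})^\phi$.

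Next I would identify $\mathcal R(\mathcal F)^\phi$ as the Rees-type object to which Lemma~\ref{lemma3} applies. Writing $B = \bigoplus_{n_2,\ldots,n_g \geq 0}\mathcal R({\bf I})_{(0,n_2,\ldots,n_g)} = \mathcal R(I_2,\ldots,I_g)$, the coarsened ring $\mathcal R({\bf I})^\phi = \bigoplus_{m\geq 0}\big(\bigoplus_{n_2,\ldots,n_g}(I_1^m I_2^{n_2}\cdots I_g^{n_g}) t_1^m \cdots\big)$ is the Rees algebra $\mathcal R_B(I_1 B)$ of the extension of $I_1$ to $B$ — a standard $\mathbb N$-graded ring over the multigraded base ring $B$. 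Since $I_1$ is $\m$-primary, $I_1 B$ has positive height in $B$. Moreover $\mathcal R(\mathcal F)^\phi = \bigoplus_{m\geq 0}\big(\bigoplus_{n_2,\ldots,n_g}\mathcal F(m,n_2,\ldots,n_g)\,{\bf t}^{\bf n}\big)$ is the Rees algebra of the $\mathbb N$-graded filtration $\{J_m\}$ of homogeneous ideals of $B$ given by $J_m = \bigoplus_{n_2,\ldots,n_g}\mathcal F(m,n_2,\ldots,n_g){\bf t}^{(0,n_2,\ldots,n_g)}$; admissibility of $\mathcal F$ over ${\bf I}$ forces $\{J_m\}$ to be an $I_1 B$-admissible filtration, i.e.\ $\mathcal R(\mathcal F)^\phi$ is a finite $\mathcal R_B(I_1 B)$-module. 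One technical point to verify carefully is that Lemma~\ref{lemma3} is applied with respect to the correct maximal homogeneous ideal: since $B$ is not local but multigraded over the local ring $R$, I would pass to $B_{\mathcal N}$ (as in the proof of Lemma~\ref{lemma3} itself) and use that forming $H^\bullet_{\mathcal M^\phi}$ commutes with this localization in the top degree, so no information about the relevant graded pieces is lost.

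With these identifications in place, Lemma~\ref{lemma3} applied to $B$ (of dimension $g-1+d$) and the filtration $\{J_m\}$ of $I_1 B$ gives $a(\mathcal R(\mathcal F)^\phi) = -1$, which by the displayed identity above is exactly $a^1(\mathcal R(\mathcal F)) = -1$. Running the identical argument with $\phi({\bf n}) = n_i$ for each $i = 1,\ldots,g$ completes the proof. The main obstacle I expect is purely bookkeeping rather than conceptual: one must check that the coarsening $\mathcal R(\mathcal F)^\phi$ really is the Rees algebra of an $I_1B$-admissible filtration over $B$ (finiteness of $\mathcal R(\mathcal F)^\phi$ over $\mathcal R_B(I_1B)$, inherited from finiteness of $\mathcal R'(\mathcal F)$ over $\mathcal R'({\bf I})$), that $\dim$ and the top local cohomology degree match up under coarsening, and that $I_1B$ genuinely has positive height — the last of which is immediate from $I_1$ being $\m$-primary and $B$ being a domain-like extension faithfully flat over $R$ in each graded piece. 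None of this requires new ideas beyond the Hyry machinery already cited; it is a matter of assembling the pieces in the right order.
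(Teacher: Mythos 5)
Your proposal is correct and follows essentially the same route as the paper: coarsen the $\mathbb Z^g$-grading via $\phi({\bf n})=n_i$, recognize $\mathcal R(\mathcal F)^\phi$ as the Rees algebra of an admissible filtration of homogeneous ideals over a multigraded base ring $B$, and invoke Lemma \ref{lemma3} together with $H^j_{\mathcal M}(\,\cdot\,)^\phi=H^j_{\mathcal M^\phi}(\,\cdot\,^\phi)$. The only (immaterial) difference is that you take $B=\mathcal R(I_2,\ldots,I_g)$ where the paper takes $B=\mathcal R(\mathcal G)$ for the restricted filtration $\mathcal G=\{\F({\bf n}-n_i{\bf e}_i)\}$; both choices satisfy the hypotheses of Lemma \ref{lemma3}.
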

\begin{proof}
Let $\mathcal G=\{\F({\bf n}-n_i{\bf e}_i)\}$ be $\mathbb N^{g-1}-$graded 
$(I_1,\ldots,\widehat{I_i},\ldots,I_g)-$filtration. 
Then $B=\mathcal R(\mathcal G)$ is $\mathbb N^{g-1}-$graded ring and 
$I=\bigoplus_{{\bf m}\in \mathbb N^g}
{\bf I}^{{\bf m}-(m_i-1){\bf e}_i}$
is homogeneous ideal in $B$. Also, $\mathcal H=\{\mathcal H(n)=\bigoplus_{{\bf m} \in 
\mathbb N^{g},m_i=n} \F({\bf m})\}_{n \in \mathbb N}$ is an $I-$filtration in $B$ and 
$\R_B(\mathcal H)=\R(\F)$ is a finite
$\R_B(I)=\R(I)-$module. Therefore by Lemma \ref{lemma3}, 
$a(\mathcal R_B(\mathcal H))=-1$. Consider $\phi: \mathbb Z^g \longrightarrow \mathbb Z $ 
defined as $\phi({\bf n})=n_i$. Then 
$[H^{d+1}_\mathcal M(\R_B(\mathcal H))]_n=[H^{d+1}_{\mathcal M^\phi}(\R(\F))^\phi]_n=\bigoplus_
{{\bf m},m_i=n}[H^i_\mathcal M(M)]_{\bf m}$. Hence $a^i(\R(\F))=-1$.
\end{proof}

\begin{lemma} \label{local cohomology of R and R*}
Let $S^*$ be a $\mathbb Z^g-$graded ring and let 
$S=\bigoplus_{{\bf n}\in \mathbb N^g} \hookrightarrow S^*$ 
be an inclusion. Then 
\end{lemma}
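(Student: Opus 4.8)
The plan is to compute both sides by means of the \v{C}ech (stable Koszul) complex attached to a homogeneous generating set of $\mathfrak{a}$, and to observe that these complexes coincide degree-by-degree in every multidegree ${\bf n}\in\mathbb{N}^g$.

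First I would fix homogeneous generators $f_1,\ldots,f_r$ of $\mathfrak{a}$; since $S$ is $\mathbb{N}^g$-graded, each $f_j$ has degree ${\bf d}_j\in\mathbb{N}^g$. These same elements generate $\mathfrak{a}S^*$, and as both $S$ and $S^*$ are Noetherian and the ideals finitely generated, local cohomology is computed by the \v{C}ech complexes $H^i_{\mathfrak{a}}(M)=H^i(C^\bullet(f_1,\ldots,f_r;M))$ and $H^i_{\mathfrak{a}S^*}(M^*)=H^i(C^\bullet(f_1,\ldots,f_r;M^*))$, whose terms are finite direct sums of localizations $M_{f_{j_1}\cdots f_{j_k}}$, respectively $M^*_{f_{j_1}\cdots f_{j_k}}$. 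Both complexes are $\mathbb{Z}^g$-graded, and since $M=\bigoplus_{{\bf n}\in\mathbb{N}^g}M^*_{\bf n}$ sits inside $M^*$ as an $S$-submodule, the inclusion induces a morphism of graded complexes $C^\bullet(\underline{f};M)\longrightarrow C^\bullet(\underline{f};M^*)$.

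The key point is then: for any homogeneous $f\in S$ of degree ${\bf d}\in\mathbb{N}^g$, the natural map $M_f\to M^*_f$ is an isomorphism in every multidegree ${\bf n}\geq{\bf 0}$. Injectivity holds in all degrees, since $M\hookrightarrow M^*$ is injective and localization is exact: if $f^\ell m=0$ in $M^*$ then already $f^\ell m=0$ in $M$. For surjectivity in degree ${\bf n}\geq{\bf 0}$, a homogeneous element of $[M^*_f]_{\bf n}$ has the form $m^*/f^k$ with $m^*\in M^*_{{\bf n}+k{\bf d}}$; since ${\bf n}\geq{\bf 0}$ and $k{\bf d}\geq{\bf 0}$ we get ${\bf n}+k{\bf d}\geq{\bf 0}$, hence $m^*\in M^*_{{\bf n}+k{\bf d}}=M_{{\bf n}+k{\bf d}}\subseteq M$, so the element lies in the image of $M_f$. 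The same applies to each product $f_{j_1}\cdots f_{j_k}$, whose degree again lies in $\mathbb{N}^g$; thus the morphism $C^\bullet(\underline{f};M)\to C^\bullet(\underline{f};M^*)$ is an isomorphism in each multidegree ${\bf n}\in\mathbb{N}^g$.

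Finally, since passing to the ${\bf n}$-th graded component is exact and commutes with cohomology, the previous step yields $[H^i_{\mathfrak{a}}(M)]_{\bf n}\cong[H^i_{\mathfrak{a}S^*}(M^*)]_{\bf n}$ for all $i\geq 0$ and all ${\bf n}\in\mathbb{N}^g$. I do not expect a genuine obstacle here; the two points that merit a line of care are the validity of the \v{C}ech computation of local cohomology over both $S$ and $S^*$ (standard, the ideals being finitely generated) and the compatibility of the degree-wise identification of localizations with the \v{C}ech differentials, which is immediate from naturality of localization.
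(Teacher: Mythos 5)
Your \v{C}ech--complex argument is sound for what it proves: for a homogeneous $f$ of degree ${\bf d}\in\mathbb N^g$ the localization map $[M_f]_{\bf n}\to[M^*_f]_{\bf n}$ is bijective when ${\bf n}\geq{\bf 0}$, so the graded pieces of the two local cohomology modules agree in every multidegree ${\bf n}\in\mathbb N^g$. But this is weaker than what the lemma actually asserts (the enumerated conclusions immediately following the lemma header): (1) an isomorphism of the \emph{entire} modules $H^i_{S_{++}}(S)\cong H^i_{S_{++}}(S^*)$ for $i>1$, valid in all multidegrees including those with negative entries, and (2) the five-term exact sequence $0\to H^0_{S_{++}}(S)\to H^0_{S_{++}}(S^*)\to S^*/S\to H^1_{S_{++}}(S)\to H^1_{S_{++}}(S^*)\to 0$, which identifies the discrepancy in cohomological degrees $0$ and $1$ as exactly $S^*/S$. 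The restriction to ${\bf n}\geq{\bf 0}$ is not merely cosmetic: in the theorem on existence of good joint reductions when $\R(\F)$ is Cohen--Macaulay, the lemma is invoked to compare $H^i_{\R_{++}}(G_i(\F))$ with $H^i_{\R_{++}}(G_i'(\F))$ in degrees with $n_i=-1$, which your statement does not reach. Your approach also silently imports Noetherian/finite-generation hypotheses (needed for the \v{C}ech computation) that the lemma does not impose.

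The paper's proof is a one-liner and needs none of this machinery: from $0\to S\to S^*\to S^*/S\to 0$, every homogeneous element of $S^*/S$ lives in a degree with a negative component, hence is annihilated by a high power of $S_{++}$ (multiplication by $(S_{++})^N$ pushes it into degrees $\geq{\bf 0}$, where $S^*/S$ vanishes); so $S^*/S$ is $S_{++}$-torsion, $H^0_{S_{++}}(S^*/S)=S^*/S$ and $H^i_{S_{++}}(S^*/S)=0$ for $i>0$, and the long exact sequence of local cohomology gives both assertions at once. If you want to stay with the \v{C}ech complex, note that the generators of $S_{++}$ may be taken of degree $\geq{\bf e}$; then for every ${\bf n}\in\mathbb Z^g$ and every nonempty product $f_{j_1}\cdots f_{j_l}$ one can raise the denominator until ${\bf n}+k{\bf d}\geq{\bf 0}$, so the comparison of localized terms is an isomorphism in \emph{all} degrees for the terms $C^{\geq 1}$, and the two complexes differ only in the $C^0$ term, by $M^*/M$ --- which recovers the full statement. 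As written, though, your proof establishes only the nonnegative-degree comparison.
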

\begin{enumerate}
 \item for $i >1$, $H^i_{S_{++}}(S) \cong H^i_{S_{++}}(S^*)$.
\item We have an exact sequence
$$0 \longrightarrow H^0_{S_{++}}(S) \longrightarrow H^0_{S_{++}}(S^*) \longrightarrow S^*/S 
\longrightarrow H^1_{S_{++}}(S) \longrightarrow H^1_{S_{++}}(S^*) \longrightarrow 0.$$
 \end{enumerate}
\begin{proof}
 We have an exact sequence 
 $$0 \longrightarrow S \longrightarrow S^* \longrightarrow S^*/S \longrightarrow 0.$$
 Since $S^*/S$ is $S_{++}-$torsion, $H^0_{S_{++}}(S^*/S)=S^*/S$ and 
 $H^i_{S_{++}}(S^*/S)=0$ for all $i>0$. Considering the long exact sequence 
 of local cohomology modules result follows. 
\end{proof}

\begin{theorem}
 \label{good joint reductions exist in CMcase}
Let $(R,\m)$ be a Cohen-Macaulay local ring of dimension $3$ with infinite residue field 
 and $I,J,K$ be $\m-$primary ideals in $R$. 
 Let $\mathcal F=\{\mathcal F{(r,s,t)}\}$ be an $(I,J,K)-$admissible filtration. 
 Assume that $\F({\bf n})=\F({\bf n}-n_i{\bf e}_i)$ for $n_i \leq 0$. 
 Suppose 
 $\mathcal R(\mathcal F)$ 
 is Cohen-Macaulay. Then there exists a good complete reduction $(M_{ij})$, where
$M= \begin{bmatrix}
  x_1 & x_2 & x_3\\
  y_1 & y_2 & y_3\\
  z_1 & z_2 & z_3\\
 \end{bmatrix}
$,
such that $(M_{i,\sigma (i)})$ is a good joint reduction of $\mathcal F$ for each permutation $\sigma \in S_3$.
\end{theorem}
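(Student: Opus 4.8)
The plan is to deduce the theorem from Theorem~\ref{existence of good joint reductions} by verifying its hypotheses: for $i=1,2,3$,
$$H^0_{\R_{++}}(G_i(\mathcal F))=0\quad\text{and}\quad[H^1_{\R_{++}}(G_i(\mathcal F))]_{\bf n}=0\text{ for all }{\bf n}+{\bf e}_i\geq{\bf 0}.$$
Granting this, Theorem~\ref{existence of good joint reductions} produces a good complete reduction $(M_{ij})$ for which $(M_{i,\sigma(i)})$ is a good joint reduction of $\mathcal F$ for every $\sigma\in S_3$ (see also Remark~\ref{remark on gcr}), which is exactly the assertion. By the symmetry of $I,J,K$ it suffices to treat $i=1$.

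First I would realise $G_1(\mathcal F)$ as an associated graded ring of a Rees-type algebra. Following the construction in the proof of Lemma~\ref{negativity of a-invariants}, set $\mathcal G=\{\mathcal F({\bf n}-n_1{\bf e}_1)\}$, $B=\R(\mathcal G)$, and let $\mathcal H=\{\mathcal H(n)\}$ be the $I$-admissible filtration of homogeneous ideals of $B$, with $I\subseteq B$ of positive height, for which $\R_B(\mathcal H)=\R(\mathcal F)$ (the first coordinate being the Rees grading). The normalisation $\mathcal F({\bf n})=\mathcal F({\bf n}-n_1{\bf e}_1)$ for $n_1\leq 0$ gives $G_B(\mathcal H)=\bigoplus_{n\geq 0}\mathcal H(n)/\mathcal H(n+1)\cong G_1(\mathcal F)$, and the natural surjection of rings $\R(\mathcal F)=\R_B(\mathcal H)\twoheadrightarrow G_1(\mathcal F)$ has kernel $\mathcal N$ with $\mathcal N\cap[\R(\mathcal F)]_{\bf n}=\mathcal F({\bf n}+{\bf e}_1){\bf t}^{\bf n}$. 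Since $\R_B(\mathcal H)=\R(\mathcal F)$ is Cohen--Macaulay, the method of the proof of Lemma~\ref{lemma3} (now pushed through by descending induction on the cohomological degree, using the two short exact sequences
$$0\to\R_B(\mathcal H)_+\to\R_B(\mathcal H)\to B\to 0,\qquad 0\to\R_B(\mathcal H)_+(1)\to\R_B(\mathcal H)\to G_B(\mathcal H)\to 0,$$
and the fact that $H^j_{\mathcal M}(B)$ is concentrated in Rees degree $0$, where $\mathcal M$ is the maximal homogeneous ideal of $\R(\mathcal F)$) shows that $G_1(\mathcal F)$ is Cohen--Macaulay of dimension $\dim\R(\mathcal F)-1\geq 2$; the same sequences show $a^j(G_1(\mathcal F))=a^j(\R(\mathcal F))=-1$ for $j=2,3$ (Lemma~\ref{negativity of a-invariants}). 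In particular $H^0_{\mathcal M}(G_1(\mathcal F))=H^1_{\mathcal M}(G_1(\mathcal F))=0$, and $[H^i_{\mathcal M}(G_1(\mathcal F))]_{\bf n}=0$ for all $i\geq 0$ whenever $n_2\geq 0$, and likewise whenever $n_3\geq 0$.

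Next I would pass from $\mathcal M$ to $\R_{++}$. Running the argument of Lemma~\ref{vanishing of lc T++} in the coordinate directions $2$ and $3$ gives $[H^0_{\R_{++}}(G_1(\mathcal F))]_{\bf n}=[H^1_{\R_{++}}(G_1(\mathcal F))]_{\bf n}=0$ for all ${\bf n}$ with $n_2,n_3\geq 0$; since $H^0_{\R_{++}}(G_1(\mathcal F))\subseteq G_1(\mathcal F)$ is $\NN^3$-graded, $H^0_{\R_{++}}(G_1(\mathcal F))=0$. It remains to kill $[H^1_{\R_{++}}(G_1(\mathcal F))]_{\bf n}$ at the boundary degrees ${\bf n}=(-1,n_2,n_3)$ with $n_2,n_3\geq 0$. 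Here I would feed the exact sequence $0\to\R_B(\mathcal H)_+(1)\to\R_B(\mathcal H)\to G_1(\mathcal F)\to 0$ into the long exact $\R_{++}$-cohomology sequence and use: $\R_{++}B=0$, so $H^i_{\R_{++}}(B)=0$ for $i\geq 1$; the vanishing $[H^1_{\R_{++}}(\R(\mathcal F))]_{(0,n_2,n_3)}=[H^2_{\R_{++}}(\R(\mathcal F))]_{(0,n_2,n_3)}=0$ (Lemma~\ref{vanishing of lc T++} applied to the Cohen--Macaulay ring $\R(\mathcal F)$, which has $a^j(\R(\mathcal F))=-1$); and the description $H^1_{\R_{++}}(\R(\mathcal F))\cong\R'(\mathcal F)/\R(\mathcal F)$ furnished by Lemma~\ref{local cohomology of R and R*}, valid because $\R'(\mathcal F)$ is Cohen--Macaulay (a consequence of Cohen--Macaulayness of $\R(\mathcal F)$ together with $a^i(\R(\mathcal F))=-1$ for all $i$), whence $\R_{++}$ has $\R'(\mathcal F)$-grade $3$ and $H^0_{\R_{++}}(\R'(\mathcal F))=H^1_{\R_{++}}(\R'(\mathcal F))=0$. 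Tracing through the identifications, $[H^1_{\R_{++}}(G_1(\mathcal F))]_{(-1,n_2,n_3)}$ is exhibited as the cokernel of a surjection, hence is zero. This verifies the hypotheses of Theorem~\ref{existence of good joint reductions}, and the theorem follows.

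The main obstacle I anticipate is the Cohen--Macaulay descent from $\R(\mathcal F)$ to $G_i(\mathcal F)$ (supplying a clean filtered version of the Ikeda--Trung criterion over the non-local graded base $B$), together with the degree bookkeeping needed to dispose of the components in first degree $-1$, which is precisely why Theorem~\ref{existence of good joint reductions} is formulated with ${\bf n}+{\bf e}_i\geq{\bf 0}$ rather than ${\bf n}\geq{\bf 0}$. The remaining steps are routine applications of Lemmas~\ref{vanishing of lc T++}, \ref{negativity of a-invariants}, \ref{local cohomology of R and R*} and Remark~\ref{remark on gcr}.
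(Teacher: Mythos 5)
Your overall strategy --- verifying the hypotheses of Theorem \ref{existence of good joint reductions} --- is the same as the paper's, but the route you take to $H^0_{\R_{++}}(G_i(\mathcal F))=0$ and $[H^1_{\R_{++}}(G_i(\mathcal F))]_{\bf n}=0$ has a genuine gap at exactly the point you flag as the main obstacle. Your Cohen--Macaulay descent from $\R(\mathcal F)=\R_B(\mathcal H)$ to $G_1(\mathcal F)=G_B(\mathcal H)$ via the two short exact sequences requires $H^{j}_{\mathcal M}(B)=0$ for $j<\dim B$, i.e.\ Cohen--Macaulayness of the slice Rees algebra $B=\R(\mathcal G)$, which is neither assumed nor deduced. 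The only input you supply about $B$ is that $H^j_{\mathcal M}(B)$ is concentrated in Rees degree $0$; chasing the two sequences with just that gives, for $j$ below the top, $H^j_{\mathcal M}(G_1(\mathcal F))\cong H^j_{\mathcal M}(B)(1)$, i.e.\ modules concentrated in Rees degree $-1$ that need not vanish --- and the slab ${\bf n}=(-1,n_2,n_3)$ is precisely where you must prove vanishing to verify the hypothesis $[H^1_{\R_{++}}(G_1(\mathcal F))]_{\bf n}=0$ for ${\bf n}+{\bf e}_1\geq{\bf 0}$. Repairing this would require a separate multigraded, filtered analogue of the theorem that Cohen--Macaulayness of a Rees algebra forces that of its base. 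Two smaller problems: running the argument of Lemma \ref{vanishing of lc T++} ``in directions $2$ and $3$ only'' yields vanishing of $H^i_{T^+_{2,3}}$, not of $H^i_{\R_{++}}$ (the Mayer--Vietoris assembly in that lemma uses all three directions); and the identification $H^1_{\R_{++}}(\R(\mathcal F))\cong\R^\prime(\mathcal F)/\R(\mathcal F)$ rests on Cohen--Macaulayness of $\R^\prime(\mathcal F)$, which you derive from essentially the statement under proof.

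For comparison, the paper never shows $G_i(\mathcal F)$ is Cohen--Macaulay. It combines Lemma \ref{negativity of a-invariants} with Cohen--Macaulayness of $\R(\mathcal F)$ to get $[H^j_{\mathcal M}(\R(\mathcal F))]_{\bf n}=0$ whenever some $n_j\geq 0$, feeds this into Lemma \ref{vanishing of lc T++} and Lemma \ref{local cohomology of R and R*} to obtain $[H^j_{\R_{++}}(\R^\prime(\mathcal F))]_{\bf n}=0$ for all ${\bf n}\in\NN^3$ and all $j$, and then reads off the required vanishing from the single exact sequence $0\to\R^\prime(\mathcal F)({\bf e}_i)\buildrel t_i^{-1}\over\longrightarrow\R^\prime(\mathcal F)\to G_i^\prime(\mathcal F)\to 0$, using the normalization $\F({\bf n})=\F({\bf n}-n_i{\bf e}_i)$ for $n_i\leq 0$ to identify $H^j_{\R_{++}}(G_i(\mathcal F))$ with $H^j_{\R_{++}}(G_i^\prime(\mathcal F))$ in the relevant degrees. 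Replacing your Ikeda--Trung step with this extended-Rees-algebra argument is the most direct way to close the gap.
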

\begin{proof}
By Lemma \ref{negativity of a-invariants}, $a^j(\mathcal R(\mathcal F))=-1$ for $j=1,2,3$. 
Since $\R(\F)$ is Cohen-Macaulay, for each $j=1,2,3$ we have 
$$[H^i_{\mathcal M}(\R(\F))]_{\bf n}=0 \mbox{ for all }i \geq 0 \mbox{ and }n_j \geq 0,$$
where $\mathcal M$ is the maximal homogeneous ideal of $\R(I,J,K)$. Therefore, 
by Lemma \ref{vanishing of lc T++}, for all $i \geq 0$
$$[H^i_{\mathcal R_{++}}(\R(\F))]_{\bf n}=0 \mbox{ for all }{\bf n} \in \mathbb N^3.$$
Hence, by Lemma \ref{local cohomology of R and R*}, for all $i \geq 0$
$$[H^i_{\mathcal R_{++}}(\R^\prime(\F))]_{\bf n}=0 \mbox{ for all }{\bf n} \in \mathbb N^3.$$
Consider the exact sequence
\begin{eqnarray*}
 0 \longrightarrow \R^\prime(\F)({\bf e}_i) \buildrel t_i^{-1}\over \longrightarrow 
 \R^\prime(\F) \longrightarrow G_i^\prime(\F) \longrightarrow 0,
\end{eqnarray*}
where $G_i^\prime(\F)=\R^\prime(\F)/t_i^{-1}\R^\prime(\F)$.
This gives a long exact sequence of local cohomology modules 
$$\cdots \longrightarrow [H^j_{\R_{++}}(\R^\prime(\F))]_{{\bf n}+{\bf e}_i} 
\longrightarrow [H^j_{\mathcal R_{++}}
(\R^\prime(\F))]_{\bf n} \longrightarrow [H^j_{\mathcal R_{++}}(G_i^\prime(\mathcal F))]_{\bf n} 
\longrightarrow [H^{j+1}_{\R_{++}}(\R^\prime(\F))]_{{\bf n}+{\bf e}_i}\longrightarrow \cdots.$$
Hence $[H^i_{\R_{++}}(G_i^\prime(\F))]_{\bf n}=0$ for all $i \geq 0$ and ${\bf n} +{\bf e}_i \geq {\bf 0}$. 
Since $\F({\bf n})=\F({\bf n}-n_i{\bf e}_i)$ for $n_i \leq 0$, 
$[G_i^\prime(\F)]_{\bf n}=0$ for $n_i <0$. Hence $\left[\frac{G_i^\prime(\F)}{G_i(\F)}\right]_{\bf n}=0$ 
if ${\bf n} \geq 0$ or $n_i < 0$. Therefore, by Lemma \ref{local cohomology of R and R*}, 
$[H^i_{\R_{++}}(G_i(\F))]_{\bf n}=[H^i_{\R_{++}}(G_i^\prime(\F))]_{\bf n}$ 
for all $i \geq 0$ if ${\bf n}\geq {\bf 0}$ or $n_i<0$. Hence 
$[H^i_{\R_{++}}(G_i(\F))]_{\bf n}=0$ for all $i \geq 0$ and ${\bf n}+{\bf e}_i \geq {\bf 0}$.
Hence result follows from Theorem \ref{existence of good joint reductions}.
\end{proof}

\begin{example}
 Let $R=k[x,y,z], I=(x,y,z)$ and $J=K=(x^2,y,z)$. Then $(x,y,z)$ forms a good 
 joint reduction of $\{I^rJ^sK^t\}$. 
\end{example}

\section{Computation of $\lm_R\left(\frac{\ov{I^rJ^sK^t}}{a^r\ov{J^sK^t}+b^s\ov{I^rK^t}+c^t\ov{I^rJ^s}}\right)$}

Let $(R,\m)$ be an analytically unramified Cohen-Macaulay local ring 
of dimension $3$ and $I,J,K$ be $\m-$primary ideals in $R$. Let $(a,b,c)$ be a 
good joint reduction of $\{\ov{I^rJ^sK^t}\}$.
In this section we express the length 
$$\lm_R\left(\frac{\ov{I^rJ^sK^t}}{a^r\ov{J^sK^t}+b^s\ov{I^rK^t}+c^t\ov{I^rJ^s}}\right)$$
in terms of the normal Hilbert coefficients. The main tool employed for this calculation 
is the homology of a multigrades version of the  Kirby-Mehran complex \cite{km}.

\begin{lemma} 
\label{properties of good joint reduction}
Let $(R,\m)$ be a Noetherian local ring and $I,J$ be ideals in $R$. Let $a\in I$, 
$b \in J$ and $(a,b)$ be a regular sequence. 
 \begin{enumerate}
   \item \label{properties of good joint reduction1} Suppose $a $ satisfies 
   \begin{eqnarray}
    \label{good joint reduction condition1}
 (a) \cap \ov{I^rJ^sK^t}&=&a\ov{I^{r-1}J^sK^t} \mbox{ for all }r>0 \mbox{ and all }s,t \geq 0.
   \end{eqnarray}
Then for all $r\geq 1$ and $s,t \geq 0$, 
  \begin{eqnarray*}
  (a^m) \cap \ov{I^rJ^sK^t}=a^m \ov{I^{r-m}J^sK^t}\mbox{ for }0 < m \leq r.
  \end{eqnarray*}
\item \label{properties of good joint reduction2} Suppose $(a,b)$ satisfies 
\begin{eqnarray} \label{good joint reduction conditions}
  \label{good joint reduction condition2}
 (a,b) \cap \ov{I^rJ^sK^t}&=&a\ov{I^{r-1}J^sK^t}+b\ov{I^rJ^{s-1}K^t} \mbox{ for all }r,s >0 
 \mbox{ and all }t \geq 0.
\end{eqnarray}
Then for all $r,s \geq 1$ and $t \geq 0$, 
$$(a^m,b^n) \cap \ov{I^rJ^sK^t}=a^m\ov{I^{r-m}J^sK^t}+b^n\ov{I^rJ^{s-n}K^t} \mbox{ for }0 <m \leq r, 
0 < n \leq s.$$
 \end{enumerate}
 \end{lemma}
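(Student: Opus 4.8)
The plan is to derive both statements from their base hypotheses (\ref{good joint reduction condition1}), (\ref{good joint reduction condition2}) by an induction that peels off one factor of $a$ or $b$ at a time, each cancellation being justified by the hypothesis that $(a,b)$ is a regular sequence. In every part only the inclusion ``$\subseteq$'' requires work: the reverse inclusion is automatic from $a\in I$, $b\in J$ and the elementary fact $L\,\overline{L'}\subseteq\overline{LL'}$, which gives for instance $a^m\,\overline{I^{r-m}J^sK^t}\subseteq I^m\,\overline{I^{r-m}J^sK^t}\subseteq\overline{I^rJ^sK^t}$ together with $a^m\,\overline{I^{r-m}J^sK^t}\subseteq(a^m)$.

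For part (\ref{properties of good joint reduction1}) I would induct on $m$, the case $m=1$ being (\ref{good joint reduction condition1}). For the inductive step, take $x=a^mw\in(a^m)\cap\overline{I^rJ^sK^t}$ with $m\le r$. Then $x\in(a)\cap\overline{I^rJ^sK^t}=a\,\overline{I^{r-1}J^sK^t}$, and since $a$ is a nonzerodivisor (being part of a regular sequence) we get $a^{m-1}w\in\overline{I^{r-1}J^sK^t}$, hence $a^{m-1}w\in(a^{m-1})\cap\overline{I^{r-1}J^sK^t}$. Applying the inductive hypothesis with $r-1$ and $m-1$ in place of $r$ and $m$ (legitimate since $m-1\le r-1$) yields $a^{m-1}w\in a^{m-1}\,\overline{I^{r-m}J^sK^t}$, and multiplying by $a$ completes the step.

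For part (\ref{properties of good joint reduction2}) I would induct on $m+n$, the base case $m=n=1$ being (\ref{good joint reduction condition2}). Since the hypothesis (\ref{good joint reduction condition2}) is symmetric under interchanging $(a,I,r)$ with $(b,J,s)$, in the inductive step (where $m+n\ge 3$) we may assume $m\ge 2$; if instead $n\ge 2$ one runs the mirror-image argument with the roles of $a$ and $b$ exchanged. Let $x\in(a^m,b^n)\cap\overline{I^rJ^sK^t}$. Then $x\in(a,b^n)\cap\overline{I^rJ^sK^t}$, and since $1+n<m+n$ the inductive hypothesis writes $x=au+b^nv$ with $u\in\overline{I^{r-1}J^sK^t}$ and $v\in\overline{I^rJ^{s-n}K^t}$. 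Writing also $x=a^mp+b^nq$ and comparing the two expressions gives $a\bigl(u-a^{m-1}p\bigr)=b^n(q-v)\in(b^n)$; since $(a,b)$, and hence $(b^n,a)$, is a regular sequence, $u-a^{m-1}p\in(b^n)$, so $u\in(a^{m-1},b^n)\cap\overline{I^{r-1}J^sK^t}$. As $(m-1)+n<m+n$, the inductive hypothesis now gives $u=a^{m-1}\alpha+b^n\beta$ with $\alpha\in\overline{I^{r-m}J^sK^t}$ and $\beta\in\overline{I^{r-1}J^{s-n}K^t}$; substituting, $x=a^m\alpha+b^n(a\beta+v)$, and since $a\in I$ the element $a\beta+v$ lies in $\overline{I^rJ^{s-n}K^t}$, which is the desired form.

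The work is entirely in the bookkeeping. At each cancellation one must verify that the element being cancelled is a nonzerodivisor modulo the relevant ideal, which is why one records that powers and permutations of the regular sequence $(a,b)$ are again regular sequences; and after each substitution one must check that the surviving terms still lie in the correct member of the integral closure filtration, the absorptions being handled by $I\,\overline{L}\subseteq\overline{IL}$ and $J\,\overline{L}\subseteq\overline{JL}$. Maintaining $0<m\le r$ and $0<n\le s$ keeps all the exponents $r-m,\ r-1,\ s-n,\ s-1$ appearing in the induction nonnegative, so that no member of the filtration is undefined. I expect no conceptual obstacle beyond organising this induction carefully.
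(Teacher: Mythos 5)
Your proof is correct and follows essentially the same route as the paper: an induction that strips off one factor of $a$ or $b$ at a time, using that $(a,b)$ (hence any permutation of powers) is a regular sequence to cancel and land back in the intersection with a smaller member of the filtration. The only difference is organizational — the paper first establishes the $(a^m,b)$ and $(a,b^n)$ cases by separate inductions on $m$ and $n$ before inducting on $m+n$, whereas you fold those cases into the single induction on $m+n$; both are valid.
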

\begin{proof}
\begin{enumerate}
 \item Induct on $m$. From equation (\ref{good joint reduction condition1}), result follows for $m=1$. 
 Let $m+1 \leq r$ and $a^{m+1}u \in \ov{I^rJ^sK^t}$. Then $a^{m+1}u \in 
 (a^m) \cap \ov{I^rJ^sK^t}=
 a^m \ov{I^{r-m}J^sK^t}$, by induction hypothesis. Hence $ au \in \ov{I^{r-m}J^sK^t} \cap (a)=
 a \ov{I^{r-m-1}J^sK^t}$. Thus $u \in \ov{I^{r-m-1}J^sK^t}$. 

\item First we use induction on $m$ to prove that 
$(a^m,b) \cap \ov{I^rJ^sK^t}=a^{m}\ov{I^{r-m}J^sK^t}+b\ov{I^rJ^{s-1}K^t}$ for $0 <m \leq r$. 
From equation (\ref{good joint reduction condition2}), the result follows for $m=1$. 
Let $1<m \leq r$ and 
$a^mu+bv \in \ov{I^rJ^sK^t}$. Thus $a^mu+bv \in \ov{I^rJ^sK^t} \cap (a^{m-1},b)=
a^{m-1}\ov{I^{r-(m-1)}J^sK^t}+b\ov{I^rJ^{s-1}K^t}$, by induction hypothesis. 
Let $a^mu+bv=a^{m-1}p+bq $ for some $p\in \ov{I^{r-(m-1)}J^sK^t}$ and 
$q \in \ov{I^rJ^{s-1}K^t}$. Then $a^{m-1}(au-p)\in (b)$. Hence 
$au -p \in (b)$ which implies that $p\in(a,b) \cap \ov{I^{r-(m-1)}J^sK^t}=
a\ov{I^{r-m}J^sK^t}+b\ov{I^{r-(m-1)}J^{s-1}K^t}$. Let $p=ap_1+bp_2$ 
for some $p_1 \in \ov{I^{r-m}J^sK^t}$ and $p_2 \in \ov{I^{r-(m-1)}J^{s-1}K^t}$. 
Thus $a^mu+bv=a^mp_1+b(a^{m-1}p_2+q) \in a^m\ov{I^{r-m}J^sK^t}+b \ov{I^rJ^{s-1}K^t}$. \\
Similar argument shows that $(a,b^n)\cap \ov{I^rJ^sK^t}=a\ov{I^{r-1}J^sK^t}+b^n
\ov{I^rJ^{s-n}K^t}$ for $0 <n \leq s$. \\
To prove the assertion we use induction on $m+n$. From equation (\ref{good joint reduction condition2}), 
the result follows for $m=n=1$. Let $m+n > 2$. We may assume that $n> 1$. Let 
$a^mu+b^nv \in (a^m,b^n) \cap \ov{I^rJ^sK^t}$. Then $a^mu+b^nv=a^mp+bq$ for 
some $p \in \ov{I^{r-m}J^sK^t}$ and $q \in \ov{I^rJ^{s-1}K^t}$. 
Hence $b(b^{n-1}v-q) \in (a^m)$ which implies that $b^{n-1}v-q \in (a^m)$. Thus 
$q \in (a^m,b^{n-1}) \cap \ov{I^{r}J^{s-1}K^t}=a^m\ov{I^{r-m}J^{s-1}K^t}+
b^{n-1}\ov{I^{r}J^{s-n}K^t}$, by induction. Let $q=a^{m}q_1+b^{n-1}q_2$ for some 
$q_1 \in \ov{I^{r-m}J^{s-1}K^t}$ and $q_2 \in \ov{I^{r}J^{s-n}K^t}$. 
Thus $a^mu+b^nv=a^m(p+bq_1)+b^n(q_2)\in a^m\ov{I^{r-m}J^sK^t}+b^n\ov{I^rJ^{s-n}K^t}$. 
\end{enumerate}
\end{proof}

We now introduce an $\NN^3$-graded version of the Kirby-Mehran complex associated to the joint reduction $(a,b,c)$ of the filtration $\{\ov{I^rJ^sK^t}\}:$
 $$
C_{\cdot}((a,b,c),r,s,t): 0 \longrightarrow \frac{R}{\ov{I^r}} \oplus \frac{R}{\ov{J^s}} \oplus 
\frac{R}{\ov{K^t}}\buildrel \phi_{1} \over\longrightarrow \frac{R}{\ov{I^rJ^s}} \oplus 
\frac{R}{\ov{I^rK^t}} \oplus \frac{R}{\ov{J^sK^t}} 
\buildrel \phi_0 \over\longrightarrow \frac{R}{\ov{I^rJ^sK^t}} \longrightarrow
0,$$
where $\phi_0$ and $\phi_1$ are defined as
$$\phi_1(u^\prime,v^\prime,w^\prime)=((a^rv+b^su)^\prime,(a^rw-c^tu)^\prime,(-b^sw-c^tv)^\prime),
\phi_0(u^\prime,v^\prime,w^\prime)=(c^tu+b^sv+a^rw)^\prime.$$
Here $^\prime$ denotes image of an element in respective quotients. 
Let $H_i((a,b,c),r,s,t)$ denote the $i$th homology of the complex $C_{\cdot}((a,b,c),r,s,t).$

\begin{proposition}
\label{computation of homology}
Let $(R,\m)$ be an analytically unramified Cohen-Macaulay local ring 
of dimension $3$ with infinite residue field and $I,J,K$ be $\m-$primary ideals in $R$. Let $(a,b,c)$ be a 
good joint reduction of $\{\ov{I^rJ^sK^t}\}$. Then for integers $r, s,t>0 $
\begin{enumerate}
\item $H_0((a,b,c),r,s,t) \cong \frac{R}{(a^r,b^s,c^t)+\ov{I^{r}J^{s}K^t}}$
\item $H_1((a,b,c),r,s,t) \cong \frac{\ov{I^rJ^sK^t} \cap (a^r,b^s,c^t)}{a^r\ov{J^sK^t}+b^s\ov{I^rK^t}+c^t\ov{I^rJ^s}}$
\item $H_2((a,b,c),r,s,t)=0.$
\end{enumerate} 
\end{proposition}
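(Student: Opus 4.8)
The plan is to recognize $C_\cdot((a,b,c),r,s,t)$ as a quotient of the Koszul complex on the sequence $c^t,b^s,a^r$ truncated in homological degrees $\leq 2$, and then to exploit that $a^r,b^s,c^t$ is a regular sequence. Let $\tilde\phi_0\colon R^3\to R$ and $\tilde\phi_1\colon R^3\to R^3$ be the $R$-linear maps given by the same formulas as $\phi_0,\phi_1$ but without passing to quotients; a direct check shows that $R\xleftarrow{\tilde\phi_0}R^3\xleftarrow{\tilde\phi_1}R^3\leftarrow 0$ is, up to sign, the truncated Koszul complex of $(c^t,b^s,a^r)$, and that the three quotients occurring in $C_\cdot$ are exactly the ones forcing all the maps to descend, since $a^r\in\ov{I^r}$, $b^s\in\ov{J^s}$, $c^t\in\ov{K^t}$ yield inclusions such as $c^t\ov{I^rJ^s}\subseteq\ov{I^rJ^sK^t}$. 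Because $(a,b,c)$ is a joint reduction of an admissible filtration of $\m$-primary ideals, it is a system of parameters of the Cohen-Macaulay ring $R$, so $a^r,b^s,c^t$ is a regular sequence and the Koszul complex on $(c^t,b^s,a^r)$ is acyclic. Part (1) is then immediate, as $H_0=\operatorname{coker}\phi_0$ and $\operatorname{im}\phi_0$ is the image of the ideal $(a^r,b^s,c^t)$ in $R/\ov{I^rJ^sK^t}$.

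For part (3), let $(\bar u,\bar v,\bar w)\in\ker\phi_1$, so $b^su+a^rv\in\ov{I^rJ^s}$ and $a^rw-c^tu\in\ov{I^rK^t}$. By Lemma~\ref{properties of good joint reduction}(2) one has $(a^r,b^s)\cap\ov{I^rJ^s}=a^r\ov{J^s}+b^s\ov{I^r}$ and $(a^r,c^t)\cap\ov{I^rK^t}=a^r\ov{K^t}+c^t\ov{I^r}$. Writing $b^su+a^rv=a^rp+b^sq$ with $p\in\ov{J^s}$, $q\in\ov{I^r}$, and using $b^s(u-q)=a^r(p-v)$ together with the fact that $b^s$ is a nonzerodivisor on $R/(a^r)$, we get $u-q\in(a^r)$, hence $u\in(a^r)+\ov{I^r}=\ov{I^r}$; the same two relations yield $v\in(b^s)+\ov{J^s}=\ov{J^s}$ and $w\in(c^t)+\ov{K^t}=\ov{K^t}$. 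Thus $(\bar u,\bar v,\bar w)=0$, so $\phi_1$ is injective and $H_2=0$. This step is short once Lemma~\ref{properties of good joint reduction}(2) is available; the only other ingredients are the regularity of the sequence $a^r,b^s,c^t$ and the trivial containments $(a^r)\subseteq\ov{I^r}$, and so on.

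For part (2), I would construct the natural map $\psi\colon\ker\phi_0\to\dfrac{\ov{I^rJ^sK^t}\cap(a^r,b^s,c^t)}{a^r\ov{J^sK^t}+b^s\ov{I^rK^t}+c^t\ov{I^rJ^s}}$ sending $(\bar u,\bar v,\bar w)$ to the class of $c^tu+b^sv+a^rw$. It is well-defined since changing the lift of the first coordinate by an element of $\ov{I^rJ^s}$ alters $c^tu$ by an element of $c^t\ov{I^rJ^s}$, and similarly for the other two coordinates; it is surjective since any element of the numerator, lying in $(a^r,b^s,c^t)$, is of the form $c^tu+b^sv+a^rw$ with $(\bar u,\bar v,\bar w)\in\ker\phi_0$. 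The identities $\phi_0\phi_1=0$ and $\psi\phi_1=0$ are immediate cancellations, so $\operatorname{im}\phi_1\subseteq\ker\psi$. The substantive step is the reverse inclusion: if $\psi(\bar u,\bar v,\bar w)=0$, write $c^tu+b^sv+a^rw=c^t\ell+b^sq+a^rp$ with $\ell\in\ov{I^rJ^s}$, $q\in\ov{I^rK^t}$, $p\in\ov{J^sK^t}$; then $(u-\ell,\,v-q,\,w-p)$ is a genuine syzygy of $(c^t,b^s,a^r)$, so by acyclicity of the Koszul complex it is a Koszul syzygy $(b^s\alpha+a^r\beta,\,a^r\gamma-c^t\alpha,\,-b^s\gamma-c^t\beta)$ for suitable $\alpha,\beta,\gamma\in R$, whence reduction modulo $\ov{I^rJ^s},\ov{I^rK^t},\ov{J^sK^t}$ gives $\phi_1(\bar\alpha,\bar\beta,\bar\gamma)=(\bar u,\bar v,\bar w)$. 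Hence $\ker\psi=\operatorname{im}\phi_1$, and $\psi$ induces the claimed isomorphism $H_1\cong\dfrac{\ov{I^rJ^sK^t}\cap(a^r,b^s,c^t)}{a^r\ov{J^sK^t}+b^s\ov{I^rK^t}+c^t\ov{I^rJ^s}}$. I expect the only real care to lie in bookkeeping: pairing each quotient with the correct Koszul coordinate so that the Koszul syzygy matches $\operatorname{im}\phi_1$ on the nose, and applying Lemma~\ref{properties of good joint reduction}(2) to the correct ideals.
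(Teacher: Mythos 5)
Your proposal is correct and takes essentially the same route as the paper: part (1) is the same observation about $\operatorname{im}\phi_0$, part (3) is the same application of Lemma~\ref{properties of good joint reduction}(2) combined with the regularity of the sequence $a^r,b^s,c^t$, and your direct construction of $\psi$ together with the Koszul-syzygy identification of $\ker\psi=\operatorname{im}\phi_1$ is exactly the computation the paper packages into a commutative diagram and the snake lemma. The only (harmless) difference is organizational, and you in fact make explicit a point the paper leaves implicit, namely why $a^r,b^s,c^t$ is a regular sequence.
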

\begin{proof}
\begin{enumerate}
 \item Since $\im \phi_0=\frac{(a^r,b^s,c^t)+\ov{I^rJ^sK^t}}{\ov{I^rJ^sK^t}},$
$$H_0((a,b,c),r,s,t) \cong \frac{R}{(a^r,b^s,c^t)+\ov{I^{r}J^{s}K^t}}.$$
\item Consider the commutative diagram:
$$
\CD
0@>   >> \im \phi_1 @>  >> \frac{R}{\ov{I^rJ^s}} \oplus \frac{R}{\ov{I^rK^t}} \oplus \frac{R}{\ov{J^sK^t}} @> \delta >>\frac{(a^r,b^s,c^t)}{a^r\ov{J^{s}K^t}+b^s\ov{I^rK^t}+c^t\ov{I^rJ^s}} @>  >> 0 \\
@.     @VVV             @V\psi VV                @V \gamma VV   @.  \\
0@>   >> \ker \phi_0 @>  >> \frac{R}{\ov{I^rJ^s}} \oplus \frac{R}{\ov{I^rK^t}} \oplus \frac{R}{\ov{J^sK^t}} @> \phi_0
 >> \frac{(a^r,b^s,c^t)+\ov{I^rJ^sK^t}}{\ov{I^rJ^sK^t}} @> >> 0
\endCD
$$
where $\delta(u^\prime,v^\prime,w^\prime)=(c^tu+b^sv+a^rw)^\prime$ and $\gamma$ is the natural map. 
To prove exactness of top row it is 
enough to prove exactness at $\frac{R}{\ov{I^rJ^s}} \oplus \frac{R}{\ov{I^rK^t}} \oplus \frac{R}{\ov{J^sK^t}}$. 
Suppose $\delta(u^\prime,v^\prime,w^\prime)=0$. Then $c^tu+b^sv+a^rw=c^tx+b^sy+a^rz$ 
for some $x \in \ov{I^rJ^s}, y\in \ov{I^rK^t}$ and $z \in \ov{J^sK^t}$. Hence 
$u-x =b^sx_1+a^rx_2$ for some $x_1,x_2 \in R$. Similarly, $v-y=a^ry_1+c^ty_2$ and 
$w-z=b^sz_1+c^tz_2$ for some $y_1,y_2,z_1,z_2 \in R$. 
Thus 
$$c^t(b^sx_1+a^rx_2)+b^s(a^ry_1+c^ty_2)+a^r(b^sz_1+c^tz_2)=0. $$
Hence $x_1+y_2 \in (a^r),x_2+z_2 \in (b^s),y_1+z_1 \in (c^t).$ 
Therefore $y_2^\prime=-x_1^\prime $ in $\frac{R}{\ov{I^r}}$ $,z_1^\prime=-y_1^\prime$ in 
$\frac{R}{\ov{K^t}}$ and $z_2^\prime=-x_2^\prime$ in $\frac{R}{\ov{J^s}}$. 
Hence $\phi_1(x_1^\prime,x_2^\prime,y_1^\prime)=(u^\prime,v^\prime,w^\prime).$ 
Thus $(u^\prime,v^\prime,w^\prime) \in \im \phi_1$.\\
Using the snake lemma and the fact that $\psi$ is an isomorphism we obtain,
$$ H_1((a,b,c),r,s,t)=\frac{\ker\phi_0}{\im \phi_1} \cong  \ker \gamma  
 = \frac{\ov{I^rJ^sK^t} \cap (a^r,b^s,c^t)}{a^r\ov{J^sK^t}+b^s\ov{I^rK^t}+c^t\ov{I^rJ^s}}.$$
\item Suppose $\phi_1(u^\prime,v^\prime,w^\prime)=0$. 
Thus $((a^rv+b^su)^\prime,(a^rw-c^tu)^\prime,(-b^sw-c^tv)^\prime)=0$. 
Hence $a^rv+b^su \in \ov{I^rJ^s} \cap(a^r,b^s)=a^r\ov{J^s}+b^s\ov{I^r}$, by Lemma 
\ref{properties of good joint reduction}. 
Let $a^rv+b^su=a^rp+b^sq$ for some $p\in \ov{J^s}$ and $q \in \ov{I^r}$. 
Thus $v-p \in (b^s)$ and $u-q \in (a^r)$. Hence $u \in \ov{I^r}$ and 
$v \in \ov{J^s}$. Therefore $u^\prime=v^\prime=0$. Similarly, $w^\prime=0$.
\end{enumerate}
\end{proof}

\begin{proposition}
\label{computation of some length}
Let $(R,\m)$ be an analytically unramified Cohen-Macaulay local ring 
of dimension $3$ with infinite residue field and $I,J,K$ be $\m-$primary ideals in $R$. Let $(a,b,c)$ be a 
good joint reduction of $\{\ov{I^rJ^sK^t}\}$. Then for $r,s,t>0$,
\begin{eqnarray*}
&&\lm_R\left(\frac{(a^r,b^s,c^t)}{a^r\ov{J^sK^t}+b^s\ov{I^rK^t}+c^t\ov{I^rJ^s}}\right)\\
&=&
\left[\lm_R\left(\frac{R}{\ov{I^rJ^s}}\right)+\lm_R\left(\frac{R}{\ov{I^rK^t}}\right)+\lm_R\left(\frac{R}{\ov{J^sK^t}}\right)\right]-
\left[\lm_R\left(\frac{R}{\ov{I^r}}\right)+\lm_R\left(\frac{R}{\ov{J^s}}\right)+\lm_R\left(\frac{R}{\ov{K^t}}\right)\right]
\end{eqnarray*}
\end{proposition}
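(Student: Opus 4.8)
The plan is to read off this length identity directly from the $\NN^3$-graded Kirby--Mehran complex $C_\cdot((a,b,c),r,s,t)$ together with the homology computations already established in Proposition \ref{computation of homology}. Since $I,J,K$ are $\m$-primary, every module occurring below — $R/\ov{I^r}$, $R/\ov{J^s}$, $R/\ov{K^t}$, $R/\ov{I^rJ^s}$, $R/\ov{I^rK^t}$, $R/\ov{J^sK^t}$, and the various subquotients of $(a^r,b^s,c^t)$ — has finite length, so additivity of length along short exact sequences is available throughout, and the argument becomes a formal length count.

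Concretely, I would begin from the exact top row of the commutative diagram used in the proof of Proposition \ref{computation of homology}(2), that is, the short exact sequence
$$0 \longrightarrow \im\phi_1 \longrightarrow \frac{R}{\ov{I^rJ^s}} \oplus \frac{R}{\ov{I^rK^t}} \oplus \frac{R}{\ov{J^sK^t}} \buildrel \delta \over\longrightarrow \frac{(a^r,b^s,c^t)}{a^r\ov{J^sK^t}+b^s\ov{I^rK^t}+c^t\ov{I^rJ^s}} \longrightarrow 0.$$
Taking lengths gives
$$\lm_R\left(\frac{(a^r,b^s,c^t)}{a^r\ov{J^sK^t}+b^s\ov{I^rK^t}+c^t\ov{I^rJ^s}}\right) = \lm_R\left(\frac{R}{\ov{I^rJ^s}}\right) + \lm_R\left(\frac{R}{\ov{I^rK^t}}\right) + \lm_R\left(\frac{R}{\ov{J^sK^t}}\right) - \lm_R(\im\phi_1).$$
It then remains to evaluate $\lm_R(\im\phi_1)$. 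By Proposition \ref{computation of homology}(3) we have $H_2((a,b,c),r,s,t) = \ker\phi_1 = 0$, so $\phi_1$ is injective and hence $\im\phi_1 \cong \frac{R}{\ov{I^r}} \oplus \frac{R}{\ov{J^s}} \oplus \frac{R}{\ov{K^t}}$, giving $\lm_R(\im\phi_1) = \lm_R(R/\ov{I^r}) + \lm_R(R/\ov{J^s}) + \lm_R(R/\ov{K^t})$. Substituting into the displayed identity produces exactly the asserted formula.

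I do not anticipate any genuine obstacle here: the substantive content — the exactness of that top row and the vanishing of $H_2$ — has already been done, so this proposition is essentially a bookkeeping step, and the only point worth a word of justification is the finiteness of all lengths involved, which is immediate from the $\m$-primariness of $I,J,K$. (If one preferred to bypass the diagram, the same conclusion follows from the Euler characteristic identity $\lm_R(C_2) - \lm_R(C_1) + \lm_R(C_0) = \lm_R(H_0) - \lm_R(H_1)$ for the complex, combined with the presentation of $\tfrac{(a^r,b^s,c^t)}{a^r\ov{J^sK^t}+b^s\ov{I^rK^t}+c^t\ov{I^rJ^s}}$ as an extension of $\tfrac{(a^r,b^s,c^t)+\ov{I^rJ^sK^t}}{\ov{I^rJ^sK^t}}$ by $H_1$, whose own length is $\lm_R(R/\ov{I^rJ^sK^t}) - \lm_R(H_0)$; this is slightly longer but conceptually identical.)
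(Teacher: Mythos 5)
Your proposal is correct and rests on exactly the same ingredients as the paper's proof, namely the Kirby--Mehran complex $C_\cdot((a,b,c),r,s,t)$ and the homology computations of Proposition \ref{computation of homology}; your parenthetical alternative (Euler characteristic of the complex combined with the identifications of $H_0$, $H_1$ and the vanishing of $H_2$) is in fact verbatim the argument the paper gives. Your primary route --- taking lengths along the exact top row of the diagram and using $H_2=\ker\phi_1=0$ to get $\im\phi_1\cong \frac{R}{\ov{I^r}}\oplus\frac{R}{\ov{J^s}}\oplus\frac{R}{\ov{K^t}}$ --- is a marginally more direct bookkeeping of the same facts and is equally valid.
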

\begin{proof}
 From the complex $C_{\cdot}((a,b,c),r,s,t),$ for $r,s,t>0,$ we have
\begin{eqnarray*}
&& \lm_R\left(\frac{R}{\ov{I^rJ^sK^t}}\right)-\left[\lm_R\left(\frac{R}{\ov{I^rJ^s}}\right)+
\lm_R\left(\frac{R}{\ov{I^rK^t}}\right)+\lm_R\left(\frac{R}{\ov{J^sK^t}}\right)\right]+
\left[\lm_R\left(\frac{R}{\ov{I^r}}\right)+\lm_R\left(\frac{R}{\ov{J^s}}\right)+\lm_R\left(\frac{R}{\ov{K^t}}\right)\right]\\
&=&\lm_R(H_0((a,b,c),r,s,t))-\lm_R(H_1((a,b,c),r,s,t))+\lm_R(H_2((a,b,c),r,s,t)).
\end{eqnarray*}
Therefore, by Proposition \ref{computation of homology}, 
\begin{eqnarray*}
 &&\left[\lm_R\left(\frac{R}{\ov{I^rJ^s}}\right)+\lm_R\left(\frac{R}{\ov{I^rK^t}}\right)+\lm_R\left(\frac{R}{\ov{J^sK^t}}\right)\right]-
 \left[\lm_R\left(\frac{R}{\ov{I^r}}\right)+\lm_R\left(\frac{R}{\ov{J^s}}\right)+\lm_R\left(\frac{R}{\ov{K^t}}\right)\right]\\
&=&\lm_R\left(\frac{R}{\ov{I^rJ^sK^t}}\right)-\lm_R\left(\frac{R}{(a^r,b^s,c^t)+\ov{I^{r}J^{s}K^t}}\right)+
\lm_R\left(\frac{\ov{I^rJ^sK^t} \cap (a^r,b^s,c^t)}{a^r\ov{J^sK^t}+b^s\ov{I^rK^t}+c^t\ov{I^rJ^s}}\right)\\
&=&\lm_R\left(\frac{(a^r,b^s,c^t)+\ov{I^rJ^sK^t}}{\ov{I^rJ^sK^t}}\right)+\lm_R\left(\frac{\ov{I^rJ^sK^t} \cap (a^r,b^s,c^t)}{a^r\ov{J^sK^t}+b^s\ov{I^rK^t}+c^t\ov{I^rJ^s}}\right)\\
&=&\lm_R\left(\frac{(a^r,b^s,c^t)}{\ov{I^rJ^sK^t} \cap (a^r,b^s,c^t)}\right)+\lm_R\left(\frac{\ov{I^rJ^sK^t} \cap (a^r,b^s,c^t)}{a^r\ov{J^sK^t}+b^s\ov{I^rK^t}+c^t\ov{I^rJ^s}}\right)\\
&=&\lm_R\left(\frac{(a^r,b^s,c^t)}{a^r\ov{J^sK^t}+b^s\ov{I^rK^t}+c^t\ov{I^rJ^s}}\right).
\end{eqnarray*}
\end{proof}

In an analytically unramified local ring of dimension $d$ there exists a 
polynomial $\ov{P}_{I,J}(x,y)\in \mathbb Q[x,y]$ 
(respectively $\ov{P}_{I,J,K}(x,y,z) \in \mathbb Q[x,y,z]$) of total degree $d$, 
called as the {\it normal Hilbert 
polynomial of $I,J$} (respectively {\it normal Hilbert polynomial of $I,J,K$}), 
such that $\ov{P}_{I,J}(r,s)=\lm(R/\ov{I^rJ^s})$ (respectively $\ov{P}_{I,J,K}(r,s,t)=
\lm(R/\ov{I^rJ^sK^t})$) for $r,s \gg 0$ (respectively $r,s,t \gg 0$). We write 
\begin{eqnarray*}
 \ov{P}_{I,J}(x,y)&=&\sum_{i+j \leq
d}(-1)^{d-(i+j)}\ov{e}_{(i,j)}(I,J)\binom{x+i-1}{i}\binom{y+j-1}{j} \mbox{ and }\\
 \ov{P}_{I,J,K}(x,y,z)&=&\sum_{i+j+k \leq
d}(-1)^{d-(i+j+k)}\ov{e}_{(i,j,k)}\binom{x+i-1}{i}\binom{y+j-1}{j}\binom{z+k-1}{k}.
\end{eqnarray*} 
Rees proved that $\ov{e}_{(1,0)}(I,J)=\ov{e}_1(I)$ in an analytically unramified Cohen-Macaulay 
local ring of dimension $2$ \cite[Theorem 1.2]{rees}. In following lemma we derive a similar relation between 
coefficients of degree $2$ in $\ov{P}_{I,J,K}(x,y,z)$, $ \ov{P}_{I,J}(x,y)$ and $\ov{P}_I(x)$ 
in an analytically unramified Cohen-Macaulay local ring of dimension $3.$

\begin{lemma} \label{lemma1}
Let $(R,\m)$ be an analytically unramified Cohen-Macaulay local ring 
of dimension $3$ and $I,J,K$ be $\m-$primary ideals in $R$. Then we have
\begin{center}
\begin{tabular}{|c|c|c|}
\hline$\ov{e}_{(2,0,0)}=\ov{e}_1(I)$  & $\ov{e}_{(0,2,0)}=\ov{e}_1(J)$ & $\ov{e}_{(0,0,2)}=\ov{e}_1(K)$\\
\hline
$\ov{e}_{(1,1,0)}=\ov{e}_{(1,1)}(I,J)$  & $\ov{e}_{(0,1,1)}=\ov{e}_{(1,1)}(J,K)$ & $\ov{e}_{(1,0,1)}=\ov{e}_{(1,1)}(I,K).$\\
\hline
\end{tabular}
\end{center}
\end{lemma}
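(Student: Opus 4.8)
The plan is to ``integrate out'' the ideals that do not appear on the right-hand side, reducing each identity to a normal Hilbert polynomial in fewer variables and ultimately to Rees' theorem in dimension two \cite[Theorem 1.2]{rees}. By the symmetry in $I,J,K$ it suffices to prove one identity from each of the two rows, say $\ov{e}_{(2,0,0)}=\ov{e}_1(I)$ and $\ov{e}_{(1,1,0)}=\ov{e}_{(1,1)}(I,J)$; since neither side changes when $R$ is replaced by $R(T)=R[T]_{\m R[T]}$, we may assume the residue field infinite.

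For $\ov{e}_{(2,0,0)}=\ov{e}_1(I)$: fix integers $s_0,t_0\gg 0$ and view $\mathcal F^{(s_0,t_0)}:=\{\ov{I^rJ^{s_0}K^{t_0}}\}_{r\ge 0}$ as an $I$-admissible filtration of $\m$-primary ideals in the $3$-dimensional Cohen-Macaulay analytically unramified ring $R$; let $\ov{e}_1(s_0,t_0)$ denote its first normal Hilbert coefficient. Since $\lambda_R(R/\ov{I^rJ^{s_0}K^{t_0}})=\ov{P}_{I,J,K}(r,s_0,t_0)$ for $r\gg 0$, expanding $\ov P_{I,J,K}$ in powers of the first variable and reading off the coefficient of $\binom{r+1}{2}$ gives
$$\ov{e}_1(s_0,t_0)=\ov{e}_{(2,0,0)}-\ov{e}_{(2,1,0)}\,s_0-\ov{e}_{(2,0,1)}\,t_0\qquad(s_0,t_0\gg 0),$$
the correction coefficients being mixed multiplicities. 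The crux is that $(s_0,t_0)\mapsto\ov{e}_1(s_0,t_0)$ is actually given by a polynomial of total degree at most $1$ on the \emph{whole} of $\NN^2$. Granting this, the polynomial must coincide with the affine function on the right above (they agree for $s_0,t_0\gg 0$); specialising to $(s_0,t_0)=(0,0)$, where $\mathcal F^{(0,0)}=\{\ov{I^r}\}$ and hence $\ov e_1(0,0)=\ov e_1(I)$, yields $\ov{e}_{(2,0,0)}=\ov{e}_1(I)$. The identity $\ov{e}_{(1,1,0)}=\ov{e}_{(1,1)}(I,J)$ is obtained in the same way: freeze a single index $t_0\gg0$, work with the $(I,J)$-admissible filtration $\{\ov{I^rJ^sK^{t_0}}\}_{r,s}$ whose bivariate normal Hilbert polynomial is $\ov P_{I,J,K}(x,y,t_0)$, observe (by comparing the coefficient of $\binom{x}{1}\binom{y}{1}$) that its $(1,1)$-coefficient equals $\ov e_{(1,1,0)}-\ov e_{(1,1,1)}t_0$ for $t_0\gg0$, show it is a polynomial of degree $\le 1$ in $t_0$ on all of $\NN$, and specialise to $t_0=0$, where the filtration becomes $\{\ov{I^rJ^s}\}$.

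It remains to justify the polynomiality claim, and this is where the Cohen-Macaulay and analytically unramified hypotheses are essential: the subleading (respectively ``mixed subleading'') normal Hilbert coefficient of such an auxiliary filtration depends polynomially, of degree $\le 1$, on the frozen indices, on all of $\NN^2$ (resp.\ $\NN$) and not just asymptotically. To see this one expresses the coefficient as a finite alternating combination of lengths of the form $\lambda_R(R/\ov{I^nJ^{s_0}K^{t_0}})$ and $\lambda_R\!\big(\ov{I^nJ^{s_0}K^{t_0}}\,/\,Q\,\ov{I^{n-1}J^{s_0}K^{t_0}}\big)$, with $Q$ a minimal reduction of $I$ chosen generically to be simultaneously a minimal reduction of every $\mathcal F^{(s_0,t_0)}$; using the Noetherianness of the multi-Rees algebras involved and the finiteness results of \S2 one checks that, for each fixed $n$, these lengths are polynomial functions of $(s_0,t_0)$ on all of $\NN^2$ and that the combination has the asserted degree. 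This polynomial dependence of a normal Hilbert coefficient on auxiliary parameters \emph{all the way down} to parameter value $0$ is precisely the mechanism Rees exploited to prove the two-variable case in \cite[Theorem 1.2]{rees}; alternatively one may descend to dimension two by a generic hyperplane section and invoke \cite[Theorem 1.2]{rees} together with \cite{masuti-verma} directly, the same delicate point resurfacing there. The main obstacle is exactly this control at small parameter values; granted it, the rest is routine bookkeeping with binomial coefficients.
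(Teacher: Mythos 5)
Your overall strategy (eliminate the variables that do not appear on the right-hand side and reduce to Rees' two-dimensional theorem) is the right one, and your identification of the coefficient of $\binom{r+1}{2}$ for $s_0,t_0\gg0$ is correct. But there is a genuine gap at the central step: the claim that $(s_0,t_0)\mapsto \ov{e}_1(s_0,t_0)$ is given by a polynomial of total degree at most $1$ on \emph{all} of $\mathbb N^2$, including at the origin. Specialising that claim at $(0,0)$ \emph{is} the statement to be proved, so it cannot be dismissed as routine bookkeeping. The justification you offer does not hold: for fixed $n$, the length $\lambda_R(R/\ov{I^nJ^{s_0}K^{t_0}})$ is only \emph{eventually} polynomial in $(s_0,t_0)$ --- normal Hilbert functions agree with their polynomials only for large arguments --- and the finiteness results of Section 2 (which concern eventual vanishing and finite length of graded components of local cohomology) say nothing about polynomiality of these lengths at small $(s_0,t_0)$. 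The assertion that the subleading coefficient ``depends polynomially all the way down to parameter value $0$'' is precisely the delicate content of Rees-type theorems; you have restated it rather than proved it.

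The paper closes exactly this gap by a different mechanism, which is the one your final sentence gestures at but does not carry out. It chooses a Rees superficial element $c\in K$ (Lemma \ref{r1.2}) with $(c)\cap\ov{I^rJ^sK^t}=c\,\ov{I^rJ^sK^{t-1}}$ for all $r,s\ge0$ and $t>0$; by Lemma \ref{properties of good joint reduction} this upgrades to $(c^t)\cap\ov{I^rJ^sK^t}=c^t\,\ov{I^rJ^s}$, which yields the \emph{exact} (not asymptotic) identity
\begin{equation*}
\lambda_R\bigl(\ov{I^rJ^s}/\ov{I^rJ^sK^t}\bigr)=\lambda_R\bigl(R/(\ov{I^rJ^sK^t}+(c^t))\bigr)
\end{equation*}
for all $r,s\ge0$ and $t>0$, so no control at small parameter values is ever needed. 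The right-hand side is the normal Hilbert function of a filtration in the two-dimensional ring $R/(c^t)$, whose degree-two part is $t\ov{e}_{(2,0,1)}\binom{r+1}{2}+t\ov{e}_{(1,1,1)}rs+t\ov{e}_{(0,2,1)}\binom{s+1}{2}$ by \cite[Theorem 2.4]{rees3}; comparing degree-two coefficients in $(r,s)$ at a single fixed large $t$ (the $t$-linear terms occur identically on both sides and cancel) gives $\ov{e}_{(2,0,0)}=\ov{e}_{(2,0)}(I,J)$ and $\ov{e}_{(1,1,0)}=\ov{e}_{(1,1)}(I,J)$, after which the two-variable identities $\ov{e}_{(2,0)}(I,J)=\ov{e}_1(I)$ of Rees and Masuti finish the proof. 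To repair your argument you would need to replace the unproved polynomiality claim by an exact length identity of this kind.
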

\begin{proof}
Let $t>0$ be fixed. There exists $c \in K$ such that 
$$(c) \cap \ov{I^rJ^sK^t}=c\ov{I^rJ^sK^{t-1}} \mbox{ for all }r,s \geq 0 \mbox{ and }t>0.$$
See \cite{rees}. Then, by Lemma \ref{properties of good joint reduction}
(\ref{properties of good joint reduction1}), 
$(c^t) \cap \ov{I^rJ^sK^t}=c^t\ov{I^rJ^s}$ for all $t >0$ and $r,s \geq 0$. Therefore we have the exact sequence
$$
0 \longrightarrow \frac{\ov{I^rJ^s}}{\ov{I^rJ^sK^t}} \longrightarrow \frac{R}{\ov{I^rJ^sK^t}}
\buildrel \mu_{c^t} \over\longrightarrow \frac{R}{\ov{I^rJ^sK^t}} \longrightarrow \frac{R}{\ov{I^rJ^sK^t}+(c^t)} \longrightarrow
0,$$ 
Therefore 
$\lm_R\left(\frac{\ov{I^rJ^s}}{\ov{I^rJ^sK^t}}\right)=\lm_R\left(\frac{R}{\ov{I^rJ^sK^t}+(c^t)}\right)$. 
Let $R^\prime=R/(c^t)$. Consider the filtration $\F=\{\F(r,s):r,s \in \mathbb Z\}$, where 
$\mathcal F{(r,s)}=\left\{\frac{\ov{I^rJ^sK^t}+(c^t)}{(c^t)}\right\}$. 
Let $``\prime$'' denotes image of an ideal in $R^\prime$. 
By \cite[Theorem 2.4]{rees3}, $\lm_R\left(\frac{R}{\ov{I^rJ^sK^t}+(c^t)}\right)$ is a 
polynomial of total degree $2$ for $r,s \gg0$ with
$$\lm_R\left(\frac{R}{\ov{I^rJ^sK^t}+(c^t)}\right)=e(I^\prime)\binom{r+1}{2}+\ov{e}_{(1,1)}(I^\prime,J^\prime)rs+e(J^\prime)\binom{s+1}{2}+\mbox{ lower degree terms}.
$$  
Note that $e(I^\prime)=t\ov{e}_{(2,0,1)},e(J^\prime)=t\ov{e}_{(0,2,1)}$ and $\ov{e}_{(1,1)}(I^\prime,J^\prime)
=t\ov{e}_{(1,1,1)}$. Therefore  
for $r,s \gg 0$ and $t>0$, 
$$\lm_R\left(\frac{R}{\ov{I^rJ^sK^t}+(c^t)}\right)=t\ov{e}_{(2,0,1)}\binom{r+1}{2}+t\ov{e}_{(1,1,1)}rs+
t\ov{e}_{(0,2,1)}\binom{s+1}{2}+\mbox{ lower degree terms}.
$$  
We have
$$\lm_R\left(\frac{R}{\ov{I^rJ^sK^t}}\right)=\lm_R\left(\frac{R}{\ov{I^rJ^s}}\right)
+\lm_R\left(\frac{\ov{I^rJ^s}}{\ov{I^rJ^sK^t}}\right).$$
Fixing $t \gg0$ and comparing the coefficients of degree $2$ in $r,s$, we get 
$$\ov{e}_{(2,0,0)}=\ov{e}_{(2,0)}(I,J),\ov{e}_{(1,1,0)}=\ov{e}_{(1,1)}(I,J),\ov{e}_{(0,2,0)}=\ov{e}_{(0,2)}(I,J).$$ 
By \cite[Theorem 4.3.1]{masuti}, 
$\ov{e}_{(2,0)}(I,J)=\ov{e}_1(I)$ and $\ov{e}_{(0,2)}(I,J)=\ov{e}_1(J)$. 
See also \cite[Theorem 1.2]{rees}. Hence 
$\ov{e}_{(2,0,0)}=\ov{e}_1(I),\ov{e}_{(0,2,0)}=\ov{e}_1(J)  $ and 
$\ov{e}_{(1,1,0)}=\ov{e}_{(1,1)}(I,J)$. \\
Similar argument shows the other equalities.
\end{proof}

\begin{theorem}\label{theorem1}
 Let $(R,\m)$ be an analytically unramified Cohen-Macaulay local ring 
of dimension $3$ with infinite residue field and $I,J,K$ be $\m-$primary ideals in $R$. Let $(a,b,c)$ be a 
good joint reduction of $\{\ov{I^rJ^sK^t}\}$. Then for $r,s, t \gg 0$,
\begin{eqnarray*}
 \lm_R\left(\frac{\ov{I^rJ^sK^t}}{a^r\ov{J^sK^t}+b^s\ov{I^rK^t}+c^t\ov{I^rJ^s}}\right)
&=&r[\ov{e}_{(1,0)}(I,J)+\ov{e}_{(1,0)}(I,K)-\ov{e}_{(1,0,0)}-\ov{e}_2(I)]\\&+&
s[\ov{e}_{(0,1)}(I,J)+\ov{e}_{(1,0)}(J,K)-\ov{e}_{(0,1,0)}-\ov{e}_2(J)]\\&+&
t[\ov{e}_{(0,1)}(I,K)+\ov{e}_{(0,1)}(J,K)-\ov{e}_{(0,0,1)}-\ov{e}_2(K)]\\&+&
\ov{e}_3(IJK)-[\ov{e}_3(IJ)+\ov{e}_3(IK)+\ov{e}_3(JK)]+\ov{e}_3(I)+\ov{e}_3(J)+\ov{e}_3(K).
\end{eqnarray*}
\end{theorem}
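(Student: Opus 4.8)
The plan is to rewrite the length on the left as an alternating sum of normal Hilbert functions, pass to the associated polynomials for $r,s,t\gg0$, and then read off the answer by collecting binomial coefficients, with the comparison identities of Lemma \ref{lemma1} doing most of the simplification. First I would record a length identity valid for \emph{all} $r,s,t>0$. Put $L=a^r\ov{J^sK^t}+b^s\ov{I^rK^t}+c^t\ov{I^rJ^s}$ and $N=\ov{I^rJ^sK^t}\cap(a^r,b^s,c^t)$, so that $L\subseteq N$ and $N$ lies inside both $\ov{I^rJ^sK^t}$ and $(a^r,b^s,c^t)$, all of which are $\m$-primary. Subtracting the two chains $L\subseteq N\subseteq\ov{I^rJ^sK^t}$ and $L\subseteq N\subseteq(a^r,b^s,c^t)$ and applying the second isomorphism theorem to $\ov{I^rJ^sK^t}/N$ and $(a^r,b^s,c^t)/N$ gives
$$\lm_R\!\left(\frac{\ov{I^rJ^sK^t}}{L}\right)=\lm_R\!\left(\frac{(a^r,b^s,c^t)}{L}\right)+\lm_R\!\left(\frac{R}{(a^r,b^s,c^t)}\right)-\lm_R\!\left(\frac{R}{\ov{I^rJ^sK^t}}\right);$$
equivalently this drops out of Proposition \ref{computation of homology} together with the Euler characteristic of $C_\bullet((a,b,c),r,s,t)$ (here $H_2=0$). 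Proposition \ref{computation of some length} identifies the first term on the right.

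Next I would pass to polynomials. As $(a,b,c)$ is a joint reduction of $\m$-primary ideals, it is a system of parameters of the $3$-dimensional Cohen--Macaulay ring $R$, hence a regular sequence, so $\lm_R(R/(a^r,b^s,c^t))=rst\,\lm_R(R/(a,b,c))=rst\,e((a,b,c))$; and by Rees' theorem identifying the colength of a joint reduction with a mixed multiplicity, $e((a,b,c))=\ov e_{(1,1,1)}$, the type-$(1,1,1)$ mixed multiplicity of $\{\ov{I^rJ^sK^t}\}$ (it agrees with that of $\{I^rJ^sK^t\}$ since passing to integral closures changes the multigraded Hilbert polynomial only in total degree $<3$). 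For $r,s,t\gg0$ I would then replace $\lm_R(R/\ov{I^rJ^s})$ by $\ov P_{I,J}(r,s)$, and similarly the remaining pair and single terms and $\lm_R(R/\ov{I^rJ^sK^t})$ by $\ov P_{I,J,K}(r,s,t)$, which expresses $\lm_R(\ov{I^rJ^sK^t}/L)$ as the single polynomial
$$\ov P_{I,J}(r,s)+\ov P_{I,K}(r,t)+\ov P_{J,K}(s,t)-\ov P_I(r)-\ov P_J(s)-\ov P_K(t)+\ov e_{(1,1,1)}\,rst-\ov P_{I,J,K}(r,s,t).$$

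Finally I would expand each normal Hilbert polynomial in the basis $\binom{r+i-1}{i}\binom{s+j-1}{j}\binom{t+k-1}{k}$ and collect coefficients. The total-degree-$3$ part of $\ov P_{I,J,K}$ apart from its $rst$ term cancels against the pair and single polynomials, because a top-dimensional normal mixed multiplicity with a vanishing exponent only sees the ideals whose exponent is positive (thus $\ov e_{(3,0,0)}=\ov e_0(I)$, $\ov e_{(2,1,0)}=\ov e_{(2,1)}(I,J)$, and so on), while its $rst$ term is killed by $\ov e_{(1,1,1)}\,rst$. The total-degree-$2$ part cancels by the identities of Lemma \ref{lemma1} ($\ov e_{(2,0,0)}=\ov e_1(I)$, $\ov e_{(1,1,0)}=\ov e_{(1,1)}(I,J)$, \ldots) together with $\ov e_{(2,0)}(I,J)=\ov e_1(I)$ for the pair polynomials. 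What survives is linear plus constant in $r,s,t$, and the constant is pinned down from the diagonal specializations $\ov P_{I,J,K}(n,n,n)=\ov P_{IJK}(n)$ and $\ov P_{I,J}(n,n)=\ov P_{IJ}(n)$ (valid since $\ov{(IJK)^n}=\ov{I^nJ^nK^n}$ and $\ov{(IJ)^n}=\ov{I^nJ^n}$), giving $\ov e_{(0,0,0)}=\ov e_3(IJK)$ and $\ov e_{(0,0)}(I,J)=\ov e_3(IJ)$; reassembling the surviving terms yields exactly the stated identity. The only genuinely delicate point is this last step, where all the comparison relations between the two- and three-variable normal Hilbert coefficients must be marshalled to make the terms of degree $\ge 2$ disappear; everything else is formal, resting on Propositions \ref{computation of homology} and \ref{computation of some length} and standard facts about regular sequences and joint reductions.
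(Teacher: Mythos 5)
Your proposal is correct and follows essentially the same route as the paper: the same length decomposition $\lm_R(\ov{I^rJ^sK^t}/L)=\lm_R(R/(a^r,b^s,c^t))+\lm_R((a^r,b^s,c^t)/L)-\lm_R(R/\ov{I^rJ^sK^t})$, Proposition \ref{computation of some length} for the middle term, $rst\,\ov{e}_{(1,1,1)}$ for the parameter colength, and the coefficient comparisons of Lemma \ref{lemma1} together with the top-degree identities to make everything of degree at least two cancel. The only cosmetic difference is that you justify the first identity via the Euler characteristic of the Kirby--Mehran complex, which is precisely what Propositions \ref{computation of homology} and \ref{computation of some length} already encapsulate.
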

\begin{proof}
We have 
\begin{eqnarray*}
 \lm_R \left(\frac{\ov{I^rJ^sK^t}}{a^r\ov{J^sK^t}+b^s\ov{I^rK^t}+c^t\ov{I^rJ^s}}\right)
&=& \lm_R\left(\frac{R}{a^r\ov{J^sK^t}+b^s\ov{I^rK^t}+c^t\ov{I^rJ^s}}\right)-\lm_R\left(\frac{R}{\ov{I^rJ^sK^t}}\right)\\
&=&\lm_R\left(\frac{R}{(a^r,b^s,c^t)}\right)+\lm_R\left(\frac{(a^r,b^s,c^t)}{a^r\ov{J^sK^t}+b^s\ov{I^rK^t}+c^t\ov{I^rJ^s}}\right)-
\lm_R\left(\frac{R}{\ov{I^rJ^sK^t}}\right)\\
&=& rst \lm_R\left(\frac{R}{(a,b,c)}\right)+\left[\lm_R\left(\frac{R}{\ov{I^rJ^s}}\right)+
\lm_R\left(\frac{R}{\ov{I^rK^t}}\right)+\lm_R\left(\frac{R}{\ov{J^sK^t}}\right)\right]\\
&-&\left[\lm_R\left(\frac{R}{\ov{I^r}}\right)+\lm_R\left(\frac{R}{\ov{J^s}}\right)+\lm_R\left(\frac{R}{\ov{K^t}}\right)\right]
-\lm_R\left(\frac{R}{\ov{I^rJ^sK^t}}\right), \mbox{ by Proposition }\ref{computation of some length}.
\end{eqnarray*}
By \cite[Theorem 2.4]{rees3}, $\ov{e}_{(1,1,1)}=\lm_R\left(\frac{R}{(a,b,c)}\right)$ and 
\begin{center}
\begin{tabular}{|c|c|c|}
\hline$\ov{e}_{(3,0)}(I,J)=\ov{e}_{(3,0)}(I,K)=$  & 
$\ov{e}_{(0,3)}(I,J)=\ov{e}_{(3,0)}(J,K)=$ & 
$\ov{e}_{(0,3)}(I,K)=\ov{e}_{(0,3)}(J,K)=$\\
$\ov{e}_{(3,0,0)}=e(I)$ & $\ov{e}_{(0,3,0)}=e(J)$ & $\ov{e}_{(0,0,3)}=e(K)$\\
\hline
$\ov{e}_{(2,1,0)}=\ov{e}_{(2,1)}(I,J)$  & $\ov{e}_{(0,2,1)}=\ov{e}_{(2,1)}(J,K)$ & $\ov{e}_{(2,0,1)}=\ov{e}_{(2,1)}(I,K)$\\
\hline
$\ov{e}_{(1,2,0)}=\ov{e}_{(1,2)}(I,J)$  & $\ov{e}_{(0,1,2)}=\ov{e}_{(1,2)}(J,K)$ & $\ov{e}_{(1,0,2)}=\ov{e}_{(1,2)}(I,K)$ \\
\hline
\end{tabular}
\end{center}
Therefore, using Lemma \ref{lemma1}, for $r,s,t\gg0,$ we have
\begin{eqnarray*}
\lm_R \left(\frac{\ov{I^rJ^sK^t}}{a^r\ov{J^sK^t}+b^s\ov{I^rK^t}+c^t\ov{I^rJ^s}}\right)
&=&r[\ov{e}_{(1,0)}(I,J)+\ov{e}_{(1,0)}(I,K)-\ov{e}_{(1,0,0)}-\ov{e}_2(I)]\\&+&
s[\ov{e}_{(0,1)}(I,J)+\ov{e}_{(1,0)}(J,K)-\ov{e}_{(0,1,0)}-\ov{e}_2(J)]\\&+&
t[\ov{e}_{(0,1)}(I,K)+\ov{e}_{(0,1)}(J,K)-\ov{e}_{(0,0,1)}-\ov{e}_2(K)]\\&+&
\ov{e}_3(IJK)-[\ov{e}_3(IJ)+\ov{e}_3(IK)+\ov{e}_3(JK)]+\ov{e}_3(I)+\ov{e}_3(J)+\ov{e}_3(K).
\end{eqnarray*}
\end{proof}

\section{Characterization of joint reduction number zero in terms of Local Cohomology modules}
Let $H^i_K(M)$ denotes the local cohomology module of an $R$-module $M$ with support in an ideal $K$ of $R.$ 
Let $(R,\m)$ be an analytically unramified local ring of dimension $3$ and $I,J,K$ be 
$\m-$primary ideals in $R$. Let $a \in I,b \in J$ and $c \in K.$ 
Let   $\ov{{\mathcal R}^\prime}$ denote the extended Rees algebra of the filtration 
${\mathcal F}=\{ \ov{I^rJ^sK^t} \}.$
Consider the Koszul complex on $((at_1)^k,(bt_2)^k,(ct_3)^k)$ in 
 $\ov{\mathcal{R}^\prime}:$
 $${F^k}^. : 0 \longrightarrow \ov{\mathcal{R}^\prime}
\buildrel\over\longrightarrow \bigoplus_{i=1}^3 \ov{\mathcal{R}^\prime}(k{\bf e}_i) 
\buildrel\alpha_k\over\longrightarrow \bigoplus_{1 \leq i<j\leq 3}\ov{\mathcal{R}^\prime}(k({\bf e}_i+{\bf e}_j)) 
\buildrel\beta_k\over\longrightarrow \ov{\mathcal{R}^\prime}(k({\bf e}_1+{\bf e}_2+{\bf e}_3))
\longrightarrow
0.
$$
The twists are given so that maps are degree zero
maps. Then
\begin{equation} \label{eq1}
H^i_{(at_1,bt_2,ct_3)}(\ov{\mathcal{R}^\prime}) =
\displaystyle\lim_{\stackrel{\longrightarrow}{k}}
H^i({F^k}^{.})\end{equation}
for all
$i$ by  \cite[Theorem 5.2.9]{bs}. 
The purpose of this section is to find the length of the component at origin of the third local cohomology module $H^3_{(at_1,bt_2,ct_3)}(\ov{\mathcal{R}^\prime})$ in terms of normal Hilbert coefficients and joint reduction $(a,b,c).$ This leads to a satisfactory analogue of Rees's Theorem for $3$-dimensional analytically unramified local rings.
\begin{theorem}\label{localcohomoly}
 Let $(R,\m)$ be an analytically unramified Cohen-Macaulay local ring of dimension $3$ 
 with infinite residue field and $I,J,K$ be $\m-$primary ideals in $R$. 
 Let $(a,b,c)$ be a good joint reduction of 
 $\{\ov{I^rJ^sK^t}\}$. Then
\begin{enumerate}
 \item \label{localcohomoly part1} \noindent 
\begin{center}
$
[H^3_{(at_1,bt_2,ct_3)}(\ov{\mathcal{R}^\prime})]_{(0,0,0)} \cong
\displaystyle\lim_{\stackrel{\longrightarrow}{k}}
\frac{\ov{I^{k}J^{k}K^k}}{a^k\ov{J^{k}K^k}+b^k\ov{I^{k}K^k}+c^k\ov{I^kJ^k}}.
$ 
                         \end{center}
\item 
\label{localcohomoly part2}
For the directed system involved in the above direct limit, the map
$\mu_k$ 
$$\frac{\ov{I^{k}J^{k}K^k}}{a^k\ov{J^{k}K^k}+b^k\ov{I^{k}K^k}+c^k\ov{I^kJ^k}}
\buildrel\mu_k\over\longrightarrow
\frac{\ov{I^{k+1}J^{k+1}K^{k+1}}}{a^{k+1}\ov{J^{k+1}K^{k+1}}+b^{k+1}\ov{I^{k+1}K^{k+1}}+c^{k+1}\ov{I^{k+1}J^{k+1}}}$$ 
given by $\mu_k(x^\prime)=(xabc)^\prime$, where $x \in \ov{I^kJ^kK^k}$ 
and $^\prime$ denotes the image of an element in respect quotients, 
is injective for all $k>0$. 
\end{enumerate}
\end{theorem}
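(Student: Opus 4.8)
The plan is to read off part (1) directly from the Koszul presentation $(\ref{eq1})$ of $H^3_{(at_1,bt_2,ct_3)}(\ov{\mathcal R^\prime})$, and to prove the injectivity in part (2) by a threefold cancellation of the regular sequence $a,b,c$, each cancellation being corrected by Lemma \ref{properties of good joint reduction}(\ref{properties of good joint reduction2}).

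\emph{Part (1).} The complex ${F^k}^{.}$ has length three, so its top cohomology $H^3({F^k}^{.})$ is the cokernel of $\beta_k$, that is, $\ov{\mathcal R^\prime}(k{\bf e})/\im\beta_k$ with ${\bf e}={\bf e}_1+{\bf e}_2+{\bf e}_3$. Passing to the degree-$(0,0,0)$ component, the target is $[\ov{\mathcal R^\prime}]_{(k,k,k)}=\ov{I^kJ^kK^k}$, while the Koszul differential $\beta_k$ carries the three summands $[\ov{\mathcal R^\prime}(k({\bf e}_1+{\bf e}_2))]_{(0,0,0)}=\ov{I^kJ^k}$, $[\ov{\mathcal R^\prime}(k({\bf e}_1+{\bf e}_3))]_{(0,0,0)}=\ov{I^kK^k}$ and $[\ov{\mathcal R^\prime}(k({\bf e}_2+{\bf e}_3))]_{(0,0,0)}=\ov{J^kK^k}$ to their $c^k$-, $b^k$- and $a^k$-multiples respectively, so that $[\im\beta_k]_{(0,0,0)}=a^k\ov{J^kK^k}+b^k\ov{I^kK^k}+c^k\ov{I^kJ^k}$; this is a submodule of $\ov{I^kJ^kK^k}$ because a product of integrally closed ideals is contained in the integral closure of the product. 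Since taking a fixed multigraded component and forming a direct limit are both exact and commute with each other, $(\ref{eq1})$ gives
$$[H^3_{(at_1,bt_2,ct_3)}(\ov{\mathcal R^\prime})]_{(0,0,0)}\cong\lim_{\stackrel{\longrightarrow}{k}}\frac{\ov{I^kJ^kK^k}}{a^k\ov{J^kK^k}+b^k\ov{I^kK^k}+c^k\ov{I^kJ^k}}.$$
On the top Koszul term the transition maps of the system $\{{F^k}^{.}\}_k$ are multiplication by $(at_1)(bt_2)(ct_3)=abc\,t_1t_2t_3$ (the standard description of the direct system computing local cohomology, cf. \cite{bs}), which in degree $(0,0,0)$ is precisely the map $\mu_k(x^\prime)=(abcx)^\prime$ of part (2); this also identifies the directed system.

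\emph{Part (2).} Let $x\in\ov{I^kJ^kK^k}$ with $abcx=a^{k+1}p+b^{k+1}q+c^{k+1}r$, where $p\in\ov{J^{k+1}K^{k+1}}$, $q\in\ov{I^{k+1}K^{k+1}}$, $r\in\ov{I^{k+1}J^{k+1}}$; we must show $x\in a^k\ov{J^kK^k}+b^k\ov{I^kK^k}+c^k\ov{I^kJ^k}$. Since $(a,b,c)$ is a joint reduction of $\{\ov{I^rJ^sK^t}\}$, the ideal $(a,b,c)$ is $\m$-primary, hence $a,b,c$ is a system of parameters of the $3$-dimensional Cohen--Macaulay ring $R$ and every permutation of powers of $a,b,c$ is a regular sequence; this is what licenses the cancellations below. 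From $a(bcx-a^kp)=b^{k+1}q+c^{k+1}r\in(b^{k+1},c^{k+1})$ and regularity of $a$ modulo $(b^{k+1},c^{k+1})$ we obtain $bcx-a^kp\in(b^{k+1},c^{k+1})$; as also $bcx-a^kp\in\ov{I^kJ^{k+1}K^{k+1}}$, Lemma \ref{properties of good joint reduction}(\ref{properties of good joint reduction2}) for the pair $(b,c)$ yields $bcx=a^kp+b^{k+1}q_1+c^{k+1}r_1$ with $q_1\in\ov{I^kK^{k+1}}$, $r_1\in\ov{I^kJ^{k+1}}$. Cancelling $b$ from $b(cx-b^kq_1)=a^kp+c^{k+1}r_1\in(a^k,c^{k+1})$, using $cx-b^kq_1\in\ov{I^kJ^kK^{k+1}}$ and Lemma \ref{properties of good joint reduction}(\ref{properties of good joint reduction2}) for $(a,c)$, gives $cx=a^kp_2+b^kq_1+c^{k+1}r_2$ with $p_2\in\ov{J^kK^{k+1}}$, $r_2\in\ov{I^kJ^k}$. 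Finally cancelling $c$ from $c(x-c^kr_2)=a^kp_2+b^kq_1\in(a^k,b^k)$, using $x-c^kr_2\in\ov{I^kJ^kK^k}$ and Lemma \ref{properties of good joint reduction}(\ref{properties of good joint reduction2}) for $(a,b)$, gives $x=a^kp_3+b^kq_3+c^kr_2$ with $p_3\in\ov{J^kK^k}$, $q_3\in\ov{I^kK^k}$, which is the required membership. Hence $\mu_k$ is injective for all $k>0$.

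The main obstacle is the exponent bookkeeping in part (2): at each of the three cancellation steps one must verify that the remainder lies in exactly the integral-closure ideal for which the needed instance of Lemma \ref{properties of good joint reduction}(\ref{properties of good joint reduction2}) is available --- for example the first step uses $(b^{k+1},c^{k+1})\cap\ov{I^kJ^{k+1}K^{k+1}}=b^{k+1}\ov{I^kK^{k+1}}+c^{k+1}\ov{I^kJ^{k+1}}$, the case $0<k+1\le k+1$ in the $J$- and $K$-slots with base exponent $k$ in the $I$-slot --- and that the three cancellations are carried out in an order compatible with $a,b,c$ being a regular sequence. Once the indices are tracked carefully this is a routine, if somewhat lengthy, verification, with no deeper difficulty expected.
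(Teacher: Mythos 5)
Your proof is correct, and part (1) is exactly the paper's argument: read off $H^3$ of the Koszul complex as the cokernel of $\beta_k$, identify $[\im\beta_k]_{(0,0,0)}$ with $a^k\ov{J^kK^k}+b^k\ov{I^kK^k}+c^k\ov{I^kJ^k}$, and pass to the direct limit with transition maps given by multiplication by $abc\,t_1t_2t_3$. In part (2) you use the same two ingredients as the paper --- the fact that powers of $a,b,c$ form a regular sequence in any order, and Lemma \ref{properties of good joint reduction}(\ref{properties of good joint reduction2}) --- but organize the decomposition differently. The paper works symmetrically: from $abcx=a^{k+1}u+b^{k+1}v+c^{k+1}w$ it first shows $u\in(b,c)$, $v\in(a,c)$, $w\in(a,b)$, splits each of $u,v,w$ by the degree-one good joint reduction identity, regroups the resulting terms as $ab(a^ku_1+b^kv_1)+ac(a^ku_2+c^kw_1)+bc(b^kv_2+c^kw_2)$, applies the power version of the lemma to each bracket, and cancels $abc$ to conclude $x=p+q+r$. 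You instead peel off $a$, then $b$, then $c$ sequentially, invoking the lemma with exponents $(k+1,k+1)$, then $(k,k+1)$, then $(k,k)$; your bookkeeping of which integral-closure module each remainder lies in ($bcx-a^kp\in\ov{I^kJ^{k+1}K^{k+1}}$, etc.) checks out, and the needed instances of the lemma for the pairs $(b,c)$, $(a,c)$, $(a,b)$ are all available from the definition of a good joint reduction. Your sequential version is somewhat leaner (three applications of the lemma rather than six uses of intersection identities plus a regrouping), while the paper's symmetric version displays the final membership as an explicit sum $x=p+q+r$ of three pieces, one for each pair of ideals; the two are otherwise equivalent in strength and hypotheses.
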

\begin{proof}
\begin{enumerate}
 \item 
The local cohomology module $H^3_{(at_1,bt_2,ct_3)}(\ov{\mathcal R^\prime})$ has a 
$\mathbb{Z}^3$-grading inherited from the $\mathbb{Z}^3$-grading of
$\ov{\mathcal{R}^\prime}$.
Therefore by the equation (\ref{eq1}),
$$
[H^3_{(at_1,bt_2,ct_3)}(\ov{\mathcal{R}^\prime})]_{(0,0,0)} =
\displaystyle\lim_{\stackrel{\longrightarrow}{k}}
\frac{\ov{I^{k}J^{k}K^k}}{(\image \beta_k)_{(0,0,0)}}.
$$
Since $(\image \beta_k)_{(0,0,0)}
\simeq[a^k\ov{J^{k}K^k}+b^k\ov{I^{k}K^k}+c^k\ov{I^kJ^k}],$
$$[H^3_{(at_1,bt_2,ct_3)}(\ov{\mathcal{R}^\prime})]_{(0,0,0)} \cong
\displaystyle\lim_{\stackrel{\longrightarrow}{k}}
\frac{\ov{I^{k}J^{k}K^k}}{a^k\ov{J^{k}K^k}+b^k\ov{I^{k}K^k}+c^k\ov{I^kJ^k}}.$$
\item Let $x \in \ov{I^kJ^kK^k}$ be such that $\mu_k(x^\prime)=0$. Then 
$xabc=a^{k+1}u+b^{k+1}v+c^{k+1}w$ for some $u \in \ov{J^{k+1}K^{k+1}}, v \in \ov{I^{k+1}K^{k+1}}$ and 
$w \in \ov{I^{k+1}J^{k+1}}$. Hence $u \in (b,c) \cap \ov{J^{k+1}K^{k+1}}=b\ov{J^kK^{k+1}}+c\ov{J^{k+1}K^k}$. 
Thus $u=bu_1+cu_2$ for some $u_1 \in \ov{J^kK^{k+1}}$ and $u_2 \in \ov{J^{k+1}K^k}$. Similarly, 
$v=av_1+cv_2, w=aw_1+bw_2$ for some $v_1\in \ov{I^kK^{k+1}},v_2 \in \ov{I^{k+1}K^k},
w_1 \in \ov{I^kJ^{k+1}}$ and $w_2 \in \ov{I^{k+1}J^k}$. 
Thus,
$$xabc=ac(a^{k}u_2+c^kw_1)+bc(b^{k}v_2+c^{k}w_2)+ab(a^ku_1+b^kv_1).$$
Hence $a^ku_1+b^kv_1 \in (c) \cap \ov{I^kJ^kK^{k+1}}=c\ov{I^kJ^kK^k}.$ Let $a^ku_1+b^kv_1=cp$ for some $p \in \ov{I^kJ^kK^k}$. Then 
$p \in (a^k,b^k) \cap \ov{I^kJ^kK^k}=a^k\ov{J^kK^k}+b^k\ov{I^kK^k}$. Similarly, $a^ku_2+c^kw_1=bq$ for some $q \in a^k\ov{J^kK^k}+c^k\ov{I^kJ^k}$ 
and $b^kv_2+c^kw_2=ar$ for some $r \in b^k\ov{I^kK^k}+c^k\ov{I^kJ^k}$. Hence 
$x=p+q+r \in a^k\ov{J^{k}K^k}+b^k\ov{I^{k}K^k}+c^k\ov{I^kJ^k}$. Thus $x^\prime=0$ and therefore $\mu_k$ is injective.
\end{enumerate}
\end{proof}
 
\begin{lemma}
\label{lemma on direct limit}
Let 
Let $(R,\m)$ be a Noetherian local ring. 
Let $\{(M_i,\psi_{i,j})\}_{i \geq 0}$ be a directed 
system of  $R-$modules and $M=\displaystyle\lim_{\stackrel{\longrightarrow}{k}} M_k$. \\
(1) If $M$ is a finitely generated $R$-module then  
the natural map $\psi_k: M_k\rightarrow M$ is 
surjective for all large $k$.
\\
(2) If
$\psi_{k,k+1}:M_k \longrightarrow M_{k+1} $ is injective 
for $k \gg 0$ then $\lm_k:M_k\rightarrow M$ is injective for large $k.$
\end{lemma}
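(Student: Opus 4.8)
The plan is to argue directly from the construction of the direct limit $M$, using the two standard facts: every element of $M$ has the form $\psi_k(m)$ for some index $k$ and some $m\in M_k$, and $\psi_k(m)=0$ in $M$ if and only if $\psi_{k,l}(m)=0$ for some $l\ge k$.

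For part (1), first I would fix a finite generating set $g_1,\dots,g_n$ of the $R$-module $M$ and write $g_i=\psi_{k_i}(m_i)$ with $m_i\in M_{k_i}$. Setting $k=\max_i k_i$, the element $\psi_{k_i,k}(m_i)\in M_k$ satisfies $\psi_k(\psi_{k_i,k}(m_i))=\psi_{k_i}(m_i)=g_i$, so the image of $\psi_k$ is an $R$-submodule of $M$ containing each $g_i$, hence equals $M$. For any $l\ge k$ the identity $\psi_l\circ\psi_{k,l}=\psi_k$ then forces $\psi_l$ to be surjective as well, which yields surjectivity of $\psi_k$ for all large $k$.

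For part (2), suppose $\psi_{j,j+1}$ is injective for all $j\ge N$. For $k\ge N$ and any $l\ge k$ the transition map $\psi_{k,l}=\psi_{l-1,l}\circ\cdots\circ\psi_{k,k+1}$ is a composition of injective maps, hence injective. If $m\in M_k$ with $k\ge N$ and $\psi_k(m)=0$, then $\psi_{k,l}(m)=0$ for some $l\ge k$, and injectivity of $\psi_{k,l}$ forces $m=0$; thus $\ker\psi_k=0$ for all $k\ge N$, i.e.\ $\psi_k$ is injective for large $k$.

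I do not expect a genuine obstacle here: both parts are formal consequences of the definition of the direct limit, and the only care needed is the bookkeeping of indices and the explicit description of the kernels of the structure maps $\psi_k$. (In particular, part (2) uses neither the finite generation hypothesis of part (1) nor the Noetherian or local hypothesis on $R$; these are recorded only because that is the setting in which the lemma will be applied, for instance to the directed system appearing in Theorem~\ref{localcohomoly}.)
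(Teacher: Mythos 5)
Your proof is correct and follows essentially the same route as the paper's: part (1) lifts a finite generating set of $M$ to a single $M_k$ via the transition maps, and part (2) combines the standard fact that $\psi_k(m)=0$ forces $\psi_{k,l}(m)=0$ for some $l\geq k$ (which the paper cites from Rotman) with the injectivity of the composite transition maps. No gaps.
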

\begin{proof} 
(1) Let $\lm_i : M_i \rightarrow \bigoplus_{i \geq 0}M_i$ be the 
ith injection. We know that $M= \frac{\bigoplus_k M_k}{N}$, where $N$ is the 
$R-$submodule of $\bigoplus_k M_k$ generated by elements 
$\lambda_j\psi_{i,j}(m_i)-\lambda_i(m_i)$ 
for $m_i \in M_i$ and $i \leq j$.  Let $\psi_k:M_k \longrightarrow M $ be the natural map defined as 
$\psi_k(m_k)=\lambda_k(m_k)+N$. Suppose that $M$ is generated by 
$\psi_{k_1}(x_{k_1}), \psi_{k_2}(x_{k_2}),\ldots, \psi_{k_g}(x_{k_g})$ 
for some $x_{k_i} \in M_{k_i}$.   
Let $k_0=\max\{k_1,k_2,\ldots,k_g\}$. 
Since $\psi_{k_i}=\psi_k\psi_{k_i,k}$ for $k\geq k_0$, 
$M$ is also generated by $\psi_k(\psi_{k_i,k}(x_{k_i}))$ for $i=1,2,\ldots,g.$ 
Hence $\psi_k: M_k\rightarrow M$ is surjective for $k\geq k_0$. \\
(2) Suppose $\psi_k(x)=0$ for some $x \in M_k$. Then $ \lm_k(x)+ N=0.$ 
By \cite[Lemma 5.30]{rotman}, $\psi_{k,j}(x)=0$ for some $j \geq k$. Hence $x=0$ if $k \gg 0$. 
\end{proof}

\begin{theorem} \label{theorem4}
Let $(R,\m)$ be an analytically unramified Cohen-Macaulay local ring 
of dimension $3$ with infinite residue field and $I,J,K$ be $\m-$primary ideals in $R$. 
Assume that a good complete reduction of the filtration $\{\ov{I^rJ^sK^t}\}$ exists. 
Let $(a,b,c)$ be any good joint reduction of the filtration $\{\ov{I^rJ^sK^t}\}$. Then
\begin{enumerate}
\item $\lm_R([H^3_{(at_1,bt_2,ct_3)}(\ov{\mathcal R^\prime})]_{(0,0,0)}) < \infty$.
 \item \label{theorem4 part2} $\lm_R([H^3_{(at_1,bt_2,ct_3)}(\ov{\mathcal R^\prime})]_{(0,0,0)})=\lm_R\left(\frac{\ov{I^{r}J^{s}K^t}}{a^r\ov{J^{s}K^t}+b^s\ov{I^{r}K^t}+c^t\ov{I^rJ^s}}\right)$ 
for large $r,s,t.$
\item \label{theorem4 part3} $\lm_R([H^3_{(at_1,bt_2,ct_3)}(\ov{\mathcal R^\prime})]_{(0,0,0)})=
\ov{e}_3(IJK)-[\ov{e}_3(IJ)+\ov{e}_3(IK)+\ov{e}_3(JK)]+\ov{e}_3(I)+\ov{e}_3(J)+\ov{e}_3(K)$.
\end{enumerate}
In particular, $\lm_R([H^3_{(at_1,bt_2,ct_3)}(\ov{\mathcal R^\prime})]_{(0,0,0)})$ is 
independent of good joint reduction $(a,b,c)$ of $\{\ov{I^rJ^sK^t}\}$.
\end{theorem}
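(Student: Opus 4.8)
The plan is to deduce the three assertions from the direct--limit description of $[H^3_{(at_1,bt_2,ct_3)}(\ov{\mathcal R^\prime})]_{(0,0,0)}$ given in Theorem~\ref{localcohomoly}, together with the length formula of Theorem~\ref{theorem1}; the hypothesis that a good complete reduction of $\{\ov{I^rJ^sK^t}\}$ exists will be used exactly once, through the Finiteness Theorem~\ref{finiteness of local cohomology modules}, to force the linear part of that formula to vanish. Throughout set $M_{r,s,t}^{(a,b,c)}:=\ov{I^rJ^sK^t}/\bigl(a^r\ov{J^sK^t}+b^s\ov{I^rK^t}+c^t\ov{I^rJ^s}\bigr)$.

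The first step is to show that, for \emph{any} good joint reduction $(a,b,c)$ and all $r,s,t\ge1$, multiplication by $a$, $b$, $c$ respectively defines \emph{injective} maps $M_{r,s,t}^{(a,b,c)}\to M_{r+1,s,t}^{(a,b,c)}$, $M_{r,s,t}^{(a,b,c)}\to M_{r,s+1,t}^{(a,b,c)}$ and $M_{r,s,t}^{(a,b,c)}\to M_{r,s,t+1}^{(a,b,c)}$. These are well defined because $a\in I$ carries each of the three summands of the denominator of $M_{r,s,t}$ into the corresponding summand of the denominator of $M_{r+1,s,t}$, and similarly for $b,c$. Injectivity of the first map is a diagram chase modelled on the proof of part~(\ref{localcohomoly part2}) of Theorem~\ref{localcohomoly}, but with the three exponents unequal: if $x\in\ov{I^rJ^sK^t}$ and $ax=a^{r+1}u+b^sv+c^tw$ with $u\in\ov{J^sK^t}$, $v\in\ov{I^{r+1}K^t}$, $w\in\ov{I^{r+1}J^s}$, then $b^sv+c^tw=a(x-a^ru)$ lies in $(a)\cap\ov{I^{r+1}J^sK^t}=a\,\ov{I^rJ^sK^t}$ by the single--element good joint reduction identity, so $b^sv+c^tw=a\xi$ with $\xi\in\ov{I^rJ^sK^t}$; as $(a,b,c)$ is a regular sequence this gives $x=a^ru+\xi$, and $\xi\in(b^s,c^t)\cap\ov{I^rJ^sK^t}=b^s\ov{I^rK^t}+c^t\ov{I^rJ^s}$ by Lemma~\ref{properties of good joint reduction}, so that $x$ lies in the denominator of $M_{r,s,t}$. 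The maps for $b$ and $c$ are handled identically, using the good joint reduction identities for the pairs $\{a,c\}$ and $\{a,b\}$. Composing these injections yields $M_{r,s,t}^{(a,b,c)}\hookrightarrow M_{N,N,N}^{(a,b,c)}$ for every $N\ge\max\{r,s,t\}$.

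Now bring in the hypothesis. Let $(x_{ij})$ be a good complete reduction; by Remark~\ref{remark on gcr} it is a strict complete reduction and $(x_{11},x_{22},x_{33})$ is a good joint reduction. Put $\mathcal I_0=(x_{11}t_1,x_{22}t_2,x_{33}t_3)$. By Theorem~\ref{finiteness of local cohomology modules} the number $L:=\lm_R\bigl([H^3_{\mathcal I_0}(\ov{\mathcal R^\prime})]_{(0,0,0)}\bigr)$ is finite; by part~(\ref{localcohomoly part1}) of Theorem~\ref{localcohomoly} the module $[H^3_{\mathcal I_0}(\ov{\mathcal R^\prime})]_{(0,0,0)}$ is the direct limit over $k$ of the $M_{k,k,k}^{(x_{11},x_{22},x_{33})}$ along injective transition maps, so by Lemma~\ref{lemma on direct limit} the canonical maps $M_{N,N,N}^{(x_{11},x_{22},x_{33})}\to[H^3_{\mathcal I_0}(\ov{\mathcal R^\prime})]_{(0,0,0)}$ are injective for $N\gg0$. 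Combining this with the first step, $\lm_R\bigl(M_{r,s,t}^{(x_{11},x_{22},x_{33})}\bigr)\le L$ for all $r,s,t\ge1$. On the other hand Theorem~\ref{theorem1} says that for $r,s,t\gg0$ this length equals $rA+sB+tD+C$, where $A,B,D$ are the three bracketed linear coefficients and $C=\ov{e}_3(IJK)-[\ov{e}_3(IJ)+\ov{e}_3(IK)+\ov{e}_3(JK)]+\ov{e}_3(I)+\ov{e}_3(J)+\ov{e}_3(K)$. An affine function of $(r,s,t)$ that stays between $0$ and $L$ on all of $\{r,s,t\gg0\}$ must be constant, hence $A=B=D=0$; and since $A,B,D,C$ are expressed entirely through normal Hilbert coefficients of $I,J,K$, this vanishing does not depend on the chosen joint reduction.

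Finally, assemble the pieces. Let $(a,b,c)$ be an arbitrary good joint reduction. By Theorem~\ref{theorem1} and the vanishing just proved, $\lm_R\bigl(M_{r,s,t}^{(a,b,c)}\bigr)=C$ for all $r,s,t\gg0$, in particular $\lm_R\bigl(M_{k,k,k}^{(a,b,c)}\bigr)=C$ for $k\gg0$. Part~(\ref{localcohomoly part1}) of Theorem~\ref{localcohomoly} exhibits $[H^3_{(at_1,bt_2,ct_3)}(\ov{\mathcal R^\prime})]_{(0,0,0)}$ as the direct limit over $k$ of the $M_{k,k,k}^{(a,b,c)}$ along the maps $\mu_k$, which are injective by part~(\ref{localcohomoly part2}); an injection between modules of equal finite length $C$ is an isomorphism, so the directed system stabilises and the limit has length $C$. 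This proves~(1) and~(3), and assertion~(2) is then the identity $\lm_R\bigl([H^3_{(at_1,bt_2,ct_3)}(\ov{\mathcal R^\prime})]_{(0,0,0)}\bigr)=C=\lm_R\bigl(M_{r,s,t}^{(a,b,c)}\bigr)$ for $r,s,t\gg0$; the independence of the good joint reduction is immediate since $C$ depends only on $I,J,K$. I expect the main obstacle to be the injectivity of the three shift maps in the second step --- not conceptually deep, but requiring careful bookkeeping of which good joint reduction intersection identity and which power version of Lemma~\ref{properties of good joint reduction} is invoked where; the point of these injections is precisely that they bound $\lm_R(M_{r,s,t})$ uniformly and so annihilate the linear terms of Theorem~\ref{theorem1}.
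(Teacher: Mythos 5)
Your proof is correct, and its overall architecture coincides with the paper's: both use the direct limit description and the injectivity of the transition maps $\mu_k$ from Theorem~\ref{localcohomoly}, both invoke the good complete reduction through Theorem~\ref{finiteness of local cohomology modules} to get finiteness for the distinguished joint reduction $(x_{11},x_{22},x_{33})$, both then force the linear part of the polynomial in Theorem~\ref{theorem1} to vanish (a statement about normal Hilbert coefficients only, hence independent of the joint reduction), and both conclude by observing that an eventually constant system of injections stabilizes. The one step you handle differently is the annihilation of the linear terms. The paper stays on the diagonal: from the stabilization of $S_{(x,y,z)}(k,k,k)$ it gets that the \emph{sum} of the three linear coefficients is zero, and separately argues that each coefficient is nonnegative (a length function that is affine in $r$ for $r\gg 0$ cannot have negative leading coefficient), whence each vanishes. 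You instead prove a genuinely new auxiliary fact --- that multiplication by $a,b,c$ gives injections $M_{r,s,t}\hookrightarrow M_{r+1,s,t}$, etc., for unequal exponents, via the single-element identity $(a)\cap\ov{I^{r+1}J^sK^t}=a\ov{I^rJ^sK^t}$, regularity of $(a,b,c)$, and Lemma~\ref{properties of good joint reduction}(2) --- and use it to bound $\lm_R(M_{r,s,t})$ uniformly off the diagonal, so that boundedness alone kills all three coefficients at once. Your route costs an extra (carefully checked, and correct) diagram chase but dispenses with the separate nonnegativity observation; the paper's route is shorter because it never leaves the diagonal. Both are valid, and the remainder of your argument (isomorphy of the $\mu_k$ between modules of equal finite length, hence stabilization of the limit, hence parts (1)--(3) and the independence statement) matches the paper's.
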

\begin{proof}
\begin{enumerate}
\item By Theorem \ref{localcohomoly}(\ref{localcohomoly part1}), 
$$
[H^3_{(at_1,bt_2,ct_3)}(\ov{\mathcal{R}^\prime})]_{(0,0,0)} \cong
\displaystyle\lim_{\stackrel{\longrightarrow}{k}}
\frac{\ov{I^{k}J^{k}K^k}}{a^k\ov{J^{k}K^k}+b^k\ov{I^{k}K^k}+c^k\ov{I^kJ^k}}.
$$
Let $S_{(a,b,c)}(r,s,t)=\lm_R\left(\frac{\ov{I^{r}J^{s}K^t}}{a^r\ov{J^{s}K^t}+b^s\ov{I^{r}K^t}+
c^t\ov{I^rJ^s}}\right)$. First we show that $S_{(a,b,c)}(r,s,t)$ is a constant for $r,s,t \gg 0$. 
Let $(x_{ij})$ be a good complete reduction of $\{\ov{I^rJ^sK^t}\}$. 
Let $x=x_{11},y=x_{22}$ and $z=x_{33}$. Then, by Remark \ref{remark on gcr}(\ref{jr coming from gcr are good}), 
$(x,y,z)$ is a good joint reduction of $\{\ov{I^rJ^sK^t}\}$. By Theorem 
\ref{finiteness of local cohomology modules}, $\lm_R([H^3_{(xt_1,yt_2,zt_3)} 
(\ov{\mathcal R^\prime})]_{(0,0,0)})<\infty$. Thus $[H^3_{(xt_1,yt_2,zt_3)} 
(\ov{\mathcal R^\prime})]_{(0,0,0)}$ is a finitely generated $R-$module. Therefore, by Theorem 
\ref{localcohomoly} and Lemma \ref{lemma on direct limit}, for $k \gg 0$, 
\begin{eqnarray*}
 [H^3_{(xt_1,yt_2,zt_3)}(\ov{\mathcal{R}^\prime})]_{(0,0,0)} \cong 
 \frac{\ov{I^{k}J^{k}K^k}}{x^k\ov{J^{k}K^k}+y^k\ov{I^{k}K^k}+z^k\ov{I^kJ^k}}
\end{eqnarray*}
Hence
\begin{eqnarray}\label{Sk is constant}
\lm_R([H^3_{(xt_1,yt_2,zt_3)}(\ov{\mathcal R^\prime})]_{(0,0,0)})=S_{(x,y,z)}(k,k,k). 
\end{eqnarray}
By Theorem \ref{theorem1}, $S_{(x,y,z)}(r,s,t)$ is a polynomial function in $r,s,t$ of total 
degree atmost $1$. 
Fixing $s_0,t_0$ large, $S_{(x,y,z)}(r,s_0,t_0)$ is a polynomial function in $r$ of degree atmost $1$. 
Hence 
coefficient of $r$ in the polynomial associated with $S_{(x,y,z)}(r,s_0,t_0)$ is nonnegative. Thus 
the coefficient of $r$ in the polynomial associated with $S_{(x,y,z)}(r,s,t)$ is nonnegative. Similarly, 
coefficients of $s$ and $t$ in the polynomial associated with $S_{(x,y,z)}(r,s,t)$ are nonnegative. 
By equation (\ref{Sk is constant}), $S_{(x,y,z)}(k,k,k)$ is a constant for large $k$. 
Hence the sum of coefficients of $r,s,t$ in the polynomial 
associated with $S_{(x,y,z)}(r,s,t) $ is zero. Hence each coefficient of $r,s,t$ in the polynomial 
associated with $S_{(x,y,z)}(r,s,t) $ is zero. Thus, by Theorem \ref{theorem1}, for $r,s,t \gg 0,$
\begin{eqnarray} \label{formula for S(r,s,t)}
S_{(a,b,c)}(r,s,t)
=\ov{e}_3(IJK)-[\ov{e}_3(IJ)+\ov{e}_3(IK)+\ov{e}_3(JK)]+\ov{e}_3(I)+\ov{e}_3(J)+\ov{e}_3(K)
\end{eqnarray}
for any good joint reduction $(a,b,c)$ of $\{\ov{I^rJ^sK^t}\}$. 
Let 
$$\mu_k:\frac{\ov{I^{k}J^{k}K^k}}{a^k\ov{J^{k}K^k}+b^k\ov{I^{k}K^k}+c^k\ov{I^kJ^k}}
\longrightarrow
\frac{\ov{I^{k+1}J^{k+1}K^{k+1}}}{a^{k+1}\ov{J^{k+1}K^{k+1}}+b^{k+1}\ov{I^{k+1}K^{k+1}}+c^{k+1}\ov{I^{k+1}J^{k+1}}}
$$
be the map involved in the direct limit. 
By Theorem \ref{localcohomoly}(\ref{localcohomoly part2}), $\mu_k$ is injective for $k>0$. 
By equation (\ref{formula for S(r,s,t)}), $S_{(a,b,c)}(k,k,k)$ is a constant for $k \gg 0$. Hence 
$\mu_k$ is an isomorphism for $k \gg 0$. Therefore for $k \gg 0$, 
\begin{eqnarray}\label{formula for local cohomology}
 [H^3_{(at_1,bt_2,ct_3)}(\ov{\mathcal R^\prime})]_{(0,0,0)} \cong 
 \frac{\ov{I^{k}J^{k}K^k}}{a^k\ov{J^{k}K^k}+b^k\ov{I^{k}K^k}+c^k\ov{I^kJ^k}}
\end{eqnarray}
and hence $[H^3_{(at_1,bt_2,ct_3)}(\ov{\mathcal R^\prime})]_{(0,0,0)}$ has finite length.
\item 
Follows from equations (\ref{formula for S(r,s,t)}) and (\ref{formula for local cohomology}).

\item Follows from equations (\ref{formula for S(r,s,t)}) and (\ref{formula for local cohomology}).
\end{enumerate}
 \end{proof}

\begin{theorem}
\label{characterization of joint reduction number}
Let $(R,\m)$ be an analytically unramified Cohen-Macaulay local ring 
of dimension $3$ and $I,J,K$ be $\m-$primary ideals in $R$. 
Assume that a good complete reduction of the filtration $\{\ov{I^rJ^sK^t}\}$ exists. 
The following statements are equivalent:
\begin{enumerate}
\item $[H^3_{(at_1,bt_2,ct_3)}(\ov{\mathcal R^\prime})]_{(0,0,0)}=0$ for some good 
joint reduction $(a,b,c)$ of the filtration $\{\ov{I^rJ^sK^t}\}$,
\item $[H^3_{(at_1,bt_2,ct_3)}(\ov{\mathcal R^\prime})]_{(0,0,0)}=0$ for any good 
joint reduction $(a,b,c)$ of the filtration $\{\ov{I^rJ^sK^t}\}$,
 \item the normal joint reduction number of $I,J,K$ is zero with respect to any good joint 
 reduction $(a,b,c)$ of the filtration $\{\ov{I^rJ^sK^t}\}$,
 \item the normal joint reduction number of $I,J,K$ is zero with respect to some good joint 
 reduction $(a,b,c)$ of the filtration $\{\ov{I^rJ^sK^t}\}$,
\item $\ov{e}_3(IJK)-[\ov{e}_3(IJ)+\ov{e}_3(IK)+\ov{e}_3(JK)]+\ov{e}_3(I)+\ov{e}_3(J)+\ov{e}_3(K)=0.$
\end{enumerate}
 \end{theorem}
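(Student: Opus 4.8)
The plan is to route all five statements through (5). Passing, if necessary, to the faithfully flat extension $R(u)=R[u]_{\m R[u]}$ --- under which each of the five statements is equivalent to the corresponding statement for $IR(u),JR(u),KR(u)$ (integral closures, lengths over Artinian quotients, joint and complete reductions, and normal Hilbert coefficients all behave well under this extension) --- I may assume the residue field of $R$ is infinite, so that Theorems \ref{theorem1}, \ref{localcohomoly} and \ref{theorem4} apply. By Theorem \ref{theorem4}(3), for \emph{any} good joint reduction $(a,b,c)$ of $\{\ov{I^rJ^sK^t}\}$ one has $\lm_R([H^3_{(at_1,bt_2,ct_3)}(\ov{\mathcal R^\prime})]_{(0,0,0)})=\ov{e}_3(IJK)-[\ov{e}_3(IJ)+\ov{e}_3(IK)+\ov{e}_3(JK)]+\ov{e}_3(I)+\ov{e}_3(J)+\ov{e}_3(K)$, a quantity that does not depend on $(a,b,c)$. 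Hence this module vanishes for one good joint reduction iff the integer in (5) is $0$ iff it vanishes for every good joint reduction, which is exactly (1) $\Leftrightarrow$ (5) $\Leftrightarrow$ (2).

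\textbf{Step 2: an algebraic reformulation of normal joint reduction number zero.} I would next prove the elementary equivalence that, for a good joint reduction $(a,b,c)$, the normal joint reduction number of $I,J,K$ is zero with respect to $(a,b,c)$ if and only if
\[\ov{I^rJ^sK^t}=a^r\ov{J^sK^t}+b^s\ov{I^rK^t}+c^t\ov{I^rJ^s}\qquad\text{for all }r,s,t\ge 1.\qquad(\star)\]
The implication $(\star)\Rightarrow$ normal joint reduction number zero is immediate, since $a^r\ov{J^sK^t}\subseteq a\,\ov{I^{r-1}J^sK^t}$, $b^s\ov{I^rK^t}\subseteq b\,\ov{I^rJ^{s-1}K^t}$, $c^t\ov{I^rJ^s}\subseteq c\,\ov{I^rJ^sK^{t-1}}$ (because $a^{r-1}\in\ov{I^{r-1}}$, and likewise for $b,c$), and the reverse inclusion is trivial. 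For the converse I would argue by a double induction: an outer induction on $r$ with base $r=1$, and for each fixed $r$ an inner induction on $s+t$ with base $s=t=1$ (which is precisely normal joint reduction number zero at $(1,1,1)$); in the inductive step one expands each summand of $\ov{I^rJ^sK^t}=a\ov{I^{r-1}J^sK^t}+b\ov{I^rJ^{s-1}K^t}+c\ov{I^rJ^sK^{t-1}}$ by the inductive hypothesis, brings $a^r,b^s,c^t$ to the front, and absorbs each cross term $a^{i}\ov{I^{r-i}J^{s-1}K^t}$ into $b\,\ov{I^rJ^{s-1}K^t}$ (and similarly for the $c$-term) via $a^{i}\ov{I^{r-i}}\subseteq\ov{I^r}$.

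\textbf{Step 3: the descent --- main obstacle.} The crux is to show that, for a good joint reduction $(a,b,c)$, the length $S_{(a,b,c)}(r,s,t):=\lm_R\big(\ov{I^rJ^sK^t}/(a^r\ov{J^sK^t}+b^s\ov{I^rK^t}+c^t\ov{I^rJ^s})\big)$ occurring in Theorems \ref{theorem1} and \ref{theorem4}(2) is nondecreasing in each of $r,s,t$ on $\{r,s,t\ge 1\}$. I would obtain this by proving that multiplication by $a$ induces an \emph{injection} $\ov{I^rJ^sK^t}/(a^r\ov{J^sK^t}+b^s\ov{I^rK^t}+c^t\ov{I^rJ^s})\hookrightarrow\ov{I^{r+1}J^sK^t}/(a^{r+1}\ov{J^sK^t}+b^s\ov{I^{r+1}K^t}+c^t\ov{I^{r+1}J^s})$, and symmetrically for $b$ and $c$: if $x\in\ov{I^rJ^sK^t}$ and $ax=a^{r+1}u+b^sv+c^tw$ with $u\in\ov{J^sK^t}$, $v\in\ov{I^{r+1}K^t}$, $w\in\ov{I^{r+1}J^s}$, then $a(x-a^ru)\in(b^s,c^t)$; since $(a,b,c)$ is a system of parameters of the Cohen-Macaulay ring $R$ it is a regular sequence, so $a$ is a nonzerodivisor modulo $(b^s,c^t)$, whence $x-a^ru\in(b^s,c^t)\cap\ov{I^rJ^sK^t}=b^s\ov{I^rK^t}+c^t\ov{I^rJ^s}$ by the analogue of Lemma \ref{properties of good joint reduction}(\ref{properties of good joint reduction2}) for the pair $(b,c)$; hence $x\in a^r\ov{J^sK^t}+b^s\ov{I^rK^t}+c^t\ov{I^rJ^s}$. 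I expect casting Lemma \ref{properties of good joint reduction} into exactly this form, and making the regular-sequence facts for the relevant powers airtight, to be the main technical obstacle.

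\textbf{Step 4: closing the loop.} For (5) $\Rightarrow$ (3): if the $\ov{e}_3$-combination is $0$ and $(a,b,c)$ is any good joint reduction, then Theorem \ref{theorem4}(2),(3) gives $S_{(a,b,c)}(r,s,t)=0$ for $r,s,t\gg 0$; since $S_{(a,b,c)}$ is a nonnegative function nondecreasing in each variable (Step 3), $S_{(a,b,c)}(r,s,t)=0$ for all $r,s,t\ge 1$, i.e.\ $(\star)$ holds, so by Step 2 the normal joint reduction number is zero with respect to $(a,b,c)$. For (3) $\Rightarrow$ (4): trivial, because the standing hypothesis that a good complete reduction of $\{\ov{I^rJ^sK^t}\}$ exists furnishes a good joint reduction via Remark \ref{remark on gcr}(\ref{jr coming from gcr are good}). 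For (4) $\Rightarrow$ (1): if the normal joint reduction number is zero with respect to some good joint reduction $(a,b,c)$, then $(\star)$ gives $\ov{I^kJ^kK^k}=a^k\ov{J^kK^k}+b^k\ov{I^kK^k}+c^k\ov{I^kJ^k}$ for every $k\ge 1$, so every term of the directed system in Theorem \ref{localcohomoly}(\ref{localcohomoly part1}) is zero and hence $[H^3_{(at_1,bt_2,ct_3)}(\ov{\mathcal R^\prime})]_{(0,0,0)}=0$. Combined with Step 1, this proves the equivalence of (1)--(5).
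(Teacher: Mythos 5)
Your proposal is correct and follows essentially the same route as the paper: the equivalences with (5) come from Theorem \ref{theorem4}, your Step 3 descent (multiplication by $a$ is injective on the quotients, via Lemma \ref{properties of good joint reduction} and the regular-sequence property of $(a,b,c)$) is exactly the paper's argument for $(2)\Rightarrow(3)$ and for Theorem \ref{localcohomoly}(\ref{localcohomoly part2}), and your Step 2 is the paper's induction in $(4)\Rightarrow(5)$. The only differences are cosmetic --- you close the cycle as $(1)\Leftrightarrow(2)\Leftrightarrow(5)\Rightarrow(3)\Rightarrow(4)\Rightarrow(1)$ rather than the paper's linear chain, and you make explicit the reduction to an infinite residue field that the paper leaves tacit.
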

\begin{proof}
$(1) \Rightarrow (2):$ The result follows by the Theorem \ref{theorem4}(\ref{theorem4 part3}).\\  
$(2) \Rightarrow (3):$ By Theorem \ref{theorem4}(\ref{theorem4 part2}), for $r,s,t \gg0$, 
$$\ov{I^rJ^sK^t}=a^r\ov{J^sK^t}+b^s\ov{I^rK^t}+c^t\ov{I^rJ^s},$$
say for $r,s,t \geq N\geq 1$. Suppose $N \geq 2$. We prove that for $s,t \geq N$
$$\ov{I^{N-1}J^sK^t}=a^{N-1}\ov{J^sK^t}+b^s\ov{I^{N-1}K^t}+c^t\ov{I^{N-1}J^s}.$$
Let $z \in \ov{I^{N-1}J^sK^t}.$ Then $az \in \ov{I^NJ^sK^t}$. Let $az=a^Nu+b^sv+c^tw$ for some 
$u \in \ov{J^sK^t},v \in \ov{I^NK^t},w\in \ov{I^NJ^s}$. Then $z-a^{N-1}u \in (b^s,c^t) \cap \ov{I^{N-1}J^sK^t}=
b^s\ov{I^{N-1}K^t}+c^t\ov{I^{N-1}J^s}$, by Lemma \ref{properties of good joint reduction}. Hence $z \in a^{N-1}\ov{J^sK^t}+b^s\ov{I^{N-1}K^t}+c^t\ov{I^{N-1}J^s}$.\\
Similar argument shows that 
\begin{eqnarray*}
 \ov{I^{r}J^{N-1}K^t}&=&a^{r}\ov{J^{N-1}K^t}+b^{N-1}\ov{I^{r}K^t}+c^t\ov{I^rJ^{N-1}} \mbox{ for }r,t \geq N\\
 \ov{I^{r}J^sK^{N-1}}&=&a^{r}\ov{J^sK^{N-1}}+b^s\ov{I^{r}K^{N-1}}+c^{N-1}\ov{I^{r}J^s} \mbox{ for }r,s \geq N.
\end{eqnarray*}
Continuing as above, we get that for all $r,s,t \geq 1,$
$$\ov{I^rJ^sK^t}=a^r\ov{J^sK^t}+b^s\ov{I^rK^t}+c^t\ov{I^rJ^s}.$$
Hence $\ov{I^rJ^sK^t}\subseteq a\ov{I^{r-1}J^sK^t}+b\ov{I^rJ^{s-1}K^t}+c\ov{I^rJ^sK^{t-1}}$ for all 
$r,s,t \geq 1$. Thus the normal joint reduction number of $I,J,K$ is 
zero with respect to $(a,b,c)$.\\ 
$(3) \Rightarrow (4):$ This is clear.\\
$(4) \Rightarrow (5):$ First we prove that for $r,s,t>0$,
\begin{eqnarray}\label{power of a}
\ov{I^rJ^sK^t}=a^r\ov{J^sK^t}+b\ov{I^rJ^{s-1}K^t}+c\ov{I^rJ^sK^{t-1}}.
\end{eqnarray}
Induct on $r.$ Since the normal joint reduction number of $I,J,K$ is zero with respect to $(a,b,c)$, 
the result is true for $r=1$. Let $r>1$ and $x \in \ov{I^rJ^sK^t}$. Therefore 
$x=au+bv+cw$ for some $u \in \ov{I^{r-1}J^sK^t},v\in \ov{I^rJ^{s-1}K^t}, w \in \ov{I^rJ^sK^{t-1}}.$ 
By induction, $u=a^{r-1}u_1+bu_2+cu_3$ for some $u_1 \in \ov{J^sK^t},u_2\in\ov{I^{r-1}J^{s-1}K^t},u_3 \in 
\ov{I^{r-1}J^sK^{t-1}}.$ Hence $x \in a^r\ov{J^sK^t}+b\ov{I^rJ^{s-1}K^t}+c\ov{I^rJ^sK^{t-1}}$. 
Similarly, for $r,s,t>0$, 
\begin{eqnarray} 
\label{power of b} \ov{I^rJ^sK^t}&=&a\ov{I^{r-1}J^sK^t}+b^s\ov{I^rK^t}+c\ov{I^rJ^sK^{t-1}} \mbox{ and }\\
\label{power of c}\ov{I^rJ^sK^t}&=&a\ov{I^{r-1}J^sK^t}+b\ov{I^rJ^{s-1}K^t}+c^t\ov{I^rJ^s}.
\end{eqnarray}
We use induction on $r+s+t$ to prove that for $r,s,t>0$,
$$\ov{I^rJ^sK^t}=a^r\ov{J^sK^t}+b^s\ov{I^rK^t}+c^t\ov{I^rJ^s}.$$ 
Suppose $r+s+t=3$. Then $r=s=t=1$. Since the joint reduction number is zero, result is true in 
this case. Let $r+s+t>3$. Without loss of generality we may assume that $s>1$. 
If $r=t=1$, then result follows from the equation (\ref{power of b}). Therefore we may also assume that $t>1$. 
Let $x \in \ov{I^rJ^sK^t}$. Then, by the equation (\ref{power of a}), $x=a^ru+bv+cw$ for some $u \in \ov{J^sK^t},v \in \ov{I^rJ^{s-1}K^t}$ and 
$w \in \ov{I^rJ^sK^{t-1}}$. By induction, $v=a^rv_1+b^{s-1}v_2+c^tv_3$ and 
$w =a^rw_1+b^sw_2+c^{t-1}w_3$ for some $v_1 \in \ov{J^{s-1}K^t},v_2 \in \ov{I^rK^t}, v_3\in\ov{I^rJ^{s-1}}, 
w_1 \in \ov{J^sK^{t-1}},w_2 \in \ov{I^rK^{t-1}},w_3 \in \ov{I^rJ^s}$. Thus $x \in 
a^r\ov{J^sK^t}+b^s\ov{I^rK^t}+c^t\ov{I^rJ^s} $.\\
Therefore, by Theorem \ref{theorem1}, result follows.\\
$(5) \Rightarrow (1):$ Follows from Theorem \ref{theorem4}(\ref{theorem4 part3}).
\end{proof}

\begin{theorem}
\label{vanishing of e_3(IJK)}
Let $(R,\m)$ be an analytically unramified Cohen-Macaulay local ring 
of dimension $3$ and $I,J,K$ be $\m-$primary ideals in $R$. Let $(a,b,c)$ be a good joint 
reduction of the filtration $\{\ov{I^rJ^sK^t}\}$. Assume that 
$\lm_R(R/\ov{I^rJ^sK^t})=\ov{P}_{I,J,K}(r,s,t)$ for $r+s+t>0$. Then the 
normal joint reduction number of $I,J,K$ is zero with respect to 
$(a,b,c)$ if and only of $\ov{e}_3(IJK)=0$.
\end{theorem}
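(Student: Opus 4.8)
The plan is to prove the sharper statement that, under the hypothesis,
$$S(r,s,t):=\lm_R\!\left(\frac{\ov{I^rJ^sK^t}}{a^r\ov{J^sK^t}+b^s\ov{I^rK^t}+c^t\ov{I^rJ^s}}\right)=\ov{e}_3(IJK)$$
for \emph{all} integers $r,s,t>0$; in particular this length will be independent of $(r,s,t)$. Granting this, the theorem follows. If the normal joint reduction number of $I,J,K$ is zero with respect to $(a,b,c)$, then, arguing as in the proof of the implication $(4)\Rightarrow(5)$ of Theorem \ref{characterization of joint reduction number} (the part that deduces $\ov{I^rJ^sK^t}=a^r\ov{J^sK^t}+b^s\ov{I^rK^t}+c^t\ov{I^rJ^s}$ for $r,s,t>0$ from Lemma \ref{properties of good joint reduction}, not using the existence of a good complete reduction), we get $S(r,s,t)=0$, hence $\ov{e}_3(IJK)=0$. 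Conversely, if $\ov{e}_3(IJK)=0$, then $S\equiv 0$, i.e.\ $\ov{I^rJ^sK^t}=a^r\ov{J^sK^t}+b^s\ov{I^rK^t}+c^t\ov{I^rJ^s}$; since $a^r\ov{J^sK^t}\subseteq a\ov{I^{r-1}J^sK^t}$ and similarly for the other two summands, this forces $\ov{I^rJ^sK^t}=a\ov{I^{r-1}J^sK^t}+b\ov{I^rJ^{s-1}K^t}+c\ov{I^rJ^sK^{t-1}}$ for all $r,s,t>0$, i.e.\ the normal joint reduction number is zero with respect to $(a,b,c)$.

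To prove $S(r,s,t)=\ov{e}_3(IJK)$, I would first re-run the length bookkeeping from the beginning of the proof of Theorem \ref{theorem1}: using Proposition \ref{computation of some length} together with $\lm_R(R/(a^r,b^s,c^t))=rst\,\lm_R(R/(a,b,c))=rst\,\ov{e}_{(1,1,1)}$, one obtains, valid for \emph{all} $r,s,t>0$, a formula for $S(r,s,t)$ as $rst\,\ov{e}_{(1,1,1)}$ plus an alternating combination of the lengths $\lm_R(R/\ov{I^rJ^s})$, $\lm_R(R/\ov{I^rK^t})$, $\lm_R(R/\ov{J^sK^t})$, $\lm_R(R/\ov{I^r})$, $\lm_R(R/\ov{J^s})$, $\lm_R(R/\ov{K^t})$, $\lm_R(R/\ov{I^rJ^sK^t})$. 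Now the hypothesis is brought in: it gives $\lm_R(R/\ov{I^rJ^sK^t})=\ov{P}_{I,J,K}(r,s,t)$ for \emph{every} $(r,s,t)$ with $r+s+t>0$, and this range already covers the coordinate hyperplanes; since $\ov{I^rJ^sK^0}=\ov{I^rJ^s}$, $\ov{I^rJ^0K^0}=\ov{I^r}$, and so on, all seven lengths above are values of the single polynomial $\ov{P}:=\ov{P}_{I,J,K}$ with some coordinates set to $0$. Substituting, $S(r,s,t)$ becomes, as an identity of polynomials,
$$S(r,s,t)=rst\,\ov{e}_{(1,1,1)}+\big[\ov{P}(r,s,0)+\ov{P}(r,0,t)+\ov{P}(0,s,t)\big]-\big[\ov{P}(r,0,0)+\ov{P}(0,s,0)+\ov{P}(0,0,t)\big]-\ov{P}(r,s,t),$$
whose right-hand side is $rst\,\ov{e}_{(1,1,1)}-G(r,s,t)-\ov{P}(0,0,0)$, where $G$ is the ``triple difference at the origin'' of $\ov{P}$. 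Expanding $\ov{P}$ into monomials, $G$ retains exactly the monomials of $\ov{P}$ of positive degree in each of $r,s,t$; since $\ov{P}$ has total degree $3$, the only such monomial is $(\text{coefficient of }rst\text{ in }\ov{P})\cdot rst=\ov{e}_{(1,1,1)}\,rst$, so the two $rst$-terms cancel and $S(r,s,t)=-\ov{P}(0,0,0)$ for all $r,s,t>0$. Finally $\ov{P}_{IJK}(n)=\lm_R(R/\ov{I^nJ^nK^n})=\ov{P}(n,n,n)$ for $n\gg 0$, so $\ov{P}_{IJK}(x)=\ov{P}(x,x,x)$ as polynomials and $\ov{e}_3(IJK)=-\ov{P}_{IJK}(0)=-\ov{P}(0,0,0)$, which completes the claim. (Equivalently, the same substitutions show that, under the hypothesis, all of $\ov{e}_3(I),\ov{e}_3(J),\ov{e}_3(K),\ov{e}_3(IJ),\ov{e}_3(IK),\ov{e}_3(JK),\ov{e}_3(IJK)$ equal $-\ov{P}(0,0,0)$, so the alternating combination of normal Hilbert coefficients in Theorems \ref{theorem1} and \ref{theorem4} collapses to $\ov{e}_3(IJK)$ and its linear part is forced to be $0$.)

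The main obstacle is this middle step: one must notice that the hypothesis ``$\lm_R(R/\ov{I^rJ^sK^t})=\ov{P}_{I,J,K}(r,s,t)$ for $r+s+t>0$'' legitimately licenses evaluating $\ov{P}_{I,J,K}$ along the coordinate hyperplanes — which is exactly what turns the asymptotic identity of Theorem \ref{theorem1} into an exact one — and then carry out the monomial-degree computation that cancels the two degree-$3$ contributions. Everything else is routine: the length identity is read off from the proof of Theorem \ref{theorem1}; $\lm_R(R/(a^r,b^s,c^t))=rst\,\lm_R(R/(a,b,c))=rst\,\ov{e}_{(1,1,1)}$ holds because a joint reduction $(a,b,c)$ of $(I,J,K)$ generates an $\m$-primary ideal, hence (as $R$ is Cohen--Macaulay) a parameter and regular-sequence ideal, together with \cite[Theorem 2.4]{rees3}; the equivalence of $S\equiv 0$ with normal joint reduction number zero is extracted from the proof of Theorem \ref{characterization of joint reduction number}; and if the residue field is finite one first passes to $R[X]_{\m R[X]}$, under which the filtration, the good joint reduction, the normal Hilbert coefficients and the hypothesis are all preserved.
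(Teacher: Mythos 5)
Your proof is correct and follows essentially the same route as the paper: both use the hypothesis to evaluate $\ov{P}_{I,J,K}$ along the coordinate hyperplanes, turning the asymptotic formula of Theorem \ref{theorem1} / Proposition \ref{computation of some length} into the exact identity $\lm_R\bigl(\ov{I^rJ^sK^t}/(a^r\ov{J^sK^t}+b^s\ov{I^rK^t}+c^t\ov{I^rJ^s})\bigr)=\ov{e}_3(IJK)$ for all $r,s,t>0$, from which the equivalence is immediate. Your single triple-difference computation is just a tidier packaging of the paper's coefficient-by-coefficient comparisons (e.g.\ $\ov{e}_3(IJK)=\ov{e}_3(I)=\ov{e}_3(IJ)$, $\ov{e}_{(1,0,0)}=\ov{e}_2(I)$, etc.), and your explicit handling of the finite residue field case and of the implication from joint reduction number zero to the $(a^r,b^s,c^t)$-form is, if anything, more careful than the paper's.
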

\begin{proof}
 Since $\lm_R(R/\ov{I^rJ^sK^t})=\ov{P}_{I,J,K}(r,s,t)$ for $r+s+t>0$, taking $s=t=0$ and $r>0$, we get 
$$\ov{e}_{(1,0,0)}=\ov{e}_2(I), \ov{e}_3(IJK)=\ov{e}_3(I).$$
Taking $t=0$ and $r,s \gg 0$, we get $\ov{e}_{(1,0,0)}=\ov{e}_{(1,0)}(I,J),\ov{e}_{(0,1,0)}=\ov{e}_{(0,1)}(I,J)$ and 
$\ov{e}_3(IJK)=\ov{e}_3(IJ)$. Since $\lm_R(R/\ov{I^rJ^sK^t})=\ov{P}_{I,J,K}(r,s,t)$ for $r+s+t>0$, 
arguing similarly and using Theorem \ref{theorem1},
we get that for $r,s,t > 0$, 
\begin{eqnarray*}
 \lm_R\left(\frac{\ov{I^rJ^sK^t}}{a^r\ov{J^sK^t}+b^s\ov{I^rK^t}+c^t\ov{I^rJ^s}}\right)
=\ov{e}_3(IJK).
\end{eqnarray*}
Thus $\ov{e}_3(IJK)=0$ if and only if for $r,s,t >0$, 
$\ov{I^rJ^sK^t}=a^r\ov{J^{s}K^t}+b^s\ov{I^{r}K^t}+c^t\ov{I^rJ^s}.$ 
Hence the 
normal joint reduction number of $I,J,K$ is zero with respect to 
$(a,b,c)$ if and only of $\ov{e}_3(IJK)=0$.
 \end{proof}

\section{Some Applications} 
In this section we discuss a few applications of Theorem \ref{characterization of joint reduction number}. 
Let $(R,\m)$ be an analytically unramified local ring of dimension $d$ and $I$ be an 
$\m-$primary ideal in $R$. An ideal $K \subseteq I$ is said to be a
{\it reduction} of the filtration $\{\ov{I^n}\}$ if $K\ov{I^n}=\ov{I^{n+1}}$ 
for all large $n$. A {\it minimal reduction} of $\{\ov{I^n}\}$ is a reduction of
$\{\ov{I^n}\}$ minimal with respect to inclusion. For a 
minimal reduction $K$ of $\{\ov{I^n}\}$, we set 
$$\ov{r}_K(I)= \sup \{n\in
\mathbb Z \mid \ov{I^n}\not= K\ov{I^{n-1}}\}.$$ 
The {\it reduction  number} $\ov{r}(I)$ of $\{\ov{I^n}\}$ is defined to be the
least $\ov{r}_K(I)$ over all possible minimal reductions 
$K$ of $\{\ov{I^n}\}$.

\begin{theorem}\cite[Corollary 3.17]{marleythesis}
 Let $(R,\m)$ be an analytically unramified Cohen-Macaulay local ring of dimension $d \geq 3$ 
 with infinite residue field and 
 $I $ be an $\m-$primary ideal in $R$. Let $\ov{G}(I)=\bigoplus_{n \geq 0}\ov{I^n}/\ov{I^{n+1}}$ 
 be the associated graded algebra of $\{\ov{I^n}\}$. 
 Suppose $\depth \ov{G}(I) \geq d-1$. Then $\ov{r}(I) \leq 2$ if and only if 
 $ \ov{e}_3(I)=0$. 
 \end{theorem}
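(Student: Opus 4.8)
The plan is to derive this from Theorem~\ref{characterization of joint reduction number} by applying it to the ``diagonal'' filtration $\{\ov{I^rJ^sK^t}\}$ with $I=J=K$, after first reducing to dimension three. This is Corollary~3.17 of \cite{marleythesis}, and the argument below is meant to show how it fits the circle of ideas developed in this paper; one could alternatively bypass the multigraded machinery entirely and argue via the nonnegativity of the $h$-vector of $\ov{G}(I)$ when $\depth\ov{G}(I)\geq d-1$, using $\ov{e}_3=\sum_{j\geq3}\binom{j}{3}h_j$ and the equality $\ov{r}(I)=\deg h$, which is the route in \cite{marleythesis}.

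\emph{Reduction to $d=3$.} Since $\depth\ov{G}(I)\geq d-1$ and the residue field is infinite, I would choose $d-3$ elements $x_1,\dots,x_{d-3}\in I$ that are superficial for $\{\ov{I^n}\}$ and whose initial forms in $\ov{G}(I)_1$ form a regular sequence, arranging by a generic choice that $A:=R/(x_1,\dots,x_{d-3})$ is again analytically unramified Cohen--Macaulay with $\ov{(IA)^n}=\ov{I^n}A$ for all $n$. Such a passage leaves $\ov{e}_3(I)$ and $\ov{r}(I)$ unchanged, while $\ov{G}(IA)\cong\ov{G}(I)/(x_1^\ast,\dots,x_{d-3}^\ast)$ has depth at least $(d-1)-(d-3)=2=\dim A-1$. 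So from now on $d=3$ and $\depth\ov{G}(I)\geq2$.

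\emph{Producing a good complete reduction of the diagonal filtration.} Put $J=K=I$ and regard $\mathcal F=\{\ov{I^{r+s+t}}\}_{(r,s,t)\in\NN^3}$ as an $(I,I,I)$-admissible filtration. Here $[G_i(\mathcal F)]_{(r,s,t)}=[\ov{G}(I)]_{r+s+t}$, so $G_i(\mathcal F)$ is the diagonal $\NN^3$-algebra built from $\ov{G}(I)$, and I expect $\grade(\R_{++},G_i(\mathcal F))\geq\depth\ov{G}(I)\geq2$ since the two extra gradings do not decrease grade; this yields $H^0_{\R_{++}}(G_i(\mathcal F))=0$ and $[H^1_{\R_{++}}(G_i(\mathcal F))]_{\bf n}=0$ for all $\bf n$ and $i=1,2,3$, i.e.\ exactly the hypotheses of Theorem~\ref{existence of good joint reductions}. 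That theorem then provides a good complete reduction $(M_{ij})$ of $\mathcal F$ for which $(a,b,c):=(M_{11},M_{22},M_{33})$ is a good joint reduction of $\mathcal F$; being generated by $3=\dim R$ elements, $(a,b,c)$ is a minimal reduction of $\{\ov{I^n}\}$. In particular a good complete reduction of $\{\ov{I^rI^sI^t}\}$ exists, so Theorem~\ref{characterization of joint reduction number} applies to $(I,I,I)$ and $(a,b,c)$.

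\emph{Unwinding the two equivalent conditions.} Writing $n=r+s+t$, the statement ``the normal joint reduction number of $I,I,I$ is zero with respect to $(a,b,c)$'', i.e.\ $\ov{I^rI^sI^t}=a\ov{I^{r-1}I^sI^t}+b\ov{I^rI^{s-1}I^t}+c\ov{I^rI^sI^{t-1}}$ for all $r,s,t>0$, is literally ``$\ov{I^n}=(a,b,c)\ov{I^{n-1}}$ for all $n\geq3$'', that is $\ov{r}_{(a,b,c)}(I)\leq2$. On the other side, from $(I^m)^n=I^{mn}$ one gets $\ov{P}_{I^m}(x)=\ov{P}_I(mx)$, hence $\ov{e}_3(I^m)=\ov{e}_3(I)$ for every $m\geq1$, so the number appearing in part (5) of Theorem~\ref{characterization of joint reduction number} collapses to $\ov{e}_3(I^3)-3\,\ov{e}_3(I^2)+3\,\ov{e}_3(I)=\ov{e}_3(I)$. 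Thus Theorem~\ref{characterization of joint reduction number} will give $\ov{r}_{(a,b,c)}(I)\leq2\iff\ov{e}_3(I)=0$. To finish, if $\ov{e}_3(I)=0$ then $\ov{r}_{(a,b,c)}(I)\leq2$ and hence $\ov{r}(I)\leq2$ (the infimum over minimal reductions is $\leq\ov{r}_{(a,b,c)}(I)$); and conversely, using that $\depth\ov{G}(I)\geq d-1$ makes $\ov{r}_K(I)$ independent of the minimal reduction $K$ (the Trung--Huckaba--Huneke argument carries over to the normal filtration), $\ov{r}(I)\leq2$ forces $\ov{r}_{(a,b,c)}(I)\leq2$, whence $\ov{e}_3(I)=0$. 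The steps I expect to be the real work are the grade estimate $\grade(\R_{++},G_i(\mathcal F))\geq2$, which needs the module structure of the diagonal algebra $G_i(\mathcal F)$ over $\ov{G}(I)$ to be pinned down, and the ``analytically unramified, $\ov{(IA)^n}=\ov{I^n}A$'' part of the dimension reduction, which requires a sufficiently generic choice of the $x_i$; the independence of $\ov{r}_K(I)$ from $K$ is standard under $\depth\ov{G}(I)\geq d-1$.
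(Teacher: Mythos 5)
Your proposal is correct and follows the same overall route as the paper: specialize Theorem \ref{characterization of joint reduction number} to $I=J=K$, note that the alternating sum of third normal Hilbert coefficients collapses to $\ov{e}_3(I^3)-3\ov{e}_3(I^2)+3\ov{e}_3(I)=\ov{e}_3(I)$, identify ``normal joint reduction number zero'' for the diagonal filtration with $\ov{r}_{(a,b,c)}(I)\leq 2$, and finish using the independence of $\ov{r}_K(I)$ from the choice of minimal reduction $K$ under the depth hypothesis. You diverge only in two auxiliary steps. First, to get a good complete reduction in dimension three the paper does not pass through Theorem \ref{existence of good joint reductions}: since $\depth\ov{G}(I)\geq 2$ one chooses a minimal reduction $(a,b,c)$ of $\{\ov{I^n}\}$ whose initial forms $a^*,b^*,c^*$ are nonzerodivisors with each pair a regular sequence on $\ov{G}(I)$, and the constant-row matrix $x_{i1}=a,\;x_{i2}=b,\;x_{i3}=c$ is verified directly to be a good complete reduction of $\{\ov{I^rI^sI^t}\}$. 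This is lighter than your route: your grade estimate $\grade(\R_{++},G_i(\F))\geq 2$ does go through, but only after one has already produced such a regular sequence of initial forms, so invoking Theorem \ref{existence of good joint reductions} on top of that buys nothing. Second, for the reduction to $d=3$ the paper descends one dimension at a time via Itoh's generic element $z=a_1X_1+\cdots+a_lX_l$ in $R[X_1,\ldots,X_l]_{\m[X_1,\ldots,X_l]}$, for which analytic unramifiedness and the Cohen--Macaulay property of the quotient, the equality $\ov{e}_3(I)=\ov{e}_3(IC)$, and the lifting of $\ov{r}\leq 2$ back to $R$ are all available as citations to \cite{itoh}; this is precisely the rigorous version of the ``sufficiently generic superficial elements'' you postulate, and it is the safer choice because compatibility of integral closures with the quotient is exactly the delicate point you flag.
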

\begin{proof}
Let $\ov{r}(I) \leq 2$. Then $\ov{e}_3(I)=0$ by \cite[Corollary 3.17]{marleythesis}. 
To prove the converse we use induction on $d$. Let $d=3$. 
Since $\depth \ov{G}(I) \geq 2, $ there exists a minimal reduction $J=(a,b,c)$ of $\{\ov{I^n}\}$ such that $a^*,b^*,c^{*}$ are 
nonzerodivisors in $\ov{G}(I)$ and $(a^*,b^*),(b^*,c^*),(a^*,c^*)$ are $\ov{G}(I)-$regular sequences. 
Hence 
$(x_{ij})$, 
where $x_{i1}=a,x_{i2}=b$ and $x_{i3}=c, $ for all $i=1,2,3$, is a good complete 
reduction of $\{\ov{I^rI^sI^t}\}$ and $(a,b,c)$ is a good joint reduction of $\{\ov{I^rI^sI^t}\}$.  
Since $\ov{e}_3(I)=\ov{e}_3(I^2)=\ov{e}_3(I^3)$, by Theorem 
\ref{characterization of joint reduction number}, $\ov{e}_3(I)=0$ if and only if $\ov{r}_{J}(I) \leq 2$. 
By \cite[Corollary 3.8]{marleythesis}, $\ov{r}(I)$ is independent of the minimal reduction chosen. 
Hence $\ov{e}_3(I)=0$ if and only if $\ov{r}(I) \leq 2$.
\\
Suppose $d>3$. Let $I=(a_1,\ldots,a_l)$ for some $l \geq 1$. For indeterminates $X_1,\ldots,X_l$, 
set $S=R[X_1,\ldots,X_l]_{\m[X_1,\ldots,X_l]},z=a_1X_1+\cdots+a_lX_l$ and $C=S/zC $. Then, 
by \cite[Theorem 1]{itoh} and \cite[Corollary 8]{itoh}, $C$ is an analytically unramified 
Cohen-Macaulay local ring of dimension $d-1$ and $\ov{e}_3(I)=\ov{e}_3(IC)$. 
Therefore then $\ov{r}(IC) \leq 2$, by induction. Hence, by \cite[Proposition 17]{itoh}, 
$\ov{r}(I) \leq 2$. 
\end{proof}

\begin{theorem}
\label{negative postulation number}
Let $(R,\m)$ be a Cohen-Macaulay local ring of dimension $d$ and ${\bf I}=(I_1,I_2,I_3)$ be 
$\m-$primary ideals in $R$. Let $\mathcal F$ be an ${\bf I}-$admissible filtration. Suppose 
that $\mathcal R(\mathcal F)$ is Cohen-Macaulay. Then $P_{\mathcal F}({\bf n})=H_{\mathcal F}({\bf n})$ 
for all ${\bf n} \geq {\bf 0}$.
\end{theorem}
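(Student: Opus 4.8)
The plan is to deduce, from the Cohen--Macaulayness of $\mathcal R(\mathcal F)$ together with the negativity of its $a$-invariants, that $[H^i_{\mathcal R_{++}}(\mathcal R^\prime(\mathcal F))]_{\bf n}=0$ for all ${\bf n}\geq{\bf 0}$ and all $i\geq 0$, and then to feed this vanishing into the difference formula for admissible multigraded filtrations of \cite{jayanthan-verma} (the formula underlying Proposition \ref{difference formula for filtration}) to obtain $H_{\mathcal F}({\bf n})=P_{\mathcal F}({\bf n})$ for all ${\bf n}\geq{\bf 0}$. Here $H_{\mathcal F}({\bf n})=\lm_R(R/\mathcal F({\bf n}))$ and $P_{\mathcal F}$ is its Hilbert polynomial.

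First I would set $T=\mathcal R({\bf I})$, a standard $\NN^3$-graded ring over the local ring $R$, and regard $M:=\mathcal R(\mathcal F)$ as a finitely generated graded $T$-module (by admissibility). Let $\mathcal M$ be the maximal homogeneous ideal of $T$; since $M$ is module-finite over $T$, the modules $H^i_{\mathcal M}(M)$ agree with the local cohomology of $M$ taken with respect to its own maximal homogeneous ideal. Because $\mathcal R(\mathcal F)$ is Cohen--Macaulay of dimension $d+3$, we have $H^i_{\mathcal M}(M)=0$ for $i<d+3$, while by Lemma \ref{negativity of a-invariants} $a^j(\mathcal R(\mathcal F))=-1$ for $j=1,2,3$, so $[H^{d+3}_{\mathcal M}(M)]_{\bf n}=0$ whenever $n_j\geq 0$ for some $j$. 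Hence $[H^i_{\mathcal M}(M)]_{\bf n}=0$ for every $i\geq 0$ as soon as some coordinate of ${\bf n}$ is nonnegative; in particular the hypothesis of Lemma \ref{vanishing of lc T++} holds for each $j=1,2,3$. (This is exactly the chain of implications at the start of the proof of Theorem \ref{good joint reductions exist in CMcase}.) Applying Lemma \ref{vanishing of lc T++} gives $[H^i_{\mathcal R_{++}}(\mathcal R(\mathcal F))]_{\bf n}=0$ for all ${\bf n}\in\NN^3$ and all $i$. Finally, Lemma \ref{local cohomology of R and R*}, applied with $S=\mathcal R(\mathcal F)\hookrightarrow S^{\ast}=\mathcal R^\prime(\mathcal F)$ and using that $\mathcal R^\prime(\mathcal F)/\mathcal R(\mathcal F)$ is concentrated in degrees ${\bf n}\not\geq{\bf 0}$, transfers this to $[H^i_{\mathcal R_{++}}(\mathcal R^\prime(\mathcal F))]_{\bf n}=0$ for all ${\bf n}\geq{\bf 0}$ and all $i\geq 0$.

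With that vanishing in hand, I would invoke the difference formula of \cite{jayanthan-verma} from which Proposition \ref{difference formula for filtration} was extracted: it writes $H_{\mathcal F}({\bf n})-P_{\mathcal F}({\bf n})$ as an alternating sum of lengths $\lm_R([H^i_{\mathcal R_{++}}(\mathcal R^\prime(\mathcal F))]_{\bf m})$, where ${\bf m}$ ranges over a set of degrees which, for ${\bf n}\geq{\bf 0}$, is contained in $\NN^3$. Since every such length vanishes by the previous step, the formula yields $H_{\mathcal F}({\bf n})=P_{\mathcal F}({\bf n})$ for all ${\bf n}\geq{\bf 0}$, which is the assertion.

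The main obstacle is the last step: identifying precisely which components of $H^i_{\mathcal R_{++}}(\mathcal R^\prime(\mathcal F))$ appear in the multigraded difference formula and checking that they all lie in the range ${\bf m}\geq{\bf 0}$ where vanishing has been established (in the single-graded case the relevant range is $\{m:m>n\}$, which for $n\geq -1$ is $\{m\geq 0\}$, making the analogous multigraded claim very plausible). If one prefers a self-contained route one can instead run the classical Grothendieck--Serre argument: use the exact sequences $0\to\mathcal R^\prime(\mathcal F)({\bf e}_j)\xrightarrow{t_j^{-1}}\mathcal R^\prime(\mathcal F)\to\mathcal R^\prime(\mathcal F)/t_j^{-1}\mathcal R^\prime(\mathcal F)\to 0$ to propagate the vanishing to the associated graded modules $G_j(\mathcal F)$, note that $[G_j(\mathcal F)]_{\bf 0}=R/\mathcal F({\bf e}_j)$ has finite length so that Grothendieck--Serre forces the Hilbert function of each $G_j(\mathcal F)$ to equal its Hilbert polynomial in the vanishing range, and recover $\lm_R(R/\mathcal F({\bf n}))=P_{\mathcal F}({\bf n})$ for ${\bf n}\geq{\bf 0}$ by summing over the box $\{{\bf m}:{\bf 0}\leq{\bf m}\leq{\bf n}\}$; there the delicate point is that $t_1^{-1},t_2^{-1},t_3^{-1}$ need not form a regular sequence on $\mathcal R^\prime(\mathcal F)$, so the kernels of those maps must be tracked through the local cohomology bookkeeping.
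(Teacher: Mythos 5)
Your proposal follows the paper's proof essentially step for step: Cohen--Macaulayness of $\mathcal R(\mathcal F)$ together with Lemma \ref{negativity of a-invariants} gives $[H^i_{\mathcal M}(\mathcal R(\mathcal F))]_{\bf n}=0$ for $n_j\geq 0$, Lemmas \ref{vanishing of lc T++} and \ref{local cohomology of R and R*} then transfer this to $[H^i_{\mathcal R_{++}}(\mathcal R^\prime(\mathcal F))]_{\bf n}=0$ for all ${\bf n}\geq{\bf 0}$, and the difference formula of \cite{jayanthan-verma} concludes. The one point you flag as an obstacle is moot: the difference formula as used in the paper reads $P_{\mathcal F}({\bf n})-H_{\mathcal F}({\bf n})=\sum_{i=0}^{d}(-1)^i\lm_R([H^i_{\mathcal R_{++}}(\mathcal R^\prime(\mathcal F))]_{\bf n})$, so only the component in degree ${\bf n}$ itself appears and the vanishing you established applies directly.
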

\begin{proof}
Since $a^j(\mathcal R(\mathcal F))=-1$, by Lemma \ref{negativity of a-invariants}, 
for each $j=1,2,3$
$$[H^i_{\mathcal M}(\mathcal R(\mathcal F))]_{\bf n}=0\mbox{ for all } n_j \geq  0  
\mbox{ and all }i \geq 0.
$$
Therefore, by Lemma \ref{vanishing of lc T++}, 
$[H^i_{\mathcal R_{++}}(\mathcal R(\mathcal F))]_{\bf n}=0\mbox{ for all } {\bf n} \geq  {\bf 0}  
\mbox{ and all }i \geq 0.
$
Thus, by Lemma \ref{local cohomology of R and R*}, 
$[H^i_{\mathcal R_{++}}(\mathcal R^\prime(\mathcal F))]_{\bf n}=0\mbox{ for all } {\bf n} \geq  {\bf 0}  
\mbox{ and all }i \geq 0.
$
Hence, by difference formula \cite[Theorem 5.1]{jayanthan-verma}, for all ${\bf n} \geq {\bf 0}$, we have
\begin{eqnarray*}
P_{\mathcal F}({\bf n})-H_{\mathcal F}({\bf n})&=&\sum_{i=0}^d(-1)^i\lm_R([H^i_{\mathcal R_{++}}
(\mathcal R^\prime(\mathcal F))]_{\bf n})=0
 \end{eqnarray*}
Hence $P_{\mathcal F}({\bf n})=H_{\mathcal F}({\bf n})$ for all ${\bf n} \geq  {\bf 0}$. 
\end{proof}

\begin{theorem}
\label{joint reduction number of monomial ideals is zero}
 Let $R=k[x,y,z]$ be polynomial ring in $3$ variables $x,y,z$. Let $\m=(x,y,z)$ and $I,J,K$ be an $\m-$primary monomial ideal in $R$. 
 Let $S=R_\m$. Then the normal joint 
 reduction number of $IS,JS,KS$ is zero with respect to every good joint reduction of $\{\ov{I^rJ^sK^tS}\}$.
 \end{theorem}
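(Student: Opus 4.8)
The plan is to establish Cohen-Macaulayness of the relevant normal multi-Rees algebras, deduce from it the existence of good (complete and joint) reductions, and then verify the numerical criterion (item (5)) of Theorem \ref{characterization of joint reduction number} by checking that every normal coefficient $\ov{e}_3$ occurring there vanishes. At the outset we may assume the residue field $k$ is infinite: if not, pass to $S(X)=S[X]_{\m S[X]}$, a faithfully flat extension preserving integral closures and lengths; for a given good joint reduction $(a,b,c)$ of $\{\ov{I^rJ^sK^tS}\}$ its extension to $S(X)$ is again a good joint reduction, and the equalities $\ov{I^rJ^sK^tS(X)}=aS(X)\ov{I^{r-1}J^sK^tS(X)}+\cdots$ contract back to the corresponding equalities over $S$, so normal joint reduction number zero descends.

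First I would observe that each $\ov{I^rJ^sK^t}$ is a monomial ideal of $R$, and that the $\mathbb N^3$-graded multi-Rees algebra $\mathcal R(\mathcal F)=\bigoplus_{(r,s,t)\in\mathbb N^3}\ov{I^rJ^sK^t}\,t_1^rt_2^st_3^t$ of the filtration $\mathcal F=\{\ov{I^rJ^sK^t}\}$ is the integral closure of the ordinary multi-Rees algebra $\mathcal R(I,J,K)\subseteq R[t_1,t_2,t_3]$. Since $\mathcal R(I,J,K)$ is a $k$-subalgebra of $k[x,y,z,t_1,t_2,t_3]$ generated by monomials, it is an affine semigroup ring, so its normalization $\mathcal R(\mathcal F)$ is a normal affine semigroup ring, hence Cohen-Macaulay (Hochster). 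Localizing in degree $\mathbf 0$, $\mathcal R_S(\mathcal F_S)=\mathcal R(\mathcal F)\otimes_R S$ is Cohen-Macaulay. The integral closure filtration satisfies $\mathcal F(\mathbf n)=\mathcal F(\mathbf n-n_i\mathbf e_i)$ for $n_i\le 0$, so Theorem \ref{good joint reductions exist in CMcase} yields a good complete reduction of $\{\ov{I^rJ^sK^tS}\}$, and by Remark \ref{remark on gcr} good joint reductions of this filtration exist.

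Next I would show that $\ov{e}_3(LS)=0$ for every monomial ideal $L$ of $R$. Consider the $(L,L,L)$-admissible integral closure filtration $\mathcal G$ with $\mathcal G(r,s,t)=\ov{L^{r+s+t}}$ for $r,s,t\ge 0$; its multi-Rees algebra is $\ov{\mathcal R(L,L,L)}$, again the normalization of a monomial (hence affine semigroup) Rees algebra, so it is Cohen-Macaulay. Theorem \ref{negative postulation number} then gives $P_{\mathcal G}(\mathbf n)=H_{\mathcal G}(\mathbf n)$ for all $\mathbf n\ge \mathbf 0$; restricting to $\mathbf n=(n,0,0)$ identifies the polynomial $P_{\mathcal G}(x,0,0)$ with the normal Hilbert polynomial $\ov P_{LS}(x)$, and evaluation at $0$ yields $-\ov{e}_3(LS)=\ov P_{LS}(0)=P_{\mathcal G}(\mathbf 0)=H_{\mathcal G}(\mathbf 0)=\lm_S(S/S)=0$. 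Applying this with $L$ running over $I,J,K,IJ,IK,JK,IJK$ — all monomial — shows that the numerical condition (item (5)) of Theorem \ref{characterization of joint reduction number} is satisfied by $IS,JS,KS$.

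Finally, since a good complete reduction of $\{\ov{I^rJ^sK^tS}\}$ exists and condition (5) holds, Theorem \ref{characterization of joint reduction number} gives that its items (1)--(5) are all equivalent, so in particular item (3) holds: the normal joint reduction number of $IS,JS,KS$ is zero with respect to every good joint reduction of $\{\ov{I^rJ^sK^tS}\}$. I expect the main obstacle to be the content of the second and third paragraphs' setup — recognizing $\mathcal R(\mathcal F)$ and $\ov{\mathcal R(L,L,L)}$ as normalizations of affine semigroup Rees algebras of monomial ideals, invoking Hochster's theorem, and verifying that Cohen-Macaulayness (and the hypotheses of the cited theorems) survive localization at $\m$ and, when needed, the faithfully flat passage to an infinite residue field — together with pinning down the precise meaning of the coefficients $\ov{e}_3(\cdot)$ so that their vanishing is indeed equivalent to vanishing of the appropriate normal Hilbert polynomial at the origin.
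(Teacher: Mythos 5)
Your proposal is correct and follows essentially the same route as the paper: Cohen--Macaulayness of the normal multi-Rees algebra of monomial ideals (a normal affine semigroup ring), Theorem \ref{good joint reductions exist in CMcase} to produce a good complete reduction, and Theorem \ref{negative postulation number} to make the relevant normal Hilbert coefficients vanish. The only (harmless) divergence is at the end --- the paper applies Theorem \ref{negative postulation number} once to the trigraded filtration $\{\ov{I^rJ^sK^tS}\}$ and then invokes Theorem \ref{vanishing of e_3(IJK)}, whereas you check condition (5) of Theorem \ref{characterization of joint reduction number} by proving $\ov{e}_3(LS)=0$ separately for the seven monomial ideals $L$ --- and your explicit reduction to an infinite residue field handles a hypothesis the paper leaves implicit.
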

\begin{proof}
 Note that $S$ is an analytically unramified 
 Cohen-Macaulay local ring of dimension $3$ and $IS,JS,KS$ are $\m S-$primary ideals in $S$. 
 By \cite[Theorem 6.3.5]{bruns-herzog}, $\ov{\mathcal R}(I,J,K)$ is Cohen-Macaulay. Since 
 $\ov{\mathfrak a S}=\ov{\mathfrak a}S$ for every ideal $\mathfrak{a}$ in $R$, we have
 $$\ov{\mathcal R}(IS,JS,KS)=\bigoplus_{r,s,t \geq 0}
 \ov{I^rJ^sK^tS}t_1^rt_2^st_3^t=\bigoplus_{r,s,t \geq 0} 
 \ov{I^rJ^sK^t}St_1^rt_2^st_3^t=\ov{\mathcal R}(I,J,K) S.$$ 
 Hence $\ov{\mathcal R}(IS,JS,KS)$ is Cohen-Macaulay. Thus, by Theorem 
 \ref{good joint reductions exist in CMcase}, 
 there exists a good complete reduction $(x_{ij})_{1\leq i,j\leq 3}$ of the filtration  
$\{\ov{I^rJ^sK^tS}\}$. 
 By Theorem \ref{negative postulation number}, $\ov{P}_{IS,JS,KS}(r,s,t)=\ov{H}_{IS,JS,KS}(r,s,t)$ 
 for all $r,s,t \geq 0$. Hence $\ov{P}_{IS,JS,KS}(0,0,0)=\ov{e}_3(IJKS)=0$. 
 Therefore, by Theorem \ref{vanishing of e_3(IJK)}, 
the normal joint reduction number of $IS,JS,KS$ is zero 
 with respect to every good joint reduction $(a,b,c)$ of $\{\ov{I^rJ^sK^tS}\}$. 
\end{proof}
 
\begin{theorem}
\label{one ideal}
 Let $R=k[x,y,z], \m=(x,y,z)$ and $I$ be an $\m-$primary monomial ideal in $R$. Suppose 
 $I $ and $I^2$ are complete. Then $I^n$ is complete for all $n \geq 1$.
 \end{theorem}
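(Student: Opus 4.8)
The plan is to derive this as a quick consequence of Theorem \ref{joint reduction number of monomial ideals is zero} applied with all three ideals equal to $I$, followed by a short induction. First I would pass to the local ring $S=R_\m$, which is analytically unramified and Cohen--Macaulay of dimension $3$, with $IS$ an $\m S$-primary ideal. Since integral closure commutes with localization, $\ov{\mathfrak a S}=\ov{\mathfrak a}S$, and since $\mathfrak b S\cap R=\mathfrak b$ for every $\m$-primary ideal $\mathfrak b$ of $R$ (because $R/\mathfrak b$ is already local), one has $I^n=\ov{I^n}$ if and only if $I^nS=\ov{I^nS}$, and the hypothesis becomes $IS=\ov{IS}$ and $I^2S=\ov{I^2S}$. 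If $k$ happens to be finite I would first enlarge it to $k(u)$; by faithful flatness this changes none of the statements above. So it is enough to prove $I^nS=\ov{I^nS}$ for all $n\ge 1$.

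Next I would apply Theorem \ref{joint reduction number of monomial ideals is zero} with $J=K=I$ (its proof also furnishes a good complete reduction, hence by Remark \ref{remark on gcr} a good joint reduction, of $\{\ov{I^rI^sI^tS}\}$). This produces a good joint reduction $(a,b,c)$ of the filtration $\{\ov{I^rI^sI^tS}\}=\{\ov{I^{r+s+t}S}\}$, with $a,b,c\in IS$, with respect to which the normal joint reduction number of $IS,IS,IS$ is zero; that is,
$$\ov{I^{r+s+t}S}=a\,\ov{I^{r+s+t-1}S}+b\,\ov{I^{r+s+t-1}S}+c\,\ov{I^{r+s+t-1}S}=(a,b,c)\,\ov{I^{r+s+t-1}S}$$
for all integers $r,s,t>0$. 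Writing $n=r+s+t$ with $r,s,t\ge 1$, this specializes to
$$\ov{I^nS}=(a,b,c)\,\ov{I^{n-1}S}\quad\text{for all }n\ge 3.$$

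Finally I would induct on $n$ to show $I^nS=\ov{I^nS}$. The cases $n=1,2$ are exactly the hypothesis. For $n\ge 3$, assuming $\ov{I^{n-1}S}=I^{n-1}S$ and using $(a,b,c)\subseteq IS$,
$$\ov{I^nS}=(a,b,c)\,\ov{I^{n-1}S}=(a,b,c)\,I^{n-1}S\subseteq IS\cdot I^{n-1}S=I^nS\subseteq\ov{I^nS},$$
forcing $I^nS=\ov{I^nS}$; contracting back to $R$ gives $I^n=\ov{I^n}$. The substantive input has already been absorbed into Theorem \ref{joint reduction number of monomial ideals is zero} (and, behind it, the Cohen--Macaulayness of $\ov{\R}(I,I,I)$ via Bruns--Herzog together with Theorems \ref{good joint reductions exist in CMcase}, \ref{vanishing of e_3(IJK)} and \ref{characterization of joint reduction number}); the only points here that need a little care are the passage between $R$ and $S$ — in particular the reduction to an infinite residue field — and the specialization of the three-variable joint-reduction identity to the diagonal $n=r+s+t$.
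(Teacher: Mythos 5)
Your proposal is correct and follows essentially the same route as the paper's own proof: localize at $\m$, invoke Theorem \ref{joint reduction number of monomial ideals is zero} with $J=K=I$ to get $\ov{I^nS}=(a,b,c)\ov{I^{n-1}S}$ for $n\geq 3$, and induct using the hypothesis for $n=1,2$. Your extra care about the finite-residue-field case and the contraction back to $R$ is a welcome (if minor) refinement of details the paper leaves implicit.
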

\begin{proof}
Let $S=R_\m$. 
Then, by Theorem \ref{joint reduction number of monomial ideals is zero}, the 
normal joint reduction number of $IS,IS,IS$ is zero with respect to 
every good joint reduction $J=(a,b,c)$ of $\{\ov{I^rI^sI^tS}\}$. 
Hence $\ov{I^nS}=J\ov{I^{n-1}S}$ for all $n \geq 3$. 
Since $I$ and $I^2$ are complete, $IS$ and $I^2S$ are compete. 
Thus $I^nS$ is complete for all $n \geq 1$. Hence $I^n$ is complete for 
all $n \geq 1$.
\end{proof}
 
\begin{theorem}
\label{two ideal}
 Let $R=k[x,y,z],\m=(x,y,z)$ and $I,J$ be $\m-$primary monomial ideals in $R$. Suppose that 
 $I^rJ^s$ is complete for all $r,s \geq 0$ such that $r+s \leq 2$. Then $I^rJ^s$ is complete for all 
 $r,s \geq 0$. 
\end{theorem}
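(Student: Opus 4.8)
The plan is to adapt the proof of Theorem~\ref{one ideal}. I would pass to the local ring $S=R_{\m}$, feed the two triples of monomial ideals $(I,I,J)$ and $(I,J,J)$ into Theorem~\ref{joint reduction number of monomial ideals is zero} to obtain two ``reduction identities'', use Theorem~\ref{one ideal} to dispose of the pure powers $\ov{I^n}$ and $\ov{J^n}$, and then run an induction on $r+s$.

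First I would reduce to $S=R_\m$. Integral closure commutes with localization, so $\ov{\mathfrak a}S=\ov{\mathfrak a S}$ for every ideal $\mathfrak a$ of $R$; and since $\mathfrak a S\cap R=\mathfrak a$ for $\m$-primary $\mathfrak a$, an $\m$-primary monomial ideal $\mathfrak a$ is complete if and only if $\mathfrak a S$ is complete. Hence it is enough to prove $\ov{(IS)^r(JS)^s}=(IS)^r(JS)^s$ for all $r,s\ge 0$. The hypothesis gives that $(IS)^{r'}(JS)^{s'}$ is complete whenever $r'+s'\le 2$; in particular $IS,(IS)^2$ and $JS,(JS)^2$ are complete, so Theorem~\ref{one ideal} applied to $IS$ and to $JS$ shows that $\ov{(IS)^n}=(IS)^n$ and $\ov{(JS)^n}=(JS)^n$ for every $n\ge 0$. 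From now on I suppress $S$, writing $\ov{I^rJ^s}$ for $\ov{(IS)^r(JS)^s}$.

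Next I would extract the reduction identities. Apply Theorem~\ref{joint reduction number of monomial ideals is zero} to the monomial ideals $I,I,J$; its proof produces a good complete reduction of $\{\ov{I^rI^sJ^t}\}=\{\ov{I^{r+s}J^t}\}$, hence a good joint reduction $(a,b,c)$ with $a,b\in IS$ and $c\in JS$, and the normal joint reduction number of $I,I,J$ is zero with respect to it; taking the first two exponents to be $m-1$ and $1$ in the defining relation yields
$$\ov{I^{m}J^{t}}=(a,b)\,\ov{I^{m-1}J^{t}}+c\,\ov{I^{m}J^{t-1}}\qquad\text{for all }m\ge 2,\ t\ge 1.$$
Applying Theorem~\ref{joint reduction number of monomial ideals is zero} similarly to $I,J,J$ gives a good joint reduction $(a',b',c')$ with $a'\in IS$, $b',c'\in JS$ and
$$\ov{I^{r}J^{n}}=a'\,\ov{I^{r-1}J^{n}}+(b',c')\,\ov{I^{r}J^{n-1}}\qquad\text{for all }r\ge 1,\ n\ge 2.$$
These play, for the pair $I,J$, the role that $\ov{I^n}=(a,b,c)\ov{I^{n-1}}$ ($n\ge 3$) played in Theorem~\ref{one ideal}.

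Finally I would induct on $N=r+s$. For $N\le 2$ the assertion is the hypothesis. Let $N\ge 3$ and assume $\ov{I^{p}J^{q}}=I^{p}J^{q}$ for all $p+q<N$. If $s=0$ or $r=0$ the claim is the pure-power statement already proved. Otherwise $r,s\ge 1$ with $r+s\ge 3$, so $r\ge 2$ or $s\ge 2$; in the first case use the $(I,I,J)$ identity with $m=r$, $t=s$, in the second the $(I,J,J)$ identity with $n=s$. In either case $\ov{I^rJ^s}$ is exhibited as $\alpha\,\ov{I^{r-1}J^{s}}+\beta\,\ov{I^{r}J^{s-1}}$ with $\alpha\in IS$ and $\beta\in JS$. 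Since $(r-1)+s<N$ and $r+(s-1)<N$, the induction hypothesis replaces these by $I^{r-1}J^{s}$ and $I^{r}J^{s-1}$, and then
$$\ov{I^rJ^s}\subseteq I\cdot I^{r-1}J^{s}+J\cdot I^{r}J^{s-1}=I^rJ^s\subseteq\ov{I^rJ^s},$$
so $I^rJ^s$ is complete; undoing the passage to $S$ completes the induction and the proof. The only point needing care is that a single triple only lowers an exponent that is already at least $2$ and never touches the pure powers; this is why both triples $(I,I,J)$ and $(I,J,J)$ are used (equivalently, one could run the $(I,I,J)$ identity together with the $I\leftrightarrow J$ symmetry of the statement), and why Theorem~\ref{one ideal} is invoked to handle the boundary cases $(n,0)$ and $(0,n)$.
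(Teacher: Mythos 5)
Your proposal is correct and follows essentially the same route as the paper: localize at $\m$, apply Theorem \ref{joint reduction number of monomial ideals is zero} to the triples $(I,I,J)$ and $(I,J,J)$ to get the two reduction identities valid for $(r,s)\geq(2,1)$ and $(r,s)\geq(1,2)$ respectively, handle pure powers via Theorem \ref{one ideal}, and induct on $r+s$. The only (harmless) difference is that you spell out the case split $r\geq 2$ versus $s\geq 2$ explicitly, where the paper invokes symmetry and only writes out the first case.
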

\begin{proof}
Let $S=R_\m.$ 
By Theorem \ref{joint reduction number of monomial ideals is zero}, the normal joint reduction 
number of $IS,IS,JS$ and $IS,JS,JS$ is zero with respect to every good joint reduction of 
$\{\ov{I^rI^sJ^tS}\}$ and $\{\ov{I^rJ^sJ^tS}\}$ respectively. Let  
$(a_1,b_1,c_1)$ be a good joint reduction of the filtration $\{\ov{I^rI^sJ^tS}\}$ 
and $(a_2,b_2,c_2)$ be a good joint reduction of the filtration $\{\ov{I^rJ^sJ^tS}\}$. Then
 \begin{eqnarray} \label{two ideal equation1}
\ov{I^rJ^sS}&=&(a_1,b_1)\ov{I^{r-1}J^sS}+c_1\ov{I^rJ^{s-1}S} \mbox{ for all } (r,s)\geq (2,1)\mbox{ and }\\
\label{two ideal equation2}\ov{I^rJ^sS}&=&(a_2)\ov{I^{r-1}J^sS}+(b_2,c_2)\ov{I^rJ^{s-1}S} \mbox{ for all } (r,s)\geq (1,2).
\end{eqnarray}
It suffices to show that $I^rJ^sS$ is complete for all $r,s \geq 0$. 
By Theorem \ref{one ideal}, $I^nS$ and $J^nS$ are complete for all $n \geq 1$. 
We use induction on $r+s$ to show that $I^rJ^sS$ is complete for all $r,s \geq 0$. By assumption, 
result is true for $r+s \leq 2$. Suppose $r+s >2$. We may assume that $r>1$. Since $I^rS$ is complete 
for all $r \geq 1$, assertion is true for $s=0$. Hence we may also assume that $s \geq 1$. 
By induction, $I^{r-1}J^sS$ and $I^rJ^{s-1}S$ are complete. Therefore,
by equation (\ref{two ideal equation1}), $I^rJ^sS$ is complete. 
 \end{proof}

\begin{theorem}
 Let $R=k[x,y,z],\m=(x,y,z)$ and $I,J,K$ be $\m-$primary monomial ideals in $R$. Suppose that 
 $I^rJ^sK^t$ is complete for all $r,s,t \geq 0$ such that $r+s+t \leq 2$. Then $I^rJ^sK^t$ is complete 
 for all $r,s,t \geq 0$.
\end{theorem}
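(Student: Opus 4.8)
The plan is to bootstrap from Theorem \ref{two ideal} just as that result was bootstrapped from Theorem \ref{one ideal}. As in those proofs, after passing to $S=R_\m$ it suffices to prove that $I^rJ^sK^tS$ is complete for all $r,s,t\ge 0$; the descent back to $R$ is then the same argument used there, since for an $\m$-primary monomial ideal $\mathfrak a$ the finite length module $\ov{\mathfrak a}/\mathfrak a$ vanishes as soon as its localization at $\m$ does.

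By \cite[Theorem 6.3.5]{bruns-herzog} the multi-Rees algebra $\ov{\mathcal R}(I,J,K)$ is Cohen-Macaulay, hence so is $\ov{\mathcal R}(IS,JS,KS)=\ov{\mathcal R}(I,J,K)S$; by Theorem \ref{good joint reductions exist in CMcase} a good complete reduction of $\{\ov{I^rJ^sK^tS}\}$ exists, and by Theorem \ref{joint reduction number of monomial ideals is zero} the normal joint reduction number of $IS,JS,KS$ is zero with respect to every good joint reduction of $\{\ov{I^rJ^sK^tS}\}$. Fixing such a good joint reduction $(a,b,c)$ with $a\in IS$, $b\in JS$, $c\in KS$, this says
$$\ov{I^rJ^sK^tS}=a\,\ov{I^{r-1}J^sK^tS}+b\,\ov{I^rJ^{s-1}K^tS}+c\,\ov{I^rJ^sK^{t-1}S}\qquad\mbox{for all }r,s,t\ge 1.$$
I would also record the lower-dimensional cases: $I^nS$, $J^nS$ and $K^nS$ are complete for every $n$ by Theorem \ref{one ideal}, and $I^rJ^sS$, $I^rK^tS$ and $J^sK^tS$ are complete for all exponents by Theorem \ref{two ideal} applied to the pairs $(I,J)$, $(I,K)$ and $(J,K)$; in each case the hypotheses needed are instances of the assumption that $I^rJ^sK^t$ is complete whenever $r+s+t\le 2$.

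The proof then runs by induction on $N=r+s+t$. The case $N\le 2$ is the hypothesis. Let $N\ge 3$. If some exponent is zero then $I^rJ^sK^tS$ equals, up to relabelling, one of the products treated in the previous paragraph and is therefore complete. If $r,s,t\ge 1$, then the triples $(r-1,s,t)$, $(r,s-1,t)$ and $(r,s,t-1)$ all have coordinate sum $N-1\ge 2$, so by the induction hypothesis $\ov{I^{r-1}J^sK^tS}=I^{r-1}J^sK^tS$, $\ov{I^rJ^{s-1}K^tS}=I^rJ^{s-1}K^tS$ and $\ov{I^rJ^sK^{t-1}S}=I^rJ^sK^{t-1}S$; feeding this into the displayed identity and using $a\in IS$, $b\in JS$, $c\in KS$ gives
$$\ov{I^rJ^sK^tS}=a\,I^{r-1}J^sK^tS+b\,I^rJ^{s-1}K^tS+c\,I^rJ^sK^{t-1}S\subseteq I^rJ^sK^tS,$$
so $I^rJ^sK^tS$ is complete. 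This closes the induction, and the first paragraph then yields that $I^rJ^sK^t$ is complete for all $r,s,t\ge 0$.

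I do not anticipate a genuine obstacle here: all the substantial input — the finiteness theorem, the existence of good complete and good joint reductions in the Cohen-Macaulay setting, the characterization of normal joint reduction number zero, and its automatic validity for monomial ideals — has already been established, so only a short induction remains. The one point needing mild care is invoking Theorem \ref{two ideal} for all three coordinate pairs, which is legitimate because that theorem is stated for an arbitrary pair of $\m$-primary monomial ideals of $k[x,y,z]$, and the instances required are all special cases of the standing hypothesis.
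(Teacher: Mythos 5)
Your proposal is correct and follows essentially the same route as the paper: localize at $\m$, invoke Theorem \ref{joint reduction number of monomial ideals is zero} to get the identity $\ov{I^rJ^sK^tS}=a\,\ov{I^{r-1}J^sK^tS}+b\,\ov{I^rJ^{s-1}K^tS}+c\,\ov{I^rJ^sK^{t-1}S}$ for $r,s,t\ge 1$, handle the cases with a zero exponent via Theorem \ref{two ideal}, and induct on $r+s+t$. The only differences are cosmetic (you spell out the descent from $S$ back to $R$ and cite Theorem \ref{one ideal} explicitly, both of which the paper leaves implicit).
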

\begin{proof}
Let $S=R_\m$. 
By Theorem \ref{joint reduction number of monomial ideals is zero}, the normal joint reduction 
 number of $IS,JS,KS$ is zero with respect to every good joint reduction of the filtration 
 $\{\ov{I^rJ^sK^tS}\}$.  Let  
$(a,b,c)$ be a good joint reduction of the filtration $\{\ov{I^rJ^sK^tS}\}$. 
Hence, for all $r,s, t \geq 1, $ 
 \begin{eqnarray}\label{three ideal}
 \ov{I^rJ^sK^tS}=a\ov{I^{r-1}J^sK^tS}+b\ov{I^rJ^{s-1}K^tS}+c\ov{I^rJ^sK^{t-1}S}.
 \end{eqnarray}
 It suffices to show that $I^rJ^sK^tS$ is complete for all $r,s,t \geq 0$. 
 We use induction on $r+s+t$ to show that $I^rJ^sK^tS$ is complete for all $r,s,t \geq 0$. 
By assumption result is true for $r+s+t \leq 2$. Let $r+s+t\geq 3$. By Theorem \ref{two ideal}, 
$I^rJ^sS,I^rK^tS$ and $J^sK^tS$ are complete for all 
 $r,s,t\geq 0$. Hence result is true if $r=0$ or $s=0$ or $t=0$. Therefore, we may assume that $r,s,t 
 \geq 1$. By induction, $I^{r-1}J^sK^tJS, I^rJ^{s-1}K^tS$ and $I^rJ^sK^{t-1}S$ are complete. Hence, by equation 
 (\ref{three ideal}), $I^rJ^sK^tS$ is complete.
\end{proof}

\end{document}